\numberwithin{equation}{section}
\newtheorem{thm}{Theorem}[section]
\newtheorem{prop}[thm]{Proposition}
\newtheorem{lem}[thm]{Lemma}
\newtheorem{res}[thm]{Result}
\newtheorem{cor}[thm]{Corollary}
\theoremstyle{remark}
\newtheorem{rem}{Remark}[section]
\newcommand{\BBB}{\mathbb}
\newcommand{\R}{{\BBB R}}
\newcommand{\Z}{{\BBB Z}}
\newcommand{\T}{{\BBB T}}
\newcommand{\N}{{\BBB N}}
\newcommand{\p}{\partial}
\newcommand{\EQQ}[1]{\begin{equation*} \begin{split} #1
 \end{split} \end{equation*}}
\newcommand{\EQQARR}[1]{\begin{equation} \begin{array}{ll} #1 \end{array} \nonumber \end{equation}}
\newcommand{\EQQARRLAB}[1]{\begin{equation} \begin{array}{ll} #1 \end{array} \end{equation}}
\newcommand{\Combin}[2]
{
\left(
\begin{array}{l}
#1 \\
#2
\end{array}
\right)
}
\begin{document}

\title[Nonlinear third-order equations] {Local well-posedness and Parabolic smoothing of solutions of fully nonlinear third-order equations on the torus }
\author{Tristan Roy}
\address{American University of Beirut, Department of Mathematics}
\email{tr14@aub.edu.lb}

\begin{abstract}
In this paper we study the initial value problem of fully nonlinear third-order equations on the torus, that is
$\p_{t} u = F  \left( \p_{x}^{3} u, \p_{x}^{2}u, \p_{x}u , u, x,t \right) $ with $F$ a smooth function depending on the space variable $x$,
the time variable $t$, the first three derivatives of $u$ with respect to $x$, and $u$. In particular we find conditions on $u(0)$ and $F$ for which
one can construct a local and unique solution $u$. In particular if $F$ and $u(0)$ satisfy some conditions then the equation behaves like a diffusive one and
it has a parabolic smoothing property: the solution is infinitely smooth in one direction of time and the problem is ill-posed in the other direction of time.
If $F$ and $u(0)$ satisfy some other conditions then the equation behaves like a dispersive one. We also prove continuous dependence with respect to the data. The proof relies upon energy estimates combined with a gauge transformation (see e.g \cite{Hayashi,HayashiOzawa1, HayashiOzawa2}) and the Bona-Smith argument
(see \cite{BonaS}).
\end{abstract}

\maketitle

\tableofcontents
\setcounter{page}{001}

\section{Introduction}

We consider the third-order nonlinear Cauchy problem on the torus  $ \T := \mathbb{R} / 2 \pi \Z $

\EQQARRLAB{
\p_{t} u & = F (\vec{u},t) \cdot
\label{Eqn:CauchyPb}
}
Here $ F := \mathbb{R}^{4} \times \T \times \mathbb{R} \rightarrow \mathbb{R} := ( \vec{\omega},t ) \rightarrow F(\vec{\omega},t) $ is a
$\mathcal{C}^{\infty} -$ function with arguments $\vec{\omega} := (\omega_{3}, \omega_{2}, \omega_{1}, \omega_{0}, \omega_{-1}) \in
\mathbb{R}^{4} \times \T $  and $t \in \mathbb{R}$ the time variable; $ \vec{u}:= \left( \p_{x}^{3} u, \p_{x}^{2} u, \p_{x} u, u, x \right) $ with
$x \in \T$ the space variable and $u$ a function (lying in an appropriate space) depending on $x$ and $t$. \\
Equations of the form (\ref{Eqn:CauchyPb}) have been extensively studied in the literature, in particular on the real line (i.e $x \in \mathbb{R}$) and for
equations

\EQQARRLAB{
\partial_{t} u + \partial_{x}^{3} u = R(\p_{x}^{2} u, \p_{x} u, u) \cdot
\label{Eqn:Parta}
}
Here $R$ is a polynomial that satisfies some conditions. We mention some results on the real line. In this case, it is well-known that the linear part of these equations creates a strong dispersive effect, which should overcome the effect of the nonlinearity at least locally and be a plus for the construction of local solutions. In \cite{KPV2}, local well-posedness (L.W.P) on a small interval $[-T,T]$ (with $T > 0$) was proved for small data in \cite{KPV2} and for large data in \cite{KPV} in weighted Sobolev spaces with regularity exponent high enough for equations (\ref{Eqn:Parta}) by using, among other things, the dispersive effect of these equations. We also refer to \cite{KS} who studied L.W.P for generalizations of (\ref{Eqn:Parta}) to systems. In \cite{Pilod}, L.W.P was proved in weighted Besov spaces for small data. In \cite{HarGriff2}, L.W.P was studied in some translation-invariant subspaces of Sobolev spaces. In \cite{HarGriff}, under some additional assumptions on $R$, L.W.P was proved in (standard) Sobolev spaces $H^{k}$. Recently, in \cite{Akh}, the authors considered the full nonlinear problem (\ref{Eqn:CauchyPb}) and found that under some assumptions on $F$ and the data, the problem is locally well-posed and that the dispersive effect of the equation dominates. \\
In this paper we aim at constructing local strong solutions of the fully nonlinear problem (\ref{Eqn:CauchyPb2}) on the torus $\T$. By strong solutions of
(\ref{Eqn:CauchyPb}) we mean solutions $u \in \mathcal{C} \left( I, H^{k} \right)$ of the following integral equation on a interval $I$ containing $0$:

\EQQARRLAB{
t \in I: \; u(t)  =  u(0) + \int_{0}^{t} F  \left( \overrightarrow{u(t')}, t' \right) \; dt',
\label{Eqn:CauchyPb2}
}
with $u(0)$ a given function that lies in $H^{k}$. Here $H^{k}$ denotes the usual Sobolev space with index $k$, i.e the closure of smooth functions $f$  with respect to the norm  $ \| f \|_{H^{k}} :=  \| \{ \langle n \rangle^{k} \hat{f}(n) \}_{n \in \mathbb{Z}} \|_{l^{2}} $ that are defined on the torus. Here $\langle n \rangle := (1 + n^{2})^{\frac{1}{2}}$ and $\hat{f}(n)$ denotes the $n^{th}-$ Fourier coefficient of a function $f$ defined on the torus, that is $\hat{f}(n) := \frac{1}{2 \pi} \int_{0}^{2 \pi} f(x) e^{-inx} \; dx $. \\
To this end we define some quantities that we use throughout this paper and that appear in the statement of your theorem.  Let $f$ be function defined on the torus and let
$t \in \mathbb{R}$. Let $[f]_{ave}$ denote the average value of a function $f$ over $\T$, that is $ [ f ]_{ave} := \frac{1}{2 \pi} \int_{\T} f(x') \; dx'$. We define

\EQQ{
\delta(\vec{f}, t ) := \inf_{x \in \T} \left| \p_{\omega_{3}} F(\vec{f},t)(x) \right|, \; \text{and} \; \tilde{\delta}(\vec{f}) := \delta(\vec{f},0) \cdot
}
Here $\vec{f} := ( \partial_{x}^{3} f, \partial_{x}^{2} f, \partial_{x} f, f, x ) $.

It is well-known that $ \tilde{\delta} \left( \overrightarrow{u(0)} \right) $ measures the strength of the dispersion of (\ref{Eqn:CauchyPb2}). Intuitively, dispersion should facilitate the rule-out of blow-up and the construction of local solutions: see e.g \cite{Akh} for discussions related to this number. Therefore in the sequel we only consider solutions of (\ref{Eqn:CauchyPb}) with initial data $u(0)$ that satisfy the well-known non-degeneracy dispersion property, that is $ \tilde{\delta}
\left( \overrightarrow{u(0)} \right) > 0 $. Let \footnote{Notation convention in the formula of $P(f,t)$: $\p_{x}^{0} f := 1$.}

\EQQARR{
P (f,t) :=  \p_{\omega_2} F (\vec{f},t) + 6 \sum \limits_{p=-1}^{3} \p^{2}_{\omega_3 \omega_p} F (\vec{f},t) \p_{x}^{p+1} f , \\
Q(f,t) := \p_{\omega_{3}} F( \vec{f}, t) \left[ \frac{P(f,t)}{ _{\p_{\omega_{3}}F (\vec{f},t)}}  \right]_{ave},
}

\EQQARR{
\delta^{'}(f,t) := \inf \limits_{x \in \T}  | Q(f,t)(x)|, \; \text{and} \; \tilde{\delta}^{'}(f) := \delta^{'}(f,0) \cdot
}
We define now the notion of parabolic resonance for (\ref{Eqn:CauchyPb2}), by analogy with that in \cite{Tsugawa} in the framework of  fifth-order (semilinear) dispersive equations. We say that (\ref{Eqn:CauchyPb2}) is of \textit{non-parabolic resonance type} if
$ \left[ \frac{P(f,t)}{\partial_{\omega_{3}} F(\vec{f},t)} \right]_{ave} = 0$
for all $t \in \mathbb{R}$ and for all $ f \in  E := \left\{ h \in \mathcal{C}^{\infty} (\T): \; \delta(\vec{h},t) > 0 \; \text{for all} \;
t \in \mathbb{R} \right\} $
\footnote{ Let $ k > \frac{19}{2} $. We will be able to use the non-parabolic resonance property of (\ref{Eqn:CauchyPb2}) with data $\phi \in \widetilde{\mathcal{P}}_{k}$, with
$\widetilde{\mathcal{P}}_{k}$ defined below: see Theorem \ref{Thm:MainDisp}. Indeed observe that if (\ref{Eqn:CauchyPb2}) is of non-parabolic resonance type, then $ \left[ \frac{P(f,t)}{\partial_{\omega_{3}} F(\vec{f},t)} \right]_{ave} = 0 $ for all $ t \in \mathbb{R} $ and for all $ f \in \widetilde{\mathcal{P}}_{k} $: this follows from the density of $ \mathcal{C}^{\infty} (\T)$ in $H^{k}$ and arguments in  Section \ref{Sec:App}.}. If not, we say that (\ref{Eqn:CauchyPb2}) is of \textit{parabolic resonance type}. \\
Examples of non-parabolic resonance type equations are: \\
the \textit{KdV equation} $ \partial_{t} u = -  \partial_{x}^{3} u - 6 u \partial_{x} u $ and more generally the \textit{transition KdV equation}
$ \partial_{t} u = - \partial_{x}^{3} u - 6 h(t) u \partial_{x} u $ with $h \in \mathcal{C}^{\infty}(\mathbb{R})$ : $ \tilde{\delta}(\vec{f}) = 1$, $P(f,t)=0$ and $E = \mathcal{C}^{\infty}(\T)$; \\
the \textit{Rosenau-Hyman equation} (also called the $K(2,2)$ equation) $ \partial_{t} u =  \partial_{x}(u^{2}) + \partial_{xxx} (u^{2})$:
$ \tilde{\delta}(\vec{f}) =  2 \inf \limits_{x \in \T} |f(x)|$,
$P(\vec{f},t) = 18 \partial_{x} f $, and    $ E := \left\{ h \in \mathcal{C}^{\infty}(\T): \; h > 0 \; \text{or} \;  h < 0 \right\} $; \\
the \textit{Harry-Dym equation} $\partial_{t} u = u^{3} u_{xxx}$ :  $ \tilde{\delta}(\vec{f}) = \inf \limits_{x \in \T} |f(x)|^{3}$,  $P(\vec{f},t) = 18 f^{2} \partial_{x} f $, and $ E := \left\{ h \in \mathcal{C}^{\infty}(\T): \; h > 0 \; \text{or} \;  h < 0 \right\} $. \\
Examples of parabolic resonance type equations are: \\
the \textit{Korteweg-de Vries-Burgers equation} $\partial_{t} u  = -  \partial_{x}^{3} u +  \partial_{x}^{2} u  + 2 u \partial_{x} u = 0$: $\tilde{\delta}(\vec{f})= 1$,  $P(\vec{f},t)= 1$, and $ \tilde{\delta}^{'}(f) =1  $; \\
the \textit{Korteweg-de Vries type equation} $\p_{t} u = a(x,t) \p_{xxx} u + b(x,t) \p_{xx} u + c(x,t) \p_{x} u + d(x,t) u + e(t,x)$ with $\mathcal{C}^{\infty} ( \T \times \mathbb{R})-$ variable coefficients $a$,$b$,$c$,$d$, and $e$ such that  $ |a(x,0)| > 0 $ for all $x \in \T$ and $ \left[ \frac{b(0)}{a(0)} \right]_{ave} \neq 0$:  $ \tilde{\delta}(\vec{f}) = \inf \limits_{x \in \T} |a(0,x)|$, $ P(f,t) := 6 \partial_{x} a(t)  + b(t)$, and $ \tilde{\delta}^{'}(f) =  \tilde{\delta}(\vec{f})
\left| \left[ \frac{b(0)}{a(0)} \right]_{ave} \right| $. \\
We shall see that L.W.P holds on a small forward-in-time interval $[0,T]$ (resp. a small backward-in-time interval $[-T,0]$)
if we assume that (\ref{Eqn:CauchyPb}) is of parabolic resonance type, that the data has enough regularity, and that $\inf \limits_{x \in \T} Q(u(0),0)(x) > 0$  (resp. $
\sup \limits_{x \in \T} Q(u(0),0)(x) < 0 $); moreover the behavior of the solution depends on the sign of $Q (u(0),0)$. Therefore we define

\EQQARR{
\widetilde{\mathcal{P}}_{+,k} := \{ f \in H^{k}: \;  \tilde{\delta}(\vec{f}) > 0 \; \text{and} \; \inf \limits_{x \in \T} Q(f,0)(x) > 0   \}, \; \text{and} \\
\widetilde{\mathcal{P}}_{-,k} := \{ f \in H^{k}: \;  \tilde{\delta}(\vec{f}) > 0 \; \text{and} \; \sup \limits_{x \in \T} Q(f,0)(x) < 0   \} \cdot
}
More generally we define for $t \in \mathbb{R}$

\EQQARR{
\mathcal{P}_{+,k}(t) := \{ f \in H^{k}: \;  \delta(\vec{f},t) > 0 \; \text{and} \; \inf \limits_{x \in \T} Q(f,t)(x) > 0  \}, \; \text{and} \\
\mathcal{P}_{-,k}(t) := \{ f \in H^{k}: \;  \delta(\vec{f},t) > 0 \; \text{and} \; \sup \limits_{x \in \T} Q(f,t)(x) < 0   \} \cdot
}
We shall also see that L.W.P holds on a small interval $[-T,T]$ if we assume that (\ref{Eqn:CauchyPb}) is of non-parabolic resonance type and that the data has
enough regularity. Therefore we define

\EQQARR{
\widetilde{\mathcal{P}}_{k} := \{ f \in H^{k}: \; \tilde{\delta}(\vec{f}) > 0 \}, \; \text{and, more generally, for} \; t \in \mathbb{R}, \\
\\
\mathcal{P}_{k}(t) :=  \{ f \in H^{k}: \; \delta(\vec{f},t) > 0 \} \cdot
}
We now state the first theorem of this paper:

\begin{thm}

Assume that (\ref{Eqn:CauchyPb2}) is of parabolic resonance type. Let $ k \geq k_{0} > \frac{19}{2}$. Then the following properties hold:

\begin{enumerate}

\item $(Local \; existence \; and \; Parabolic \; smoothing)$
Let $ \phi \in \widetilde{\mathcal{P}}_{+,k}$ $ \left( \; \text{resp.} \; \phi \in  \widetilde{\mathcal{P}}_{-,k} \;  \right)$. Then one can find $ T := T \left( \| \phi \|_{H^{k_{0}}}, \tilde{\delta}(\vec{\phi}), \tilde{\delta}^{'}(\phi) \right) > 0 $ for which there exists a solution
$ u  \in \mathcal{C} \left( [0,T], H^{k}  \right) $
$ \left( \; \text{resp.} \; u \in \mathcal{C} \left( [-T,0], H^{k} \right) \; \right) $  of (\ref{Eqn:CauchyPb2}) with $ u(0) := \phi$ such that $ u(t) \in \mathcal{P}_{+,k}(t) $ for $ t \in [0,T] $ (resp. $ u(t) \in \mathcal{P}_{-,k}(t) $ for $t \in [-T,0]$ ). Moreover a parabolic smoothing effect holds, that is $ u \in \mathcal{C}^{\infty} \left( (0,T] \times \T \right) $ $\left( \text{resp.} \; u \in \mathcal{C}^{\infty} \left( [-T,0) \times \T \right) \; \right) $ if $\phi \in \widetilde{\mathcal{P}}_{+,k} $ (resp. $\phi \in \widetilde{\mathcal{P}}_{-,k} $).

\item [$$]

\item $(Uniqueness)$ Assume that for some $\breve{T} > 0$  there exist $ u_{1},u_{2} \in \mathcal{C} \left( [0, \breve{T}], H^{k} \right)$
$\left( \; \text{resp.} \; u_{1},u_{2} \in \mathcal{C} \left( [ - \breve{T},  0], H^{k} \right)  \right)$  solutions of
    (\ref{Eqn:CauchyPb2}) with $u_{1}(0) = u_{2}(0)$ such that $u_{q}(t) \in \mathcal{P}_{+,k}(t)$  $ \left( \; \text{resp.} \; u_{q}(t) \in \mathcal{P}_{-,k}(t) \right)$ holds for $q \in \{ 1,2 \} $ and for $ t \in [0, \breve{T}]$ $ \left( \; \text{resp.} \; t \in [-\breve{T},0] \right)$. Then $u_{1}(t) = u_{2}(t)$ on $ \left[ 0, \breve{T} \right]$ $\left( \; \text{resp.} \; \left[ -\breve{T}, 0 \right] \; \right)$.

\item [$$]

\item $(Continuous \; dependence \; on \;  initial \; data)$ Let $ \phi_{\infty} \in \widetilde{\mathcal{P}}_{+,k} $  $\left( \; \text{resp.} \;
\phi_{\infty} \in \widetilde{\mathcal{P}}_{-,k} \; \right)$. Let $\phi_{n} \in H^{k}$ be such that  $ \phi_{n} \rightarrow \phi_{\infty} $ in $ H^{k} $ as $ n \rightarrow \infty $. Then there exists  $N \in \mathbb{N}$ such that for $ \infty \geq n \geq N$ one can find $ \breve{T} := \breve{T} \left( \| \phi_{\infty} \|_{H^{k_{0}}}, \tilde{\delta}(\overrightarrow{\phi_{\infty}}), \tilde{\delta}^{'}(\phi_{\infty}) \right) > 0 $ for which
there is a unique solution $u_{n} \in \mathcal{C} \left( [0, \breve{T}], H^{k} \right)$
$ \left( \, \text{resp.} \; u_{n} \in \mathcal{C} \left( [-\breve{T},0], H^{k} \right) \; \right)$ of (\ref{Eqn:CauchyPb2}) with data $ u_{n}(0) := \phi_{n}$
that satisfies  $ u_{n}(t) \in \mathcal{P}_{+,k}(t) $ for $ t \in [0, \breve{T}] $
$ \left( \; \text{resp.} \; u_{n}(t) \in \mathcal{P}_{-,k}(t) \; \text{for} \;  t \in \left[ -\breve{T}, 0 \right] \right)$.
Moreover $ \sup_{t \in [0,\breve{T}]} \| u_{n}(t) - u_{\infty}(t) \|_{H^{k}} \rightarrow 0 $ $ \left(  \text{resp.} \; \sup_{t \in [-\breve{T},0]}
\| u_{n}(t) - u_{\infty}(t) \|_{H^{k}} \rightarrow 0  \right)$ as $ n \rightarrow \infty$.

\end{enumerate}

\label{Thm:MainDiff}
\end{thm}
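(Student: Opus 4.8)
The plan is to reduce all three parts to a single \emph{gauged, dissipative energy inequality}, and then to run (i) a parabolic regularization for existence, (ii) a bootstrap for smoothing, and (iii) a Bona--Smith comparison for continuous dependence; uniqueness will fall out of the energy estimate applied to the difference of two solutions. I treat the forward case ($\phi \in \widetilde{\mathcal{P}}_{+,k}$, $\inf_x Q(\phi,0) > 0$); the backward case is identical after $t \mapsto -t$. Differentiating (\ref{Eqn:CauchyPb2}) and reorganizing, the vector $\wlzer$ of top-order spatial derivatives of $u$ solves a quasilinear third-order system whose leading coefficient is $\p_{\omega_3}F(\vec u,t)$ and whose \emph{effective} second-order coefficient, after conjugating by the periodic elliptic weight built from an antiderivative of the mean-zero part of $P(u,t)/\p_{\omega_3}F(\vec u,t)$ (this is the gauge transformation), becomes precisely $Q(u,t)$. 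Indeed, a naive energy estimate for a variable-coefficient third-order operator always produces a one-derivative-too-many term $\sim \int (\p_x \p_{\omega_3}F)\,(\p_x^{k+1}u)^2$; the gauge removes its mean-zero part and the surviving mean is a multiple of $Q(u,t)$. Since $\inf_x Q(\phi,0) > 0$ and $t\mapsto u(t)$ is continuous into $H^k \hookrightarrow \mathcal{C}^3$ (here $k > \tfrac{19}{2}$ is used), $Q(u(t),t) \geq c_0 > 0$ on a short interval $[0,T]$, and the modified energy $E(t) \simeq \|u(t)\|_{H^k}^2$ — built with the combinatorial coefficients $C_{l_0}^{j}$ so that the commutators in the system for $\wlzer$ cancel or are dominated — satisfies
\[ \tfrac{d}{dt} E(t) \ \leq \ - c\,\| \p_x^{k+1}u(t) \|_{L^2}^2 + \Lambda\big( \|u(t)\|_{H^{k_0}} \big)\,E(t), \]
every remaining borderline term being absorbed into the dissipative term by Young's inequality. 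An Osgood/Gronwall iteration (the source of the fractional powers of a growth function and the $\loglog$ in the auxiliary lemmas) yields $T = T(\|\phi\|_{H^{k_0}}, \tilde\delta(\vec\phi), \tilde\delta^{'}(\phi)) > 0$ together with a bound on $\sup_{[0,T]}\|u(t)\|_{H^k}^2 + c\int_0^T \|\p_x^{k+1}u(t)\|_{L^2}^2\,dt$ depending only on $\|\phi\|_{H^k}$, $\|\phi\|_{H^{k_0}}$, $\tilde\delta(\vec\phi)$, $\tilde\delta^{'}(\phi)$.

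For part (1) I construct the solution by \emph{parabolic regularization}: solve $\p_t u_\e = F(\vec{u_\e},t) - \e\,\p_x^4 u_\e$ with $u_\e(0)=\phi$, which is uniformly parabolic and hence locally solvable in $\mathcal{C}([0,T_\e],H^k)$ by a Duhamel contraction; since $-\e\p_x^4$ only strengthens the dissipation, the energy inequality above holds for $u_\e$ uniformly in $\e$, so $T_\e \geq T$ and $\{u_\e\}$ is bounded in $L^\infty([0,T],H^k)\cap L^2([0,T],H^{k+1})$, with $\{\p_t u_\e\}$ bounded in a space of negative index via the equation. Aubin--Lions extracts a subsequence converging strongly in $\mathcal{C}([0,T],H^{k'})$ for every $k' < k$ and weak-$*$ in $L^\infty H^k$; passing to the limit in (\ref{Eqn:CauchyPb2}) gives a solution $u$, and $u \in \mathcal{C}([0,T],H^k)$ follows from weak continuity plus convergence of the norms. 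Continuity of $t\mapsto u(t)$ into $H^k \hookrightarrow \mathcal{C}^3$ and of $f\mapsto\delta(\vec f,t)$, $f\mapsto\inf_x Q(f,t)$ then gives $u(t)\in\mathcal{P}_{+,k}(t)$ on $[0,T]$ after shrinking $T$. The parabolic smoothing is a bootstrap off the dissipation: because $u \in L^2([0,T],H^{k+1})$, some $u(t_1)\in H^{k+1}$ with $t_1 \in (0,T)$; $\mathcal{P}_{+,k+1}(t_1)$-membership is inherited, so one restarts the construction from $(t_1,u(t_1))$ at regularity $k+1$, or equivalently propagates the time-weighted energies $t\mapsto t^{j}\|u(t)\|_{H^{k+j}}^2$, whose derivatives again close thanks to the $-c\|\p_x^{\cdot+1}u\|_{L^2}^2$ term. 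Iterating over $j$ gives $u\in\mathcal{C}((0,T],H^{k+j})$ for all $j$, hence $u(t)\in\mathcal{C}^\infty(\T)$ for $t>0$; differentiating the equation promotes this to $u\in\mathcal{C}^\infty((0,T]\times\T)$.

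For part (2), set $w := u_1 - u_2$; using $F(\vec{u_1},t) - F(\vec{u_2},t) = \sum_p \big( \int_0^1 \p_{\omega_p}F(\vec{u_2}+\theta(\vec{u_1}-\vec{u_2}),t)\,d\theta \big)\p_x^{p+1}w$ shows $w$ solves a linear third-order equation whose leading and effective second-order coefficients are $\mathcal{C}^0$-close to those of $u_1$ (both $u_q(t)\in\mathcal{P}_{+,k}(t)$), so the same gauge yields a dissipative term; running the energy estimate for $w$ at a regularity level low enough for those coefficients to be admissible and applying Gronwall with $w(0)=0$ forces $w\equiv 0$. For part (3) I use the Bona--Smith method. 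Mollify the data at scale $\delta$: $\phi \mapsto \phi^\delta$. First, energy estimates at levels $k$ and $k+1$ together with $\|\phi-\phi^\delta\|_{H^k}\to 0$ and $\|\phi^\delta\|_{H^{k+1}} \lesssim \delta^{-1}\|\phi\|_{H^k}$ give $\sup_t\|u^\delta(t) - u(t)\|_{H^k}\to 0$ as $\delta\to 0$, uniformly over data in a fixed ball. Second, for a \emph{fixed} $\delta$ the difference estimate for $u_n^\delta - u_\infty^\delta$ — which loses one derivative, but that is affordable between smooth solutions — closes in $H^k$ with right-hand side controlled by $\|\phi_n^\delta - \phi_\infty^\delta\|_{H^{k+1}} \lesssim \delta^{-1}\|\phi_n-\phi_\infty\|_{H^k}\to 0$. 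Since $\phi_n\to\phi_\infty$ in $H^k$, the quantities $\|\phi_n\|_{H^{k_0}}$, $\tilde\delta(\overrightarrow{\phi_n})$, $\tilde\delta^{'}(\phi_n)$ converge, so there is $N$ beyond which all $\phi_n\in\widetilde{\mathcal{P}}_{+,k}$ with uniform constants and a common existence time $\breve T$; the triangle inequality $\|u_n-u_\infty\|_{H^k}\leq \|u_n-u_n^\delta\|_{H^k}+\|u_n^\delta-u_\infty^\delta\|_{H^k}+\|u_\infty^\delta-u_\infty\|_{H^k}$ with $\delta=\delta(n)\to 0$ chosen slowly then gives $\sup_{[0,\breve T]}\|u_n(t)-u_\infty(t)\|_{H^k}\to 0$.

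The crux throughout is the quasilinear derivative loss. Its first incarnation — the one-derivative-too-many term in the top-order energy estimate — is what forces the gauge transformation and the weighted energy with the coefficients $C_{l_0}^{j}$; its survival as the mean $Q(u,t)$ is precisely why the dichotomy $\inf_x Q > 0$ versus $\sup_x Q < 0$ determines the direction of smoothing, and the $Q>0$ dissipation is exactly what is needed to swallow the remaining borderline terms. Its second incarnation — the impossibility of comparing two $H^k$ solutions directly in part (3) — is what necessitates the Bona--Smith scheme, and the delicate point there is making the scale $\delta(n)$, the existence time $\breve T$, and all the constants uniform along the converging data sequence while respecting the $\mathcal{P}_{+,k}(t)$ constraints. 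The regularization limit and the parabolic bootstrap are, by comparison, routine once the gauged dissipative energy inequality is in hand.
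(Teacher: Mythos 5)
Your overall strategy coincides with the paper's: the gauge built from an antiderivative of the mean-zero part of $P/\partial_{\omega_3}F$ turns the otherwise unabsorbable second-order coefficient into $Q(u,t)$, the sign condition $\inf_x Q>0$ supplies forward-in-time dissipation, existence goes through a fourth-order parabolic regularization, uniqueness comes from the difference energy estimate run at a lower regularity level, smoothing from the dissipative term plus restarting at successively higher indices, and continuous dependence from a Bona--Smith scheme. (Incidentally, the combinatorial coefficients $C_{l_0}^{j}$, the Osgood iteration and the $\loglog$ you invoke play no role here: a plain Gronwall argument on the gauged energy is all that is used or needed.)

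The genuine gap is in your limit passage for existence. After extracting an Aubin--Lions subsequence you assert that $u\in\mathcal{C}([0,T],H^{k})$ ``follows from weak continuity plus convergence of the norms'', but convergence of the top-order norms is precisely the point at issue: weak-$*$ convergence in $L^\infty_t H^{k}$ only gives lower semicontinuity, and for a quasilinear problem with derivative loss there is no cheap reason why $\|u_\e(t)\|_{H^{k}}\to\|u(t)\|_{H^{k}}$, nor why the limit is strongly right-continuous at $t=0$ in $H^{k}$. Supplying exactly this is what the paper's construction is for: the data are mollified along with the regularization ($u_m$ solves the problem with $\e=\frac{1}{m}$ and datum $J_{\frac{1}{m},k}\phi$), and the difference energy estimates of Proposition \ref{prop:EnergyDiffEst} --- which play $\|J_{\e,k}\phi\|_{H^{k+m}}\lesssim \e^{-m/k}\|\phi\|_{H^{k}}$ off against the $\e^{2}$ prefactors --- make $\{u_m\}$ Cauchy in $\mathcal{C}([0,T],H^{k})$, so strong convergence and top-norm continuity come for free. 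You do possess the needed machinery in your part (3) (your first Bona--Smith step, applied with the fixed datum $\phi$, would give the same conclusion), but as written part (1) does not close; the alternative repair --- right-continuity at $t=0$ of the gauged energy converted into strong $H^{k}$ continuity, combined with the smoothing for $t>0$ --- is likewise absent from the proposal. Two smaller points: the $\e$-regularization does not merely ``strengthen the dissipation'' --- since $\partial_t\Phi_{k}(u,t)$ contains $\partial_t u=-\e\partial_x^4u+\dots$, the gauged equation acquires $\e$-weighted terms of order up to four whose absorption into $\e\|D^{k-4}v\|_{L^2}^2$ must be checked (this is the origin of the coefficients $b_j$ and $c$ in the paper's Proposition \ref{prop:linearv}); and in the Bona--Smith step $\|J_{\delta}\phi-\phi\|_{H^{k}}\to0$ holds uniformly on compact (e.g.\ convergent) families of data, not on a ``fixed ball'', and it is the former that the uniformity in $n$ actually requires.
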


\begin{rem}
The proof of the local existence part of the theorem shows that $T$ can be chosen as a continuous function that decreases as $\tilde{\delta}(\vec{\phi})$ decreases, that decreases as $\tilde{\delta}^{'}(\phi)$ decreases, and that decreases as $\| \phi \|_{H^{k_{0}}}$ increases. \\
\label{Rem:Exist}
\end{rem}

\begin{rem}

Roughly speaking, the first part of the theorem says that if $ \inf \limits_{x \in \T} Q \left( u(0),0 \right)(x) > 0 $ (resp.
$ \sup \limits_{x \in \T} Q \left( u(0),0 \right)(x) < 0$ ) then the diffusive part of the equation dominates in the forward (resp. backward) direction, which creates a parabolic smoothing effect for the solution (it is infinitely smooth) in the forward (resp. backward) direction.
Assume that $ \phi \in \widetilde{\mathcal{P}}_{+,k}$  or that $ \phi \in \widetilde{\mathcal{P}}_{-,k}$. Observe that $ \p_{\omega _{3}}  F \left( \vec{\phi}, 0 \right) \in \mathcal{C}^{0}$, the space of continuous functions on
$\T$: this follows from Lemma \ref{lem:EstDerivF} and the well-known Sobolev embedding $ H^{m} \hookrightarrow \mathcal{C}^{0}$, with $ m  > \frac{1}{2}$.  Hence
the function $\p_{\omega_{3}} F (\vec{\phi},0)$ has a constant sign. Hence $Q(\phi,0) = \left| \partial_{\omega_{3}} F(\vec{\phi},0) \right| \breve{\delta}(\phi) $ with

\EQQARR{
\breve{\delta}(f) :=   \left[ \frac {P(f,0)} { \left| \partial_{\omega_{3}} F ( \vec{f},0) \right|} \right]_{ave} \cdot
 }
If $\phi \in \widetilde{\mathcal{P}}_{+,k}$ ( resp. $\phi \in \widetilde{\mathcal{P}}_{-,k}$ ) then
$ \inf \limits_{x \in \T} Q(\phi,0)(x) $ (resp. $ \sup \limits_{x \in \T} Q(\phi,0)(x) $ ) is equal to $\tilde{\delta} (\vec{\phi}) \breve{\delta}(\phi)$ .
Hence if $\phi \in \widetilde{\mathcal{P}}_{+,k}$ (resp. $\phi \in \widetilde{\mathcal{P}}_{-,k} $) then  $ \breve{\delta}(\phi) > 0 $  (resp. $ \breve{\delta}(\phi) < 0 $) and $\breve{\delta}(\phi) $ (resp. $ -\breve{\delta} (\phi) $) measures the strength of the diffusion
in the forward (resp. backward) direction, holding $ \tilde{\delta}(\vec{\phi})$ constant: see e.g \cite{Akh2} for discussions related to this number,
for particular functions $F$ of (\ref{Eqn:CauchyPb}), such as $F(\vec{u},t) := a(x,t) \partial^{3}_{x} u + b(x,t) \partial^{2}_{x} u
+ c(x,t) \partial_{x} u + d(x,t) u + e(x,t)$.

\label{Rem:Qother}
\end{rem}

%\begin{rem}

%Assume that $ \phi \in \widetilde{\mathcal{P}}_{+,k}$  or that $ \phi \in \widetilde{\mathcal{P}}_{-,k}$. Recall from (\ref{Rem:Qother}) that
%the function $\p_{\omega_{3}} F ( \vec{\phi},0)$ has a constant sign. This implies that there are two and only two scenarios: either
%$ \inf \limits_{x \in \T} \p_{\omega_{3}} F ( \vec{\phi},0)(x) > 0$ or $ \sup \limits_{x \in \T} \p_{\omega_{3}} F ( \vec{\phi},0)(x) < 0$. Observe also that

%\EQQARR{
%\phi \in \widetilde{\mathcal{P}}_{+,k} & \Leftrightarrow \left( \inf \limits_{x \in \T} \p_{\omega_{3}} F \left( \overrightarrow{\phi} ,0 \right)(x) > 0  \; \text{or} \;
%\sup \limits_{x \in \T} \p_{\omega_{3}} F \left( \overrightarrow{\phi},0  \right)(x) < 0 \right) \; \text{and} \;  \int_{\T} P(\phi,0) \; dx > 0 ; \\
%\phi \in \widetilde{\mathcal{P}}_{-,k} & \Leftrightarrow \left( \inf \limits_{x \in \T} \p_{\omega_{3}} F \left( \overrightarrow{\phi},0 \right)(x) > 0  \; \text{or} \;
%\sup \limits_{x \in \T} \p_{\omega_{3}} F \left( \overrightarrow{\phi},0 \right)(x) < 0 \right) \; \text{and} \;  \int_{\T} P(\phi,0) \; dx < 0  \cdot
%}
%Hence, in view of these observations and the theorem above we can draw conclusions regarding the local existence, parabolic smoothing, uniqueness, and continuous
%dependence on initial data of the solutions of (\ref{Eqn:CauchyPb}) from the positivity of $  \inf \limits_{x \in \T} \p_{\omega_{3}} F ( \vec{\phi},0)(x) $, the negativity of
%$ \sup \limits_{x \in \T} \p_{\omega_{3}} F (\vec{\phi},0)(x) $ , and the sign of  $ \int_{\T} P(\phi,0) \; dx $.

%\label{Rem:AboutThm}
%\end{rem}

\begin{rem}
Consider $ \phi_{n} $ and $\phi_{\infty}$  that are defined in the continuous dependence on initial data part of the theorem.
We see from Appendix with $(f,g):= (\phi_{n},\phi_{\infty})$ that $\tilde{\delta}(\overrightarrow{\phi_{n}}) \rightarrow  \tilde{\delta}(\overrightarrow{\phi_{\infty}}) $
and $\tilde{\delta}^{'}(\phi_{n}) \rightarrow \tilde{\delta}^{'}(\phi_{\infty})$ as $ n \rightarrow \infty $, and that $\phi_{n} \in \widetilde{\mathcal{P}}_{+,k}$
(resp. $\phi_{n} \in \widetilde{\mathcal{P}}_{-,k}$ ) if $\phi_{\infty} \in \widetilde{\mathcal{P}}_{+,k} $
(resp. $\phi_{\infty} \in \widetilde{\mathcal{P}}_{-,k} $ ) for $n$ large. Hence we see from the local existence part of the theorem, the uniqueness part of the theorem, Remarks \ref{Rem:Exist} and \ref{Rem:Qother},  that the first statement of the continuous dependence on initial data part of the theorem holds. It remains to prove the second statement: see Section \ref{Sec:ProofThm}.

\label{Rem:Cont}
\end{rem}

\begin{rem}
The proof of the local existence part of the theorem also shows that if $ \phi \in \widetilde{\mathcal{P}}_{+,k}$ (resp. $ \phi \in \widetilde{\mathcal{P}}_{-,k}$) holds, then
$u(t) \in \mathcal{P}_{+,k}(t)$ (resp. $ u(t) \in \mathcal{P}_{-,k}(t)$ ) holds for $ t \in [0,T] $ (resp. $ t \in [-T,0] $), but also that $\delta \left( \overrightarrow{u(t)},t  \right) \gtrsim \tilde{\delta}(\vec{\phi})$ and that $\delta^{'} (u(t),t) \gtrsim \tilde{\delta}^{'}(\phi) $  for $ t \in [0,T]$ (resp. $ t \in [-T,0]$).
\end{rem}
We now state a corollary of Theorem \ref{Thm:MainDiff} that is a non-existence result for equations  (\ref{Eqn:CauchyPb2})
that are of parabolic resonance type:

\begin{cor}
Assume that (\ref{Eqn:CauchyPb2}) is of parabolic resonance type. Let $k > \frac{19}{2}$. Assume that $ \phi \in \widetilde{\mathcal{P}}_{+,k}$
$ \left( \; \text{resp.}  \; \phi \in \widetilde{\mathcal{P}}_{-,k} \; \right) $ and
that $ \phi \notin \mathcal{C}^{\infty}(\T) $. Then for any $T > 0$, there does not exist any solution $ u \in \mathcal{C} \left( [-T,0], H^{k} \right)$
$ \left( \; \text{resp.} \; u \in \mathcal{C} \left( [0,T], H^{k} \right) \; \right) $ of (\ref{Eqn:CauchyPb2}) with $ u(0) := \phi $.
\label{Cor:MainDiff}
\end{cor}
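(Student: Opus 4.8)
The plan is to argue by contradiction: from the hypothetical backward solution I will manufacture a \emph{forward} solution to which the parabolic smoothing of Theorem~\ref{Thm:MainDiff}(1) applies, and then glue the two by the uniqueness part. It suffices to treat $\phi\in\widetilde{\mathcal P}_{+,k}$; the case $\phi\in\widetilde{\mathcal P}_{-,k}$ follows from the change of variables $t\mapsto -t$, $F\mapsto -F(\cdot,-\cdot)$, which preserves the type of (\ref{Eqn:CauchyPb2}), exchanges $\widetilde{\mathcal P}_{+,k}$ with $\widetilde{\mathcal P}_{-,k}$, and exchanges forward and backward intervals. So suppose $u\in\mathcal C([-T,0],H^{k})$ solves (\ref{Eqn:CauchyPb2}) with $u(0)=\phi\in\widetilde{\mathcal P}_{+,k}$ and $\phi\notin\mathcal C^{\infty}(\T)$.

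First I would show $u(t)\in\mathcal P_{+,k}(t)$ for $t$ in a left-neighbourhood of $0$. Since $k>\tfrac{19}{2}$, the first four components of $\overrightarrow{u(t)}$ depend continuously on $t\in[-T,0]$ in $\mathcal C^{0}(\T)$; combined with the continuity properties of $\de(\overrightarrow{\cdot},\cdot)$ and $Q(\cdot,\cdot)$ recorded in Section~\ref{Sec:App}, this makes $t\mapsto\de(\overrightarrow{u(t)},t)$ and $t\mapsto\inf_{x\in\T}Q(u(t),t)(x)$ continuous on $[-T,0]$. Both are positive at $t=0$, since $\de(\vec\phi,0)=\tilde\de(\vec\phi)>0$ and $\inf_{x}Q(\phi,0)(x)>0$ because $\phi\in\widetilde{\mathcal P}_{+,k}$; hence there is $\eps>0$ with $u(t)\in\mathcal P_{+,k}(t)$ for all $t\in[-\eps,0]$. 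Note also that $\tilde\de^{'}(\phi)=\inf_{x}|Q(\phi,0)(x)|=\inf_{x}Q(\phi,0)(x)>0$.

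Next, for $s\in(-\eps,0)$ I would apply Theorem~\ref{Thm:MainDiff}(1) to the time-recentred nonlinearity $\bar F(\vec\om,t):=F(\vec\om,t+s)$ — which is again of parabolic resonance type and satisfies $\de_{\bar F}(\vec g,t)=\de(\vec g,t+s)$, $Q_{\bar F}(g,t)=Q(g,t+s)$ — with data $u(s)$, which lies in the set $\widetilde{\mathcal P}_{+,k}$ associated to $\bar F$ precisely because $u(s)\in\mathcal P_{+,k}(s)$. This yields $T_{s}>0$ and $\bar u\in\mathcal C([0,T_{s}],H^{k})$ solving the $\bar F$-problem with $\bar u(0)=u(s)$, staying in the relevant positive set on $[0,T_{s}]$, and smooth: $\bar u\in\mathcal C^{\infty}((0,T_{s}]\times\T)$. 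By Remark~\ref{Rem:Exist}, $T_{s}$ is bounded below by a continuous, monotone function of the triple $(\|u(s)\|_{H^{k_{0}}},\de(\overrightarrow{u(s)},s),\de^{'}(u(s),s))$, which converges as $s\to 0^{-}$ to $(\|\phi\|_{H^{k_{0}}},\tilde\de(\vec\phi),\tilde\de^{'}(\phi))$ with all entries positive; so $T_{s}\geq T_{*}$ for some $T_{*}>0$ and all $s$ near $0$. Shrinking $\eps$ if necessary, fix $s\in(-\min(\eps,T_{*}),0)$, so that $-s\in(0,T_{s}]$, and set $\check u(t):=\bar u(t-s)$, a solution of (\ref{Eqn:CauchyPb2}) on $[s,s+T_{s}]$ with $\check u(s)=u(s)$ and $\check u\in\mathcal C^{\infty}((s,s+T_{s}]\times\T)$.

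Finally, $t\mapsto u(t+s)$ and $t\mapsto\bar u(t)$ are two solutions of the $\bar F$-problem on the forward interval $[0,-s]$ that agree at $t=0$ and lie, throughout that interval, in the positive set required by Theorem~\ref{Thm:MainDiff}(2) — for the first because $u(t+s)\in\mathcal P_{+,k}(t+s)$ for $t\in[0,-s]$ (as $[s,0]\subseteq[-\eps,0]$), for the second because $[0,-s]\subseteq[0,T_{s}]$ — so the uniqueness part gives $u(\cdot+s)\equiv\bar u$ on $[0,-s]$, i.e. $u\equiv\check u$ on $[s,0]$. In particular $\phi=u(0)=\check u(0)$, and since $0\in(s,s+T_{s}]$ we conclude $\phi=\check u(0,\cdot)\in\mathcal C^{\infty}(\T)$, contradicting $\phi\notin\mathcal C^{\infty}(\T)$. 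I expect the main obstacle to be the continuity step of the second paragraph — establishing that along an \emph{arbitrary} $\mathcal C([-T,0],H^{k})$ solution (not one produced by the theorem) the non-degeneracy quantities $\de(\overrightarrow{u(t)},t)$ and $Q(u(t),t)$ vary continuously, so that the open conditions defining $\mathcal P_{+,k}$ persist on a one-sided neighbourhood of $0$ — together with the routine bookkeeping of recentring the $t=0$-anchored statements of Theorem~\ref{Thm:MainDiff} at $t=s$ via $\bar F$.
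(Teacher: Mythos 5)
Your proposal is correct and follows essentially the same route as the paper: shift slightly backward to a time $s<0$ close to $0$, use the Appendix continuity estimates to check that $u(s)$ lies in the forward-positive class for the time-recentred nonlinearity, apply the local existence and parabolic smoothing of Theorem \ref{Thm:MainDiff} from time $s$ with existence time exceeding $|s|$ (via Remark \ref{Rem:Exist}), and invoke uniqueness to identify the smooth forward solution with $u$, forcing $\phi\in\mathcal{C}^{\infty}(\T)$, a contradiction. Your explicit verification that both solutions stay in $\mathcal{P}_{+,k}(t)$ before applying the uniqueness part is a detail the paper leaves implicit, but it is the same argument.
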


\begin{rem}
Corollary \ref{Cor:MainDiff} can be viewed as an ill-posedness result for equations (\ref{Eqn:CauchyPb2})
that are of parabolic resonance type. It shows that these equations are by nature parabolic: in other words, the parabolic
part of the equation dominates the dispersive part.
\end{rem}
We now state the second theorem of this paper:

\begin{thm}

Assume that (\ref{Eqn:CauchyPb2}) is of non-parabolic resonance type. Let $ k \geq k_{0} > \frac{19}{2}$. Then the following properties hold:

\begin{enumerate}

\item $(Local \; existence)$ Let $ \phi \in \widetilde{\mathcal{P}}_{k} $. Then one can find $ T := T \left( \| \phi \|_{H^{k_{0}}}, \tilde{\delta}(\vec{\phi}) \right)  > 0 $ for which there exists a solution $ u  \in \mathcal{C} \left( [-T,T], H^{k}  \right) $ of (\ref{Eqn:CauchyPb2}) with $ u(0) := \phi$ such that $ u(t) \in \mathcal{P}_{k}(t) $ for $ t \in [-T,T] $.

\item [$$]

\item $(Uniqueness)$ Assume that for some $\breve{T} > 0$  there exist $u_{1},u_{2} \in \mathcal{C} \left( [-\breve{T}, \breve{T}], H^{k} \right)$  solutions of
    (\ref{Eqn:CauchyPb2}) with $u_{1}(0) = u_{2}(0)$ such that $u_{q}(t) \in \mathcal{P}_{k}(t)$ holds for $q \in \{ 1,2 \} $ and for $ t \in [-\breve{T}, \breve{T}]$. Then $u_{1}(t) = u_{2}(t)$ on $ \left[ -\breve{T}, \breve{T} \right]$.

\item [$$]

\item $(Continuous \; dependence \; on \;  initial \; data)$ Let $ \phi_{\infty} \in \widetilde{\mathcal{P}}_{k} $. Let $\phi_{n} \in H^{k}$ be such that  $ \phi_{n} \rightarrow \phi_{\infty} $ in $ H^{k} $ as $ n \rightarrow \infty $. Then there exists  $N \in \mathbb{N}$ such that for $ \infty \geq n \geq N$ one can find $ \breve{T} := \breve{T} \left( \| \phi_{\infty} \|_{H^{k_{0}}}, \tilde{\delta}(\overrightarrow{\phi_{\infty}}) \right) > 0 $ for which
there is a unique solution $u_{n} \in \mathcal{C} \left( [-\breve{T}, \breve{T}], H^{k} \right) $ of (\ref{Eqn:CauchyPb2}) with data $ u_{n}(0) := \phi_{n}$
that satisfies  $u_{n}(t) \in \mathcal{P}_{k}(t)$ for $ t \in [-\breve{T}, \breve{T}]$. Moreover $ \sup_{t \in [-\breve{T},\breve{T}]} \| u_{n}(t) - u_{\infty}(t) \|_{H^{k}} \rightarrow 0 $ as $ n \rightarrow \infty $.

\end{enumerate}

\label{Thm:MainDisp}
\end{thm}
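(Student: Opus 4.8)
The plan is to repeat, with minor changes, the proof of Theorem \ref{Thm:MainDiff}, exploiting that the non-parabolic resonance hypothesis removes the term that there forced a choice of time direction. The first point is that if (\ref{Eqn:CauchyPb2}) is of non-parabolic resonance type then $Q(f,t)\equiv0$ for every $f\in\widetilde{\mathcal{P}}_{k}$ and every $t\in\mathbb{R}$: indeed $Q(f,t)=\partial_{\omega_{3}}F(\vec{f},t)\big[P(f,t)/\partial_{\omega_{3}}F(\vec{f},t)\big]_{ave}$, the bracket vanishes for $f\in\mathcal{C}^{\infty}(\T)$ by hypothesis, and then for $f\in H^{k}$ by the density of $\mathcal{C}^{\infty}(\T)$ in $H^{k}$ together with the continuity and approximation facts of Section \ref{Sec:App}. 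Hence $\mathcal{P}_{k}(t)=\{f\in H^{k}:\delta(\vec{f},t)>0\}$ is invariant under time reversal, which is why the whole interval $[-T,T]$ may be handled with no sign condition on $\phi$ and why no parabolic smoothing is expected or claimed here.

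For local existence I would proceed in three steps. Step 1 (gauge transformation and modified energy): applying $\langle\partial_{x}\rangle^{k}$ to (\ref{Eqn:CauchyPb2}), the quantity $v:=\langle\partial_{x}\rangle^{k}u$ satisfies, to leading order, a linear third-order equation $\partial_{t}v=\partial_{\omega_{3}}F(\vec{u},t)\,\partial_{x}^{3}v+b(x,t)\,\partial_{x}^{2}v+(\text{lower order})$, and the integrations by parts in the identity for $\frac{d}{dt}\|v\|_{L^{2}}^{2}$ leave behind a term $\int_{\T}c(x,t)\,(\partial_{x}^{k}u)^{2}$ that is not directly controlled, with $c$ built from $\partial_{\omega_{3}}F(\vec{u},t)$, $b$ and their $x$-derivatives. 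I would cancel it by working with $w:=e^{\Phi(x,t)}\langle\partial_{x}\rangle^{k}u$, with $\partial_{x}\Phi$ chosen as in \cite{Hayashi,HayashiOzawa1,HayashiOzawa2} so that the $\partial_{x}^{2}$-coefficient in the equation for $w$ vanishes; on the torus $\Phi(\cdot,t)$ must be $2\pi$-periodic, and the relevant antiderivative does close up, precisely because $\big[P(f,t)/\partial_{\omega_{3}}F(\vec{f},t)\big]_{ave}=0$ in the non-parabolic resonance case. Step 2 (a priori bound): with this gauge and a modified energy $E_{k}(u)$ equivalent to $\|u\|_{H^{k}}^{2}$ as long as $\delta(\vec{u},t)$ stays bounded below, one shows $\big|\frac{d}{dt}E_{k}(u)\big|\lesssim C\big(\|u\|_{H^{k_{0}}},\tilde{\delta}(\vec{\phi})^{-1}\big)\,E_{k}(u)$ — with no sign-definite term surviving, in contrast with the parabolic case — so Gronwall's inequality gives an $H^{k}$ bound on an interval $[-T,T]$ with $T$ depending only on $\|\phi\|_{H^{k_{0}}}$ and $\tilde{\delta}(\vec{\phi})$; in parallel, since $\partial_{t}\partial_{\omega_{3}}F(\vec{u},t)$ is bounded in $L^{\infty}$ by $C(\|u\|_{H^{k_{0}}})$ through Lemma \ref{lem:EstDerivF} and Sobolev embedding, a continuity argument propagates $\delta(\overrightarrow{u(t)},t)\gtrsim\tilde{\delta}(\vec{\phi})$, so $u(t)\in\mathcal{P}_{k}(t)$ throughout $[-T,T]$. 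Step 3 (construction): one builds solutions by regularization — adding an artificial diffusion $-\varepsilon\partial_{x}^{4}u$, or mollifying the nonlinearity — obtains smooth solutions on an interval independent of $\varepsilon$ (the regularizing term only helping in the energy identity), and passes to the limit using the uniform $H^{k}$ bound and a compactness argument, getting $u\in L^{\infty}([-T,T],H^{k})\cap\mathcal{C}([-T,T],H^{k-1})$ solving (\ref{Eqn:CauchyPb2}); the upgrade to $u\in\mathcal{C}([-T,T],H^{k})$ comes from the Bona--Smith argument \cite{BonaS}, comparing $u$ with the solution issuing from mollified data and using continuity of $t\mapsto E_{k}(u(t))$.

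Uniqueness is proved by the same gauge mechanism applied to $w:=u_{1}-u_{2}$: the fundamental theorem of calculus turns $F(\vec{u}_{1},t)-F(\vec{u}_{2},t)$ into a linear third-order operator in $w$ whose coefficients are smooth functions of $u_{1},u_{2}$ and their first three $x$-derivatives, hence of class $\mathcal{C}^{2}$ since $k-3>\frac{19}{2}-3$, and an energy estimate in a gauged norm of modest regularity — the gauge being available on $\T$ again by the non-parabolic resonance condition together with the approximation arguments of Section \ref{Sec:App} — forces $w\equiv0$ on $[-\breve{T},\breve{T}]$ by Gronwall. Continuous dependence then combines the uniform existence time (with $\breve{T}$ continuous in $\|\phi_{\infty}\|_{H^{k_{0}}}$ and $\tilde{\delta}(\overrightarrow{\phi_{\infty}})$, exactly as in the proof of Theorem \ref{Thm:MainDiff}), uniqueness, and one further Bona--Smith comparison to promote $\mathcal{C}([-\breve{T},\breve{T}],H^{k-1})$ convergence of $u_{n}$ to $\mathcal{C}([-\breve{T},\breve{T}],H^{k})$ convergence, along the lines of Section \ref{Sec:ProofThm}. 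The main obstacle is Steps 1--2: producing a gauge transformation that is genuinely periodic on $\T$ — this is exactly where the non-parabolic resonance hypothesis enters — and using it to close the $H^{k}$ energy estimate while controlling the apparent loss of derivatives coming from the fully nonlinear term $F(\vec{u},t)$; once that a priori estimate is secured, existence, uniqueness and continuous dependence follow from now-standard regularization and Bona--Smith technology. The threshold $k_{0}>\frac{19}{2}$ is what leaves the room needed to differentiate the equation $k$ times, to keep the coefficients of the gauged equations smooth enough, and to run the Bona--Smith comparisons.
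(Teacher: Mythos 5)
Your outline is essentially the route the paper takes: the paper gives no separate proof of Theorem \ref{Thm:MainDisp} at all, but states (in the ``Assumption'' paragraph after the theorems) that it is a straightforward modification of the proof of Theorem \ref{Thm:MainDiff}, the only structural change being that in the non-parabolic resonance case $Q\equiv 0$ on the relevant class --- your density argument for this is precisely the paper's footnote to the definition of non-parabolic resonance --- so the sign condition on $Q$ and the choice of a time direction disappear from the crucial identity (\ref{Eqn:CrucEq}) and from the gauged energy estimates. Your gauge $w=e^{\Phi}\langle\partial_x\rangle^{k}u$ is equivalent to the paper's $v=\Phi_{k'}(u(t),t)\,\partial_x^{6}u(t)$ measured in $H^{k'-6}$: the power $\left|\partial_{\omega_3}F\right|^{\frac{2k'-15}{6}}$ in (\ref{Eqn:DefPhip}) is the exponential of the logarithmic ($\partial_x\log|a_3|$) part of your primitive, which has zero average on $\T$ automatically, while the exponential factor carries the $P/\partial_{\omega_3}F$ part, whose average vanishes by the non-parabolic resonance hypothesis; hence, as you say, no average needs to be subtracted and the gauge is periodic. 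Your Step 2 (Gronwall plus propagation of $\delta(\overrightarrow{u(t)},t)\gtrsim\tilde\delta(\vec\phi)$ via the Appendix-type estimates), the $\epsilon_1=\epsilon_2=0$ difference estimate at regularity $k-3$ for uniqueness, and the Bona--Smith scheme for construction and continuous dependence coincide with Sections \ref{Sec:RegulLocWell}--\ref{Sec:ProofThm}.

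The one step that would fail as literally written is the construction on the backward half-interval. The regularized equation $\partial_t u+\epsilon\,\partial_x^{4}u=F(\vec u,t)$ is solvable only forward in time (the smoothing operator $e^{-\epsilon t\partial_x^{4}}$ of Proposition \ref{Prop:Integral} exists only for $t\ge 0$), so ``adding an artificial diffusion \dots obtains smooth solutions on an interval independent of $\varepsilon$'' cannot by itself produce anything on $[-T,0]$; the viscosity does more than ``help in the energy identity'' --- it fixes a time direction, exactly as in the parabolic-resonance case. The repair is the paper's explicit WLOG reduction: set $v(t):=u(-t)$, which satisfies (\ref{Eqn:CauchyPb2}) with $\tilde F(\vec w,t):=-F(\vec w,-t)$; since $\tilde F$ is again smooth, of non-parabolic resonance type, and leaves $\tilde\delta(\vec\phi)$ unchanged, the forward construction applied to $\tilde F$ gives the solution on $[-T,0]$, and the two halves are glued at $t=0$ (uniqueness on each half makes the glued function the claimed solution on $[-T,T]$). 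With that amendment your proposal matches the paper's intended argument.
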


\begin{rem}
The proof of the local existence part of the theorem shows that $T$ can be chosen as a continuous function that decreases as $\tilde{\delta}(\vec{\phi})$ decreases, and
that decreases as $\| \phi \|_{H^{k_{0}}}$ increases. \\
\label{Rem:ExistDisp}
\end{rem}

\begin{rem}
Roughly speaking, the first part of the theorem says that the dispersive part of the equation dominates. Moreover the strength of the dispersion increases as the amplitude of $\tilde{\delta}(\vec{\phi})$ increases.
\label{Rem:QotherDisp}
\end{rem}

\begin{rem}
Consider $ \phi_{n} $ and $\phi_{\infty}$  that are defined in the continuous dependence on initial data part of the theorem.
We see from Appendix with $(f,g):= (\phi_{n},\phi_{\infty})$ that  $ \tilde{\delta} \left( \overrightarrow{\phi_{n}} \right) \rightarrow  \tilde{\delta} \left(\overrightarrow{\phi_{\infty}} \right) $ as $ n \rightarrow \infty $. Hence we see from the local existence part of the theorem, the uniqueness part of the theorem, and Remark \ref{Rem:ExistDisp} that the first statement of the continuous dependence on initial data part of the theorem holds. It remains to prove the second statement of the theorem.
\label{Rem:Cont}
\end{rem}

\begin{rem}
The proof of the local existence part of the theorem also shows that if $ \phi \in \widetilde{\mathcal{P}}_{k}$, then
$u(t) \in \mathcal{P}_{k}(t)$ holds for $ t \in [-T,T] $, but also that $\delta \left( \overrightarrow{u(t)}, t \right) \gtrsim \tilde{\delta}(\vec{\phi})$ for $ t \in [-T,T]$.
\end{rem}

\underline{Assumption}:  Observe that if $v(t) := u(-t)$ then it satisfies (\ref{Eqn:CauchyPb2}), replacing $F$ with $\tilde{F}$ defined by
$\tilde{F}(\vec{w},t) := -F (\vec{w},-t) $. Hence we may WLOG assume that $\phi \in \widetilde{\mathcal{P}}_{+,k}$ in the ``Local existence and parabolic smoothing'' part of Theorem \ref{Thm:MainDiff}, that for some $\breve{T} > 0 $ there exist $u_{1},u_{2} \in \mathcal{C} \left( [0, \breve{T}], H^{k} \right)$  solutions of (\ref{Eqn:CauchyPb2}) with $u_{1}(0) = u_{2}(0)$ such that $u_{q}(t) \in \mathcal{P}_{+,k}(t)$ holds for $q \in \{ 1,2 \} $ and for $ t \in [0, \breve{T}]$ in the ``Uniqueness'' part of Theorem \ref{Thm:MainDiff}, and that $\phi_{\infty} \in \widetilde{\mathcal{P}}_{+,k}$ in the ``Continuous dependence on initial data '' part of Theorem \ref{Thm:MainDiff}. We may also WLOG restrict our analysis to positive times in the proof of Theorem \ref{Thm:MainDisp}. Actually the reader can check the proof of  Theorem \ref{Thm:MainDisp} is a straightforward modification of that of Theorem \ref{Thm:MainDiff}: therefore it is omitted. \\
\\
We now discuss the main interest of this paper. Our goal is to find conditions as unrestricted as possible on smooth functions $F$ and on the data $u(0)$ for which
local existence, uniqueness, and continuous dependence on initial data hold for (\ref{Eqn:CauchyPb2}). Our second objective is to find properties of the solutions of
(\ref{Eqn:CauchyPb2}). In particular we would like to find conditions for which the equation shares similar properties as a diffusive one (local existence and parabolic smoothing in one direction, ill-posedness in the other direction) or a dispersive one (local existence in both directions). Observe that $F$ is not necessarily linear, which makes the linear analysis (in particular the Fourier analysis)
much less efficient. Observe also that $F$ depends on the first three derivatives of $u$, which makes our analysis delicate since the R.H.S of (\ref{Eqn:CauchyPb2})
seems (\text{a priori}) to have less regularity than the L.H.S of (\ref{Eqn:CauchyPb2}).  \\
In order to overcome the difficulties it is natural to try at first sight to prove energy estimates for a candidate of a solution of (\ref{Eqn:CauchyPb}) and energy estimates for differences of candidates of solutions of (\ref{Eqn:CauchyPb}): the goal of these estimates is, roughly speaking, to prove that for some $T > 0$ small enough at least, some relevant norms (such as the $H^{k}-$ norm) of these candidates lying in an appropriate space (such as $\mathcal{C} ([0,T], H^{k})$ ) are \textit{a priori} bounded on
$[0,T]$; then we aim at proving that these bound hold \textit{a posteriori} and that at least one of this candidates is indeed a solution of (\ref{Eqn:CauchyPb})
by a standard argument (such as the fixed point theorem). \\
Unfortunately it seems to be impossible to implement this strategy to our problem without any important modification since the R.H.S of (\ref{Eqn:CauchyPb2}) contains terms that are less regular than $u$. So we consider a problem (\ref{Eqn:Regul}) that approximates well (\ref{Eqn:CauchyPb}) (the rate of approximation is measured by a parameter $\epsilon > 0 $)  and that is regularizing: this is the parabolic regularization step. The regularizing property of (\ref{Eqn:Regul}) allows to construct a solution $u$ on an interval $[0, T_{\epsilon})$: see Section \ref{Sec:RegulLocWell}. Then we aim at proving energy estimates for  $u$. These estimates should allow, among other things, to control relevant norms of $u$ on a small interval $[0,T^{'}]$, with $T^{'} > 0$ that does not depend on $\epsilon$, which is prerequisite
to prove that there is a solution of (\ref{Eqn:CauchyPb2}) by a limit process. In order to derive the energy estimates it is natural to linearize w.r.t the excess of derivatives on the R.H.S of (\ref{Eqn:Regul}) by applying the Leibnitz rule \footnote{see Section \ref{Sec:ProofLemmas}, replacing the function $ (f,t) \rightarrow \Phi_{k'}(f,t)$ with the function $(f,t) \rightarrow 1$ \label{Foot:Repl}. } and then integration by parts to force signed quantities or easy quantities to control to appear \footnote{see Section \ref{Sec:EnergyEst}, with the substitution indicated in Footnote \ref{Foot:Repl}.}. It occurs that this strategy works provided that the pointwise-in-space-and-time condition below holds, namely

\begin{equation}
\begin{array}{l}
t \in [0,T^{'}]: \;  \left( k - \frac{15}{2} \right) \partial_{x} a_{3}(t) + a_{2} (t) \geq 0, \; \text{with}  \\
(a_{3}(t),a_{2}(t)) := \left(  \p_{\omega_{3}} F ( \overrightarrow{u(t)},t ), P (u(t),t) \right):
\end{array}
\label{Condition:Init}
\end{equation}
see Section \ref{Sec:EnergyEst} \footnote{In the expression above the values of $a_{3}(t)$ and $a_{2}(t)$ are given by the analysis in Section \ref{Sec:EnergyEst},
with the substitution  indicated in Footnote \ref{Foot:Repl}.}. In particular the condition above should be satisfied at $ t = 0 $ by the data $u(0)$. Ideally we would like this condition not to exist, since we would eventually like to construct a solution of (\ref{Eqn:CauchyPb2}) with a set of data $u(0)$ as large as possible. \\
To this end we use a gauge transformation (see Section \ref{Sec:Notation}): more precisely, for a gauge function $\Phi_{k}$ to be chosen, we work with the function $\Phi_{k} (u(t),t) \partial_{x}^{6} u(t) $ instead of the function $\partial_{x}^{6} u(t)$. We introduce the gauged energy functionals in Section \ref{Sec:EnergyDef} and we prove gauged energy estimates in Section \ref{Sec:EnergyEst}. The gauged energy estimates are supposed to play the same role as the standard energy estimates. For then to be effective, the condition below should be satisfied

\begin{equation}
\begin{array}{l}
t \in [0,T^{'}]: \,  \left( k  -  \frac{15}{2}  \right) \partial_{x} a_{3}(t) + a_{2} (t) \geq 0, \; \text{with}  \\
\left( a_{3}(t), a_{2}(t) \right) :=  \left( \p_{\omega_{3}} F \left( \overrightarrow{u(t)},t \right), 3 \Phi_{k}(u(t),t) \p_{x} \Phi_{k}^{-1}(u(t),t) +
P \left( u(t), t \right) \right):
\end{array}
\label{Condition:Final}
\end{equation}
see again Section \ref{Sec:EnergyEst}. So for the condition (\ref{Condition:Final}) not to exist one would ideally like to choose $\Phi_{k}$ such that
$ t \in [0,T^{'}]: \; \left(  k - \frac{15}{2} \right) \partial_{x} a_{3}(t) + a_{2} (t) = 0 $. This equation can be solved explicitly on the real line; nevertheless the solutions that we get on the real line are not periodic. Pick one of these solutions. In order to make it periodic we substract an average: see (\ref{Eqn:DefPhip}).
Then choose the gauge function to be this new function. With this choice for the gauge function, we get $ t \in [0,T^{'}]: \, \left( k - \frac{15}{2}  \right) \partial_{x} a_{3}(t) + a_{2} (t) = Q(f,t) $: see (\ref{Eqn:CrucEq}). So the condition becomes $ (*): \, t \in [0,T']: \, \inf \limits_{x \in \T} Q (u(t),t)(x) \geq 0 $. In particular this condition should be satisfied at $t=0$, i.e $ \inf \limits_{x \in \T} Q(f,0) \geq 0 $. Now if we assume that $ \inf \limits_{x \in \T}  Q(u(0),0)(x) = 0 $ then it might be very difficult to ensure that $(*)$ holds by using a continuity-in-time argument; so we come to the conclusion that the right assumption on the data is
$ \inf \limits_{x \in \T} Q(u(0),0)(x) > 0 $. So we see from Remark \ref{Rem:Qother} that the gauge transform has turned the pointwise
and $k-$ dependent condition \ref{Condition:Init} at $t=0$ into an average and $k-$ independent one (namely $ \tilde{\delta} (u(0)) > 0 $) , which is consistent with the fact that we want to find conditions as unrestricted as possible for which our theorem holds. \\
In Section \ref{Sec:EnergyDiffEst}, we prove gauged energy estimates for the difference of two solutions of the regularized problem with smoothed out data that is also regularized by operators of the form $J_{\epsilon,s}$ (this is the Bona-Smith approximation \cite{BonaS}). In Section \ref{Sec:ProofThm}, we prove Theorem \ref{Thm:MainDiff} by using a limit process. Throughout this paper, one has to perform a delicate analysis to make sure that the solution of the regularized problem and the solution of (\ref{Eqn:CauchyPb2}) belong to similar sets as the data. \\
\\
\textbf{Acknowledgments}: The author would like to thank Kotaro Tsugawa for interesting discussions related to this problem. \\
\\
\textbf{Funding}: This article was partially funded by an URB grant (ID: $3245$) from the American University of Beirut. This research was also partially conducted in Japan and
funded by a (JSPS) Kakenhi grant  [ 15K17570 to T.R.].

\section{Notation}
\label{Sec:Notation}

In this section we introduce some notation that we use throughout this paper. \\
\\
In this paper, unless otherwise specified, we do not mention for sake of notation simplicity the spaces to which some functions (or more generally distributions) belong in some propositions or lemmas: this exercise is left to the reader. \\
Let $f$ be a function on $ \T $ . If $n \in \N $ then $\p_{x}^{n} f$ denotes the  $n^{th}$ derivative of $f$.
Let $m \in \mathbb{R} $, $n \in \mathbb{Z}$, and $\hat{f}(n)$ be the $n^{th}-$ Fourier coefficient of $f$. Let $D^{m}$ be the operator such that
$\widehat{D^{m} f}(n) := (in)^{m} \hat{f}(n)$. It is well-known that if $m \in \N$ then $D^{m} = \p_{x}^{m}$. We define $\| f \|_{\dot{H}^{m}} :=  \| \{  (in)^{m} \hat{f}(n) \}_{n \in \mathbb{Z}} \|_{l^{2}}$. \\
\\
In this paper we use a gauge transform at a time $t$, in the spirit of \cite{Hayashi,HayashiOzawa1, HayashiOzawa2}. Let $k' \in \R $. If $ \delta(\vec{f},t) > 0 $
and $ \partial_{\omega_{3}} F (\vec{f},t ) \in \mathcal{C}^{0}$ then $\partial_{\omega_{3}} F (\vec{f},t) $ has a constant sign and we define the gauge function
at $t$

\EQQARRLAB{
\Phi_{k'}(f,t)(x) & := \left| \p_{\omega_3} F(\vec{f},t)(x) \right|^{\frac{2k'-15}{6}}
e^{\int_{0}^{x} \frac{1}{3} \left( \frac{P(f,t)}{_{\p_{\omega_3} F (\vec{f},t) }} - \left[ \frac{P(f,t)}{_{\p_{\omega_3}F (\vec{f},t)}} \right]_{ave}  \right) \; dx' } \cdot
\label{Eqn:DefPhip}
}
Observe that the term $ \left[ \frac{P(f,t)}{_{\p_{\omega_3} F (\vec{f},t)}} \right]_{ave}$ in (\ref{Eqn:DefPhip}) makes the function $\Phi_{k'}(f,t)$ periodic and, consequently, it is defined on $\T$. Observe also that

\EQQARRLAB{
\left( k' - \frac{15}{2} \right) \p_{x} \p_{\omega_3} F(\vec{f},t)
+ \left( 3 \Phi_{k'}(f,t) \p_{x} \Phi_{k'}^{-1}(f,t) \p_{\omega_3} F (\vec{f},t) + P(f,t) \right)  & = Q(f,t) \cdot
\label{Eqn:CrucEq}
}
This is a crucial observation that we will often use throughout this paper. We will often work with the function
$\Phi_{k'} \left( u(t),t \right) \partial_{x}^{6} u(t) $ and prove estimates that involve this function. \\
\\
We write $C := C (\alpha_{1},...,\alpha_{n}) $ if $C$ is a constant depending on the variables $\alpha_{1}$,...,$\alpha_{n}$.
If $ C := C( \alpha_{1}, \alpha_{2}) $  is a constant depending on two variables $\alpha_{1}$ and $\alpha_{2}$, then we may take the liberty
to write only $C$ if we do not want to emphasize the dependance on $\alpha_{1}$ and $\alpha_{2}$, or to write only $C := C(\alpha_{1})$ if we do
not want to emphasize the dependance on $\alpha_{2}$, in order to simplify the notation. This convention naturally extends to a constant depending on
several variables. We write $a \lesssim b $ if there exists a constant $C > 0$ such that $a \leq C b$. We write $ a \lesssim_{\alpha_{1},...,\alpha_{n}} b$ if there exists
$C := C( \alpha_{1},...,\alpha_{n})$ such that $a \leq C b$.  If $a \lesssim_{\alpha_{1},\alpha_{2}} b $ then we may take the liberty to write only
$a \lesssim b $ if we do not want to emphasize the dependance on $\alpha_{1}$ and $\alpha_{2}$ or to write only $ a \lesssim_{\alpha_{1}} b $ if we do not
want to emphasize the dependance on $\alpha_{2}$. This convention naturally extends to $a \lesssim_{\alpha_{1},...,\alpha{n}} b$. We write
$a \ll b$ if there exists a small constant $ c > 0$ such that $a \leq c b$. Similarly we write $a \ll_{\alpha_{1},...,\alpha_{n}} b $ if there
exists  $c := c \left( \alpha_{1},...,\alpha_{n} \right)$ such that $a \leq c b$. We use the same conventions for $c$ as for $C$.      \\
\\
Let $x+ = x + \epsilon$ and $x- = x - \epsilon$ with $ 0 <  \epsilon \ll 1$. Let $p \in \N$ and $q \in \mathbb{R}$. We define $\Combin{q}{p} := \frac{q(q-1)...(q-(p-1))}{p!}  $
for $p \neq 0$ and $\Combin{q}{0} := 1 $. \\
\\
We recall some lemmas. The first lemma shows that we can control the $H^{k'}-$ norm of a product of two functions $f$ and $g$:

\begin{lem}
Let $k' \geq 0 $. Let $f$ and $g$ be two functions. Then

\EQQARR{
\| f g \|_{H^{k'}} \lesssim \| f \|_{H^{k'}} \| g \|_{H^{\frac{1}{2}+}} + \| f \|_{H^{\frac{1}{2}+}} \| g \|_{H^{k'}} \cdot
}
\label{Lem:prod}
\end{lem}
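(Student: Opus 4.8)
The plan is to work entirely on the Fourier side and reduce the estimate to Young's convolution inequality on $\mathbb{Z}$ together with the embedding of the Fourier coefficients of an $H^{\frac{1}{2}+}$ function into $\ell^{1}(\mathbb{Z})$.

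First I would record the elementary frequency bound: if $n = n_{1} + n_{2}$ with $n,n_{1},n_{2} \in \mathbb{Z}$, then $(n_{1}+n_{2})^{2} \leq 2 n_{1}^{2} + 2 n_{2}^{2}$ gives $\langle n \rangle \leq \sqrt{2}\,(\langle n_{1} \rangle + \langle n_{2} \rangle)$, and since $k' \geq 0$ the elementary inequality $(a+b)^{k'} \leq 2^{k'}(a^{k'}+b^{k'})$ for $a,b \geq 0$ yields
\begin{equation*}
\langle n \rangle^{k'} \lesssim_{k'} \langle n_{1} \rangle^{k'} + \langle n_{2} \rangle^{k'} .
\end{equation*}
Writing the Fourier coefficients of the product as a convolution, $\widehat{fg}(n) = \sum_{n_{1}+n_{2}=n} \hat{f}(n_{1}) \hat{g}(n_{2})$, and inserting the bound above, I obtain the pointwise-in-$n$ estimate
\begin{equation*}
\langle n \rangle^{k'} | \widehat{fg}(n) | \lesssim \big( (\langle \cdot \rangle^{k'} |\hat{f}|) * |\hat{g}| \big)(n) + \big( |\hat{f}| * (\langle \cdot \rangle^{k'} |\hat{g}|) \big)(n) .
\end{equation*}

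Next I would take the $\ell^{2}_{n}$ norm of both sides. By Young's inequality $\| a * b \|_{\ell^{2}(\mathbb{Z})} \leq \| a \|_{\ell^{2}(\mathbb{Z})} \| b \|_{\ell^{1}(\mathbb{Z})}$, the first term on the right is at most $\| \langle \cdot \rangle^{k'} \hat{f} \|_{\ell^{2}} \| \hat{g} \|_{\ell^{1}} = \| f \|_{H^{k'}} \| \hat{g} \|_{\ell^{1}}$, and symmetrically the second is at most $\| \hat{f} \|_{\ell^{1}} \| g \|_{H^{k'}}$. It remains to bound $\| \hat{h} \|_{\ell^{1}}$ by $\| h \|_{H^{\frac{1}{2}+}}$ for $h \in \{ f, g \}$: fixing $\epsilon > 0$ so that $\frac{1}{2}+$ denotes the exponent $\frac{1}{2}+\epsilon$, Cauchy--Schwarz gives
\begin{equation*}
\| \hat{h} \|_{\ell^{1}} = \sum_{n \in \mathbb{Z}} \langle n \rangle^{-(\frac{1}{2}+\epsilon)} \langle n \rangle^{\frac{1}{2}+\epsilon} | \hat{h}(n) | \leq \Big( \sum_{n \in \mathbb{Z}} \langle n \rangle^{-(1+2\epsilon)} \Big)^{1/2} \| h \|_{H^{\frac{1}{2}+\epsilon}} \lesssim_{\epsilon} \| h \|_{H^{\frac{1}{2}+}},
\end{equation*}
where the series converges precisely because the exponent exceeds $1$. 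Combining the two pieces yields the claimed inequality.

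I do not expect a genuine obstacle in this argument; the only point requiring care is the bookkeeping of which factor absorbs the $k'$ derivatives and which absorbs the $\frac{1}{2}+$ derivatives, and noticing that the convergence of $\sum_{n} \langle n \rangle^{-(1+2\epsilon)}$ is exactly why the auxiliary norm must be $H^{\frac{1}{2}+}$ rather than $H^{\frac{1}{2}}$. No Littlewood--Paley decomposition is actually needed: the full high--low product estimate $H^{k'} \cdot H^{\frac{1}{2}+}$ follows directly from Young's inequality and Cauchy--Schwarz.
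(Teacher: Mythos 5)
Your argument is correct, and it is essentially the approach the paper relies on: the paper does not prove Lemma \ref{Lem:prod} itself but refers to Theorem 3.200 in \cite{Iorio}, whose proof is exactly this Fourier-side scheme, and the same machinery (convolution representation, splitting the weight $\langle n \rangle^{k'} \lesssim \langle n_1 \rangle^{k'} + \langle n_2 \rangle^{k'}$, Young's inequality $\ell^2 * \ell^1 \to \ell^2$, and Cauchy--Schwarz to embed $H^{\frac{1}{2}+}$ coefficients into $\ell^1$) reappears in the paper's own proof of Lemma \ref{lem:LemD}. No gaps: the only delicate points, the validity of the weight splitting for all $k' \geq 0$ and the convergence of $\sum_n \langle n \rangle^{-(1+2\epsilon)}$, are both handled correctly.
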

For a proof see e.g \cite{Iorio} and references therein \footnote{The proof of Lemma \ref{Lem:prod} is  a slight modification of that of Theorem $3.200.$ in \cite{Iorio} }.  Given $\epsilon \in (0,1]$ and $s \geq 0$, let $J_{\epsilon,s}$ be the operator
defined by $ \widehat{J_{\epsilon,s} f}(n) := e^{- \epsilon \langle n \rangle^{s}} \hat{f}(n)$ for $n \in \mathbb{Z}$. The second lemma
shows that this operator have some smoothing properties:

\begin{lem}
Let $0 \leq j \leq s$ and $l \geq 0$. The following holds:

\EQQARR{
\| J_{\epsilon,s} f  - f \|_{H^{s-j}} \lesssim \epsilon^{\frac{j}{s}}  \| f \|_{H^{s}}, \; \text{and} \;
\| J_{\epsilon,s} f  \|_{H^{s+l}} \lesssim \epsilon^{-\frac{l}{s}} \| f \|_{H^{s}} \cdot
}
\label{Lem:MollEst}
\end{lem}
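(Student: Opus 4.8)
The plan is to pass to the Fourier side and reduce both inequalities to elementary pointwise bounds on the corresponding Fourier multipliers. By definition of the Sobolev norms via Plancherel, for every $m \in \R$ one has
\[
\| J_{\epsilon,s} f - f \|_{H^{s-j}}^{2} = \sum_{n \in \Z} \langle n \rangle^{2(s-j)} \left| e^{-\epsilon \langle n \rangle^{s}} - 1 \right|^{2} |\hat{f}(n)|^{2},
\qquad
\| J_{\epsilon,s} f \|_{H^{s+l}}^{2} = \sum_{n \in \Z} \langle n \rangle^{2(s+l)} e^{-2\epsilon \langle n \rangle^{s}} |\hat{f}(n)|^{2},
\]
to be compared with $\| f \|_{H^{s}}^{2} = \sum_{n} \langle n \rangle^{2s} |\hat{f}(n)|^{2}$. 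Hence it suffices to establish the uniform-in-$n$ multiplier bounds
\[
\langle n \rangle^{-j} \left| e^{-\epsilon \langle n \rangle^{s}} - 1 \right| \lesssim \epsilon^{j/s},
\qquad
\langle n \rangle^{l} e^{-\epsilon \langle n \rangle^{s}} \lesssim \epsilon^{-l/s},
\]
valid for all $n \in \Z$ and all $\epsilon \in (0,1]$, and then insert them termwise into the identities above.

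For the first bound I would set $x := \epsilon \langle n \rangle^{s} \ge 0$, so that $\langle n \rangle^{-j} = \epsilon^{j/s} x^{-j/s}$ with $0 \le j/s \le 1$, reducing the claim to the scalar inequality $x^{-j/s} |e^{-x} - 1| \lesssim 1$ uniformly for $x \ge 0$. Using $|e^{-x} - 1| \le \min(1, x)$ for $x \ge 0$: when $x \le 1$ one has $x^{-j/s} |e^{-x} - 1| \le x^{1 - j/s} \le 1$ since the exponent $1 - j/s$ is nonnegative; when $x \ge 1$ one has $x^{-j/s} |e^{-x} - 1| \le x^{-j/s} \le 1$ since $j/s \ge 0$ (the case $j = 0$ being trivial). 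For the second bound the same substitution gives $\langle n \rangle^{l} = \epsilon^{-l/s} x^{l/s}$, so the claim reduces to $x^{l/s} e^{-x} \lesssim 1$ uniformly for $x \ge 0$; and the function $x \mapsto x^{\alpha} e^{-x}$ is continuous on $[0,\infty)$, equals $0$ at $x=0$ (or $1$ if $\alpha = 0$) and tends to $0$ as $x \to \infty$, hence attains a finite maximum, which applied with $\alpha = l/s$ yields the estimate.

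I do not expect any genuine obstacle in this argument: it is a routine Plancherel-plus-scalar-calculus computation. The only two points that deserve a word of care are the case split $x \le 1$ versus $x \ge 1$ in the proof of the first inequality, and the fact that the implied constant in the second inequality depends on $l/s$ (equivalently on $l$ and $s$), which is permitted by the $\lesssim$ convention fixed in Section \ref{Sec:Notation}.
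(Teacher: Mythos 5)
Your proof is correct: the reduction via Plancherel to the pointwise multiplier bounds and the scalar estimates $x^{-j/s}\left|e^{-x}-1\right|\lesssim 1$ (via $\left|e^{-x}-1\right|\le\min(1,x)$) and $x^{l/s}e^{-x}\lesssim 1$ are exactly the standard argument, with the implicit constants depending only on $j/s$ and $l/s$ as the notation permits. The paper itself gives no proof of Lemma \ref{Lem:MollEst}, deferring to \cite{Iorio}, and your argument is precisely the expected one, so there is nothing to add beyond noting that the lemma is tacitly used with $s>0$ (so the exponents $j/s$, $l/s$ are well defined), which your computation implicitly assumes.
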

For a proof see e.g \cite{Iorio}. It will be often used in the propositions where we implement the Bona-Smith approximation argument
(\cite{BonaS}) \footnote{see also \cite{Iorio} for an exposition of the ideas of this argument}.

\section{Preliminary lemmas}
\label{Sec:PrelimLemma}

In this section we prove some lemmas.\\
\\
In the first lemma, we estimate an expression that involves two functions $f$ and $g$. The expression is written as a sum of two terms: the main term and the
remainder. Observe that in the remainder the excess of $m$ derivatives of $g$ is transferred on $f$.

\begin{lem}
Let $f$ and $g$ be two functions. Then

\begin{equation}
\begin{array}{l}
k' \geq 0, m \in \mathbb{N}^{*}: \, \left\| D^{k'} (f \p_x^{m} g) -  \sum \limits_{l=0}^{m-1} \binom{k'}{l} D^{l} f  D^{k'-l} \p_{x}^{m} g  \right\|_{L^{2}} \lesssim
\| f \|_{H^{k'+m}} \| g \|_{H^{\frac{1}{2}+}}  + \| f \|_{H^{ \left( m + \frac{1}{2} \right)+}} \| g \|_{H^{k'}} \cdot
\end{array}
\label{Eqn:LemD1}
\end{equation}

% COMMENTS: OTHER FRACTIONAL LEIBNITZ RULE

%\color{red}

%\begin{equation}
%\begin{array}{l}
%k' \geq 1: \, \left\| D^{k'} (fg) - D^{k'}f g - f D^{k'} g  \right\|_{L^{2}} \lesssim \| f \|_{H^{k'-1}} \| g \|_{H^{\frac{3}{2}+}}
%+ \| f \|_{H^{\frac{3}{2}+}} \| g \|_{H^{k'-1}} \cdot
%\end{array}
%\label{Eqn:LemD2}
%\end{equation}

% \color{black}

% END COMMENTS

%%% NOT OPTIMAL ESTIMATE, INVOLVING HOLDER CONTINUITY  %%%

%\EQQARRLAB{
%k' \geq m-1: & \left\| D^{k'} (f \p_x^{m} g) -  \sum \limits_{l=0}^{m-1} \binom{k'}{l} D^{l} f  D^{k'-l} \p_{x}^{m} g  \right\|_{l^{2}}
%\lesssim \| f \|_{H^{k'+m}} \| g \|_{H^{\frac{1}{2}+}} + \| f \|_{H^{ \left( m + \frac{1}{2} \right)+}} \| g \|_{H^{k'}} \\
%m - j +1  > k' \geq m-j: & \left\| D^{k'} (f \p_x^{m} g) -  \sum \limits_{l=0}^{m-j} \binom{k'}{l} D^{l} f  D^{k'-l} \p_{x}^{m} g  \right\|_{l^{2}}
%\lesssim  \| f \|_{H^{ \left( k'+ \frac{1}{2} \right)+}} \| g \|_{H^{m}}
%\label{Eqn:LemD}
%}
\label{lem:LemD}
\end{lem}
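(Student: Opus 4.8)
The plan is to expand $D^{k'}(f\p_x^m g)$ using the fractional Leibniz rule and isolate exactly the first $m$ terms of the formal binomial expansion (those where at most $m-1$ derivatives land on $f$), then bound the remainder — which morally corresponds to the ``tail'' $l\geq m$ where the operator $D^{k'-l}\p_x^m$ has nonnegative order $k'-l+m\geq k'-(m-1)$... wait, that is not quite the point; the point is that in the remainder we may integrate by parts / redistribute so that no term carries more than $k'$ derivatives on $g$, paying at most $m$ extra derivatives on $f$. Concretely, I would first reduce to the case $k'$ large (say $k'\geq m+\tfrac12$; the small-$k'$ case is even easier since $\p_x^m g$ already lives in a negative-regularity space controlled by $\|g\|_{H^{k'}}$ after moving derivatives onto $f$) and then argue on the Fourier side.

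The main step is a Coifman–Meyer / Kato–Ponce style commutator estimate. Write, with $\widehat{fg}(n)=\sum_{n_1+n_2=n}\hat f(n_1)\hat g(n_2)$,
\EQQ{
\widehat{D^{k'}(f\p_x^m g)}(n) = \sum_{n_1+n_2=n} (in)^{k'} (in_2)^{m} \hat f(n_1)\hat g(n_2),
}
and subtract the contribution of the first $m$ binomial terms, whose symbol is $\sum_{l=0}^{m-1}\binom{k'}{l}(in_1)^l (in)^{k'-l}(in_2)^m$. The claim is that the difference symbol
\EQQ{
\sigma(n_1,n_2) := (in)^{k'}(in_2)^m - \sum_{l=0}^{m-1}\binom{k'}{l}(in_1)^l(in)^{k'-l}(in_2)^m
}
satisfies $|\sigma(n_1,n_2)| \lesssim \LR{n_1}^{k'+m}\LR{n_2}^{1/2+} \cdot \LR{n_2}^{-1/2-}\LR{n_1}^{-k'-m}\cdot\big(\LR{n_1}^{k'+m}\LR{n_2}^{0} + \LR{n_1}^{m+1/2+}\LR{n_2}^{k'}\big)$... more cleanly: split into the regions $\LR{n_2}\lesssim\LR{n_1}$ and $\LR{n_2}\gg\LR{n_1}$. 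In the region $\LR{n_2}\gg\LR{n_1}$ one has $\LR{n}\sim\LR{n_2}$, and Taylor-expanding $(in)^{k'}=(in_2)^{k'}(1+n_1/n_2)^{k'}$ to order $m-1$ shows $|\sigma|\lesssim \LR{n_1}^m\LR{n_2}^{k'}$, giving the second term $\|f\|_{H^{(m+1/2)+}}\|g\|_{H^{k'}}$ after an $\ell^2$-duality/Cauchy–Schwarz argument (summing $\LR{n_1}^{m+1/2+}$ against the convolution). In the region $\LR{n_2}\lesssim\LR{n_1}$, one has $\LR{n}\lesssim\LR{n_1}$, so crudely $|\sigma|\lesssim\LR{n_1}^{k'}\LR{n_2}^m + \LR{n_1}^{k'-l}\LR{n_1}^l\LR{n_2}^m\lesssim \LR{n_1}^{k'+m}$ uniformly in $n_2\lesssim n_1$; absorbing a harmless $\LR{n_2}^{1/2+}$ to make the $n_2$-sum converge yields the first term $\|f\|_{H^{k'+m}}\|g\|_{H^{1/2+}}$. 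Both pieces are then closed by the standard $\ell^2\ast\ell^2\subset\ell^\infty$ / Schur-test bookkeeping already used to prove Lemma~\ref{Lem:prod}.

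The main obstacle — really the only nonroutine point — is the symbol bound in the ``high-high'' or ``comparable'' region: one must check that subtracting the genuine leading terms of the binomial expansion (rather than a smooth Coifman–Meyer truncation) does not create spurious growth, i.e.\ that the algebraic identity $(in)^{k'}=\sum_{l\geq0}\binom{k'}{l}(in_1)^l(in_2)^{k'-l}$ — valid as a convergent series when $|n_1|<|n_2|$ — is truncated \emph{after} exactly $m$ terms, leaving a remainder of size $\LR{n_1}^m\LR{n_2}^{k'-?}$ with the right exponents, while in the region $|n_1|\geq|n_2|$ no cancellation is needed and raw triangle-inequality bounds suffice. Once the symbol estimate is in hand, the passage to the stated $L^2\to H^s$ inequality is the same Cauchy–Schwarz-in-frequency argument as in \cite{Iorio}, so I would not belabor it. A cleaner alternative, which I would mention as a remark, is to prove the estimate by induction on $m$: the case $m=1$ is a Kato–Ponce commutator estimate, and the inductive step writes $f\p_x^m g = \p_x(f\p_x^{m-1}g) - (\p_x f)\p_x^{m-1}g$ and reorganizes the binomial coefficients via Pascal's rule, trading the induction hypothesis for two estimates at level $m-1$ with $f$ replaced by $f$ and $\p_x f$ respectively — this avoids symbol calculus entirely at the cost of slightly more bookkeeping with the binomial identities.
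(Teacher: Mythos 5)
Your proposal is essentially the paper's own proof: both work on the Fourier side, estimate the difference symbol by splitting into the region where the frequency of $f$ dominates (crude triangle-inequality bound $\langle n_1\rangle^{k'+m}+\langle n_1\rangle^{m}\langle n_2\rangle^{k'}$) and the region where the frequency of $g$ dominates (Taylor/binomial remainder bound $\langle n_1\rangle^{m}\langle n_2\rangle^{k'}$), and then conclude with Young's inequality plus Cauchy--Schwarz exactly as in Lemma \ref{Lem:prod}. The one slip is your displayed symbol: since the subtracted terms are $\binom{k'}{l}D^{l}f\,D^{k'-l}\p_{x}^{m}g$, it should read $\sum_{l=0}^{m-1}\binom{k'}{l}(in_1)^{l}(in_2)^{k'-l+m}$ rather than $\sum_{l=0}^{m-1}\binom{k'}{l}(in_1)^{l}(in)^{k'-l}(in_2)^{m}$ (for $m=1$ your version is identically zero), but your Taylor-expansion step already cancels against the correct expression, so the argument goes through unchanged.
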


\begin{proof}

We can write $L.H.S \;  \text{of} \; (\ref{Eqn:LemD1}) = \left\| \sum \limits_{n_1 \in \mathbb{Z}} X(n,n_1) \hat{f}(n-n_1) \hat{g}(n_1) \right\|_{l^{2}}$ with

\EQQARR{
X(n,n_1) & := (in)^{k'} (in_1)^{m} - \sum \limits_{l=0}^{m-1}  \binom{k'}{l} (i (n-n_1))^{l} (in_1)^{k'-l+m} \cdot
}
We estimate $X(n,n_1)$. If $|n-n_1| \gtrsim |n_1| $ then clearly $|X(n,n_1)| \lesssim |n-n_1|^{k'+m} +
+ |n -n_{1}|^{m} | n_1 |^{k'} $. If $|n-n_1| \ll |n_1| $ then we factor out $n_1^{m}$ and
we apply the Taylor formula for $n_1 \neq 0$ to get $|X(n,n_1)| \lesssim |n_1|^{m} |n-n_1|^{m} |n_1|^{k'-m} \lesssim |n-n_1|^{m} |n_1|^{k'} $. If $n_1= 0$ then
the bound found for $X(n,n_1)$ clearly holds. The Young inequality and the Cauchy-Schwarz inequality yield

\EQQARRLAB{
 L.H.S \;  \text{of} \; (\ref{Eqn:LemD1})  & \lesssim
\| f \|_{H^{k'+m}} \| \hat{g}(n) \|_{l^{1}}  + \| n^{m} \hat{f}(n) \|_{l^{1}} \| g \|_{H^{k'}} \\
& \lesssim \| f \|_{H^{k'+m}}  \| g \|_{H^{\frac{1}{2}+}}
+ \| f \|_{H^{ \left( m+ \frac{1}{2} \right) +}} \| g \|_{H^{k'}}  \cdot
}

\end{proof}

In the next two lemmas we prove some estimates involving derivatives of powers of functions, quotient of functions, exponential of functions, differences of exponentials
of functions. We also prove nonlinear estimates. These estimates will be used to prove some estimates that involve the gauge function.

\begin{lem}
Let $\beta \in \mathbb{R}$. Let $\bar{\delta} > 0$ and $(k', K) \in (\mathbb{R}^{+})^{2}$. Then the following holds:

\begin{itemize}

\item Let $f$ be a function. Assume that $ \| f \|_{H^{\frac{3}{2}+}} \leq K $ and that $ \inf \limits_{x \in \T} |f(x)| \geq \bar{\delta} $. Then there exists
$C := C(\bar{\delta},K) > 0$ such that

\EQQARRLAB{
\| f^{\beta} \|_{H^{k'}} & \leq C \left( 1 + \| f \|_{H^{k'}} \right)
\label{Eqn:DerivPower}
}

\item Let $f$ and $g$ be two functions. Assume that $ \| f \|_{H^{\frac{1}{2}+}} \leq K $,  $ \| g \|_{H^{\frac{3}{2}+}} \leq K $, and   $ \inf \limits_{x \in \T} |g(x)| \geq \bar{\delta}$. Then there exists $C := C(\bar{\delta},K) > 0$ such that

\EQQARRLAB{
\left\| \frac{f}{g} \right\|_{H^{k'}} & \leq C \left( 1 + \| f \|_{H^{k'}} + \| g \|_{H^{k'}} \right)
\label{Eqn:DerivFrac}
}
There exists $C := C (\bar{\delta},K) > 0$ such that

\EQQARRLAB{
\| g \|_{H^{\max \left( k', \frac{3}{2}+ \right)}}  \leq K  \Longrightarrow \left\| \frac{f}{g} \right\|_{H^{k'}}  \leq
C \| f \|_{H^{\max{\left( k', \frac{1}{2}+ \right)}}}  \\
\label{Eqn:DerivFrac2}
}

\item Let $f$ be a function. Then

\EQQARRLAB{
k' \in [0,1]: & \left\| \int_{0}^{x} f(x') \; dx' \right\|_{H^{k'}} \lesssim \| f \|_{L^{2}}, \; \text{and} \\
& \\
k' \geq 1: &   \left\| \int_{0}^{x} f(x') \; dx' \right\|_{H^{k'}} \lesssim \| f \|_{H^{k'-1}} \cdot
\label{Eqn:ExpIntBasic}
}

\item Let $f$ be a function. Assume that $ \| f \|_{H^{\frac{1}{2}+}} \leq K $. Then there exists $C := C (K) > 0$ such that

\EQQARRLAB{
k' \in [0,1]: &  \left\| e^{\int_{0}^{x} f(x') \; dx'} \right\|_{H^{k'}} \leq C, \; \text{and}  \\
&  \\
k' \geq 1: & \left\| e^{\int_{0}^{x} f(x') \; dx'} \right\|_{H^{k'}}  \leq C \left( 1 + \| f \|_{H^{k' -1}} \right) \cdot
\label{Eqn:ExpInt}
}

\item Let $f$ and $g$ be two functions. Assume that $ \| f \|_{H^{\frac{1}{2}+}} \leq K $ and that $\| g \|_{H^{\frac{1}{2}+}} \leq K $. Assume also that
$\| f \|_{H^{k'-1}} \leq K$ and that $\| g \|_{H^{k'-1}} \leq K $ if $k' \geq 1$. Then there exists $C := C(K)$ such that

\EQQARRLAB{
k' \in [0,1]: & \left\| e^{\int_{0}^{x} f(x') \; d x'} - e^{\int_{0}^{x} g(x') \; d x'} \right\|_{H^{k'}}   \leq C \| f - g \|_{L^{2}}, \; \text{and}  \\
k' \geq 1: & \left\| e^{\int_{0}^{x} f(x') \; d x'} - e^{\int_{0}^{x} g(x') \; d x'}  \right\|_{H^{k'}}  \leq C \| f - g \|_{H^{k'-1}} \cdot
\label{Eqn:ExpIntDiff}
}

\end{itemize}
\label{lem:EstDerivOp}
\end{lem}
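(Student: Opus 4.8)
The plan is to reduce every one of the six estimates to two building blocks: the product estimate of Lemma \ref{Lem:prod} (used together with the Sobolev embeddings $H^{1/2+}\hookrightarrow L^\infty$ and $H^{3/2+}\hookrightarrow W^{1,\infty}$), and a Moser-type composition estimate asserting that if $G$ is $\mathcal{C}^\infty$ on a neighbourhood of the range of a real function $h$ with $\|h\|_{L^\infty}\le M$, then $\|G(h)\|_{H^{k'}}\lesssim_{M,G} 1+\|h\|_{H^{k'}}$ for every $k'\ge 0$. Accordingly I would first dispose of \eqref{Eqn:ExpIntBasic}, then establish (or invoke) the composition estimate and use it to prove \eqref{Eqn:DerivPower}, and finally obtain the remaining four estimates as short consequences.

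For \eqref{Eqn:ExpIntBasic}, set $F(x):=\int_0^x f(x')\,dx'$, so $\partial_x F=f$ and, on $\T$ of length $2\pi$, $\|F\|_{L^\infty}\le\|f\|_{L^1}\lesssim\|f\|_{L^2}$, hence $\|F\|_{L^2}\lesssim\|f\|_{L^2}$; for $k'\ge 1$ one has $\|F\|_{H^{k'}}\lesssim\|F\|_{L^2}+\|\partial_x F\|_{H^{k'-1}}=\|F\|_{L^2}+\|f\|_{H^{k'-1}}\lesssim\|f\|_{H^{k'-1}}$, and for $k'\in[0,1]$ one interpolates between $L^2$ and $H^1$ to get $\|F\|_{H^{k'}}\lesssim\|F\|_{L^2}^{1-k'}\|F\|_{H^1}^{k'}\lesssim\|f\|_{L^2}$. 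For \eqref{Eqn:DerivPower}: since $\|f\|_{H^{3/2+}}\le K$ gives $\|f\|_{L^\infty}\lesssim K$ and $\inf_x|f(x)|\ge\bar\delta$ forces a constant sign, the range of $f$ lies in a compact set contained in $\{\bar\delta\le|s|\le C_K\}$, on which $s\mapsto s^\beta$ and all its derivatives are bounded by a constant $C(\bar\delta,K)$; the composition estimate with $h=f$, $G(s)=s^\beta$ then yields \eqref{Eqn:DerivPower}. For integer $k'$ the composition estimate is the classical Moser bound (expand $\partial_x^{k'}G(f)$ by the chain rule, keep the most-differentiated factor in $L^2$ and bound the others by Gagliardo--Nirenberg inequalities $\|\partial_x^j f\|_{L^{p_j}}\lesssim\|f\|_{L^\infty}^{1-\theta_j}\|f\|_{H^{k'}}^{\theta_j}$ with $\sum_j\theta_j\le 1$); for fractional $k'$ one argues with a Littlewood--Paley/paraproduct decomposition in the same spirit, or simply interpolates between consecutive integers once the $L^\infty$ bound on $f$ is fixed. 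I would keep this part brief.

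The remaining items are then immediate. \eqref{Eqn:DerivFrac}: apply \eqref{Eqn:DerivPower} with $\beta=-1$ to $g$ to get $\|g^{-1}\|_{H^{k'}}\le C(1+\|g\|_{H^{k'}})$ and, taking there $k'=\tfrac32+$, $\|g^{-1}\|_{H^{1/2+}}\le\|g^{-1}\|_{H^{3/2+}}\le C$; Lemma \ref{Lem:prod} applied to $f\cdot g^{-1}$ gives $\|f/g\|_{H^{k'}}\lesssim\|f\|_{H^{k'}}\|g^{-1}\|_{H^{1/2+}}+\|f\|_{H^{1/2+}}\|g^{-1}\|_{H^{k'}}\le C(1+\|f\|_{H^{k'}}+\|g\|_{H^{k'}})$. \eqref{Eqn:DerivFrac2} is the same computation with the cases $k'\le\tfrac32+$ (where $\|g^{-1}\|_{H^{\max(k',3/2+)}}\le C$) and $k'>\tfrac32+$ (where \eqref{Eqn:DerivPower} gives $\|g^{-1}\|_{H^{k'}}\le C$ since $\|g\|_{H^{k'}}\le K$) treated separately, absorbing $\|f\|_{H^{1/2+}}$ into $\|f\|_{H^{\max(k',1/2+)}}$. \eqref{Eqn:ExpInt}: with $F=\int_0^x f$, \eqref{Eqn:ExpIntBasic} gives $\|F\|_{H^{3/2+}}\lesssim\|f\|_{H^{1/2+}}\le K$, hence $\|F\|_{L^\infty}\le C$; the composition estimate with $G=\exp$ gives $\|e^F\|_{H^{k'}}\le C(1+\|F\|_{H^{k'}})$, and \eqref{Eqn:ExpIntBasic} bounds $\|F\|_{H^{k'}}$ by $C$ for $k'\in[0,1]$ and by $C(1+\|f\|_{H^{k'-1}})$ for $k'\ge 1$. \eqref{Eqn:ExpIntDiff}: write $e^{\int_0^x f}-e^{\int_0^x g}=(F-G)\int_0^1 e^{G+s(F-G)}\,ds=:(F-G)\,H$ with $F-G=\int_0^x(f-g)$; by \eqref{Eqn:ExpIntBasic} and the hypotheses on $f,g$ one has $\|H\|_{H^{1/2+}}\le C$ and $\|H\|_{H^{k'}}\le C$ (for $k'\ge1$ using $\|f\|_{H^{k'-1}},\|g\|_{H^{k'-1}}\le K$), so Lemma \ref{Lem:prod} applied to $(F-G)H$, combined with $\|F-G\|_{H^{1/2+}}\lesssim\|f-g\|_{L^2}$ and $\|F-G\|_{H^{k'}}\lesssim\|f-g\|_{L^2}$ (resp. $\|f-g\|_{H^{k'-1}}$), gives the claim.

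The main obstacle is the composition (Moser) estimate at a fractional regularity index with the constant made explicit in terms of $\bar\delta$ and $K$: this is where the genuine work lies (a Littlewood--Paley or paraproduct argument together with interpolation inequalities), whereas everything else is bookkeeping with Lemma \ref{Lem:prod}. A secondary point worth flagging is the precise meaning of $F=\int_0^x f$ on the torus when $[f]_{ave}\neq 0$, in which case $F$ is not literally periodic and the norms in \eqref{Eqn:ExpIntBasic} must be read on the fundamental domain; this does not affect the uses of \eqref{Eqn:ExpInt}--\eqref{Eqn:ExpIntDiff} in the construction of the gauge function, where the integrand has zero average.
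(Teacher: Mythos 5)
Your architecture is essentially the paper's for everything except the composition step, and there it diverges in a way worth spelling out. The paper proves (\ref{Eqn:DerivPower}) and (\ref{Eqn:ExpInt}) by a short induction on unit ranges of $k'$: the base case $0\le k'\le 1$ is H\"older plus the embedding $H^{\frac{1}{2}+}\hookrightarrow L^{\infty}$, and for $r\le k'\le r+1$ one writes $\partial_x(f^{\beta})=\beta f^{\beta-1}\partial_x f$ and applies the fractional product estimate of Lemma \ref{Lem:prod} together with the induction hypothesis for $\|f^{\beta-1}\|_{H^{k'-1}}$. Since Lemma \ref{Lem:prod} is stated for all real $k'\ge 0$, the fractional case costs nothing and no Littlewood--Paley or paraproduct machinery is needed; the constants are tracked through the induction, which is how the paper gets the monotone dependence on $(\bar\delta,K)$ recorded in Remark \ref{Rem:EstDerivOp}. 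Your route instead invokes a general Moser composition estimate, proved classically at integer regularity and by paraproducts at fractional regularity. That is a legitimate (if heavier) alternative, but your proposed shortcut ``simply interpolate between consecutive integers'' does not work as stated: the map $f\mapsto f^{\beta}$ is nonlinear, so interpolating the integer-level bounds only yields $\|f^{\beta}\|_{H^{k'}}\le C\,(1+\|f\|_{H^{r}})^{1-\theta}(1+\|f\|_{H^{r+1}})^{\theta}$ with $k'=r+\theta$, and this is genuinely weaker than $C(1+\|f\|_{H^{k'}})$. For instance, with $f=A\cos x+\epsilon\cos(Nx)$ and $\epsilon=AN^{-k'}$ one has $\|f\|_{H^{r}}\lesssim A$ and $\|f\|_{H^{k'}}\lesssim A$ while $\|f\|_{H^{r+1}}\approx AN^{r+1-k'}\to\infty$, so the interpolated right-hand side blows up although $1+\|f\|_{H^{k'}}$ stays bounded; the argument therefore fails to deliver (\ref{Eqn:DerivPower}), and you should rely on the paraproduct version or, more simply, adopt the paper's induction, which uses only the lemma you already quote.

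The downstream items are fine and track the paper closely: (\ref{Eqn:ExpIntBasic}) is proved the same way (the paper uses Minkowski for the $L^{2}$ bound and the same $L^{2}$--$H^{1}$ interpolation), and (\ref{Eqn:DerivFrac})--(\ref{Eqn:DerivFrac2}) follow from (\ref{Eqn:DerivPower}) with $\beta=-1$ plus Lemma \ref{Lem:prod} exactly as you write. For (\ref{Eqn:ExpIntDiff}) your factorization $e^{\int_0^x f}-e^{\int_0^x g}=(F-G)\int_{0}^{1}e^{G+s(F-G)}\,ds$ is a clean variant of the paper's decomposition into $X\,Y$ with $X=e^{\int_0^x(f-g)}-1$ and $Y=e^{\int_0^x g}$; your averaged-exponential trick avoids the paper's case analysis on whether $\|f-g\|$ is small or large, at the price of checking (easily, via Minkowski and (\ref{Eqn:ExpInt}) applied to $g+s(f-g)$) that the $s$-integral is bounded in $H^{k'}$. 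Your closing remark about the non-periodicity of $\int_{0}^{x}f$ when $[f]_{ave}\neq 0$ is a fair observation the paper leaves implicit, and, as you note, it is harmless where the lemma is actually used, since the gauge is built from a zero-average integrand.
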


\begin{rem}
The proof shows that the constants $C$ depending only on $K$ can be chosen as continuous functions of $K$ that increase as $K$ increases. It also shows that
the constants $C$ depending on $\bar{\delta}$ and $K$ can be chosen as continuous functions of $(\bar{\delta},K)$ that increase as $K$ increases
and increase as $\bar{\delta}$ decreases.
\label{Rem:EstDerivOp}
\end{rem}

\begin{proof}

We prove (\ref{Eqn:DerivPower}) by an induction process. \\
Let $ 0 \leq k' \leq 1$. The H\"older inequality, the Sobolev embedding  $H^{\frac{1}{2}+} \hookrightarrow L^{\infty}$ applied to the cases
$\beta < 1$ and the estimate $\| f^{\beta -1} \|_{L^{\infty}} \lesssim 1 $ if $\beta \geq 1$ yield

\EQQARR{
\| f^{\beta} \|_{L^{2}} & \lesssim \langle \| f \|_{H^{\frac{1}{2}+}} \rangle^{\beta-1} \| f \|_{L^{2}} \; \text{and} \\
\| \p_{x} f^{\beta} \|_{L^{2}} & \lesssim  \langle \| f \| \rangle^{\beta - 1}_{H^{\frac{1}{2}+}} \| \p_{x} f \|_{L^{2}} \lesssim \| \p_{x} f \|_{L^{2}} \cdot
}
Hence (\ref{Eqn:DerivPower}) holds for this range of $k'$. Assume now that (\ref{Eqn:DerivPower}) holds for all $ 0 \leq k' \leq r$ with $r \in \mathbb{N}^{*}$. We see
from Lemma \ref{Lem:prod} that

\EQQARR{

\left\| \p_{x} f^{\beta} \right\|_{H^{k'-1}} &  \lesssim \langle \| f^{\beta-1} \|_{H^{\frac{1}{2}+}} \rangle \| \p_{x} f \|_{H^{k'-1}}
+ \langle \| f^{\beta -1} \|_{H^{k'-1}} \rangle \| \p_{x} f \|_{H^{\frac{1}{2}+}} \\
& \leq C \left( 1 + \| f \|_{H^{k'}}  \right) \cdot
}
Hence  (\ref{Eqn:DerivPower}) holds for $r \leq k' \leq r + 1$. \\
Next we prove (\ref{Eqn:DerivFrac}) and (\ref{Eqn:DerivFrac2}). We see from (\ref{Eqn:DerivPower}) that

\EQQARR{
\left\| \frac{f}{g} \right\|_{H^{k'}} & \lesssim  \| f \|_{H^{k'}} \| g^{-1} \|_{H^{\frac{1}{2}+}} + \| g^{-1} \|_{H^{k'}} \| f \|_{H^{\frac{1}{2}+}} \\
& \lesssim \left( 1 + \| g \|_{H^{\frac{1}{2}+}} \right) \| f \|_{H^{k'}} + \left( 1 + \| g \|_{H^{k'}} \right) \| f \|_{H^{\frac{1}{2}+}} \cdot
}
Hence (\ref{Eqn:DerivFrac}) and (\ref{Eqn:DerivFrac2}) hold. \\
Next we prove (\ref{Eqn:ExpIntBasic}). Let $I:= \int_{0}^{x} f(x') \; dx'$. Let $ 0 \leq k' \leq 1$ . The Minkowski inequality shows that
$\| I \|_{L^{2}} \lesssim \| f \|_{L^{2}} $; we also have $\| \p_{x} I  \|_{L^{2}} = \| f \|_{L^{2}} $; hence
(\ref{Eqn:ExpIntBasic}) follows from $\| I \|_{H^{k'}} \lesssim \| I \|^{1- k'}_{L^{2}} \| I \|^{k'}_{H^{1}}$.  Now assume that $k'> 1$. Then
we see from $\| \p_{x} I  \|_{H^{k'-1}} = \| f \|_{H^{k'-1}}$ that (\ref{Eqn:ExpIntBasic}) holds. \\
Next we also prove (\ref{Eqn:ExpInt}). Let $ 0 \leq k' \leq 1$. There exists $C' > 0$ such that

\EQQARRLAB{
\left\| e^{\int_{0}^{x} f(x') \; dx'} \right\|_{L^{2}} & \lesssim  \left\| e^{\int_{0}^{x} f(x') \; dx'} \right\|_{L^{\infty}} \lesssim  e^{C' \| f \|_{L^{2}}}
 \lesssim 1 \cdot
\label{Eqn:EstExpf}
}
We also have

\EQQARRLAB{
\left\| \p_x e^{\int_{0}^{x} f(x') \; dx'} \right\|_{L^{2}} & \lesssim \| f \|_{L^{2}} \left\| e^{\int_{0}^{x} f(x') \; dx'} \right\|_{L^{\infty}}   \\
& \lesssim 1 \cdot
\label{Eqn:ExpDerExpf}
}
Hence (\ref{Eqn:ExpInt}) holds. \\
Assume that (\ref{Eqn:ExpInt}) holds for all $ 0 \leq  k' \leq r $ with $r \in \mathbb{N}^{*}$. If $ r \leq k' \leq r +1 $ then

\EQQARRLAB{
\left\| \p_x  e^{\int_{0}^{x} f(x') \; dx'} \right\|_{H^{k' -1}} & \lesssim \left\| e^{\int_{0}^{x} f(x') \; dx'} \right\|_{H^{\frac{1}{2}+}}
\| f \|_{H^{k'-1}} + \left\| e^{\int_{0}^{x} f(x') \; dx'} \right\|_{H^{k'-1}} \| f \|_{H^{\frac{1}{2}+}} \\
& \lesssim  1 + \| f \|_{H^{k'-1}} \cdot
\label{Eqn:ExpDerkExp}
}
Hence (\ref{Eqn:ExpInt}) holds for $ r \leq k' \leq r + 1 $. \\
Next we prove (\ref{Eqn:ExpIntDiff}). Let $h(x'):= f(x') - g(x')$, $X := e^{\int_{0}^{x} h(x') \; d x'} - 1$, and $ Y := e^{\int_{0}^{x} g(x') \; d x'}$. \\
First we estimate $\| X \|_{H^{m}}$. We claim that

\EQQARRLAB{
m \in [0,1]: & \|  X  \|_{H^{m}}  \lesssim \| h \|_{L^{2}} \\
m  \geq 1: & \| X \|_{H^{m}} \lesssim \| h \|_{H^{m-1}}   \cdot
\label{Eqn:EstdiffHm}
}
Indeed, define $\bar{m}$ as follows: let $\bar{m}:= 0$ if $m \in [0,1]$ and let $\bar{m} := m-1$ if $m \geq 1$. If $\| h \|_{H^{\bar{m}}} \gtrsim 1$ then it
follows from (\ref{Eqn:ExpInt}) and $\| \mathbf{1} \|_{H^{m}} \lesssim 1$. If $\| h \|_{H^{\bar{m}}} \ll 1$ then we use again an induction process. First let us assume that
$\bar{m} \in [0,1]$. Observe that  $ \left| \int_{0}^{x} h(x') \: dx' \right| \lesssim \| h \|_{L^{2}} \ll 1$; since $e^{y} - 1 \approx y $ for $|y| \ll 1$
we get $ \| X \|_{L^{2}}  \lesssim  \| h \|_{L^{2}}$; proceeding as in (\ref{Eqn:ExpDerExpf}) we get
$ \| \p_{x} X \|_{L^{2}} \lesssim  \| h \|_{L^{2}} $ and consequently  (\ref{Eqn:EstdiffHm}) holds for $ \bar{m} \in [0,1]$. Now assuming that
(\ref{Eqn:EstdiffHm}) holds for $0 \leq m \leq r $,  we proceed in a similar way as (\ref{Eqn:ExpDerkExp}) in order to estimate
$\| X \|_{H^{m}}$ for $r \leq m \leq r + 1 $ and we find that (\ref{Eqn:EstdiffHm}) holds. \\
Hence using also (\ref{Eqn:ExpInt}) we get

\EQQARR{
\left\| e^{\int_{0}^{x} f(x') \; dx'}  - e^{\int_{0}^{x} g(x') \; dx'}   \right\|_{H^{k'}} & \lesssim  \| X \|_{H^{k'}}
\| Y \|_{H^{\frac{1}{2}+}} + \| X \|_{H^{\frac{1}{2}+}} \| Y \|_{H^{k'}} \\
& \lesssim R.H.S \; \; \text{of} \; \; (\ref{Eqn:ExpIntDiff}) \cdot
}

\end{proof}

\begin{lem}
Let $G :  \left( \mathbb{R}^{4} \times \T \right) \times \mathbb{R} \longrightarrow \mathbb{R}$ be a $\mathcal{C}^{\infty}-$ function. Let $I \subset \mathbb{R}$ be an interval such
that  $|I| \leq 1$. Let $(k',K) \in (\mathbb{R}^{+})^{2}$. The following holds:

\begin{itemize}

\item Let $f$ be a function. Assume that $ \| f \|_{H^{\frac{9}{2}+}} \leq K $. Then there exists $C: = C(K) > 0$ such that for all $t \in I$

\EQQARRLAB{
\left\| G (\vec{f},t) \right\|_{H^{k'}} & \leq C \left( 1 + \| f \|_{H^{k'+3}} \right)
\label{Eqn:EstFDeriv}
}
Let $\beta \in \mathbb{R}$. Assume that $ \| f \|_{H^{\frac{9}{2}+}} \leq K $, and that there exists $\bar{\delta} > 0$ such that
for all $t \in I$, $ \inf \limits_{x \in \T} \left| G (\vec{f},t)(x) \right| \geq \bar{\delta}$. Then there exists $C := C(\bar{\delta}, K) > 0$ such that for $t \in I$

\EQQARRLAB{
\left\| G^{\beta} (\vec{f},t) \right\|_{H^{k'}} & \leq C \left( 1 + \| f \|_{H^{k'+3}} \right)
\label{Eqn:EstFDeriv2}
}

\item Let $f$ and $g$ be two functions. Assume that $ \| f \|_{H^{\frac{9}{2}+}} \leq K $ , $\| g \|_{H^{\frac{9}{2}+}} \leq K$,
$\| f \|_{H^{k'+3}} \leq K$, and $\| g \|_{H^{k'+3}} \leq K $ . Then there exists $ C := C(K) > 0$ such that for all $t_{1},t_{2} \in I$

\EQQARRLAB{
\left\| G (\vec{f},t_{2}) -  G  (\vec{g},t_{1})  \right\|_{H^{k'}} & \leq C \left( \| f - g \|_{H^{\max \left( k' +3, \frac{7}{2}+ \right)}} + |t_{2} - t_{1}| \right) \cdot
\label{Eqn:DiffEstFDeriv}
}
Let $\beta \in \mathbb{R} $. Assume that $ \| f \|_{H^{\frac{9}{2}+}} \leq K $, $ \| g \|_{H^{\frac{9}{2}+}} \leq K $,  $\| f \|_{H^{k'+3}} \leq K $,
$\| g \|_{H^{k'+3}} \leq K$, and that there exists $\bar{\delta} > 0 $ such that for all $t \in I$ and for all
$\theta \in [0,1]$ we have  $\inf \limits_{x \in \T} \left| G \left( \overrightarrow{h_{\theta}} ,t  \right)(x) \right| \geq \bar{\delta}$ , with
$h_{\theta} := \theta f + (1- \theta) g $. Then there exists $C := C( \bar{\delta},K) > 0$ such that for all $t_{1},t_{2} \in I$

\EQQARRLAB{
\left\| G^{\beta}(\vec{f},t_{2}) - G^{\beta}(\vec{g},t_{1}) \right\|_{H^{k'}} & \leq C \left( \| f - g \|_{H^{\max \left( k' +3, \frac{7}{2}+ \right)}}
+ |t_{2} - t_{1}| \right)
\cdot
\label{Eqn:DiffEstFDeriv2}
}

\end{itemize}
\label{lem:EstDerivF}
\end{lem}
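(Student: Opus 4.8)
These are Moser/composition estimates for the $\mathcal C^{\infty}$ function $G$ evaluated on the tuple $\vec f=(\p_x^3 f,\p_x^2 f,\p_x f,f,x)$; the three derivatives carried by $\vec f$ are responsible for the ``$+3$'' in the Sobolev indices and for the thresholds $H^{9/2+}$ and $H^{7/2+}$ in the hypotheses. I would first reduce the two ``power'' estimates to the two ``plain'' ones. For (\ref{Eqn:EstFDeriv2}): applying (\ref{Eqn:EstFDeriv}) with $k'=3/2+$ gives $\|G(\vec f,t)\|_{H^{3/2+}}\lesssim_{K}1$, and since also $\inf_{x\in\T}|G(\vec f,t)(x)|\ge\bar{\delta}$, estimate (\ref{Eqn:DerivPower}) of Lemma \ref{lem:EstDerivOp} applies to the function $G(\vec f,t)$ and yields $\|G^{\beta}(\vec f,t)\|_{H^{k'}}\lesssim_{\bar{\delta},K}1+\|G(\vec f,t)\|_{H^{k'}}$; then (\ref{Eqn:EstFDeriv}) again bounds $\|G(\vec f,t)\|_{H^{k'}}$ by $\lesssim_{K}1+\|f\|_{H^{k'+3}}$. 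For (\ref{Eqn:DiffEstFDeriv2}): choose $\psi\in\mathcal C^{\infty}(\R)$ coinciding with $s\mapsto s^{\beta}$ on $\{|s|\ge\bar{\delta}/2\}$; since $(\theta,x)\mapsto G(\overrightarrow{h_{\theta}},t)(x)$ is continuous on the connected set $[0,1]\times\T$ and is $\ge\bar{\delta}$ in absolute value, it has constant sign, hence $G^{\beta}=\psi\circ G$ at every point occurring in the estimate and $\psi\circ G$ is a genuine $\mathcal C^{\infty}$ function of $(\vec\omega,t)$; applying (\ref{Eqn:DiffEstFDeriv}) to $\psi\circ G$ gives (\ref{Eqn:DiffEstFDeriv2}), the constant now also depending on $\psi$, i.e.\ on $\bar{\delta}$. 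So it suffices to prove (\ref{Eqn:EstFDeriv}) and (\ref{Eqn:DiffEstFDeriv}).

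For (\ref{Eqn:EstFDeriv}): since $\|f\|_{H^{9/2+}}\le K$, Sobolev embedding gives $\|\p_x^{j}f\|_{L^{\infty}}\lesssim\|f\|_{H^{j+1/2+}}\lesssim_{K}1$ for $0\le j\le 3$, so for $t\in I$ (recall $|I|\le1$) the argument of $G$ ranges over a fixed compact set, on which $G$ and all its partials are bounded; in particular $\|G(\vec f,t)\|_{L^{2}}\lesssim_{K}1$ and $\|\p_{\omega_p}G(\vec f,t)\|_{L^{\infty}}\lesssim_{K}1$ for $-1\le p\le 3$. I would then prove (\ref{Eqn:EstFDeriv}), for all smooth $G$ simultaneously, by induction on $\lceil k'\rceil$. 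Base case $k'\in[0,1]$: for $k'=0$ this is the $L^{2}$ bound, while for $k'\in(0,1)$ a standard fractional chain rule (concretely, via the Gagliardo seminorm characterization of $\dot H^{k'}$ together with $|G(\vec f,t)(x)-G(\vec f,t)(y)|\lesssim_{K}\sum_{j=0}^{3}|\p_x^{j}f(x)-\p_x^{j}f(y)|+d_{\T}(x,y)$) gives $\|G(\vec f,t)\|_{\dot H^{k'}}\lesssim_{K}1+\sum_{j=0}^{3}\|\p_x^{j}f\|_{\dot H^{k'}}\lesssim_{K}1+\|f\|_{H^{k'+3}}$, and $k'=1$ is covered by the inductive step. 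Inductive step: assume (\ref{Eqn:EstFDeriv}) holds for all smooth $G$ and all $k'\le r$, and let $r\le k'\le r+1$; using $\|h\|_{H^{k'}}\approx\|h\|_{L^{2}}+\|\p_x h\|_{H^{k'-1}}$ (valid as $k'\ge1$) and the chain rule $\p_x[G(\vec f,t)]=\sum_{p=-1}^{3}\p_{\omega_p}G(\vec f,t)\,\p_x^{p+1}f$ (with $\p_x^{0}f:=1$), Lemma \ref{Lem:prod} bounds each summand in $H^{k'-1}$ by $\|\p_{\omega_p}G(\vec f,t)\|_{H^{k'-1}}\|\p_x^{p+1}f\|_{H^{1/2+}}+\|\p_{\omega_p}G(\vec f,t)\|_{H^{1/2+}}\|\p_x^{p+1}f\|_{H^{k'-1}}$. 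Here $\|\p_x^{p+1}f\|_{H^{1/2+}}=\|f\|_{H^{p+3/2+}}\le\|f\|_{H^{9/2+}}\le K$ --- this is where the hypothesis $\|f\|_{H^{9/2+}}\le K$ is used, the worst case being $p=3$, i.e.\ $\p_x^{4}f$ --- while $\|\p_x^{p+1}f\|_{H^{k'-1}}=\|f\|_{H^{k'+p}}\le\|f\|_{H^{k'+3}}$, and the factors $\p_{\omega_p}G(\vec f,t)$ are controlled in $H^{k'-1}$ (by $\lesssim_{K}1+\|f\|_{H^{k'+2}}$) and in $H^{1/2+}$ (by $\lesssim_{K}1$) via the inductive hypothesis applied to the smooth function $\p_{\omega_p}G$. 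Summing over $p$ gives $\|\p_x[G(\vec f,t)]\|_{H^{k'-1}}\lesssim_{K}1+\|f\|_{H^{k'+3}}$, hence (\ref{Eqn:EstFDeriv}) on $[r,r+1]$.

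For (\ref{Eqn:DiffEstFDeriv}): split $G(\vec f,t_{2})-G(\vec g,t_{1})=\bigl(G(\vec f,t_{2})-G(\vec f,t_{1})\bigr)+\bigl(G(\vec f,t_{1})-G(\vec g,t_{1})\bigr)$. The first bracket equals $\int_{t_{1}}^{t_{2}}\p_t G(\vec f,s)\,ds$, so by (\ref{Eqn:EstFDeriv}) applied to the smooth function $\p_t G$ and the hypothesis $\|f\|_{H^{k'+3}}\le K$ it is $\lesssim_{K}|t_{2}-t_{1}|$. For the second bracket, with $h_{\theta}:=\theta f+(1-\theta)g$ one has $G(\vec f,t_{1})-G(\vec g,t_{1})=\int_{0}^{1}\sum_{j=0}^{3}\p_{\omega_j}G(\overrightarrow{h_{\theta}},t_{1})\,\p_x^{j}(f-g)\,d\theta$ (the $\omega_{-1}=x$ slot is common to $f$ and $g$, so it drops out); Lemma \ref{Lem:prod} bounds each summand in $H^{k'}$ using $\|\p_x^{j}(f-g)\|_{H^{1/2+}}=\|f-g\|_{H^{j+1/2+}}\le\|f-g\|_{H^{7/2+}}$, $\|\p_x^{j}(f-g)\|_{H^{k'}}=\|f-g\|_{H^{k'+j}}\le\|f-g\|_{H^{k'+3}}$, and $\|\p_{\omega_j}G(\overrightarrow{h_{\theta}},t_{1})\|_{H^{\max(k',1/2+)}}\lesssim_{K}1$ (from (\ref{Eqn:EstFDeriv}) applied to $\p_{\omega_j}G$, since $\|h_{\theta}\|_{H^{9/2+}}\le K$ and $\|h_{\theta}\|_{H^{k'+3}}\le K$ by the triangle inequality). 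This yields $\|G(\vec f,t_{1})-G(\vec g,t_{1})\|_{H^{k'}}\lesssim_{K}\|f-g\|_{H^{\max(k'+3,7/2+)}}$, and adding the two contributions gives (\ref{Eqn:DiffEstFDeriv}). Throughout, the constants inherit from Lemmas \ref{Lem:prod} and \ref{lem:EstDerivOp} the continuity and monotonicity in $K$ and $\bar{\delta}$ needed downstream.

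The step I expect to be the main obstacle is the low-regularity endpoint of (\ref{Eqn:EstFDeriv}): the clean unit-step induction only starts at $k'=1$, so the range $k'\in(0,1)$ (and the corresponding endpoint of (\ref{Eqn:DiffEstFDeriv})) must be handled by a genuine fractional composition estimate rather than by the product rule, and one has to keep the Sobolev index sharp --- $k'+3$, respectively $\max(k'+3,7/2+)$ --- rather than losing derivatives; this is exactly why the hypotheses are posed at the thresholds $H^{9/2+}$ and $H^{7/2+}$. Everything else is bookkeeping with Lemmas \ref{Lem:prod} and \ref{lem:EstDerivOp}.
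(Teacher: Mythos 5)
Your proposal is correct, and it reproduces the paper's core mechanism for \eqref{Eqn:EstFDeriv} and \eqref{Eqn:DiffEstFDeriv} (induction over unit steps in $k'$, chain rule plus Lemma \ref{Lem:prod}, control of the coefficient functions $\partial_{\omega_p}G$ by the inductive hypothesis, and a first-order expansion along $h_{\theta}$), but it diverges from the paper in three places. First, in the base case $k'\in(0,1)$ you invoke a genuine fractional chain rule via the Gagliardo seminorm; the paper avoids this entirely by simply bounding $\|G(\vec{f},t)\|_{H^{1}}$ by a constant depending on $K$ (see \eqref{Eqn:GInducInit}), which suffices because for $k'\le 1$ the right-hand side $C(K)(1+\|f\|_{H^{k'+3}})$ already absorbs everything through $\|f\|_{H^{4}}\le\|f\|_{H^{\frac{9}{2}+}}\le K$ --- so the step you flag as ``the main obstacle'' is in fact unnecessary, though your route is valid. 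Second, for \eqref{Eqn:EstFDeriv2} you reduce to \eqref{Eqn:EstFDeriv} through \eqref{Eqn:DerivPower} of Lemma \ref{lem:EstDerivOp} applied to the composed function $G(\vec{f},t)$, whereas the paper reruns the induction for $G^{\beta}$ directly; your reduction is cleaner and buys you the monotone dependence on $(\bar{\delta},K)$ for free from Remark \ref{Rem:EstDerivOp}. Third, for the difference estimates you split into a time bracket and a space bracket (and, for \eqref{Eqn:DiffEstFDeriv2}, replace $s^{\beta}$ by a smooth truncation $\psi$ and apply \eqref{Eqn:DiffEstFDeriv} to $\psi\circ G$), while the paper interpolates jointly in $(f,t)$ along $(\overrightarrow{h_{\theta}},t_{\theta})$ and, for the power case, expands directly with $G^{\beta-1}(\overrightarrow{h_{\theta}},t_{\theta})$ factors controlled by \eqref{Eqn:EstFDeriv2}; the two are equivalent in strength, your $\psi$-trick being slightly more economical (it only needs the lower bound to identify $\psi\circ G=G^{\beta}$ at the two endpoints, plus the constant-sign observation), with the small caveat that the constant-sign argument should be run on $([0,1]\times\T$ at fixed $t)$ \emph{and} on $(I\times\T$ at fixed $\theta)$ so that $G(\vec{f},t_{2})$ and $G(\vec{g},t_{1})$ lie in the same component of $\{|s|\ge\bar{\delta}\}$, and that the resulting constant also depends on $\beta$ and $\bar{\delta}$ through $\psi$, exactly as in the paper.
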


\begin{rem}
Regarding the constants in Lemma \ref{lem:EstDerivF}, we refer to Remark \ref{Rem:EstDerivOp}.
\label{Rem:EstDerivF}
\end{rem}

\begin{proof}
We prove Lemma \ref{lem:EstDerivF} by an induction process. \\
\\
First we prove (\ref{Eqn:EstFDeriv}). \\
Let $ 0 \leq k' \leq 1$. The H\"older inequality, the Sobolev embedding $H^{\frac{1}{2}+} \hookrightarrow L^{\infty}$, and the composition rule imply that

\EQQARRLAB{
\left\| G(\vec{f},t) \right\|_{L^{2}} & \lesssim  \sup \limits_{| (\vec{\omega},t) | \lesssim \langle K \rangle }
\left| G(\vec{\omega},t) \right| \lesssim 1, \; \text{and} \\
\left\| \p_x  \left( G(\vec{f},t) \right)  \right\|_{L^{2}} & \lesssim
 \sup \limits_{\substack{\gamma \in \N^{6}: |\gamma| \leq 1 \\ |(\vec{\omega},t)| \lesssim \langle K \rangle }}
 \left| \p^{\gamma} G(\vec{\omega},t) \right|   \langle \| f \|_{H^{4}} \rangle \lesssim 1 \cdot
\label{Eqn:GInducInit}
}
Hence (\ref{Eqn:EstFDeriv}) holds. \\
Let $r' \in \N^{*}$. Assume that (\ref{Eqn:EstFDeriv}) holds for all $ 0 \leq  k' \leq r $ and for all smooth functions $G$. The induction assumption and
Lemma \ref{Lem:prod} yield

\EQQARRLAB{
\left\| \p_x  \left( G(\vec{f},t) \right) \right\|_{H^{k'-1}} & \lesssim
\sup \limits_{\substack{\gamma \in \N^{6}: |\gamma| \leq 1 \\ |(\vec{\omega},t)| \lesssim \langle K \rangle }}
\| \p^{\gamma} G(\vec{\omega},t) \|_{H^{\frac{1}{2}+}}   \| f \|_{H^{k'+3}} \\
& + \sup \limits_{\substack{\gamma \in \N^{6}: |\gamma| \leq 1 \\ |(\vec{\omega},t)| \lesssim \langle K \rangle }}
\| \p^{\gamma} G(\vec{\omega},t) \|_{H^{k'-1}} \langle \| f \|_{H^{\frac{9}{2}+}} \rangle \\
& \leq C \left( 1 + \| f \|_{H^{k'+3}} \right)
\label{Eqn:GInduc}
}
Hence (\ref{Eqn:EstFDeriv}) holds for $ r \leq k' \leq r + 1 $. \\
\\
The proof of (\ref{Eqn:EstFDeriv2}) is very similar to that (\ref{Eqn:EstFDeriv}). Since
$ \inf \limits_{x \in \T} \left|  G(\vec{f},t)(x) \right| \geq \bar{\delta}$ this implies that $G$, $G^{\beta}$, and $G^{\beta-1}$ are $\mathcal{C}^{\infty}-$
functions. Hence (\ref{Eqn:EstFDeriv2}) holds for $0 \leq k' \leq 1$  (resp. $r \leq k' \leq r +1$) by slightly modifying (\ref{Eqn:GInducInit})
(resp. (\ref{Eqn:GInduc})). \\
\\
Next we prove (\ref{Eqn:DiffEstFDeriv}). Let $t_{\theta} := \theta t_{1} + (1- \theta) t_{2} $.  Write

\EQQARR{
G (\vec{f},t_{2})  - G (\vec{g},t_{1}) = \int_{0}^{1}
\left[
\begin{array}{l}
\sum \limits_{i = 0}^{3}
\p_{\omega_{i}} G \left( \overrightarrow{h_{\theta}}, t_{\theta}  \right)( \p_{x_i} f - \p_{x_i} g ) + \\
\p_{t} G \left( \overrightarrow{h_{\theta}} , t_{\theta} \right) (t_{2} - t_{1})
\end{array}
\right]
\; d \theta
}
We get from (\ref{Eqn:EstFDeriv}) and Lemma \ref{Lem:prod}

\EQQARR{
\left\| G (\vec{f},t_{2})  - G (\vec{g},t_{1}) \right\|_{H^{k'}} & \lesssim
\sup \limits_{\theta \in [0,1]}   \left\| \p_{t} G \left( \overrightarrow{h_{\theta}}, \ t_{\theta}  \right) \right\|_{H^{k'}} |t_{2} - t_{1}|
+ \sup  \limits_{\substack{ i \in \{0,..,3 \} \\  \theta \in [0,1] } } \left\| \p_{\omega_i} G \left(  \overrightarrow{h_{\theta}}, t_{\theta} \right) \right\|_{H^{k'}}
\| f - g \|_{H^{\frac{7}{2}+}} \\
& + \sup  \limits_{\substack{ i \in \{0,..,3 \} \\  \theta \in [0,1] } } \left\| \p_{\omega_i} G \left(  \overrightarrow{h_{\theta}},
t_{\theta} \right)
\right\|_{H^{\frac{1}{2}+}} \| f - g \|_{H^{k'+3}} \\
& \lesssim ( 1 + \| f \|_{H^{k'+3}} + \| g \|_{H^{k'+3}} ) \left( \| f - g \|_{H^{\frac{7}{2}+}} + |t_{2} - t_{1}| \right) \\
& + ( 1 + \| f \|_{H^{\frac{7}{2}+}} + \| g \|_{H^{\frac{7}{2}+}} ) \| f - g \|_{H^{k'+3}} \\
& \leq C \left( \| f - g \|_{H^{\max \left( k' +3, \frac{7}{2}+ \right)}} + |t_{2}- t_{1}| \right)
}
Finally we prove (\ref{Eqn:DiffEstFDeriv2}). We have

\EQQARR{
\left\| G^{\beta} ( \vec{f},t_{2})  - G^{\beta} (\vec{g},t_{1}) \right\|_{H^{k'}}  \\
\lesssim
\sup \limits_{\theta \in [0,1]}
\left(
\begin{array}{l}
\left\| G^{\beta -1}  \left( \overrightarrow{h_{\theta}} , t_{\theta}  \right) \right\|_{H^{\frac{1}{2}+}}
\left\| \p_{t} G \left(  \overrightarrow{h_{\theta}},   t_{\theta} \right) \right\|_{H^{k'}}
+  \left\| G^{\beta -1}  \left( \overrightarrow{h_{\theta}}, t_{\theta} \right) \right\|_{H^{k'}}
\left\| \p_{t} G \left(  \overrightarrow{h_{\theta}}, t_{\theta} \right) \right\|_{H^{\frac{1}{2}+}}
\end{array}
\right) |t_{2}- t_{1} | \\
+ \sup \limits_{\substack{ i \in \{-1,..,3 \} \\  \theta \in [0,1] } }
\left\| G^{\beta-1} \left( \overrightarrow{h_{\theta}}, t_{\theta} \right)  \right\|_{H^{\frac{1}{2}+}}
\left\| \p_{\omega_i} G \left( \overrightarrow{h_{\theta}},t_{\theta} \right) \right\|_{H^{\frac{1}{2}+}}
\| f - g \|_{H^{k'+3}} \\
+ \sup \limits_{\substack{ i \in \{-1,..,3 \} \\  \theta \in [0,1] } }
\left\| G^{\beta-1} \left( \overrightarrow{h_{\theta}},t_{\theta} \right)  \right\|_{H^{k'}}
\left\| \p_{\omega_i} G \left( \overrightarrow{h_{\theta}}, t_{\theta} \right) \right\|_{H^{\frac{1}{2}+}}
\| f - g \|_{H^{\frac{7}{2}+}} \\
+ \sup \limits_{\substack{ i \in \{-1,..,3 \} \\  \theta \in [0,1] } }
\left\| G^{\beta-1} \left( \overrightarrow{h_{\theta}}, t_{\theta}  \right)  \right\|_{H^{\frac{1}{2}+}}
\left\| \p_{\omega_i} G \left( \overrightarrow{h_{\theta}}, t_{\theta} \right) \right\|_{H^{k'}}
\| f - g \|_{H^{\frac{7}{2}+}} \\
\lesssim \left(  1 +  \| f \|_{H^{\max \left( k' +3, \frac{7}{2}+ \right)}} + \| g \|_{H^{\max \left( k' +3, \frac{7}{2}+ \right)}} \right)^{2}
\left( \| f - g \|_{H^{\max \left( k' +3, \frac{7}{2}+ \right)}} + |t_{2} - t_{1}| \right) \\
\lesssim \| f - g \|_{H^{\max \left( k' +3, \frac{7}{2}+ \right)}} + |t_{2} - t_{1}| \cdot
}

\end{proof}

The next lemma below allows to prove estimates involving the derivatives of $\Phi_{k'}(f,t)$. More precisely:

\begin{lem}
Let $I \subset \mathbb{R}$ be an interval such that $|I| \leq 1$. Let $(k',K) \in \mathbb{R}^{+} \times \mathbb{R}^{+}$. The following holds:

\begin{enumerate}

\item  Let $f$ be a function. Assume that $ \| f \|_{H^{\frac{9}{2}+}} \leq K $ and that for all $t \in I$ $ \inf \limits_{x \in \T} | \p_{\omega_3} F (\overrightarrow{f},t)(x) | \geq \bar{\delta} $. Then there exists $C := C (\bar{\delta}, K) > 0 $ such that for all $t \in I$

\EQQARRLAB{
\left\| \Phi_{k'}(f,t) \right\|_{H^{k'}} & \leq C \left( 1 + \| f \|_{H^{k'+3}} \right), \; \text{and} \; \| \Phi_{k'}^{-1} (f,t) \|_{H^{k'}} \leq C \left( 1 + \| f \|_{H^{k'+3}} \right) \cdot
\label{Eqn:Phifkpr}
}

\item  Define $\overline{k'}$ to be the following number:

\EQQARR{
k' \geq 3: & \overline{k'} := k' +3 \\
1 \leq k' < 3: & \overline{k'} := \max \left( \frac{9}{2}+, k'+3 \right) \\
0 \leq k' < 1: & \overline{k'} := \frac{9}{2}+
}
Let $f$ and $g$ be two functions. Assume that  $ \| f \|_{H^{\overline{k'}}} \leq K $  and $ \| g \|_{H^{\overline{k'}}} \; \leq K $. Assume also that
there exists $\bar{\delta} > 0 $ such that for all $t \in I$ and for all $\theta \in  [0,1]$ $\inf \limits_{x \in \T}
  \left| \p_{\omega_3} F \left( \overrightarrow{h_{\theta}},t \right)(x)  \right| \geq \bar{\delta} $ with $h_{\theta}:= \theta f + (1- \theta) g$. Then there exists $C := C (\bar{\delta}, K) $ such that for all $t_{1}, t_{2} \in I$ the following holds:

\EQQARRLAB{
\left\| \Phi_{k'}(f,t_{2}) - \Phi_{k'}(g,t_{1}) \right\|_{H^{k'}} & \leq  C \left( \| f - g \|_{H^{\overline{k'}}} + |t_{2} - t_{1}| \right), \; \text{and} \; \\
& \\
\left\| \Phi_{k'}^{-1}(f,t_{2}) - \Phi_{k'}^{-1}(g,t_{1}) \right\|_{H^{k'}} &  \leq C \left( \| f - g \|_{H^{\overline{k'}}} + |t_{2} - t_{1}| \right) \cdot
\label{Eqn:DiffPhifkpr}
}

\end{enumerate}

\label{lem:gauge}
\end{lem}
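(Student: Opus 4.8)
The plan is to write both $\Phi_{k'}(f,t)$ and $\Phi_{k'}^{-1}(f,t)$ as a product of an explicit power factor and an exponential-of-an-antiderivative factor, and to estimate each factor separately using the preliminary lemmas. Concretely, since $\p_{\omega_3} F(\vec{f},t)$ has constant sign we may write $|\p_{\omega_3} F(\vec{f},t)|^{\frac{2k'-15}{6}} = A(f,t)$ with $A(f,t) := \big( (\p_{\omega_3} F(\vec{f},t))^{2} \big)^{\frac{2k'-15}{12}}$, so that $\Phi_{k'}(f,t) = A(f,t)\, B(f,t)$ and $\Phi_{k'}^{-1}(f,t) = A(f,t)^{-1} B(f,t)^{-1}$, where $B(f,t) := e^{\int_{0}^{x} h(f,t)(x')\, dx'}$, $h(f,t) := \tfrac{1}{3}\big( \tfrac{P(f,t)}{\p_{\omega_3} F(\vec{f},t)} - \big[ \tfrac{P(f,t)}{\p_{\omega_3} F(\vec{f},t)} \big]_{ave} \big)$, and $B(f,t)^{-1}$ is again an exponential of an antiderivative, namely of $-h(f,t)$. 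The point of squaring inside $A$ is that $(\p_{\omega_3}F)^{2} \geq \bar{\delta}^{2} > 0$ is of the form $G(\vec{f},t)$ for a smooth $G$ bounded away from zero, so Lemma \ref{lem:EstDerivF} applies directly to $A$ and $A^{-1}$.

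To prove item (1), I would first record the auxiliary bound $\| P(f,t) \|_{H^{m}} \lesssim 1 + \| f \|_{H^{m+4}}$: expanding the definition of $P$ and applying Lemma \ref{Lem:prod} together with (\ref{Eqn:EstFDeriv}), the worst term is $\p^{2}_{\omega_3 \omega_3} F(\vec{f},t)\, \p_{x}^{4} f$, which forces the loss of four derivatives. Then (\ref{Eqn:DerivFrac}) gives $\| h(f,t) \|_{H^{m}} \lesssim 1 + \| f \|_{H^{m+4}}$ (the subtracted average is harmless, as $|[\cdot]_{ave}| \lesssim \| \cdot \|_{L^{2}}$), so (\ref{Eqn:ExpInt}) yields $\| B(f,t) \|_{H^{k'}} \lesssim 1 + \| h(f,t) \|_{H^{\max(k'-1,0)}} \lesssim 1 + \| f \|_{H^{k'+3}}$ — the single derivative gained by integration is exactly what turns the $+4$ into the claimed $+3$. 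Meanwhile (\ref{Eqn:EstFDeriv2}) applied with $G := (\p_{\omega_3}F)^{2}$ gives $\| A(f,t) \|_{H^{k'}}, \| A(f,t)^{-1} \|_{H^{k'}} \lesssim 1 + \| f \|_{H^{k'+3}}$, and also $\| A(f,t) \|_{H^{\frac{1}{2}+}}, \| B(f,t) \|_{H^{\frac{1}{2}+}} \lesssim 1$. A final application of Lemma \ref{Lem:prod}, estimating the top factor in $H^{k'}$ and the other in $H^{\frac{1}{2}+}$, then gives (\ref{Eqn:Phifkpr}) for both $\Phi_{k'}$ and $\Phi_{k'}^{-1}$.

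For item (2), I would decompose $\Phi_{k'}(f,t_2) - \Phi_{k'}(g,t_1) = A(f,t_2)\big( B(f,t_2) - B(g,t_1) \big) + \big( A(f,t_2) - A(g,t_1) \big) B(g,t_1)$ (and analogously for the reciprocals), estimate each product by Lemma \ref{Lem:prod}, and use the bounds of item (1) for the undifferenced factors. The factor $A(f,t_2) - A(g,t_1)$ is controlled by (\ref{Eqn:DiffEstFDeriv2}) applied with $G := (\p_{\omega_3}F)^{2}$ — whose positivity along the segment $h_{\theta} = \theta f + (1-\theta) g$ is precisely the standing hypothesis — giving a bound by $\| f - g \|_{H^{\max(k'+3, \frac{7}{2}+)}} + |t_2 - t_1|$. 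For $B(f,t_2) - B(g,t_1)$ I would invoke (\ref{Eqn:ExpIntDiff}) with integrands $h(f,t_2)$ and $h(g,t_1)$; this reduces matters to bounding $\| h(f,t_2) - h(g,t_1) \|_{H^{\max(k'-1,0)}}$, which I would handle by writing the difference of quotients as $\frac{P(f,t_2) - P(g,t_1)}{\p_{\omega_3}F(\vec{f},t_2)} + P(g,t_1)\, \frac{\p_{\omega_3}F(\vec{g},t_1) - \p_{\omega_3}F(\vec{f},t_2)}{\p_{\omega_3}F(\vec{f},t_2)\, \p_{\omega_3}F(\vec{g},t_1)}$ and estimating each piece via Lemma \ref{Lem:prod}, (\ref{Eqn:DerivFrac})--(\ref{Eqn:DerivFrac2}), (\ref{Eqn:EstFDeriv2}) for the reciprocals of $\p_{\omega_3}F$, and (\ref{Eqn:DiffEstFDeriv}) applied to $\p_{\omega_2}F$, the $\p^{2}_{\omega_3 \omega_p}F$, and $\p_{\omega_3}F$; expanding $P$ and telescoping each product gives $\| P(f,t_2) - P(g,t_1) \|_{H^{m}} \lesssim \| f - g \|_{H^{\max(m+4, \frac{9}{2}+)}} + |t_2 - t_1|$. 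Combining, everything is bounded by $\| f - g \|_{H^{\overline{k'}}} + |t_2 - t_1|$ with a constant depending on $(\bar{\delta}, K)$.

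The main obstacle is the index bookkeeping that produces the three-regime definition of $\overline{k'}$: one must check that in each regime every smallness hypothesis ($H^{\frac{9}{2}+}$, $H^{\frac{7}{2}+}$, $H^{\frac{3}{2}+}$, $H^{\frac{1}{2}+}$) and every $H^{k'-1}$ boundedness hypothesis demanded by (\ref{Eqn:ExpIntDiff}) and by the quotient estimates is met from the single standing bound $\| f \|_{H^{\overline{k'}}}, \| g \|_{H^{\overline{k'}}} \leq K$, and that the loss incurred — the $+4$ from $P$ offset by the $-1$ from the antiderivative, the $\max(\cdot,\frac{7}{2}+)$ and $\max(\cdot,\frac{9}{2}+)$ from (\ref{Eqn:DiffEstFDeriv}) applied to first- and second-order $\omega$-derivatives of $F$, and the $H^{\frac{1}{2}+}$ cross-terms from Lemma \ref{Lem:prod} — is never worse than $\overline{k'}$. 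No new idea beyond the product/power/quotient/exponential toolkit of Lemmas \ref{Lem:prod}, \ref{lem:EstDerivOp}, \ref{lem:EstDerivF} is needed; the only conceptual observation is that the antiderivative in the definition of $\Phi_{k'}$ buys back exactly the one derivative lost in forming $P$.
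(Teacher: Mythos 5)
Your proposal is correct and follows essentially the same route as the paper: the same factorization of $\Phi_{k'}$ into a power factor and an exponential-of-antiderivative factor, the same auxiliary bounds $\| P(f,t) \|_{H^{m}} \lesssim 1 + \| f \|_{H^{m+4}}$ and $\| P(f,t_2) - P(g,t_1) \|_{H^{m}} \lesssim \| f-g \|_{H^{\max(m+4,\frac{9}{2}+)}} + |t_2 - t_1|$, the same quotient decomposition for the difference of $\frac{P}{\p_{\omega_3}F}$, and the same reliance on Lemmas \ref{Lem:prod}, \ref{lem:EstDerivOp}, and \ref{lem:EstDerivF}. The only (harmless) cosmetic difference is writing $|\p_{\omega_3}F|^{\frac{2k'-15}{6}}$ as $\big((\p_{\omega_3}F)^{2}\big)^{\frac{2k'-15}{12}}$ rather than invoking sign constancy as the paper does.
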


\begin{rem}
Regarding the constants in Lemma \ref{lem:gauge}, we refer to Remark \ref{Rem:EstDerivOp}.
\label{Rem:Gauge}
\end{rem}

\begin{proof}
We only prove the first estimate of (\ref{Eqn:Phifkpr}): the proof of the second estimate is similar and therefore omitted. Let $m \geq 0$. We have

\EQQARR{
\left\| \left[ \frac{P(f,t)}{\p_{\omega_3} F(\vec{f},t)} \right]_{ave} \right\|_{L^{2}} & \lesssim
\left\| \left[ \frac{P(f,t)}{\p_{\omega_3} F(\vec{f},t)} \right]_{ave} \right\|_{L^{\infty}} \lesssim  \left\|  \frac{P(f,t)}{\p_{\omega_3} F(\vec{f},t)}  \right\|_{L^{1}}
\lesssim  \left\|  \frac{P(f,t)}{\p_{\omega_3} F(\vec{f},t)}  \right\|_{L^{2}}
}
Hence in view of the Sobolev embedding  $H^{\frac{1}{2}+} \hookrightarrow L^{\infty}$ we have

\EQQARR{
\left\| \left[ \frac{P(f,t)}{\p_{\omega_3} F(\vec{f},t)} \right]_{ave} \right\|_{L^{2}} & \lesssim \sup \limits_{\substack{\gamma \in \N^{6}: |\gamma| \leq 2
\\ | (\vec{\omega},t)| \lesssim  \langle K \rangle}} \left| \p^{\gamma} F(\vec{\omega},t) \right|
\| f \|_{H^{4}} \lesssim 1
}
Hence, using also $\p_{x}^{n} \left[ \frac{P(f,t)}{\p_{\omega_3}F (\vec{f},t)} \right]_{ave} = 0$ for  $n \in \N^{*}$, we get

\EQQARRLAB{
 \left\| \left[ \frac{P(f,t)}{\p_{\omega_3} F (\vec{f},t)} \right]_{ave} \right\|_{H^{m}} \lesssim 1.
 \label{Eqn:EstFracAvPf}
 }
Lemmas \ref{Lem:prod} and \ref{lem:EstDerivF} yield

\EQQARRLAB{
\| P(f,t) \|_{H^{m}} & \lesssim \sup  \limits_{\gamma \in \N^{6}: |\gamma| \leq 2 }
 \left\| \p^{\gamma} F (\vec{f},t)  \right\|_{H^{m}} \langle \| f \|_{H^{\frac{9}{2}+}} \rangle  \\
 & + \sup  \limits_{\gamma \in \N^{6}: |\gamma| \leq 2 }
 \left\| \p^{\gamma} F (\vec{f},t)  \right\|_{H^{\frac{1}{2}+}} \| f \|_{H^{m+4}} \\
 & \lesssim 1 + \| f \|_{H^{m+4}} \cdot
 \label{Eqn:EstPf}
}
We get from Lemma \ref{lem:EstDerivOp}

\EQQARRLAB{
\left\| \frac{P(f,t)}{\p_{\omega_3} F (\vec{f},t) } \right\|_{H^{m}} &  \lesssim 1 + \| P(f,t) \|_{H^{m}} + \| \p_{\omega_3} F (\vec{f},t) \|_{H^{m}}  \\
 & \lesssim 1 + \| f \|_{H^{m+4}} \cdot
\label{Eqn:EstFracPf}
}
Let $X :=  e^{\int_{0}^{x} \frac{1}{3} \left( \frac{P(f,t)}{\p_{\omega_3} F(\vec{f},t)} - \left[ \frac{P (f,t)}{\p_{\omega_3} F (\vec{f},t)}  \right]_{ave} \right) \; dx'}$. We have

\EQQARR{
\| \Phi_{k'} (f,t) \|_{H^{k'}} & \lesssim \left\| (\p_{\omega_3} F)^{\frac{2k'-15}{6}} (\vec{f},t) \right\|_{H^{k'}}
\| X \|_{H^{\frac{1}{2}+}} +  \| X \|_{H^{k'}} \left\|  (\p_{\omega_3} F)^{\frac{2k'-15}{6}} (\vec{f},t)  \right\|_{H^{\frac{1}{2}+}} \\
& \lesssim  \left( 1 + \| f \|_{H^{k'+3}} \right) \| X \|_{H^{\frac{1}{2}+}} + \| X \|_{H^{k'}}  \\
& \lesssim 1 + \| f \|_{H^{k'+3}} \cdot
}
Hence (\ref{Eqn:Phifkpr}) holds. \\
\\
Next we prove the first estimate of (\ref{Eqn:DiffPhifkpr}): indeed the proof of the second estimate is similar and therefore left to the
reader. We have

\EQQARR{
 \| \Phi_{k'} (f,t_{2}) - \Phi_{k'} (g,t_{1}) \|_{H^{k'}} & \lesssim \| \triangle Y(t_{1},t_{2}) \|_{H^{k'}} \| Z(\vec{f},t_{2}) \|_{H^{\frac{1}{2}+}}
+ \| \triangle Y(t_{1},t_{2}) \|_{H^{\frac{1}{2}+}} \| Z(\vec{f},t_{2}) \|_{H^{k'}}  \\
& + \| Y(\vec{g},t_{1}) \|_{H^{k'}} \| \triangle Z(t_{1},t_{2}) \|_{H^{\frac{1}{2}+}} + \| Y(\vec{g},t_{1}) \|_{H^{\frac{1}{2}+}} \| \triangle Z(t) \|_{H^{k'}},
}
with  $ Y(\vec{f},t) := (\p_{\omega_3} F)^{\frac{2k-15}{6}}(\vec{f},t) $ , $ \triangle Y(t_{1},t_{2}) := Y(\vec{f},t_{2}) - Y(\vec{g},t_{1}) $, \\
$Z(\vec{f},t)$ defined by $ Z (\vec{f},t)(x) :=   e^{\int_{0}^{x} \frac{1}{3} \left( \frac{P(f,t)}{\p_{\omega_3} F(\vec{f},t)} -
\left[ \frac{P(f,t)}{\p_{\omega_3} F (\vec{f},t) }  \right]_{ave} \right) \; d x' } $, and
$\triangle Z(t_1,t_2) := Z(\vec{f},t_2) - Z(\vec{g},t_1) $. Let $m \in \left\{ \frac{1}{2}, k' \right\}$. Lemma \ref{lem:EstDerivF}
shows that $\left\| \triangle Y(t_{1},t_{2}) \right\|_{H^{k'}} \lesssim |t_{2} - t_{1}| + \| f - g \|_{H^{\max \left( k' + 3, \frac{7}{2}+ \right) }}$
and $\left\| \triangle Y(t_{1}, t_{2}) \right\|_{H^{\frac{1}{2}+}} \lesssim |t_{2} - t_{1}| +  \| f - g \|_{H^{\frac{7}{2}+}}$.
We also have $\left\| Y(\vec{g},t_{1}) \right\|_{H^{k'}} \lesssim 1 + \| g \|_{H^{k'+ 3}} $ and
$\left\| Y(\vec{g},t_{1}) \right\|_{H^{\frac{1}{2}+}} \lesssim 1 + \| g \|_{H^{\frac{7}{2}+}} $. We have
$\left\|  Z(\vec{f},t_{2})\right\|_{H^{\frac{1}{2}+}} \lesssim  1$ and
$\left\|  Z(\vec{f},t_{2}) \right\|_{H^{k'}} \lesssim  1$ ( resp.  \; $\left\|  Z(\vec{f},t_{2}) \right\|_{H^{k'}} \lesssim
1 + \| f \|_{H^{k'+3}}$ )
if $k' \leq 1 $ (resp. $k' \geq 1$). It remains to estimate $ \| \triangle  Z(t_{1},t_{2}) \|_{H^{m}}$ for $m \in \left\{ \frac{1}{2}, k' \right\}$.
To this end we prove the following claim: \\
\\
\underline{Claim}: Let $p \geq 0$. Assume that $\| f \|_{H^{ \max \left( \frac{9}{2}+, p+4 \right) }} \leq K$ and
$\| g \|_{H^{\max \left( \frac{9}{2}+, p +3 \right)}} \leq K$. Then

\EQQARRLAB{
\| P(f,t_{2}) - P(g,t_{1}) \|_{H^{p}} & \lesssim \| f - g \|_{H^{\max{ \left( p + 4, \frac{9}{2}+ \right)}}} + |t_{2} - t_{1}|
\label{Eqn:DiffEstP}
}

\begin{proof}

We have $ \| P(f,t_{2}) - P(g, t_{1}) \|_{H^{p}} \leq A + B  $ with

\EQQARR{
 A := \sup  \limits_{\gamma \in \N^{6}: |\gamma| \leq 2 } \left\| \p^{\gamma}  F ( \vec{f},t_{2} ) -  \p^{\gamma} F( \vec{g},t_{1} )  \right\|_{H^{p}}
\sup_{ 0 \leq j \leq 4} \| \p_{x}^{j} f \|_{H^{\frac{1}{2}+}} \\
+  \sup \limits_{\gamma \in \N^{6}: |\gamma| \leq 2 } \left\| \p^{\gamma}  F ( \vec{f},t_{2} ) -  \p^{\gamma} F( \vec{g},t_{1} )  \right\|_{H^{p}}
\sup_{ 0 \leq j \leq 4} \| \p_{x}^{j} f \|_{H^{p}}, ; \text{and}
}

\EQQARR{
B := \sup \limits_{ 0 \leq j \leq 4 } \| \p_{x}^{j} (f-g)  \|_{H^{p}} \sup  \limits_{\gamma \in \N^{6}: |\gamma| \leq 2 }
 \left\| \p^{\gamma}  F ( \vec{g},t_{1}) \right\|_{H^{\frac{1}{2}+}} \\
 + \sup \limits_{ 0 \leq j \leq 4 } \| \p_{x}^{j} (f-g)  \|_{H^{\frac{1}{2}+}}
 \sup  \limits_{\gamma \in \N^{6}: |\gamma| \leq 2 } \left\| \p^{\gamma}  F ( \vec{g},t_{1}) \right\|_{H^{p}} \cdot
}
Lemma \ref{lem:EstDerivF} implies that

\EQQARR{
A \lesssim \| f  - g \|_{H^{\max \left( p + 3, \frac{7}{2}+ \right)}}  \| f \|_{H^{\max \left( \frac{9}{2}+, p \right)} }  + |t_{2} - t_{1}| \\
\lesssim  \| f  - g \|_{H^{\max \left( p + 3, \frac{7}{2}+ \right)}} + | t_{2} - t_{1} | \cdot
}
We also have

\EQQARR{
B \lesssim \| f - g \|_{H^{p+4}}  \left( 1 +  \| g \|_{H^{\frac{7}{2}+}} \right)
+ \| f - g \|_{H^{\frac{9}{2}+}} \left( 1 + \| g \|_{H^{p+3}} \right) \\
\lesssim \| f - g \|_{H^{\max \left( p + 4, \frac{9}{2}+ \right)}} \cdot
}
Hence (\ref{Eqn:DiffEstP}) holds.

\end{proof}
Let $m \geq 1$. Lemma \ref{lem:EstDerivOp} shows that

\EQQARR{
\| \triangle Z(t_{1},t_{2}) \|_{H^{m}} & \lesssim \left\| \frac{P(f,t_{2})}{\p_{\omega_3} F(\vec{f},t_{2})} - \frac{P(g,t_{1})}{\p_{\omega_3} F(\vec{g},t_{1})} \right\|_{H^{m-1}} +
\left\| \left[ \frac{P(f,t_{2})}{\p_{\omega_3} F(\vec{f},t_{2})} - \frac{P(g,t_{1})}{\p_{\omega_3} F(\vec{g},t_{1})}  \right]_{ave} \right\|_{H^{m-1}} \\
& \lesssim \| \triangle Z_{1}(t_{1},t_{2}) \|_{H^{m-1}} + \| \triangle Z_{2}(t_{1},t_{2}) \|_{H^{m-1}},
}
assuming that $\| \triangle Z_1(t_{1},t_{2}) \|_{H^{ \max \left( m-1, \frac{1}{2}+ \right)}} \leq K$ and
$\| \triangle Z_2(t_{1},t_{2}) \|_{H^{ \max \left( m-1, \frac{1}{2}+ \right)}} \leq K $: this will be proved shortly. Observe from
(\ref{Eqn:EstFracPf}) and (\ref{Eqn:EstFracAvPf}) that $\| \triangle Z_1(t_1,t_2) \|_{H^{\frac{1}{2}+}} \lesssim K$ and that
$\| \triangle Z_2(t_1,t_2) \|_{H^{\frac{1}{2}+}} \lesssim K$.
We first estimate $ \| \triangle Z_{2}(t_1,t_2) \|_{H^{m-1}} $. Let $ A :=  \frac{P(f,t_2) - P(g,t_1)}{\p_{\omega_3} F (\vec{f},t_2)}$ and
$B := \frac{P(g,t_1) \left( \p_{\omega_3} F(\vec{g},t_1) - \p_{\omega_3} F(\vec{f},t_2) \right) }{\p_{\omega_3} F(\vec{f},t_2) \p_{\omega_3} F(\vec{g},t_1)} $. We have
(see proof of (\ref{Eqn:EstFracAvPf}))

\EQQARRLAB{
\left\| \triangle Z (t_1,t_2) \right\|_{H^{m-1}} & \lesssim \left\| \frac{P(f,t_2)}{\p_{\omega_3} F (\vec{f},t_2)} -  \frac{P(g,t_1)}{\p_{\omega_3} F (\vec{g},t_1)}    \right\|_{L^{2}}
\lesssim \| A \|_{L^{2}} + \| B \|_{L^{2}} \cdot
\label{Eqn:EstZ2}
}
The above claim implies

\EQQARRLAB{
\| A \|_{L^{2}} & \lesssim \| P(f,t_2) - P(g,t_1) \|_{L^{2}} \lesssim \| f - g \|_{H^{\frac{9}{2}+}} + |t_2- t_1| \cdot
\label{Eqn:EstAL2}
}
We see from  (\ref{Eqn:EstPf}) and Lemma \ref{lem:EstDerivF}

\EQQARRLAB{
\| B \|_{L^{2}} & \lesssim \| P(g,t_{1}) \|_{L^{\infty}}   \| \p_{\omega_3} F(\vec{g},t_{1}) - \p_{\omega_3} F(\vec{f},t_{2}) \|_{L^{2}} \\
& \lesssim  \|  P(g,t_{1}) \|_{H^{\frac{1}{2}+}}  \left( \| f - g \|_{H^{\frac{7}{2}+}} + |t_{2} - t_{1}| \right) \\
& \lesssim \| f - g \|_{H^{\frac{7}{2}+}} + |t_{2} - t_{1}| \cdot
\label{Eqn:EstBL2}
}
Hence $\| \triangle Z_2(t_{1},t_{2}) \|_{H^{m-1}} \lesssim \| f - g \|_{H^{\overline{k'}}}$. We now estimate
$\| \triangle Z_1 (t_{1},t_{2}) \|_{H^{m-1}}$. We have
$\| \triangle Z_1(t_{1},t_{2}) \|_{H^{m-1}} \lesssim \| A \|_{H^{m-1}} + \| B \|_{H^{m-1}} $. Assume that $m \geq 3$.
We use Lemma \ref{Lem:prod}, Lemma \ref{lem:EstDerivOp}, Lemma \ref{lem:EstDerivF}, (\ref{Eqn:EstPf}), and (\ref{Eqn:DiffEstP}).
We have $ \| A \|_{H^{m-1}}  \lesssim \| P(f,t_{2}) - P(g,t_{1}) \|_{H^{m-1}} \lesssim \| f - g \|_{H^{\overline{k'}}} + |t_{2} - t_{1}| $. We have

\EQQARR{
\| B \|_{H^{m-1}} & \lesssim  \left\| P(g,t_{1}) \left( \p_{\omega_3} F(\vec{g},t_{1}) - \p_{\omega_3} F(\vec{f},t_{2}) \right)  \right\|_{H^{m-1}} \\
& \lesssim \| P(g,t_{1}) \|_{H^{m-1}}  \left\| \p_{\omega_3} F(\vec{g},t_{1}) - \p_{\omega_3} F(\vec{f},t_{2}) \right\|_{H^{\frac{1}{2}+}} \\
& + \| P(g,t_{1}) \|_{H^{\frac{1}{2}+}}  \left\| \p_{\omega_3} F(\vec{g},t_{1}) - \p_{\omega_3} F(\vec{f},t_{2}) \right\|_{H^{m-1}} \\
& \lesssim \| f - g \|_{H^{\overline{k'}}} \cdot
}
Hence $ \| \triangle Z_{1}(t_{1},t_{2}) \|_{H^{m-1}} \lesssim \| f- g \|_{H^{\overline{k'}}} $. Assume now that $ 1 \leq m < 3$. We have

\EQQARRLAB{
\| A \|_{H^{m-1}} & \lesssim \| P(f,t_{2}) - P(g,t_{1}) \|_{H^{\max \left( m -1 , \frac{1}{2}+ \right)}} \lesssim \| f - g \|_{H^{\overline{k'}}}
+ |t_{2} - t_{1}| \cdot
\label{Eqn:EstA}
}
We also have

\EQQARRLAB{
\| B \|_{H^{m-1}} & \lesssim \left\| P(g,t_{1}) \left( \p_{\omega_3} F(\vec{g},t_{1}) - \p_{\omega_3} F(\vec{f},t_{2}) \right) \right\|_{H^{\max \left( m-1, \frac{1}{2}+ \right) }} \\
& \lesssim \| P(g,t_{1}) \|_{H^{\max \left( m-1, \frac{1}{2}+ \right) }} \left\| \p_{\omega_3} F(\vec{g},t_{1}) - \p_{\omega_3} F(\vec{f},t_{2}) \right\|_{H^{\frac{1}{2}+}}  \\
& + \| P(g,t_{1}) \|_{H^{\frac{1}{2}+}} \left\| \p_{\omega_3} F(\vec{g},t_{1}) - \p_{\omega_3} F(\vec{f},t_{2}) \right\|_{H^{\max \left( m-1, \frac{1}{2}+ \right) }}  \\
& \lesssim \| f - g \|_{H^{\overline{k'}}} + |t_{2} - t_{1}| \cdot
\label{Eqn:EstB}
}
Hence $\| \triangle Z_1(t_{1},t_{2}) \|_{H^{m}} \lesssim  \| f - g \|_{H^{\overline{k'}}} + |t_{2} - t_{1} |$.\\
Assume now that $0 \leq m \leq 1 $. Lemma \ref{lem:EstDerivOp} shows that

\EQQARR{
\| \triangle Z(t_{1},t_{2}) \|_{H^{m}} & \lesssim \| \triangle Z_1(t_{1},t_{2}) \|_{L^{2}} +  \left\| \triangle Z_2(t_{1},t_{2}) \right\|_{L^{2}},
}
assuming that $ \| \triangle Z_1 (t_{1},t_{2})\|_{H^{\frac{1}{2}+}} \lesssim K$ and $\| \triangle Z_2(t_{1},t_{2}) \|_{H^{\frac{1}{2}+}} \lesssim K$: this will be proved shortly.
From (\ref{Eqn:EstZ2}), (\ref{Eqn:EstAL2}), and (\ref{Eqn:EstBL2}), we see that
$ \| \triangle Z_2(t_{1},t_{2}) \|_{H^{\frac{1}{2}+}} \lesssim \| f - g \|_{H^{\overline{k'}}} $. A similar scheme used in (\ref{Eqn:EstA}) and
(\ref{Eqn:EstB}) shows that $ \| \triangle Z_1(t_{1},t_{2}) \|_{H^{\frac{1}{2}+}} \lesssim \| f - g \|_{H^{\overline{k'}}} $. Hence the assumptions
above are clearly satisfied and moreover

\EQQARR{
\| \triangle Z(t_{1},t_{2}) \|_{H^{m}} & \lesssim \| f - g \|_{H^{\overline{k'}}} + |t_{2} - t_{1}| \cdot
}

\end{proof}

\section{Regularized problem: local well-posedness}
\label{Sec:RegulLocWell}

We consider the following regularized problem (with $\epsilon \in (0,1]$)

\EQQARRLAB{
\p_{t} u + \epsilon \p_{x}^{4} u & = F(\vec{u},t) \cdot
\label{Eqn:Regul}
}
Let $A \in \mathbb{R}$. Let  $e^{- A \p_x^{4} f} $  be the operator such that
$ \widehat{e^{-A \p_x^{4}} f}(n) := e^{- A n^{4}} \hat{f}(n)$ for $n \in \mathbb{Z}$. The following proposition holds:

\begin{prop}
Let $k' \geq k_{0}^{'} >  \frac{9}{2}$. Let $\tilde{\phi} \in H^{k'}$. Then there exists $T_{\epsilon} := T_{\epsilon} ( \tilde{\phi} )  \in (0, \infty]$ and a unique solution
$u \in \mathcal{C} \left( \left[ 0,T_{\epsilon} \right) , H^{k'} \right)$ such that $u$ satisfies the integral equation for (\ref{Eqn:Regul}) with initial data
$u(0):= \tilde{\phi}$ on $[0, T_{\epsilon})$, i.e

\EQQARRLAB{
t \in [ 0, T_{\epsilon} ) : & u(t) = \mathcal{T}(u(t)) :=  e^{- \epsilon t \p_x^{4}} \tilde{\phi} + \int_{0}^{t} e^{- \epsilon (t-t') \p_{x}^{4}} F \left( \overrightarrow{u(t')} ,t' \right) \; dt',
\label{Eqn:Integral}
}
and such that either $(i): \;  \lim \inf_{t \rightarrow T_{\epsilon}} \| u(t) \|_{H^{k'_{0}}}  = \infty $ or
$ (ii): \; T_{\epsilon}  = \infty $ holds. Moreover let $\tilde{\phi}_{n} \in H^{k'}$ and $\tilde{\phi}_{\infty} \in H^{k'}$ be such that  $ \| \tilde{\phi}_{n} - \tilde{\phi}_{\infty} \|_{H^{k'}} \rightarrow 0 $ as $ n \rightarrow \infty $. Let $u_{n} \in \mathcal{C} \left( \left[ 0, \, T_{\epsilon} ( \tilde{\phi}_{n})  \right), H^{k'} \right)$
$\left( \text{resp.} \; u_{\infty} \; \in \mathcal{C} \left( \left[ 0, \, T_{\epsilon}( \tilde{\phi}_{\infty}  ) \right), H^{k'} \right) \right)$ be the solution of (\ref{Eqn:Integral}), replacing $\tilde{\phi}$ with $\tilde{\phi}_{n}$ (resp. $\tilde{\phi}_{\infty}$). Let
$ T \in \left[ 0, \, T_{\epsilon} ( \tilde{\phi}_{\infty} ) \right) $. Then $ \sup \limits_{t \in [0,T] } \| u_n(t) - u_{\infty}(t) \|_{L_{t}^{\infty} H^{k'} ([0,T])} \rightarrow 0$ as $n \rightarrow \infty$.
\label{Prop:Integral}
\end{prop}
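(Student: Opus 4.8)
The plan is to solve the integral equation (\ref{Eqn:Integral}) by a contraction argument in $\mathcal{C}([0,T],H^{k'})$ for $T$ small, exploiting the parabolic smoothing of the linear part. Since the Fourier multiplier of $e^{-\epsilon s\p_x^4}$ is $e^{-\epsilon sn^4}$, the two bounds I will use are
\[
\big\| e^{-\epsilon s\p_x^4} g \big\|_{H^{k'}} \le \| g \|_{H^{k'}}, \qquad \big\| e^{-\epsilon s\p_x^4} g \big\|_{H^{k'}} \lesssim_{\epsilon} \big( 1 + s^{-\frac34} \big) \| g \|_{H^{k'-3}},
\]
the second coming from $\sup_{n}\LR{n}^{6}e^{-2\epsilon sn^4}\lesssim_{\epsilon}1+s^{-\frac32}$. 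The exponent $\tfrac34<1$ makes $\int_0^t\big(1+(t-t')^{-\frac34}\big)\,dt'\lesssim_{\epsilon}t^{\frac14}$ finite for $t\le1$, and this integrability is the sole reason for the fourth-order regularization in (\ref{Eqn:Regul}). On the nonlinear side $\vec f$ contains three $x$-derivatives of $f$, so I apply Lemma \ref{lem:EstDerivF} with its index equal to $k'-3$ (which is $>0$ since $k'>\tfrac92$, and $H^{k'}\hookrightarrow H^{\frac92+}$) to get $\| F(\vec f,t) \|_{H^{k'-3}}\le C(\|f\|_{H^{k'}})(1+\|f\|_{H^{k'}})$ and its difference counterpart $\| F(\vec f,t_2)-F(\vec g,t_1) \|_{H^{k'-3}}\le C(\|f\|_{H^{k'}},\|g\|_{H^{k'}})\big(\|f-g\|_{H^{k'}}+|t_2-t_1|\big)$. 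In particular $t\mapsto F(\overrightarrow{u(t)},t)$ is continuous into $H^{k'-3}$ for $u\in\mathcal{C}([0,T],H^{k'})$, so the Duhamel term defines a continuous $H^{k'}$-valued function and, since $\|e^{-\epsilon s\p_x^4}\tilde\phi-\tilde\phi\|_{H^{k'}}\to0$ as $s\to0$, $\mathcal{T}(u)\in\mathcal{C}([0,T],H^{k'})$ with $\mathcal{T}(u)(0)=\tilde\phi$.

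Combining these, on $B_{R,T}:=\{v\in\mathcal{C}([0,T],H^{k'}):\sup_{[0,T]}\|v(t)\|_{H^{k'}}\le R\}$ with $R:=2\|\tilde\phi\|_{H^{k'}}+1$ and $T\le1$, one obtains $\sup_{[0,T]}\|\mathcal{T}(v)(t)\|_{H^{k'}}\le\|\tilde\phi\|_{H^{k'}}+C_{\epsilon}(R)T^{\frac14}$ and $\sup_{[0,T]}\|\mathcal{T}(v)(t)-\mathcal{T}(w)(t)\|_{H^{k'}}\le C_{\epsilon}(R)T^{\frac14}\sup_{[0,T]}\|v(t)-w(t)\|_{H^{k'}}$ for $v,w\in B_{R,T}$. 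Hence for $T=T(\|\tilde\phi\|_{H^{k'}},\epsilon)>0$ small $\mathcal{T}$ maps $B_{R,T}$ into itself and is a contraction there; its fixed point solves (\ref{Eqn:Integral}) on $[0,T]$. Unconditional uniqueness in $\mathcal{C}([0,T'],H^{k'})$, and hence existence of a maximal solution on a maximal $[0,T_\epsilon)$, follows from the difference estimate plus the singular (Henry-type) Gronwall inequality: two solutions $u,v$ of (\ref{Eqn:Integral}) with the same data satisfy $\|u(t)-v(t)\|_{H^{k'}}\le C\int_0^t\big(1+(t-t')^{-\frac34}\big)\|u(t')-v(t')\|_{H^{k'}}\,dt'$ with $C$ depending only on $\epsilon$ and $\sup\|u\|_{H^{k'}}+\sup\|v\|_{H^{k'}}$, whence $u\equiv v$.

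For the blow-up alternative, first run the same contraction with $k'$ replaced by $k_0'$: because the constants from Lemma \ref{lem:EstDerivF} depend on $f$ only through $\|f\|_{H^{\frac92+}}\le\|f\|_{H^{k_0'}}$, the local existence time from data $\psi\in H^{k_0'}$ depends only on $\|\psi\|_{H^{k_0'}}$ and $\epsilon$; the standard restarting argument then shows the maximal $H^{k_0'}$-solution $\tilde u$ (with $\tilde u(0)=\tilde\phi$) satisfies: if its maximal time $\tilde T$ is finite then $\liminf_{t\to\tilde T}\|\tilde u(t)\|_{H^{k_0'}}=\infty$. It remains to identify $T_\epsilon$ with $\tilde T$. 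Since $H^{k'}\hookrightarrow H^{k_0'}$, uniqueness gives $T_\epsilon\le\tilde T$ and $u=\tilde u$ on $[0,T_\epsilon)$; if $T_\epsilon<\tilde T$ then $N:=\sup_{[0,T_\epsilon]}\|\tilde u(t)\|_{H^{k_0'}}<\infty$, so inserting $\|F(\overrightarrow{u(t')},t')\|_{H^{k'-3}}\le C(N)(1+\|u(t')\|_{H^{k'}})$ into (\ref{Eqn:Integral}) and applying the singular Gronwall inequality yields $\sup_{[0,T_\epsilon)}\|u(t)\|_{H^{k'}}<\infty$, and restarting the $H^{k'}$-solution from $u(t_n)$ with $t_n\uparrow T_\epsilon$ extends it beyond $T_\epsilon$, a contradiction. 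Hence $T_\epsilon=\tilde T$, and since $u=\tilde u$ near $T_\epsilon$ the alternative holds for $u$ in the $H^{k_0'}$-norm.

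For continuous dependence, fix $T\in[0,T_\epsilon(\tilde\phi_\infty))$, put $N:=\sup_{[0,T]}\|u_\infty(t)\|_{H^{k'}}<\infty$ and $\delta_n:=\|\tilde\phi_n-\tilde\phi_\infty\|_{H^{k'}}\to0$, and let $T_n^{*}$ be the largest time in $[0,\min(T,T_\epsilon(\tilde\phi_n)))$ with $\sup_{[0,T_n^{*}]}\|u_n-u_\infty\|_{H^{k'}}\le N+1$. On $[0,T_n^{*}]$ one has $\|u_n\|_{H^{k'}}\le2N+1$, so subtracting the integral equations, using the difference estimate with constant $\le C(2N+1)$, and applying the singular Gronwall inequality gives $\sup_{[0,T_n^{*}]}\|u_n-u_\infty\|_{H^{k'}}\le C_\epsilon(N)\,\delta_n$. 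For $n$ large this is $<\tfrac12(N+1)$, so by continuity $T_n^{*}$ is not interior to $[0,\min(T,T_\epsilon(\tilde\phi_n)))$; and $T_n^{*}=T_\epsilon(\tilde\phi_n)<T$ is impossible since then $\|u_n(t)\|_{H^{k_0'}}\le\|u_n(t)\|_{H^{k'}}\le2N+1$ would stay bounded as $t\to T_\epsilon(\tilde\phi_n)$, contradicting the alternative. Hence $T_n^{*}=T$, so $u_n$ exists on $[0,T]$ and $\sup_{[0,T]}\|u_n(t)-u_\infty(t)\|_{H^{k'}}\le C_\epsilon(N)\,\delta_n\to0$. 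The main obstacle is not conceptual but structural: the Duhamel kernel carries the integrable-but-singular weight $(t-t')^{-3/4}$, so every Gronwall step (uniqueness, the a priori $H^{k'}$-bound, continuous dependence) must use the generalized singular Gronwall inequality rather than the classical one; and the blow-up criterion is stated in the \emph{weak} norm $H^{k_0'}$, which forces the persistence-of-regularity argument above and the attendant care that the constants from Lemma \ref{lem:EstDerivF} involve $u$ only through the $H^{k_0'}$-norm held fixed, so that the $H^{k'}$-estimate closes linearly.
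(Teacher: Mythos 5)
Your proposal is correct and follows essentially the same route as the paper: a Duhamel fixed point in $\mathcal{C}([0,T],H^{k'})$ built on the smoothing bound $\sup_n \langle n \rangle^{3} e^{-\epsilon s n^{4}} \lesssim \epsilon^{-\frac{3}{4}} s^{-\frac{3}{4}}$ and on Lemma \ref{lem:EstDerivF} (whose constants depend on the solution only through low Sobolev norms, which is exactly what lets the $H^{k'}$ estimate close linearly and yields the blow-up alternative in the $H^{k_{0}'}$ norm), followed by continuation arguments for the alternative and for continuous dependence. The only divergence is bookkeeping: where you invoke the singular (Henry-type) Gronwall inequality for unconditional uniqueness, the a priori $H^{k'}$ bound, and the continuous-dependence estimate, the paper instead iterates the contraction estimate over subintervals of fixed length and runs continuity-in-time bootstrap arguments (including the $T_{*}$ argument for continuous dependence) — the two devices are interchangeable here.
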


%\begin{rem}
%A similar proof shows that the conclusions of Proposition \ref{Prop:Integral} still hold, with $' \epsilon '$  ( resp. '$[0,T_{\epsilon}]$') replaced with
%$'- \epsilon'$ and '$( -T_{\epsilon},0 ]$'  in (\ref{Eqn:Regul}) and (\ref{Eqn:Integral}) ( resp. in Proposition \ref{Prop:Integral}).
%\end{rem}

\begin{rem}
The conclusion above implies that  $T_{\epsilon} \left( \tilde{\phi_{n}} \right) > T $ for $n$ large.
\end{rem}

\begin{proof}

The proof uses standard arguments. In the sequel let $C (x)$ be a nonnegative number depending on $x$ of which the value is allowed to change from one line to the other one and such that all the statements below are true. \\
\\
Let $T'_{\epsilon} := T'_{\epsilon} \left( \|  \tilde{\phi} \|_{H^{k'}} \right)$ be a positive time that is small enough such that all the statements below are true.
Let $X$ be the following Banach space

\EQQARR{
X := \left\{ v \in \mathcal{C} \left( [ 0,T'_{\epsilon}), H^{k'} \right):
\; \| v \|_{ L_{t}^{\infty}  H^{k'} \left( [ 0,T'_{\epsilon}) \right) } \leq 2 ( 1 + \| \tilde{\phi} \|_{H^{k'}} ) \right\} \cdot
}
Clearly $X$, endowed with the norm $ \| v \|_{X} := \| v \|_{L_{t}^{\infty} H^{k'} ([0,T'_{\epsilon})) } $, is a Banach space.  Let $t \in [0,T'_{\epsilon})$.
Let $ A_{v}(t) := F \left( \overrightarrow{v(t)},t \right) $. \\
We first claim that $ \mathcal{T}(X) \subset X$. The Minkowski inequality shows that

\EQQARR{
\| \mathcal{T}(v(t)) \|_{H^{k'}} & \leq \| \tilde{\phi} \|_{H^{k^{'}}} +
\int_{0}^{t} \left\| \langle n \rangle^{3} e^{- \epsilon (t-t') n^{4}} \langle n \rangle^{ k^{'} -3} \widehat{A_{v}(t')}(n) \right\|_{l^{2}} \; d t' \\
}
From (\ref{Eqn:EstFDeriv}) we see that

\EQQARRLAB{
\left\| \langle n \rangle^{ k' - 3} \widehat{A_{v}(t')}(n)  \right\|_{l^{2}} & \leq C \left( \| \tilde{\phi} \|_{H^{k_{0}^{'}}} \right)
\left( 1 + \| v(t') \|_{H^{k'}} \right)
\label{Eqn:Nonlin92}
}
Elementary considerations show that

\EQQARRLAB{
\sup_{n \in \mathbb{Z}} \langle n \rangle^{3} e^{- \epsilon (t-t') n^{4}} \lesssim
\epsilon^{-\frac{3}{4}} (t-t')^{- \frac{3}{4}} \cdot
\label{Eqn:EstElemExp}
}
Hence we get after integration

\EQQARRLAB{
\| \mathcal{T}(v) \|_{X} & \leq \| \tilde{\phi} \|_{H^{k'}} + C ( \|  \tilde{\phi} \|_{H^{k^{'}_{0}}} )
 \epsilon^{-\frac{3}{4}} \left( T^{'}_{\epsilon} \right)^{\frac{1}{4}} ( 1 + \| \tilde{\phi} \|_{H^{k'}} ) \\
& \leq \| \tilde{\phi} \|_{H^{k'}} +  C ( \|  \tilde{\phi} \|_{H^{k^{'}}} )
 \epsilon^{-\frac{3}{4}}  \left( T^{'}_{\epsilon} \right)^{\frac{1}{4}} ( 1 + \| \tilde{\phi} \|_{H^{k'}} ) \\
& \leq 2 ( 1 + \| \tilde{\phi} \|_{H^{k'}} ) \cdot
\label{Eqn:EstfixedOne}
}
One gets from (\ref{Eqn:DiffEstFDeriv})

\EQQARRLAB{
\left\| \langle n \rangle^{ k'-3 }
\left(  \widehat{A_{v_{1}}(t')}(n) - \widehat{A_{v_{2}}(t')}(n) \right)
\right\|_{l^{2}}
& \leq C \left( \| \tilde{\phi} \|_{H^{k'}} \right) \| v_{1}(t') -  v_{2}(t') \|_{H^{k'}}
\label{Eqn:EstDiffv1v2}
}
Hence integrating in time we see that

\EQQARR{
\left\| \mathcal{T}(v_1) - \mathcal{T}(v_2) \right\|_{X} & \lesssim C \left( \| \tilde{\phi} \|_{H^{k'}} \right) \epsilon^{- \frac{3}{4}} \left(T^{'}_{\epsilon} \right)^{\frac{1}{4}}
\| v_{1} -  v_{2} \|_{ L_{t}^{\infty} H^{k'} ([0,T^{'}_{\epsilon}])} \\
& \leq \frac{1}{2} \| v_1 - v_2 \|_{X}
}
Hence $\mathcal{T}$ is a contraction.  \\
\\
We then claim that $\mathcal{T} v \in \mathcal{C} \left( [ 0,T'_{\epsilon}), H^{k'} \right) $. Indeed the dominated convergence
theorem allows to prove that $ t \rightarrow e^{- \epsilon t \p_{x}^{4} } \tilde{\phi}$ in continuous
in $ H^{k'}$. Let $t \in [0, T^{'}_{\epsilon})$ and let $h > 0 $ be a small number. Let $ G(t) :=  \int_{0}^{t} e^{- \epsilon (t-t') \p_{x}^{4}} A_{v}(t') \; dt' $. Writing
$ G (t + h) - G(t) = H_{1} + H_{2} $ with \\
$H_{1} := \int_{t}^{t+h}  e^{- \epsilon (t + h -t') \p_{x}^{4}} A_{v}(t') \; dt'  $,
and  $ H_{2} := \int_{0}^{t} \left( e^{- \epsilon (t + h - t') \p_{x}^{4}} - e^{- \epsilon (t- t') \p_{x}^{4} } \right) A_{v}(t') \; dt'  $. (\ref{Eqn:EstElemExp}), (\ref{Eqn:EstfixedOne}), and the dominated convergence theorem
allow to prove that $H_{1} \rightarrow 0 $ as $h \rightarrow 0$. In order to prove that
$ H_{2} \rightarrow 0 $ we proceed as follows. The mean value theorem and elementary estimates show that
$ \left| e^{- \epsilon (t + h - t')n^{4}} - e^{- \epsilon (t- t') n^{4}} \right|  \lesssim e^{- \epsilon (t - t') n^{4}} n^{4} h \epsilon $. We also have
$ \left| e^{- \epsilon (t + h - t')n^{4}} - e^{- \epsilon (t- t') n^{4}} \right|  \lesssim e^{- \epsilon (t - t') n^{4}} $ . Hence
$\| H_{2} \|_{H^{k'}} \lesssim X + Y $ with
$ X := \int_{0}^{t}  \left\| \langle n \rangle^{3}  e^{- \epsilon (t-t') n^{4}} \langle n \rangle^{k'-3} \widehat{A_{v}(t')}(n) \right\|_{l^{2}(n^{4} h \geq 1)}  \; dt'  $ and
$ Y :=  \int_{0}^{t} \left\| \langle n \rangle^{3} n^{4} h \epsilon e^{- \epsilon (t-t') n^{4}} \langle n \rangle^{k'-3}  \widehat{A_{v}(t')}(n)  \right\|_{l^{2}(n^{4} h \leq 1)} \; dt' $. There exists $c > 0 $ such that

\EQQARR{
X & \lesssim  \int_{0}^{t} (t-t')^{-\frac{3}{4}} \sup \limits_{n' \geq \frac{\epsilon (t-t')}{h}} \left( (n^{'})^{\frac{3}{4}} e^{-n'} \right) \left\|
\langle n \rangle^{k'-3}  \widehat{A_{v}(t')}(n) \right\|_{l^{2}} \; dt' \\
& \lesssim ( 1 + \| \tilde{\phi} \|_{H^{k'}}) \int_{0}^{t} (t-t')^{-\frac{3}{4}} e^{- \frac{ c \epsilon (t-t')}{h}}  \; dt'.
}
Hence, by making the change of variable $ u= \frac{(t-t') c \epsilon}{h}, $ $A \rightarrow 0$ as $ h \rightarrow 0$. We have

\EQQARR{
Y & \lesssim  h^{0+}   \int_{0}^{t} \left\| \sup_{n^{4} h \leq 1 } \left( n^{3+}  e^{- \epsilon (t-t') n^{4}} \right) \langle n \rangle^{k'-3} \widehat{A_{v}(t')}(n) \right\|_{l^{2}(n^{4} h \leq 1)} \; dt' \\
& \lesssim  h^{0+} \int_{0}^{t}  (t-t')^{\left (-\frac{3}{4} - \right) } \; dt'  ( 1 + \| \tilde{\phi} \|_{H^{k'}}) \\
& \lesssim h^{0+} ( 1 + \| \tilde{\phi} \|_{H^{k'}})  \cdot
}
Hence $B \rightarrow 0$ as $h \rightarrow 0$. \\
Observe that the argument shows that one can choose $T^{'}_{\epsilon}$ as a continuous and decreasing function of $\| \tilde{\phi} \|_{H^{k'}}$. \\
\\
We consider now the union of all intervals $[0,T')$ such that there exists a (unique) solution
$u \in \mathcal{C} \left( [0,T'), H^{k'} \right)$ satisfying the integral equation for (\ref{Eqn:Regul}) on $[0,T')$. Clearly there exists
$T_{\epsilon} := T_{\epsilon} ( \tilde{\phi} ) \in (0, \infty]$ such that this union is equal to $[0,T_{\epsilon})$. Assume now that $T_{\epsilon} < \infty$ and that
$ \lim \inf_{t \rightarrow T_{\epsilon}} \| u(t) \|_{H^{k'_{0}}}  < \infty $. Hence we can find an $M  < \infty $  and
$ 0 \leq \tilde{t} < T_{\epsilon} $ close enough to $T_{\epsilon}$ such that the following properties holds:
$ \| u(\tilde{t}) \|_{H^{k_{0}^{'}}} \leq M $, and (by proceeding similarly as in (\ref{Eqn:EstfixedOne})) we get for all
$ t \in [\tilde{t}, T_{\epsilon}) $

\EQQARRLAB{
\| u \|_{L_{t}^{\infty} H^{k'_{0}} ([\tilde{t}, t ]) }  \leq M
+ C (M) \epsilon^{-\frac{3}{4}}  (t-\tilde{t})^{\frac{1}{4}} \left( \| u \|_{L_{t}^{\infty} H^{k'_{0}} ([\tilde{t},t])} + 1 \right)
\label{Eqn:EstfixedOnePlus}
}
Hence a continuity argument shows that $\| u \|_{L_{t}^{\infty} H^{k'_{0}} ([\tilde{t}, t ])} \lesssim  M +1 $. Since we also have
$ u \in \mathcal{C} \left( [0,\tilde{t}], H^{k'_{0}} \right)$ this implies that there exists a constant  (that we still denote by $M > 0$) such that $ \left\| u(t) \right\|_{H^{k_{0}^{'}}} \leq M $ for all $t \in [0,T_{\epsilon})$.
We then claim that there exists $M' > 0 $ such that

\EQQARRLAB{
t \in [0, T_{\epsilon}): \;  \left\| u(t) \right\|_{H^{k'}} \leq M'  \cdot
\label{Eqn:BoundMpri}
}
Indeed let  $ J := [a,t) \subset [0,T_{\epsilon})$; then we get

\EQQARR{
\| u \|_{L_{t}^{\infty} H^{k'}([a,t))} & \leq \| u(a) \|_{H^{k'}} + C (M) \epsilon^{-\frac{3}{4}}  (t-a)^{\frac{1}{4}} \left( 1  + \| u \|_{L_{t}^{\infty} H^{k'}([a,t))} \right);
}
hence a continuity argument shows that there exists $c := c(M, \epsilon) > 0$ such that if $ |J| \leq c $ then $ \| u \|_{L_{t}^{\infty} H^{k'} (J)} \leq
2 \langle \| u(a) \|_{H^{k'}} \rangle $; hence, dividing $[0,T_{\epsilon})$ into subintervals $J$ such that $ |J| = c $ except maybe the last one we see that (\ref{Eqn:BoundMpri}) holds. Let $ \bar{T} $ be close enough to $T_{\epsilon}$ from the left. Then  $ \| u(\bar{T}) \|_{H^{k'}} \leq M' $; moreover by using the arguments above to construct a unique solution  $ u \in \mathcal{C} \left( [0,T'_{\epsilon}], H^{k'} \right) $ satisfying (\ref{Eqn:Integral}) we can construct a unique solution $u \in
\mathcal{C} \left( [\bar{T}_{\epsilon}, \overline{\bar{T}}), H^{k'} \right) $ for some $ \overline{\bar{T}} > T_{\epsilon} $ that satisfies (\ref{Eqn:Integral}), replacing
``$\tilde{\phi}$'', ``$\int_{0}^{t}$'' with ``$u(\bar{T})$'', `` $ \int_{\bar{T}}^{\overline{\bar{T}}} $'' respectively. Hence there exists a unique solution
$u \in \mathcal{C} \left( [0,\overline{\bar{T}} ), H^{k'}  \right)$ of (\ref{Eqn:Integral}), which contradicts the definition of $T_{\epsilon}$. \\
\\
Let $\{ \tilde{\phi}_{n} \}$ be a sequence such that  $\| \tilde{\phi}_{n} - \tilde{\phi}_{\infty} \|_{H^{k'}} \rightarrow 0 $ as
$ n \rightarrow \infty $. Let $t \in \left[ 0, \frac{T_{\infty}^{'}}{2} \right) $, with
$T^{'}_{\infty} := T_{\epsilon}^{'} \left( \| \tilde{\phi}_{\infty} \|_{H^{k^{'}}} \right)$. The above observation shows that
$T^{'}_{\epsilon} \left( \| \tilde{\phi}_{n} \|_{H^{k^{'}}} \right) > \frac{T^{'}_{\infty}}{2} $ for $n$ large.
We have

\EQQARR{
 \left\| e^{-\epsilon t \p_{x}^{4}} ( \tilde{\phi}_{n} - \tilde{\phi}_{\infty} ) \right\|_{H^{k'}} & \lesssim \| \tilde{\phi}_{n} - \tilde{\phi}_{\infty} \|_{H^{k'}} \cdot
}
We see from (\ref{Eqn:EstDiffv1v2}) ( with $v_{1} := u_{n}$ and $ v_{2} := u_{\infty} $ )  and the estimate $ \| \tilde{\phi}_{n} \|_{H^{k'}} \lesssim \| \tilde{\phi}_{\infty} \|_{H^{k'}} $
we see that

\EQQARR{
\left\| \langle n \rangle^{ k'-3}
\left(  \widehat{A_{u_{n}}(t')}(n) - \widehat{A_{u_{\infty}}(t')}(n) \right)  \right\|_{l^{2}}
& \leq C \left( \| \tilde{\phi} \|_{H^{k'}} \right) \| u_{n}(t') -  u_{\infty}(t') \|_{H^{k'}}
}
Integrating in time we get $ \| u_{n} - u \|_{L_{t}^{\infty} H^{k'}  \left( \left[ 0 ,\frac{T^{'}_{\infty}}{2} \right) \right)  } \leq  \| \tilde{\phi}_{n} - \tilde{\phi} \|_{H^{k'}} + \frac{1}{2} \| u_{n} - u \|_{L_{t}^{\infty} H^{k'} \left( \left[ 0 ,\frac{T^{'}_{\infty}}{2} \right) \right)} $,
which implies that $\| u_{n} - u_{\infty} \|_{L_{t}^{\infty} H^{k'}  \left( \left[ 0 ,\frac{T^{'}_{\infty}}{2} \right) \right)} \rightarrow 0 $ as $ n \rightarrow \infty $. \\
\\
Let $ 0 \leq T <  T_{\epsilon}( \tilde{\phi}_{\infty}  )$. Observe that the continuity of $u_{\infty}$ on $[0,T]$ implies that there exists $ M  > 0 $ such that

\EQQARRLAB{
t \in [0,T]: \; \| u_{\infty}(t) \|_{H^{k'}} \leq M \cdot
\label{Eqn:BoundCompu}
}
Let $ T_{*} := \sup  \left\{ t \in [0,T]: \;  \sup \limits_{s \in [0,t]} \| u_{n}(s) - u_{\infty}(s) \|_{H^{k'}} \rightarrow 0 \; \text{as} \;
n \rightarrow \infty \right\} $. Since $ \| u_{n} - u_{\infty} \|_{L_{t}^{\infty} H^{k'} \left( \left[0, \frac{T^{'}_{\infty}}{2} \right] \right)} \rightarrow 0 $  as $ n \rightarrow \infty $, we have $T_{*} > 0$. Assume that $T_{*} < T $. Then choose  $ 0 \leq \tilde{t} < T_{*} $  that is close enough to $ T_{*} $ such that the
statements below hold. We have  $ \sup \limits_{t \in [0,\tilde{t}]} \| u_{n}(t) - u_{\infty}(t) \|_{H^{k'}} \rightarrow 0 $ as $ n \rightarrow \infty $, and
we also have $ \| u_{n} - u_{\infty} \|_{L_{t}^{\infty} H^{k'}  ([\tilde{t}, \tilde{t}^{'}])} = \sup \limits_{0 \leq h \leq \tilde{t}^{'} - \tilde{t} } \| u_{n}(\tilde{t} + h )  - u_{\infty} (\tilde{t} + h) \|_{H^{k'}}  \rightarrow 0 $ for some $ T \geq \tilde{t}^{'} >  T_{*} $  as $ n \rightarrow \infty $, by applying the above argument
to $ u_{n}(\tilde{t})$  ( resp. $u_{\infty}(\tilde{t})$) for $ t \in [0, \tilde{t}^{'} - \tilde{t}]$ ) instead of $ \tilde{\phi}_{n}$  (resp. $\tilde{\phi}_{\infty}$) for
$ t \in [0, T^{'}_{\epsilon} ] $ ), taking into account (\ref{Eqn:BoundCompu}). But then this would contradict the definition of $T_{*}$. Hence $ T_{*} = T $. We now claim that
$ \sup \limits_{t \in [0,T_{*}]} \| u_{n}(t) - u_{\infty}(t) \|_{H^{k'}} \rightarrow 0 $ as $ n \rightarrow \infty $. Indeed we can choose
$ 0 \leq \tilde{t} < T_{*} $  that is close enough to $ T_{*} $  such that $ \sup \limits_{t \in [0,\tilde{t}]} \| u_{n}(t) - u_{\infty}(t) \|_{H^{k'}} \rightarrow 0 $ as
$ n \rightarrow \infty $ and  $ \| u_{n} - u_{\infty} \|_{L_{t}^{\infty} H^{k'}  ([\tilde{t}, \tilde{t}^{'}])} \rightarrow 0 $ for some
$   T_{\epsilon}( \tilde{\phi}_{\infty} )  > \tilde{t}^{'} >  T_{*} $  as $ n \rightarrow \infty$. Hence $ \sup \limits_{t \in [0,T] } \| u_{n}(t) - u_{\infty}(t) \|_{H^{k'}} \rightarrow 0 $ as $ n \rightarrow \infty $.

\begin{rem}
We say that $u$ is a smooth solution of (\ref{Eqn:Integral}) obtained by Proposition \ref{Prop:Integral} with data
$ \tilde{\phi} $ if $\tilde{\phi}$ is smooth, i.e $ \tilde{\phi}  \in \bigcap \limits_{m \geq k'_{0}} H^{m} = \mathcal{C}^{\infty} $ \footnote{This equality follows for the well-known Sobolev embedding $H^{m} \hookrightarrow \mathcal{C}^{p}$ if $ m > p + \frac{1}{2}$}. Observe that  $ u \in  \mathcal{C}^{\infty} \left( [0,T], H^{m} \right) $ for all
$ m \geq k^{'}_{0} $ and that $u(t) \in \mathcal{C}^{\infty}$: this follows from a standard bootstrap argument
applied to $\p_{t} u = -\epsilon \p_{x}^{4} u + F(\vec{u},t)$, taking into
account Lemma \ref{lem:EstDerivF}. Let $\tilde{\phi} \in H^{k'}$. Let  $\{ \tilde{\phi}_{n} \} $ be a sequence of smooth
functions such that $\tilde{\phi}_{n} \rightarrow \tilde{\phi} $. Then Proposition \ref{Prop:Integral} allows to approximate the solution $u$
arbitrarily on any compact interval $[0,T] \subset [0,T_{\epsilon}) $ with the sequence of smooth solutions $ \{ u_{n} \}$. In the sequel we will
implicitly use $a/$: smooth solutions to justify some computations that allow to prove some estimates that involve these solutions
$b/$: the approximation of $u$ with smooth solutions to show that these estimates also hold for $u$.

\label{Rem:smooth}
\end{rem}

\end{proof}

\section{Gauged energy}
\label{Sec:EnergyDef}

Let $k^{'}  \geq 6$. Let $f$ and $g$ be two functions.
We define the gauged energy $E_{k'}(f,t)$ of $f$  (and more generally $E_{k'}(f,g,t)$) at a time $t$ in the following fashion:

\EQQARR{
E_{k'}(f,g,t) & := \left\| \Phi_{k'} (f,t) \p_{x}^{6} f  - \Phi_{k'} (g,t) \p_{x}^{6} g \right\|^{2}_{H^{k'-6}}  +
\| f - g \|^{2}_{H^{\max \left( k'-3,\frac{9}{2}+ \right)}}, \; \text{and}  \\
E_{k'}(f,t) & :=  E_{k'}(f,0,t) \cdot
}
The following proposition shows that the gauged energy $E_{k'}(f,t)$ of $f$ (resp. $E_{k'}(f,g,t)$) can be compared to the $H^{k'}-$ norm of $f$
(resp.  $\| f - g \|_{H^{k'}}$):

\begin{prop}

Let $I \subset \mathbb{R}$ be an interval such that $|I| \leq 1$. Let $\bar{\delta} > 0 $. Let $K \in \mathbb{R}^{+}$.  Let $k' > \frac{13}{2}$. The following holds:

\begin{enumerate}

\item  Assume that $\inf \limits_{x \in \T}  \left| \p_{\omega_3} F \left( \vec{f},t \right)(x)  \right| \geq \bar{\delta}$
for all $t \in I$. \\

\begin{itemize}

\item Assume also that $ \| f \|_{H^{\frac{13}{2}+}} \leq K $. Then there exists $C := C(K,\bar{\delta}) > 0$ such that for all $t \in I$

\EQQARRLAB{
E_{k'}(f,t) & \leq C \left( 1 + \| f \|^{2}_{H^{k'}} \right)
\label{Eqn:EHs}
}

\item Assume also that $E_{\frac{13}{2}+} (f,t) \leq K $. Then there exists  $C := C(K,\bar{\delta}) > 0$ such that for all $t \in I$

\EQQARRLAB{
\| f \|^{2}_{H^{k'}} & \leq C \left( 1 + E_{k'}(f,t)  \right) \cdot
\label{Eqn:HsE}
}

\end{itemize}

\item Assume that $ \| f \|_{H^{k'}} \leq K $ and $ \| g \|_{H^{k'}} \leq K $. Assume also that for all $t \in I$
and for all $\theta \in  [0,1]$ we have $\inf \limits_{x \in \T}  \left| \p_{\omega_3} F \left( \overrightarrow{h_{\theta}},t \right)(x)  \right| \geq \bar{\delta} $ with $h_{\theta}:= \theta f + (1- \theta) g$. Then there exists $C := C(K,\bar{\delta}) > 0$ such that for all $t \in I$

\EQQARR{
C^{-1} \| f - g \|^{2}_{H^{k'}} & \leq E_{k'}(f,g,t) \leq  C \| f - g \|^{2}_{H^{k'}}
}

\end{enumerate}
\label{prop:compEHnorm}

\end{prop}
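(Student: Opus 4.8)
All three statements rest on one mechanism, which I would isolate first. The elementary Fourier-side equivalence $\| h\|_{H^{k'}}^{2} \approx \| h\|_{L^{2}}^{2} + \| \p_{x}^{6}h\|_{H^{k'-6}}^{2}$ (valid as $k' > \frac{13}{2} > 6$) reduces everything to controlling $\p_{x}^{6}(\cdot)$ in $H^{k'-6}$, while the transfer identity $\p_{x}^{6}f = \Phi_{k'}^{-1}(f,t)\bigl(\Phi_{k'}(f,t)\p_{x}^{6}f\bigr)$ lets one move between $\p_{x}^{6}f$ and the gauged object $\Phi_{k'}(f,t)\p_{x}^{6}f$ by the product rule of Lemma \ref{Lem:prod} (with exponent $k'-6 \geq 0$), provided the norms $\| \Phi_{k'}^{\pm1}(f,t)\|_{H^{m}}$ for $m\in\{\frac{1}{2}+,\,k'-6\}$ are controlled. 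I would get those from Lemma \ref{lem:gauge} in the shape $\| \Phi_{k'}^{\pm1}(f,t)\|_{H^{m}} \leq C(1+\| f\|_{H^{m+3}})$, writing $\Phi_{k'}(f,t) = |\p_{\omega_3}F(\vec f,t)|^{\frac{2k'-15}{6}}Z(f,t)$ with $Z$ the exponential factor and estimating the two factors separately via Lemmas \ref{lem:EstDerivF} and \ref{lem:EstDerivOp} (this decouples the gauge exponent from the Sobolev index). Two facts keep the bookkeeping closed: $k'-6 > \frac{1}{2}$, so $H^{k'-6}\hookrightarrow H^{\frac{1}{2}+}$; and $\| \p_{x}^{6}h\|_{H^{\frac{1}{2}+}} \lesssim \| h\|_{H^{\frac{13}{2}+}} \lesssim \| h\|_{H^{k'}}$. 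All constants $C$ depend only on $(K,\bar\delta)$; the non-degeneracy hypothesis plus Sobolev embedding ensures $\p_{\omega_3}F$ has constant sign, so $\Phi_{k'}$ is well defined.

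For the upper bound (\ref{Eqn:EHs}) I would just expand $\| \Phi_{k'}(f,t)\p_{x}^{6}f\|_{H^{k'-6}}$ by Lemma \ref{Lem:prod} and insert $\| \Phi_{k'}(f,t)\|_{H^{k'-6}} \leq C(1+\| f\|_{H^{k'-3}}) \lesssim C(1+\| f\|_{H^{k'}})$, $\| \Phi_{k'}(f,t)\|_{H^{\frac{1}{2}+}} \leq C$, $\| \p_{x}^{6}f\|_{H^{\frac{1}{2}+}} \leq \| f\|_{H^{\frac{13}{2}+}} \leq K$, $\| \p_{x}^{6}f\|_{H^{k'-6}} \lesssim \| f\|_{H^{k'}}$; as $\max(k'-3,\frac{9}{2}+) \leq k'$, this yields $E_{k'}(f,t) \lesssim 1+\| f\|_{H^{k'}}^{2}$.

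The lower bound (\ref{Eqn:HsE}) is the delicate part. From $E_{\frac{13}{2}+}(f,t)\leq K$ I would first read off $\| f\|_{H^{\frac{9}{2}+}} \leq \sqrt{K}$ and, because the Sobolev index in the first summand of $E_{\frac{13}{2}+}$ equals $\frac{1}{2}+$, also $\| \Phi_{\frac{13}{2}+}(f,t)\p_{x}^{6}f\|_{H^{\frac{1}{2}+}} \leq \sqrt{K}$; writing $\p_{x}^{6}f = \Phi_{\frac{13}{2}+}^{-1}(f,t)\bigl(\Phi_{\frac{13}{2}+}(f,t)\p_{x}^{6}f\bigr)$ and using that $H^{\frac{1}{2}+}$ is an algebra and $\| \Phi_{\frac{13}{2}+}^{-1}(f,t)\|_{H^{\frac{1}{2}+}} \leq C$, I would \emph{bootstrap} $\| \p_{x}^{6}f\|_{H^{\frac{1}{2}+}} \leq C$, hence $\| f\|_{H^{\frac{13}{2}+}} \leq C$. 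Then from $\p_{x}^{6}f = \Phi_{k'}^{-1}(f,t)\bigl(\Phi_{k'}(f,t)\p_{x}^{6}f\bigr)$ and Lemma \ref{Lem:prod}, with $\| \Phi_{k'}(f,t)\p_{x}^{6}f\|_{H^{\frac{1}{2}+}} \leq C$, $\| \Phi_{k'}^{-1}(f,t)\|_{H^{\frac{1}{2}+}} \leq C$ (from $\| f\|_{H^{\frac{13}{2}+}} \leq C$), $\| \Phi_{k'}^{-1}(f,t)\|_{H^{k'-6}} \leq C(1+\| f\|_{H^{k'-3}})$ and $\| \Phi_{k'}(f,t)\p_{x}^{6}f\|_{H^{k'-6}} \leq \sqrt{E_{k'}(f,t)}$, together with the Fourier equivalence and $\| f\|_{L^{2}} \leq \sqrt{K}$, I get $\| f\|_{H^{k'}} \lesssim 1 + \| f\|_{H^{k'-3}} + \sqrt{E_{k'}(f,t)}$. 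Finally I would interpolate $\| f\|_{H^{k'-3}} \leq \eta\| f\|_{H^{k'}} + C(\eta)\| f\|_{L^{2}}$ (legitimate since $k' > 3$), fix $\eta$ small after the constant is frozen, absorb $\eta\| f\|_{H^{k'}}$ on the left, and square, getting $\| f\|_{H^{k'}}^{2} \lesssim 1 + E_{k'}(f,t)$. \textbf{The hard part is this absorption}: inverting by $\Phi_{k'}^{-1}(f,t)$ costs three derivatives of $f$, and the estimate closes only because the $E_{\frac{13}{2}+}$-bound turns the low-frequency factor into a genuine constant --- not a multiple of $\| f\|_{H^{k'}}$, which would be fatal --- while the residual $\| f\|_{H^{k'-3}}$ is sub-top-order, hence interpolatable.

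For part (2) I would run the same scheme on $f-g$, with \emph{no} interpolation needed, since $\| f\|_{H^{k'}},\| g\|_{H^{k'}} \leq K$ make every Sobolev norm of $\Phi_{k'}(f,t)$, $\Phi_{k'}(g,t)$, $\Phi_{k'}^{-1}(f,t)$, $\Phi_{k'}^{-1}(g,t)$ a pure constant $C(K,\bar\delta)$. For the upper bound, split $\Phi_{k'}(f,t)\p_{x}^{6}f - \Phi_{k'}(g,t)\p_{x}^{6}g = \Phi_{k'}(f,t)\p_{x}^{6}(f-g) + \bigl(\Phi_{k'}(f,t)-\Phi_{k'}(g,t)\bigr)\p_{x}^{6}g$; bound the first term by Lemma \ref{Lem:prod} with $\| \p_{x}^{6}(f-g)\|_{H^{\frac{1}{2}+}},\| \p_{x}^{6}(f-g)\|_{H^{k'-6}} \lesssim \| f-g\|_{H^{k'}}$, and the second by Lemma \ref{Lem:prod} together with (\ref{Eqn:DiffPhifkpr}) taken at $t_{1}=t_{2}=t$ (so the $|t_{2}-t_{1}|$ term drops), noting that the indices $\overline{k'-6}$ and $\overline{\frac{1}{2}} = \frac{9}{2}+$ occurring there satisfy $\overline{k'-6},\,\overline{\frac{1}{2}} \leq \max(k'-3,\frac{9}{2}+) \leq k'$; this gives $E_{k'}(f,g,t) \leq C\| f-g\|_{H^{k'}}^{2}$. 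For the lower bound, decompose $\p_{x}^{6}(f-g) = \Phi_{k'}^{-1}(f,t)\bigl(\Phi_{k'}(f,t)\p_{x}^{6}f - \Phi_{k'}(g,t)\p_{x}^{6}g\bigr) + \bigl(\Phi_{k'}^{-1}(f,t)-\Phi_{k'}^{-1}(g,t)\bigr)\Phi_{k'}(g,t)\p_{x}^{6}g$; the $H^{k'-6}$-norm of the first bracket is $\leq \sqrt{E_{k'}(f,g,t)}$ by definition of $E_{k'}$, and $\Phi_{k'}^{-1}(f,t)-\Phi_{k'}^{-1}(g,t)$ is bounded in $H^{\frac{1}{2}+}$ and $H^{k'-6}$ by $\| f-g\|_{H^{\max(k'-3,\frac{9}{2}+)}} \leq \sqrt{E_{k'}(f,g,t)}$ via (\ref{Eqn:DiffPhifkpr}); with the Fourier equivalence and $\| f-g\|_{L^{2}} \leq \| f-g\|_{H^{\max(k'-3,\frac{9}{2}+)}}$ this gives $\| f-g\|_{H^{k'}}^{2} \leq C\,E_{k'}(f,g,t)$.
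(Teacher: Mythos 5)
Your argument is correct and follows essentially the paper's own route: Lemma \ref{Lem:prod} combined with the gauge bounds and gauge-difference bounds of Lemma \ref{lem:gauge}, the algebra property of $H^{k'-6}$ (since $k'-6>\frac{1}{2}$), and un-gauging by multiplying with $\Phi_{k'}^{\pm 1}$; your explicit bootstrap of $\|f\|_{H^{\frac{13}{2}+}}\leq C$ out of $E_{\frac{13}{2}+}(f,t)\leq K$, and your slightly different splittings in part (2), only make explicit or rearrange steps the paper treats the same way. The one (harmless) detour is the interpolation/absorption step in the proof of (\ref{Eqn:HsE}): it is unnecessary, because the second summand of $E_{k'}(f,t)=E_{k'}(f,0,t)$ is $\|f\|^{2}_{H^{\max\left(k'-3,\frac{9}{2}+\right)}}$, so $\|f\|_{H^{k'-3}}\leq E^{\frac{1}{2}}_{k'}(f,t)$ holds directly, which is exactly how the paper closes that estimate.
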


\begin{rem}
The proof shows that one can choose the constants depending on $K$ to be continuous functions that increase as $K$ increases.
\end{rem}

\begin{proof}

Lemma \ref{Lem:prod} and Lemma \ref{lem:gauge} imply that

\EQQARR{
\left\| \Phi_{k'} (f,t) \p_{x}^{6} f \right\|_{H^{k'-6}} & \lesssim \| \Phi_{k'} (f,t) \|_{H^{\frac{1}{2}+}} \|  \p_{x}^{6} f \|_{H^{k'-6}}
+ \| \p_{x}^{6} f \|_{H^{\frac{1}{2}+}} \left\| \Phi_{k'} (f,t) \right\|_{H^{k'-6}} \\
& \lesssim \left( 1 + \| f \|_{H^{\frac{7}{2}+}} \right) \| f \|_{H^{k'}}
+ \| f \|_{H^{\frac{13}{2}+}} \left( 1 + \| f \|_{H^{k'-3}} \right) \\
& \lesssim 1 + \| f \|_{H^{k'}} \cdot
}
Hence (\ref{Eqn:EHs}) holds. We have

\EQQARR{
\| D^{k'} f \|^{2}_{L^{2}} & \lesssim \left\| \Phi^{-1}_{k'}(f,t) \Phi_{k'}(f,t) \p_{x}^{6} f \right\|^{2}_{H^{k'-6}} \\
&  \lesssim \left\| \Phi^{-1}_{k'}(f,t) \right\|^{2}_{H^{\frac{1}{2}+}}
\left\| \Phi_{k'} (f,t) \p_{x}^{6} f \right\|^{2}_{H^{k'-6}}  + \left\| \Phi^{-1}_{k'}(f,t) \right\|^{2}_{H^{k'-6}} \left\| \Phi_{k'}(f,t) \p_{x}^{6} f \right\|^{2}_{H^{\frac{1}{2}+}} \\
& \lesssim \left\| \Phi_{k'}(f,t) \p_{x}^{6} f \right\|^{2}_{H^{k'-6}} \left( 1 + \| f \|^{2}_{H^{\frac{7}{2}+}} \right)^{2}
+  \left( 1 + \| f \|^{2}_{H^{k'-3}} \right) E_{\frac{13}{2}+}(f,t) \\
& \lesssim E_{k'}(f,t)
}
Hence (\ref{Eqn:HsE}) holds. We have

\EQQARR{
\left\| \Phi_{k'} (f,t) \p_{x}^{6} f - \Phi_{k'} (g,t) \p_{x}^{6} g  \right\|_{H^{k'-6}} \\
\lesssim \left\| \Phi_{k'}(f,t) - \Phi_{k'}(g,t) \right\|_{H^{k'-6}} \|  \p_{x}^{6} f \|_{H^{\frac{1}{2}+}}
+ \left\| \Phi_{k'} (f,t) - \Phi_{k'} (g,t) \right\|_{H^{\frac{1}{2}+}} \|  \p_{x}^{6} f \|_{H^{k'-6}} \\
+ \left\| \Phi_{k'} (g,t) \right\|_{H^{k'-6}} \left\| \p_{x}^{6} (f-g) \right\|_{H^{\frac{1}{2}+}}
+ \left\| \Phi_{k'} (g,t) \right\|_{ H^{\frac{1}{2}+} } \left\| \p_{x}^{6} (f-g) \right\|_{H^{k'-6}} \\
\lesssim \| f - g \|_{H^{k'}}
}
It remains to show that $\| D^{k'} (f - g) \|^{2}_{L^{2}} \lesssim E_{k'}(f,g,t)$. We have (taking into account
that $k' - 6 > \frac{1}{2}$)

\EQQARR{
\| D^{k'}(f-g) \|^{2}_{L^{2}} & \lesssim \left\| \Phi^{-1}_{k'}(f,t) \Phi_{k'}(f,t) \p_{x}^{6} (f-g)  \right\|^{2}_{H^{k'-6}} \\
& \lesssim  \left\| \Phi^{-1}_{k'}(f,t) \right\|^{2}_{H^{k'-6}} \left\| \Phi_{k'}(f,t) \p_{x}^{6} (f-g)  \right\|^{2}_{H^{k'-6}} \\
& \lesssim  \left\| \Phi_{k'}(f,t) \p_{x}^{6} f - \Phi_{k'} (g,t) \p_{x}^{6} g \right\|^{2}_{H^{k'-6}} + \left\| \Phi_{k'} (g,t) - \Phi_{k'} (f,t) \right\|^{2}_{H^{k'-6}}  \| \p_{x}^{6} g \|^{2}_{H^{k'-6}} \\
& \lesssim \left\| \Phi_{k'} (f,t) \p_{x}^{6} f - \Phi_{k'} (f,t) \p_{x}^{6} g \right\|^{2}_{H^{k'-6}} + \| f -g  \|^{2}_{H^{\max ( k' - 3, \frac{9}{2}+ )}} \\
& \lesssim E_{k'}(f,g,t)
}
\end{proof}

\section{Gauged energy estimates}
\label{Sec:EnergyEst}

In this section we prove some gauged energy estimates. To this end we first prove estimates for a function $v$ that satisfies the equation
(\ref{Eqn:Eqvkpr6}): see Proposition \ref{prop:linearv}. Then we apply these estimates to
$ v(t) := \Phi_{k'}(u(t),t) \partial_{x}^{6} u(t)$  (with $u$ solution of (\ref{Eqn:Regul})) to get the gauged energy estimates.

\subsection{General estimates}

We prove the following proposition:

\begin{prop}
Let $ 0 < \epsilon \ll 1$. Let $k' \geq 6$. Assume that $v$ is a function on an interval $[0,T']$ that satisfies

\EQQARRLAB{
\p_{t} v + \epsilon \p_{x}^{4} v & = a_{3} \p_{x}^{3} v + a_{2} \p_{x}^{2} v + a_{1} \p_{x} v +  a_{0} \\
& + \epsilon b_3 \p_{x}^{3} v + \epsilon b_2 \p_{x}^{2} v + \epsilon b_1 \p_x v + \epsilon b_{0}  + \epsilon \p_{x}^{4} c v \cdot
\label{Eqn:Eqvkpr6}
}
There exists $C > 0$ such that for all $t \in [0,T']$

\EQQARRLAB{
\frac{d \left( \| v(t) \|^{2}_{\dot{H}^{k'-6}} \right)}{dt} + 2 \epsilon \| D^{k'-4} v(t) \|^{2}_{L^{2}}
+ 2 \int_{0}^{T} \int_{\T} \left(  \left( k' -  \frac{15}{2}   \right) \p_{x} a_{3}(t) + a_{2}(t) \right)
 \left( \p_{x} D^{k'-6} v(t) \right)^{2} \; dx \\
\leq C
\left(
\begin{array}{l}
\| a_0 (t) \|^{2}_{H^{k'-6}}  + \| v(t) \|^{2}_{H^{k'-6}}
\left(
1 +  \sum \limits_{m=1}^{3} \| a_m(t) \|_{H^{\left( m + \frac{1}{2} \right)+}} \right) \\
+ \sum \limits_{m=1}^{3} \| a_m (t) \|^{2}_{H^{k'-6 + m}}  \| v(t) \|^{2}_{H^{\frac{1}{2}+}}   \\
+ \epsilon \left( \| b_3(t) \|_{H^{\frac{3}{2}+}} + \| b_2(t) \|_{H^{\frac{1}{2}+}} \right) \| \p_{x} D^{k'-6} v(t) \|^{2}_{L^{2}}  \\
+ \epsilon^{2} \| b_0(t) \|^{2}_{H^{k'-6}} +  \epsilon^{2}
\left(   \sum \limits_{m=1}^{3} \| b_{m}(t) \|^{2}_{H^{k'-6+m}}  + \| c(t) \|^{2}_{H^{k'-2}} \right) \| v(t) \|^{2}_{H^{\frac{1}{2}+}} \\
+ \epsilon \left(
\sum \limits_{m=1}^{3}  \| b_m(t) \|_{H^{ \left( m + \frac{1}{2} \right)+}}
+ \| c(t) \|_{H^{\frac{9}{2}+}}
\right)
\| v(t) \|^{2}_{H^{k'-6}}
\end{array}
\right)
\label{Eqn:EstHv}
}

\label{prop:linearv}
\end{prop}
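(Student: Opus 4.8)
The plan is to perform a straightforward $\dot{H}^{k'-6}$ energy estimate on (\ref{Eqn:Eqvkpr6}), tracking the top-order terms by means of the commutator Lemma \ref{lem:LemD}. We may assume $v$ is smooth (by approximation; in the application this follows from Remark \ref{Rem:smooth}). Set $s := k'-6 \geq 0$ and $w := D^{s}v$, so that $\frac{1}{2}\frac{d}{dt}\|v\|^2_{\dot{H}^{s}} = \langle w,\, D^{s}\p_t v\rangle$, the $L^2$ inner product. Substituting $\p_t v$ from (\ref{Eqn:Eqvkpr6}), the term $-\epsilon\p_x^4 v$ contributes $-\epsilon\langle D^{s}v, D^{s+4}v\rangle = -\epsilon\|D^{k'-4}v\|^2_{L^2}$, which after multiplication by $2$ and transposition gives the parabolic gain $2\epsilon\|D^{k'-4}v\|^2_{L^2}$ on the left.

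For the third- and second-order terms I would apply Lemma \ref{lem:LemD} with $k'$ replaced by $s$ and $m\in\{2,3\}$, writing $D^{s}(a_m\p_x^{m}v) = \sum_{l=0}^{m-1}\binom{s}{l}(\p_x^{l}a_m)\p_x^{m-l}w + R_m$ with $\|R_m\|_{L^2}\lesssim\|a_m\|_{H^{s+m}}\|v\|_{H^{\frac12+}} + \|a_m\|_{H^{(m+\frac12)+}}\|v\|_{H^{s}}$. Pairing $R_m$ with $w$ and using Cauchy--Schwarz and Young's inequality yields the right-hand side contributions $\|a_m\|^2_{H^{k'-6+m}}\|v\|^2_{H^{\frac12+}}$ and $\|a_m\|_{H^{(m+\frac12)+}}\|v\|^2_{H^{k'-6}}$. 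Pairing the main terms $\binom{s}{l}(\p_x^{l}a_m)\p_x^{m-l}w$ with $w$ and integrating by parts repeatedly on $\T$, all terms carrying two factors $\p_x w$ -- coming from the $l=0,1$ terms of $a_3$ and the $l=0$ term of $a_2$ -- combine into
\[
\Big(\tfrac32 - s\Big)\!\int_{\T}(\p_x a_3)(\p_x w)^2\,dx - \int_{\T}a_2(\p_x w)^2\,dx = -\int_{\T}\Big[\big(k'-\tfrac{15}{2}\big)\p_x a_3 + a_2\Big](\p_x w)^2\,dx,
\]
while the remaining terms carry at least three $x$-derivatives on $a_3$ or two on $a_2$ and are controlled by $\big(\|a_3\|_{H^{(7/2)+}} + \|a_2\|_{H^{(5/2)+}}\big)\|v\|^2_{H^{k'-6}}$ via $H^{\frac12+}\hookrightarrow L^\infty$. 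Multiplying by $2$ and transposing places $2\int_{\T}\big[(k'-\tfrac{15}{2})\p_x a_3 + a_2\big](\p_x D^{k'-6}v)^2\,dx$ on the left-hand side.

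The lower-order and $\epsilon$-terms are handled identically. For $a_1\p_x v$: $D^{s}(a_1\p_x v) = a_1\p_x w + R_1$, with $2\int a_1 w\p_x w = -\int(\p_x a_1)w^2 \lesssim \|a_1\|_{H^{(3/2)+}}\|v\|^2_{H^{k'-6}}$ and $R_1$ contributing $\|a_1\|^2_{H^{k'-5}}\|v\|^2_{H^{\frac12+}} + \|a_1\|_{H^{(3/2)+}}\|v\|^2_{H^{k'-6}}$; the term $a_0$ gives $\|a_0\|^2_{H^{k'-6}} + \|v\|^2_{H^{k'-6}}$. For $\epsilon b_3\p_x^3 v$, $\epsilon b_2\p_x^2 v$, $\epsilon b_1\p_x v$, $\epsilon b_0$ the same manipulations carry an extra factor $\epsilon$: the top-order parts give $\epsilon\big(\|b_3\|_{H^{(3/2)+}} + \|b_2\|_{H^{(1/2)+}}\big)\|\p_x D^{k'-6}v\|^2_{L^2}$ (these compete with but are not absorbed by the dissipation); the commutator remainders, via $\epsilon\|w\|_{L^2}\|b_m\|_{H^{s+m}}\|v\|_{H^{\frac12+}} \leq \tfrac12\|w\|^2_{L^2} + \tfrac12\epsilon^2\|b_m\|^2_{H^{s+m}}\|v\|^2_{H^{\frac12+}}$, give $\epsilon^2\sum_m\|b_m\|^2_{H^{k'-6+m}}\|v\|^2_{H^{\frac12+}} + \|v\|^2_{H^{k'-6}}$; the lower-order parts give $\epsilon\sum_m\|b_m\|_{H^{(m+\frac12)+}}\|v\|^2_{H^{k'-6}}$; and $\epsilon b_0$ gives $\epsilon^2\|b_0\|^2_{H^{k'-6}} + \|v\|^2_{H^{k'-6}}$. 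Finally, for the term $\epsilon\,\p_x^4 c\,v$ (the coefficient being $\p_x^4 c$), Lemma \ref{Lem:prod} together with $\|\p_x^4 c\|_{H^{k'-6}} = \|c\|_{H^{k'-2}}$ and $\|\p_x^4 c\|_{H^{\frac12+}} = \|c\|_{H^{9/2+}}$ gives $2\epsilon\langle w, D^{s}((\p_x^4 c)v)\rangle \lesssim \epsilon\|v\|_{H^{k'-6}}\big(\|c\|_{H^{k'-2}}\|v\|_{H^{\frac12+}} + \|c\|_{H^{9/2+}}\|v\|_{H^{k'-6}}\big)$, whence, by Young, $\epsilon^2\|c\|^2_{H^{k'-2}}\|v\|^2_{H^{\frac12+}} + \epsilon\|c\|_{H^{9/2+}}\|v\|^2_{H^{k'-6}} + \|v\|^2_{H^{k'-6}}$. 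Assembling all contributions yields (\ref{Eqn:EstHv}).

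The main obstacle is the bookkeeping of the second paragraph: one must verify that, after the Leibniz-type expansion of Lemma \ref{lem:LemD} and the repeated integrations by parts, the binomial coefficients combine so that precisely the combination $(k'-\tfrac{15}{2})\p_x a_3 + a_2$ multiplies $(\p_x D^{k'-6}v)^2$ -- the $\tfrac{15}{2}$ arising from $s = k'-6$ together with the $\tfrac32$ produced by integrating $\int a_3 w\,\p_x^3 w$ by parts twice and the $-s$ coming from the $l=1$ term. The rest is routine use of Lemmas \ref{Lem:prod} and \ref{lem:LemD}, the embedding $H^{\frac12+}\hookrightarrow L^\infty$, and Young's inequality.
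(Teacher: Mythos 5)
Your proposal is correct and follows essentially the same route as the paper: an $\dot H^{k'-6}$ energy identity, the commutator expansion of Lemma \ref{lem:LemD} for the $m=2,3$ coefficients, repeated integration by parts producing exactly the combination $\left(k'-\tfrac{15}{2}\right)\p_x a_3 + a_2$ in front of $\left(\p_x D^{k'-6}v\right)^2$ (your bookkeeping $\tfrac32 - s$ with $s=k'-6$ matches the paper's $A_3^{a}+A_3^{b}+A_2^{a}$ computation), and Lemma \ref{Lem:prod}, the embedding $H^{\frac12+}\hookrightarrow L^{\infty}$, and Young's inequality for the remaining terms, including the $\epsilon$-terms and $\epsilon\,\p_x^4 c\, v$. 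The only cosmetic differences are that you pair $D^{k'-6}v$ with $D^{k'-6}\p_t v$ instead of $v$ with $D^{2(k'-6)}\p_t v$ (avoiding the paper's $(-1)^{k'-6}$ factor), and your "$\|\p_x^4 c\|_{H^{k'-6}} = \|c\|_{H^{k'-2}}$" should be $\lesssim$, which is all that is needed.
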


%\begin{rem}
%A similar proof shows that (\ref{Eqn:EstHv}) still holds, with  `$ \epsilon$', '$[0,T]$, ' $ \leq C$ ', replaced with
%`$- \epsilon$', $'[-T,0]'$, $'\geq - C '$ in (\ref{Eqn:Eqvkpr6}) and the LHS of (\ref{Eqn:EstHv}), Proposition
%\ref{prop:linearv}, and (\ref{Eqn:EstHv}) respectively.
%\end{rem}

\begin{proof}

Elementary considerations show that

\EQQARRLAB{
\langle \partial_{t} v(t) + \epsilon \p_{x}^{4} v(t), D^{2 (k'-6)} v(t) \rangle  = \\
\frac{(-1)^{k'-6}}{2} \frac{d \left( \| v(t) \|^{2}_{\dot{H}^{k'-6}} \right)}{dt}  + (-1)^{k'-6} \epsilon \left\| D^{k' - 4} v(t) \right\|^{2}_{L^{2}} \cdot
\label{Eqn:Meth1}
}
On the other hand

\EQQARRLAB{
\langle \p_{t} v(t) + \epsilon \p_{x}^{4} v(t), D^{2 (k'-6)} v(t) \rangle  = \\
(-1)^{k'-6} \left( A_{3}(t) + ... + A_{0}(t) + B_3(t)+ ... + B_0(t) + C(t) \right), \; \text{with}
\label{Eqn:Meth2}
}

\EQQARR{
A_0(t) := \int D^{k'-6} \left( a_0(t) \right) D^{k'-6} v(t) \; dx, \, B_0(t)  := \epsilon \int D^{k'-6}  \left( b_0(t) \right) D^{k'-6} v(t) \; dx, \\
A_m(t) := \int D^{k'-6} \left( a_m(t) \p_{x}^{m} v(t) \right) D^{k'-6} v(t) \; dx, \, \text{and} \,  B_m (t) := \epsilon \int D^{k'-6} \left( b_m(t) \p_{x}^{m} v(t) \right) D^{k'-6} v(t) \; dx \cdot
}
Here $m \in \{ 1,2,3 \}$. We first estimate $A_m(t)$. \\
We see from Lemma \ref{lem:LemD} that

\EQQARR{
A_3 (t) &  = A_3^{a}(t) + A_3^{b}(t) + A_3^{c}(t) \\
&  + O \left( \| v(t) \|_{H^{k'-6}}  \left(  \| a_{3}(t) \|_{H^{k'-3}} \| v(t) \|_{H^{\frac{1}{2}+}} + \| a_{3}(t) \|_{H^{\frac{7}{2}+}} \| v(t) \|_{H^{k'-6}} \right) \right),
}
with

\EQQARR{
A_{3}^{a}(t) & :=  \int a_3(t) D^{k'-6} \p_{x}^{3} v (t) D^{k'-6} v (t) \; dx, \\
A_{3}^{b}(t)  & :=  (k'-6) \int D a_3(t) D^{k'-7} \p_{x}^{3} v(t) D^{k'-6} v (t) \; dx, \; \text{and} \\
A_{3}^{c}(t)  & := \frac{ (k'-6) (k'-7)}{2} \int D^{2} a_3 (t) D^{k'-8} \p_{x}^{3} v(t)  D^{k'-6} v (t) \; dx \cdot
}
Integrations by parts show that

\EQQARR{
A_3^{a}(t) & = - \int \p_{x} a_3 (t) \p_{x}^{2} D^{k'-6} v (t) D^{k'-6} v (t) \; dx
-  \int a_3 (t) \p_{x}^{2} D^{k'-6} v(t) \p_{x} D^{k'-6} v(t) \; dx \\
& =  \int \p_{x}^{2} a_3(t) \p_x D^{k'-6} v(t) D^{k'-6} v(t) \; dx
+ \frac{3}{2} \int \p_{x} a_3(t) \left( \p_x D^{k'-6} v(t) \right)^{2} \; dx  \\
& = - \frac{1}{2} \int \p_{x}^{3} a_3 (t) \left( D^{k'-6} v(t) \right)^{2} \; dx
+ \frac{3}{2} \int \p_{x} a_3(t) \left( \p_x D^{k'-6} v(t) \right)^{2} \; dx \cdot
}
We have

\EQQARR{
A_3^{b}(t) & =  (k'-6) \int \p_{x} a_{3}(t) \p_{x}^{2} D^{k'-6} v(t) D^{k'-6} v(t) \; dx \\
& = -  (k'-6) \int \p_{x}^{2} a_3(t)  \p_{x} D^{k'-6} v(t) D^{k'-6} v(t) \; dx
-  (k'-6) \int \p_{x} a_3(t) \left( \p_x D^{k'-6} v(t) \right)^{2} \; dx  \\
& = \frac{k'-6}{2} \int \p_{x}^{3} a_3(t) \left( D^{k'-6} v(t) \right)^{2} \; dx
-  (k'-6) \int \p_{x} a_3(t) \left( \p_x D^{k'-6} v(t) \right)^{2} \; dx
}
We have

\EQQARR{
A_3^{c}(t) & =  \frac{(k'-6)(k'-7)}{2} \int \p_{x}^{2} a_{3}(t) \p_{x} D^{k'-6} v(t) D^{k'-6} v(t) \; dx \\
& = - \frac{(k'-6)(k'-7)}{4} \int \p_{x}^{3} a_3(t)  \left( D^{k'-6} v(t) \right)^{2} \; dx \cdot
}
We can also write

\EQQARR{
A_{2}(t) & = A_{2}^{a}(t) + A_{2}^{b}(t) \\
& + O \left( \| a_2(t) \|_{H^{k'-4}} \| v(t) \|_{H^{k'-6}} \| v(t) \|_{H^{\frac{1}{2}+}} \right)
+ O \left( \| a_2(t) \|_{H^{\frac{5}{2}+}} \| v(t) \|_{H^{k'-6}}^{2} \right),
}
with

\EQQARR{
A_{2}^{a}(t) & :=  \int a_2(t) D^{k'-6} \p_{x}^{2} v(t) D^{k'-6} v(t) \; dx, \; \text{and} \\
A_{2}^{b}(t) & :=  (k'-6) \int D a_2(t) D^{k'-7} \p_{x}^{2} v(t) D^{k'-6} v(t) \; dx \cdot
}
We have

\EQQARR{
A_{2}^{a}(t) & = -  \int \p_{x} a_2(t)  \p_{x} D^{k'-6} v(t) D^{k'-6} v(t) \; dx -  \int a_2(t)   \left( \p_{x} D^{k'-6} v(t) \right)^{2} \; dx \\
          & = - \int a_2(t) \left( \p_x D^{k'-6} v(t) \right)^{2} \; dx + O \left( \| \p_x^{2} a_2(t) \|_{L^{\infty}} \| v(t) \|^{2}_{H^{k'-6}} \right), \; \text{and}
}

\EQQARR{
A_2^{b}(t) & = - \frac{k'-6}{2} \int \p_{x}^{2} a_2(t) \left( D^{k'-6} v(t) \right)^{2} \; dx = O \left( \| \p_x^{2} a_2 (t) \|_{L^{\infty}}
\| v(t) \|^{2}_{H^{k'-6}} \right) \cdot
 }
We have

\EQQARR{
A_1(t) & =  \int a_1(t) D^{k'-6} \p_x v(t) D^{k'-6} v(t) \; dx + O \left( \| a_1(t) \|_{H^{k'-5}} \| v(t) \|_{H^{\frac{1}{2}+}}
\| v(t) \|_{H^{k'-6}} \right) \\
& + O \left( \| a_1(t) \|_{H^{\frac{3}{2}+}} \| v(t) \|^{2}_{H^{k'-6}} \right)  \\
& = - \frac{1}{2}  \int \p_{x} a_1(t) \left( D^{k'-6} v(t) \right)^{2} \; dx  + O (...) + O (...)  \\
& = O \left( \| \p_{x} a_1(t) \|_{L^{\infty}} \| v(t) \|_{H^{k'-6}}^{2} \right) + O (...) + O (...) \cdot
}
We have

\EQQARR{
| A_0(t)|  \lesssim \left\| D^{k'-6} \left( a_0 (t) \right) \right\|_{L^{2}}  \| D^{k'-6} v(t) \|_{L^{2}} \lesssim
\| a_0 (t) \|_{H^{k'-6}}  \|  v(t) \|_{H^{k'-6}} \cdot
}
We then estimate $B_{m}(t)$ by a similar process to estimate $A_{m}(t)$. We get

\EQQARR{
 \epsilon^{-1} B_{3}(t) & =  \frac{(k'-7)(8-k')}{4} \int \p_{x}^{3} b_3 (t) \left( D^{k'-6} v(t) \right)^{2} \; dx
+ \left( \frac{15}{2} - k' \right) \int \p_{x} b_3(t) \left( \p_{x} D^{k'-6} v(t) \right)^{2} \; dx  \\
& + O \left( \| v(t) \|_{H^{k'-6}}  \left(  \| b_{3}(t) \|_{H^{k'-3}} \| v(t) \|_{H^{\frac{1}{2}+}} + \| b_{3}(t) \|_{H^{\frac{7}{2}+}} \| v(t) \|_{H^{k'-6}} \right) \right),
}

\EQQARR{
 \epsilon^{-1} B_{2}(t) & = - \int b_2(t)   \left( \p_{x} D^{k'-6} v(t) \right)^{2} \; dx
 + O \left( \| b_{2}(t) \|_{H^{k'-4}} \| v(t) \|_{H^{k'-6}}  \| v(t) \|_{H^{\frac{1}{2}+}} \right) \\
 &  + O \left( \| b_2 (t) \|_{H^{\frac{5}{2}+}} \| v(t) \|^{2}_{H^{k'-6}}  \right) + O \left( \| \p_x^{2} b_2(t) \|_{L^{\infty}} \| v(t) \|^{2}_{H^{k'-6}} \right), \; \text{and}
}

\EQQARR{
\epsilon^{-1} B_{1}(t) & =  O \left( \| \p_{x} b_1 (t) \|_{L^{\infty}} \| v(t) \|^{2}_{H^{k'-6}} \right) + O \left( \| b_1(t) \|_{H^{k' - 5}} \| v(t) \|_{H^{\frac{1}{2}+}}
\| v(t) \|_{H^{k'-6}} \right) \\
& + O \left( \| b_1(t) \|_{\frac{3}{2}+} \| v(t) \|^{2}_{H^{k'-6}} \right), \; \text{and}
}

\EQQARR{
\epsilon^{-1} | B_0(t)| & \lesssim \| b_0(t) \|_{H^{k'-6}} \| v(t) \|_{H^{k'-6}} \cdot
}
We get from Lemma \ref{Lem:prod}

\EQQARR{
\epsilon^{-1} |C(t)| & \lesssim  \left( \| c(t) \|_{H^{k'-2}} \| v(t) \|_{H^{\frac{1}{2}+}} + \| c(t) \|_{H^{\frac{9}{2}+}} \| v(t) \|_{H^{k'-6}}
\right) \| v(t) \|_{H^{k'-6}} \cdot
}
Next we divide (\ref{Eqn:Meth1}) and (\ref{Eqn:Meth2}) by $(-1)^{k'-6}$. We see that we can bound from above
$\frac{d  \left( \| v(t) \|^{2}_{\dot{H}^{k-6}} \right)}{dt}x   + 2 \epsilon \| D^{k'-4} v(t) \|^{2}_{L^{2}} + 2 \int \left( \left( k' - \frac{15}{2} \right) \p_{x} a_{3}(t) + a_2(t) \right)
\left( \p_{x} D^{k'-6} v(t) \right)^{2} \; dx  $ by the RHS of (\ref{Eqn:EstHv})
by using the above estimates, the Sobolev embedding $H^{\frac{1}{2}+} \hookrightarrow L^{\infty}$,
and the Young inequality $ab \leq \frac{a^{2}}{2} + \frac{b^{2}}{2}$.

\end{proof}

We also prove the proposition below:

\begin{prop}
Let $ 0 < \epsilon \ll 1$. The following hold:

\begin{enumerate}

\item Let $k' \geq 3$, $K \geq 0$, and $j \in  \left\{ \frac{9}{2}+, k' -3 \right\} $.
Let $u$ be a solution of (\ref{Eqn:Integral}) obtained by Proposition \ref{Prop:Integral} on an interval $[0,T']$.
Assume that $ \sup_{t \in [0,T']} \| u(t) \|_{H^{\frac{9}{2}+}} \leq K $. Then there exists $C := C(K) > 0$ such that

\EQQARRLAB{
\frac{d \left( \| u(t) \|^{2}_{H^{j}} \right)}{dt}  & \leq C \left(  1 + \| u(t) \|^{2}_{H^{j+3}} \right) \cdot
\label{Eqn:EqvL20}
}

\item Assume that $v$ is a function on an interval $[0,T]$ that satisfies

\EQQARRLAB{
\p_{t} v + \epsilon \p_{x}^{4} v & = a_{3} \p_{x}^{3} v + a_{2} \p_{x}^{2} v + a_{1} \p_{x} v + a_{0} \\
& + \epsilon b_3 \p_{x}^{3} v + \epsilon b_2 \p_{x}^{2} v + \epsilon b_1 \p_x v  + \epsilon b_0 +  \epsilon \p_{x}^{4} c v \cdot
\label{Eqn:EqvL2}
}
There exists $C > 0$ such that for all $t \in [0,T']$

\EQQARRLAB{
\frac{d \left( \| v(t) \|^{2}_{L^{2}} \right)}{dt} + 2 \epsilon \| \p_{x}^{2} v(t) \|^{2}_{L^{2}}  \\
\leq C
\left(
\begin{array}{l}
 \| a_0 (t) \|^{2}_{L^{2}} + \left( 1 + \sum \limits_{m=1}^{3} \| a_m(t) \|_{H^{ \left(m + \frac{1}{2} \right)+}} \right)
\| v(t) \|^{2}_{L^{2}}  \\
+ \sum \limits_{m=1}^{3} \| a_m (t) \|^{2}_{H^{m}}  \| v(t) \|^{2}_{H^{\frac{1}{2}+}}  \\
+ \left( \| a_3(t) \|_{H^{\frac{3}{2}+}} + \| a_2(t) \|_{H^{\frac{1}{2}+}} \right) \| v(t) \|^{2}_{H^{1}} \\
+ \epsilon \left( \| b_3(t) \|_{H^{\frac{3}{2}+}} + \| b_2(t) \|_{H^{\frac{1}{2}+}} \right) \| \p_{x} v(t) \|^{2}_{L^{2}}  \\
+ \epsilon^{2} \| b_0(t) \|^{2}_{L^{2}} +  \epsilon^{2}
\left(   \sum \limits_{m=1}^{3} \| b_{m}(t) \|^{2}_{H^{m}}  + \| c(t) \|^{2}_{H^{4}} \right) \| v(t) \|^{2}_{H^{\frac{1}{2}+}} \\
+ \epsilon \left(
\sum \limits_{m=1}^{3}  \| b_m(t) \|_{H^{ \left( m + \frac{1}{2} \right)+}}
+ \| c(t) \|_{H^{\frac{9}{2}+}}
\right)
\| v(t) \|^{2}_{L^{2}}
\end{array}
\right)
\label{Eqn:EstL2v}
}

\end{enumerate}
\label{prop:linearL2v}
\end{prop}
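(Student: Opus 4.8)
I would prove the two parts separately; both reduce quickly to earlier results. \emph{For part (1)}, by Remark \ref{Rem:smooth} I may restrict to smooth solutions, for which $u$ satisfies the differential form $\p_{t} u = - \epsilon \p_{x}^{4} u + F(\vec{u},t)$ of (\ref{Eqn:Regul}) and $t \mapsto \| u(t) \|_{H^{j}}^{2}$ is $\mathcal{C}^{1}$. Differentiating in time,
\[
\frac{d}{dt} \| u(t) \|_{H^{j}}^{2} = - 2 \epsilon \sum_{n \in \Z} \langle n \rangle^{2 j} n^{4} \, | \widehat{u(t)}(n) |^{2} + 2 \langle u(t), F(\overrightarrow{u(t)},t) \rangle_{H^{j}} ;
\]
the first sum is $\geq 0$ and is discarded, and for the second term the Cauchy-Schwarz inequality and (\ref{Eqn:EstFDeriv}) (with $G = F$, exponent $j$, using $\sup_{t} \| u(t) \|_{H^{9/2+}} \leq K$) give
\[
2 \langle u(t), F(\overrightarrow{u(t)},t) \rangle_{H^{j}} \leq 2 \| u(t) \|_{H^{j}} \, \| F(\overrightarrow{u(t)},t) \|_{H^{j}} \leq C(K) \, \| u(t) \|_{H^{j+3}} \big( 1 + \| u(t) \|_{H^{j+3}} \big) ,
\]
whence (\ref{Eqn:EqvL20}) follows by the Young inequality and $j \leq j+3$.

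\emph{For part (2)}, the point is that equation (\ref{Eqn:EqvL2}) is identical to (\ref{Eqn:Eqvkpr6}) and that (\ref{Eqn:EstL2v}) is the case $k' = 6$ of Proposition \ref{prop:linearv}, once the signed term on the left of (\ref{Eqn:EstHv}) has been moved to the right. With $k' = 6$ one has $\dot{H}^{k'-6} = L^{2}$, $D^{k'-4} = \p_{x}^{2}$, $D^{k'-6} = \mathrm{Id}$, $k'-6+m = m$, $k'-2 = 4$, and $k' - \frac{15}{2} = - \frac{3}{2}$, so Proposition \ref{prop:linearv} reads
\[
\frac{d}{dt} \| v(t) \|_{L^{2}}^{2} + 2 \epsilon \| \p_{x}^{2} v(t) \|_{L^{2}}^{2} + 2 \int_{\T} \Big( - \tfrac{3}{2} \p_{x} a_{3}(t) + a_{2}(t) \Big) \big( \p_{x} v(t) \big)^{2} \, dx \leq C \, (\cdots) ,
\]
where $(\cdots)$ is term by term the right-hand side of (\ref{Eqn:EstL2v}) with its $\| v(t) \|_{H^{1}}^{2}$ term deleted. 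I would then bring the signed integral to the right and bound it crudely by $\big( \| \p_{x} a_{3}(t) \|_{L^{\infty}} + \| a_{2}(t) \|_{L^{\infty}} \big) \| v(t) \|_{H^{1}}^{2}$, and use the Sobolev embedding $H^{1/2+} \hookrightarrow L^{\infty}$ to replace $\| \p_{x} a_{3}(t) \|_{L^{\infty}}$ by $\| a_{3}(t) \|_{H^{3/2+}}$ and $\| a_{2}(t) \|_{L^{\infty}}$ by $\| a_{2}(t) \|_{H^{1/2+}}$; this restores exactly the deleted term and yields (\ref{Eqn:EstL2v}). Alternatively one may just rerun the proof of Proposition \ref{prop:linearv} with $k' = 6$: there Lemma \ref{lem:LemD} contributes no remainder since $\binom{0}{l} = 0$ for $l \geq 1$, so only the integrations by parts producing the $a_{3}, a_{2}, a_{1}$ and $\epsilon b_{3}, \epsilon b_{2}, \epsilon b_{1}$ contributions remain, with the $a_{3}, a_{2}$ terms on $(\p_{x} v)^{2}$ routed to the right as above.

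No serious obstacle is involved: part (1) is a one-line energy identity once the favourable sign of the $\epsilon \p_{x}^{4}$ term is observed, and part (2) is essentially a bookkeeping specialization of Proposition \ref{prop:linearv}. The two points that need mild care are the appeal to Remark \ref{Rem:smooth} to justify the time differentiation in part (1), and verifying in part (2) that discarding the signed term brings in no norm of $a_{3}$ or $a_{2}$ stronger than the $\| a_{3} \|_{H^{3/2+}}$ and $\| a_{2} \|_{H^{1/2+}}$ that already appear on the right of (\ref{Eqn:EstL2v}).
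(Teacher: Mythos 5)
Your proposal is correct and follows essentially the same route as the paper: part (1) is the same energy pairing with the nonnegative $\epsilon$-term discarded and the nonlinearity bounded via Cauchy--Schwarz and Lemma \ref{lem:EstDerivF}, and part (2) is the paper's own specialization of Proposition \ref{prop:linearv} to $k'=6$, moving the signed integral to the right and bounding it by $\left( \| a_3(t) \|_{H^{\frac{3}{2}+}} + \| a_2(t) \|_{H^{\frac{1}{2}+}} \right) \| v(t) \|^{2}_{H^{1}}$ through the Sobolev embedding $H^{\frac{1}{2}+} \hookrightarrow L^{\infty}$. No substantive difference from the paper's argument.
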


\begin{rem}
Regarding the constant $C:=C(K)$ in (\ref{Eqn:EqvL20}) of Proposition \ref{prop:linearL2v}, we refer to Remark \ref{Rem:EstDerivOp}.
\label{Rem:linearL2v}
\end{rem}

\begin{proof}

Let $m \in  \left\{0, \frac{9}{2}+, k'-3 \right\}$. Elementary considerations show that

\EQQARR{
\langle \p_{t} u(t) + \epsilon \p_{x}^{4} u(t),  D^{2m} u(t) \rangle & = \frac{(-1)^{m}}{2}
\frac{d \left(  \| D^{m} u(t) \|^{2}_{L^{2}} \right)}{dt} + (-1)^{m} \epsilon
\| D^{m+2} u(t) \|^{2}_{L^{2}}
}
On the other hand $ \langle F(\overrightarrow{u(t)}), D^{2m} u(t) \rangle
= (-1)^{m} \langle D^{m} \left( F(\overrightarrow{u(t)}) \right), D^{m} u(t) \rangle $.
Hence we see from Lemma \ref{lem:EstDerivF}, the Cauchy-Schwartz inequality, and elementary estimates that

\EQQARR{
\frac{d \left( \| u(t) \|^{2}_{\dot{H}^{m}} \right)}{dt}  & \lesssim  1 + \| u(t) \|^{2}_{H^{m+3}} \cdot
}
Hence (\ref{Eqn:EqvL2}) holds. \\
Mimicking the proof of Proposition \ref{prop:linearv} (with $k'$ replaced with $6$) we see that $ (*): \frac{d \left(  \| v(t) \|^{2}_{L^{2}} \right) }{dt}   + 2 \epsilon \| \p_{x}^{2} v(t) \|^{2}_{L^{2}} + 2 \int \left( -\frac{3}{2}  \p_{x} a_{3}(t) + a_2(t) \right) \left( \p_{x} v(t) \right)^{2} \; dx $ is bounded by the sum of the RHS of (\ref{Eqn:EstL2v}) (with again $k'$ replaced with $6$).
Observe  that the second term of $(*)$ is bounded by a constant multiplied by  $ \left( \| \partial_{x} a_{3}(t) \|_{L^{\infty}} + \| a_{2}(t) \|_{L^{\infty}} \right) \| v(t) \|^{2}_{H^{1}} $. Hence, using also
the Sobolev embedding $H^{\frac{1}{2}+} \hookrightarrow L^{\infty} $, we see that (\ref{Eqn:EstL2v}) holds.

\end{proof}

\subsection{Gauged energy estimates: statement}

In this subsection we prove the following gauged energy estimates:

\begin{prop}
Let $ k' \geq k'_{0} > \frac{17}{2} $. Let  $ \tilde{\phi} \in \widetilde{\mathcal{P}}_{+,k'} $  be a smooth function. Then there exist $1 \geq T' := T' \left( \| \tilde{\phi} \|_{H^{k'_{0}}}, \tilde{\delta}( \overrightarrow{\tilde{\phi}} ), \tilde{\delta}^{'}( \tilde{\phi}) \right) > 0 $ and
$ C :=  \left( \| \tilde{\phi} \|_{H^{k'_{0}}}, \tilde{\delta}( \overrightarrow{\tilde{\phi}} ) \right) $ such that if $u$ is the solution of (\ref{Eqn:Integral}) obtained by Proposition \ref{Prop:Integral} on $[0,T_{\epsilon})$ then

\EQQARRLAB{
T_{\epsilon} > T'; \; \sup_{t \in [0,T']} \left( E_{k'}(u(t),t) + \int_{0}^{t} \int_{\T} Q \left( u(t^{'}),t' \right) \left( \partial_{x} D^{k'-6} v(t') \right)^{2} dx \; dt' \right)
\leq 2  \left( 1 + E_{k'}(\tilde{\phi},0) \right), \\
t \in [0,T']: \; \frac{d E_{k'} (u(t),t )}{dt}  + \int_{\T} Q (u(t),t) \left( \partial_{x} D^{k'-6} v(t) \right)^{2} \; dx
\leq C \left( 1 + E_{k'}(u(t),t) \right), \\
u(t) \in \mathcal{P}_{+,k'}(t), \; \delta  \left( \overrightarrow{u(t)},t \right) \gtrsim \tilde{\delta} \left( \overrightarrow{\tilde{\phi}} \right), \; \text{and} \;
\delta^{'} \left( u(t),t \right) \gtrsim \tilde{\delta}^{'} (\tilde{\phi}),
\label{Eqn:BoundNrjVisc}
}
with $v(t):= \Phi_{k'} \left( u(t),t \right) \partial_{x}^{6} u(t) $.

%More generally, let $\breve{\phi}$ be a smooth function such that $ \| \breve{\phi} \|_{H^{k'}} \leq \| \tilde{\phi} \|_{H^{k'}}$. Then
%there exist $1 \gtrsim T' := T' \left( \| \tilde{\phi} \|_{H^{k'_{0}}}, \delta( \tilde{\phi} ), \delta^{'}( \tilde{\phi}) \right) > 0 $ and $ C := C \left( \| \tilde{\phi} %\|_{H^{k'_{0}}}, \delta(\tilde{\phi}), \delta'(\tilde{\pshi}),  \right) > 0 $ such that
%if $u$ is the solution of (\ref{Eqn:Integral}) obtained by Proposition \ref{Prop:Integral} (with $\tilde{\phi}$ replaced with $\breve{\phi}$) then
%(\ref{Eqn:BoundNrjVisc}) holds.

 \label{Prop:OneEnergyEst}
\end{prop}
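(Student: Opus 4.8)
The plan is to run a continuity (bootstrap) argument on the three quantities $\|u(t)\|_{H^{k'_0}}$, the gauged energy $E_{k'}(u(t),t)$, and the defining conditions of $\mathcal{P}_{+,k'}(t)$, simultaneously. Since $\tilde\phi$ is smooth, $u$ is a smooth solution on $[0,T_\epsilon)$ (Remark \ref{Rem:smooth}), so every integration by parts below is legitimate. Fix a constant $K$ (depending only on $\|\tilde\phi\|_{H^{k'_0}}$ and $\tilde\delta(\vec{\tilde\phi})$, large for what follows), $\bar\delta:=\tfrac12\tilde\delta(\vec{\tilde\phi})$ and $\bar\delta':=\tfrac12\tilde\delta^{'}(\tilde\phi)$, and let $T^\ast$ be the supremum of the $\tau\in(0,T_\epsilon)$ such that, for all $t\in[0,\tau]$, $\|u(t)\|_{H^{k'_0}}\le K$, $E_{k'}(u(t),t)\le 2(1+E_{k'}(\tilde\phi,0))$, $\delta(\vec{u(t)},t)\ge\bar\delta$, and $\inf_{x\in\T}Q(u(t),t)(x)\ge\bar\delta'$ (hence $u(t)\in\mathcal{P}_{+,k'}(t)$). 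Since $\tilde\phi\in\widetilde{\mathcal{P}}_{+,k'}$ and these four conditions are continuous in $t$ (via $H^{k'}\hookrightarrow\mathcal{C}^3$ and the continuity of $\delta$, $Q$ as functionals, Section \ref{Sec:App}), $T^\ast>0$; the goal is $T^\ast\ge T'$ for a suitable $T':=T'(\|\tilde\phi\|_{H^{k'_0}},\tilde\delta(\vec{\tilde\phi}),\tilde\delta^{'}(\tilde\phi))$, after which \ref{Eqn:BoundNrjVisc} is just the restriction of the bootstrap estimates to $[0,T']$.

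Next I would derive the equation for the gauged top derivative $v:=\Phi_{k'}(u,t)\,\partial_x^6u$. Differentiating \ref{Eqn:Regul} six times in $x$, substituting $\partial_x^6u=\Phi_{k'}^{-1}(u,t)\,v$, and using $\partial_tu=-\epsilon\partial_x^4u+F(\vec u,t)$ to evaluate $\partial_t\Phi_{k'}^{\pm1}$ (with the pieces carrying too many derivatives of $u$ put into a term of the form $\epsilon\partial_x^4(cv)$, and all lower-order and $\epsilon$-weighted material absorbed into $a_1,a_0,b_0,\dots,b_3,c$), one checks that $v$ satisfies an equation of type \ref{Eqn:Eqvkpr6} on $[0,T^\ast]$ with $a_3=\partial_{\omega_3}F(\vec u,t)$ and $a_2=3\,\Phi_{k'}(u,t)\,\partial_x\Phi_{k'}^{-1}(u,t)\,\partial_{\omega_3}F(\vec u,t)+P(u,t)$, every coefficient being built from $F$ and finitely many of its derivatives at $\vec u$ together with $u$, $\Phi_{k'}^{\pm1}(u,t)$ and their spatial derivatives; this is the computation of Section \ref{Sec:EnergyEst} with the substitution of Footnote \ref{Foot:Repl}. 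The structural heart of the matter is the identity \ref{Eqn:CrucEq}, which says precisely that $\left(k'-\tfrac{15}{2}\right)\partial_x a_3+a_2=Q(u,t)$, so that the gauge converts the pointwise, $k'$-dependent coefficient appearing in Proposition \ref{prop:linearv} into $Q(u,t)$, which is $\ge\bar\delta'>0$ on the bootstrap interval.

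Then I would insert this into the energy machinery: Proposition \ref{prop:linearv} for $\|v(t)\|_{\dot{H}^{k'-6}}$, Proposition \ref{prop:linearL2v}(2) for $\|v(t)\|_{L^2}$, and Proposition \ref{prop:linearL2v}(1) with $j=k'-3=\max(k'-3,\tfrac92+)$ (legitimate since $k'_0>\tfrac{17}{2}$) for $\|u(t)\|_{H^{k'-3}}$. Adding these, using the identity above, and discarding the nonnegative terms $2\epsilon\|D^{k'-4}v\|_{L^2}^2$ etc., the left-hand side becomes $\tfrac{d}{dt}E_{k'}(u(t),t)+\int_\T Q(u(t),t)\,(\partial_xD^{k'-6}v(t))^2\,dx$; on the right, every norm of $a_m$, $b_m$, $c$ is $\le C(K,\bar\delta)(1+\|u(t)\|_{H^{k'}})$ by Lemmas \ref{Lem:prod}, \ref{lem:EstDerivOp}, \ref{lem:EstDerivF}, \ref{lem:gauge} and \ref{Eqn:EstPf} (using $\|u(t)\|_{H^{k'_0}}\le K$, $\delta(\vec{u(t)},t)\ge\bar\delta$, and the auxiliary $H^{\frac{13}{2}+}$-bounds guaranteed by $k'_0>\tfrac{17}{2}$), the accompanying factors $\|v(t)\|_{H^{\frac12+}}$ are $\le C(K,\bar\delta)$ by Lemma \ref{lem:gauge} since $\tfrac12+<k'_0-6$, $\|u(t)\|_{H^{k'}}^2\le C(1+E_{k'}(u(t),t))$ by Proposition \ref{prop:compEHnorm}(1) (its hypothesis $E_{\frac{13}{2}+}(u(t),t)\le C(K,\bar\delta)$ following from \ref{Eqn:EHs}), and $\epsilon\le1$ kills the $\epsilon$-weighted terms; so one arrives at $\tfrac{d}{dt}E_{k'}(u(t),t)+\int_\T Q(u(t),t)(\partial_xD^{k'-6}v(t))^2\,dx\le C(1+E_{k'}(u(t),t))$ with $C=C(\|\tilde\phi\|_{H^{k'_0}},\tilde\delta(\vec{\tilde\phi}))$, which is the differential inequality in \ref{Eqn:BoundNrjVisc}. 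A Gronwall step then yields $E_{k'}(u(t),t)+\int_0^t\int_\T Q(u(t'),t')(\partial_xD^{k'-6}v(t'))^2\,dx\,dt'<2(1+E_{k'}(\tilde\phi,0))$ and, applying the same with $k'$ replaced by $k'_0$ plus Proposition \ref{prop:compEHnorm}(1), $\|u(t)\|_{H^{k'_0}}<K$, as long as $Ct$ is small (this forces $T'$ to depend on $\|\tilde\phi\|_{H^{k'_0}}$ and $\tilde\delta(\vec{\tilde\phi})$; here $K$ must be chosen $\ge 2\sqrt{C(1+E_{k'_0}(\tilde\phi,0))}$, which by \ref{Eqn:EHs} is admissible). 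To recover $\delta(\vec{u(t)},t)\ge\bar\delta$ and $\inf_xQ(u(t),t)(x)\ge\bar\delta'$ with strict inequality — equivalently the quantitative bounds $\delta(\vec{u(t)},t)\gtrsim\tilde\delta(\vec{\tilde\phi})$, $\delta^{'}(u(t),t)\gtrsim\tilde\delta^{'}(\tilde\phi)$ — I would note that on the bootstrap interval $\partial_tu=-\epsilon\partial_x^4u+F(\vec u,t)$ is bounded in $H^{k'_0-4}$ \emph{uniformly in} $\epsilon\in(0,1]$ (since $\epsilon\|\partial_x^4u\|_{H^{k'_0-4}}\le\|u\|_{H^{k'_0}}\le K$ and $\|F(\vec u,t)\|_{H^{k'_0-4}}\le C(K)$ by Lemma \ref{lem:EstDerivF}), whence $\|u(t)-\tilde\phi\|_{H^{k'_0-4}}\le C(K)t$ and, interpolating with the bootstrapped $H^{k'_0}$-bound, $\|u(t)-\tilde\phi\|_{H^{k'_0-1}}\le C(K)t^{\theta}$ for some $\theta\in(0,1)$; since $k'_0-1>\tfrac72$ we have $H^{k'_0-1}\hookrightarrow\mathcal{C}^3$, so $\|\vec{u(t)}-\vec{\tilde\phi}\|_{\mathcal{C}^0}\le C(K)t^\theta$ and hence (by smoothness of $F$, $|t|\le1$, and Section \ref{Sec:App}) $\delta(\vec{u(t)},t)\ge\tilde\delta(\vec{\tilde\phi})-C(K)t^\theta$ and $\inf_xQ(u(t),t)(x)\ge\tilde\delta^{'}(\tilde\phi)-C(K)t^\theta$. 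Choosing $T'$ small enough that all strict improvements hold on $[0,T']$, a standard open--closed argument gives $T^\ast\ge T'$, and since $\|u\|_{H^{k'_0}}$ cannot blow up before $T'$, Proposition \ref{Prop:Integral} yields $T_\epsilon>T'$.

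The step I expect to be the main obstacle is this last maintenance of $u(t)\in\mathcal{P}_{+,k'}(t)$ with quantitative control of $\delta$ and $\delta^{'}$: one must estimate $u(t)-\tilde\phi$ in a norm dominating $\mathcal{C}^0$ of the first three derivatives \emph{uniformly in the regularization parameter} $\epsilon$, which is exactly what forces the interpolation between the $\epsilon$-uniform $H^{k'_0-4}$ time-Lipschitz bound and the bootstrapped $H^{k'_0}$ bound, and is the reason the data hypothesis must be the \emph{strict} inequality $\inf_xQ(u(0),0)(x)>0$ rather than $\ge 0$. The remaining ingredients — the Gronwall closure, the bookkeeping of the coefficient norms, and the sign of the gauged coefficient — are routine once one has the identity \ref{Eqn:CrucEq} and Propositions \ref{prop:linearv}--\ref{prop:compEHnorm}.
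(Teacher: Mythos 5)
Your proposal is correct and follows the paper's overall architecture (derive the equation for $v=\Phi_{k'}(u,t)\partial_x^6u$, use (\ref{Eqn:CrucEq}) to turn the coefficient $\left(k'-\frac{15}{2}\right)\partial_xa_3+a_2$ into $Q(u,t)$, feed this into Propositions \ref{prop:linearv} and \ref{prop:linearL2v}, Gronwall, and a continuity argument to propagate $u(t)\in\mathcal{P}_{+,k'}(t)$), but you handle the one genuinely delicate step differently. The paper controls $\|u(t)-\tilde{\phi}\|_{H^{\frac{9}{2}+}}$ uniformly in $\epsilon$ by going back to the Duhamel formula (\ref{Eqn:Integral}): it splits frequencies at $N\approx(\epsilon t)^{-1/4}$ and uses $|e^{-x}-1|\lesssim\min(x,1)$ for the linear part, plus Minkowski and Lemma \ref{lem:EstDerivF} for the nonlinear part, obtaining a bound of the form $\epsilon^{0+}t^{0+}+t$; you instead observe that $\partial_tu=-\epsilon\partial_x^4u+F(\vec{u},t)$ is bounded in $H^{k_0'-4}$ uniformly in $\epsilon\in(0,1]$ on the bootstrap interval, giving a time-Lipschitz bound there, and interpolate with the bootstrapped $H^{k_0'}$ bound to reach $H^{k_0'-1}\supset H^{\frac{9}{2}+}$ smallness of order $t^{1/4}$. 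Your route is more elementary (no semigroup smoothing estimates), and it closes because $k_0'>\frac{17}{2}$ leaves enough room above $\frac{9}{2}$ after losing four derivatives; the paper's Duhamel computation has the advantage that the same estimate is recycled later with the mollified data $J_{\epsilon,k'}\tilde{\phi}$ (see (\ref{Eqn:uqminJ})), where the $\epsilon$-dependence of the data matters. Two small points to tighten: the $\epsilon$-weighted terms containing $\|D^{k'-4}v\|_{L^2}^2$ are not disposed of merely because $\epsilon\le1$ — they must be absorbed into the dissipative term $2\epsilon\|D^{k'-4}v\|_{L^2}^2$ coming from (\ref{Eqn:Meth1}), exactly as in the paper's proof of Lemma \ref{lem:OneEnergyEst}; and for the quantitative lower bound on $\inf_xQ(u(t),t)$ you need the Appendix estimate on $Y$, whose hypothesis $\delta\ge\bar{\delta}$ is available only inside the bootstrap, so the order of the claims (first $\delta$, then $Q$) matters as in the paper.
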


% COMMENTS
% \color{red}
% The restriction $k_{0}^{'} > \frac{17}{2}$ comes from the estimate of $ \| \tilde{a}_{0} (u(t),t) \|_{H^{k'-6}}$: one uses the estimate
% $ \epsilon \| \partial_{x}^{2} v(t) \|_{H^{\frac{1}{2}+}} \lesssim \epsilon \| u(t) \|_{H^{\frac{17}{2}+}} \lesssim  1 $. Observe that this
% restriction is not essential since it will be applied within the Bona-Smith procedure and the power $\epsilon$ compensates the loss of regularity
% $\epsilon^{-\alpha}$ (with $\alpha < 1$) due to the loss of regularity. Actually we can assume only that  $k_{0}^{'} > \frac{15}{2}$: the restriction
% comes from the estimate just under ``The Minkowski inequality and Lemma 3.3 show that'' on the next page
% \color{black}

\begin{rem}
The proof of Proposition \ref{Prop:OneEnergyEst} shows that $T'$ (resp. $C$) can be chosen as a continuous function that decreases (resp. increases) as $\tilde{\delta} \left( \overrightarrow{\tilde{\phi}} \right)$ decreases, decreases (resp. increases) as $\| \tilde{\phi} \|_{H^{k_{0}^{'}}}$ increases. It also shows that
$T'$ can be chosen as a continuous function that decreases as $ \tilde{\delta}^{'}(\tilde{\phi})$ decreases.
\label{Rem:PropOneEnergyEst}
\end{rem}

%\begin{rem}
%A similar proof shows that (\ref{Eqn:BoundNrjVisc}) still holds, with $`\epsilon'$, $`[0,T_{\epsilon}] '$,
%$ ` < 0' $, $`[0,T]'$, and ` $\leq C$', replaced with $`- \epsilon' $, $`[-T_{\epsilon},0]'$, $ > 0 $, $`[-T,0]'$, and $` \geq - C'$,
%in (\ref{Eqn:Integral}), Proposition \ref{Prop:OneEnergyEst}, and (\ref{Eqn:BoundNrjVisc}) respectively.
%\end{rem}

\begin{proof}

%We prove the first part of Proposition \ref{Prop:OneEnergyEst}: the more general statement is proved by using a similar argument and therefore we omit it. \\
%\\
The proof relies upon the lemma below:

\begin{lem}
Let $K \geq 0$. Let $u$ be a solution of (\ref{Eqn:Integral}) on an interval $[0,T^{''}]$ with $0 < T^{''} \leq 1$. Let $\bar{\delta} > 0$. Assume that
that  $\delta \left( \overrightarrow{u(t)}, t \right) \geq \bar{\delta}$ for all $t \in [0,T^{''}]$. Assume also that
$\sup_{t \in [0,T^{''}]} E_{\frac{17}{2}+} \left(  u(t),t \right) \leq K$. Then there exists $C:= C(K,\bar{\delta}) > 0$ such that

\EQQARRLAB{
t \in [0,T^{''}]: \; \frac{d E_{k'} (u(t),t)}{dt} + \int_{\T} Q (u(t),t) \left( \partial_{x} D^{k'-6} v(t) \right)^{2} \; dx
\leq C \left( 1 + E_{k'} (u(t),t)  \right) \cdot
\label{Eqn:EstDerE}
}

\label{lem:OneEnergyEst}
\end{lem}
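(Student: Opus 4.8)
The plan is to produce the evolution equation satisfied by $v(t) := \Phi_{k'}(u(t),t)\p_x^6 u(t)$, to match it with the hypotheses of Proposition \ref{prop:linearv}, and then to read off (\ref{Eqn:EstDerE}) from the estimates of Propositions \ref{prop:linearv} and \ref{prop:linearL2v}. First I would apply $\p_x^6$ to $\p_t u + \epsilon\p_x^4 u = F(\vec u,t)$ and collect, on the right-hand side, the terms that are at most linear in the highest space-derivatives of $u$: the result is $\p_{\omega_3}F(\vec u,t)\,\p_x^3(\p_x^6 u) + P(u,t)\,\p_x^2(\p_x^6 u) + (\text{lower order})$, the coefficient $P(u,t)$ of $\p_x^8 u$ being exactly the one in (\ref{Eqn:DefPhip}) (the factor $6$ there is $\binom{6}{5}$). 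Conjugating by $\Phi_{k'}(u(t),t)$ — i.e.\ writing the equation for $v=\Phi_{k'}\p_x^6 u$, commuting $\Phi_{k'}$ through $\p_x^3,\p_x^4$ and substituting $\p_t u = F - \epsilon\p_x^4 u$ wherever $\p_t$ falls on $\Phi_{k'}$ — one checks that $v$ solves an equation of the form (\ref{Eqn:Eqvkpr6}) with
\begin{equation*}
a_3 = \p_{\omega_3}F(\vec u,t), \qquad a_2 = 3\,\Phi_{k'}(u,t)\,\p_x\Phi_{k'}^{-1}(u,t)\,\p_{\omega_3}F(\vec u,t) + P(u,t),
\end{equation*}
and $a_1,a_0,b_3,b_2,b_1,b_0,c$ explicit expressions built from partial derivatives of $F$, from $u$, from $\p_x^6 u = \Phi_{k'}^{-1}v$, and from $\Phi_{k'}^{\pm1}$ (the $\epsilon$-coefficients $b_i,c$ arising from the commutators $[\p_x^j,\Phi_{k'}^{-1}]$ and from the $-\epsilon\p_x^4$ term acting through $\p_t\Phi_{k'}$). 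The crucial point is the identity (\ref{Eqn:CrucEq}): with this precise $a_2$ one has $(k'-\tfrac{15}{2})\p_x a_3 + a_2 = Q(u,t)$, so the signed weight appearing in Proposition \ref{prop:linearv} is exactly $Q(u(t),t)$.

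Next I would record the a priori control of low norms. From $\sup_t E_{\frac{17}{2}+}(u(t),t)\le K$ one has $\|u(t)\|_{H^{\frac{9}{2}+}}\le\sqrt K$; feeding this together with $\delta(\overrightarrow{u(t)},t)\ge\bar\delta$ into Lemmas \ref{lem:EstDerivF} and \ref{lem:gauge} and running the short Sobolev-index bootstrap from the proof of Proposition \ref{prop:compEHnorm}, one gets $\sup_t\|u(t)\|_{H^{\frac{17}{2}+}}\le C(K,\bar\delta)$. Consequently all ``low-order'' norms of $a_i,b_i,c$ and of $\Phi_{k'}^{\pm1}$ are bounded by $C(K,\bar\delta)$, while the ``top-order'' norms $\|a_m\|_{H^{k'-6+m}}$, $\|a_0\|_{H^{k'-6}}$ (where the terms quadratic in $\p_x^6 u$ hidden in $a_0$ are handled by Lemma \ref{Lem:prod}), $\|b_m\|_{H^{k'-6+m}}$, $\|c\|_{H^{k'-2}}$, $\|\Phi_{k'}^{\pm1}\|_{H^{k'-6}}$ are all bounded by $C(K,\bar\delta)\,(1+\|u(t)\|_{H^{k'}})$; and by Proposition \ref{prop:compEHnorm}(1) again, $\|u(t)\|^2_{H^{k'}}\le C(K,\bar\delta)\,(1+E_{k'}(u(t),t))$.

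Then I would assemble the estimate. Write $E_{k'}(u,t) = \|v\|^2_{L^2} + \|v\|^2_{\dot H^{k'-6}} + \|u\|^2_{H^{\max(k'-3,\frac{9}{2}+)}}$, differentiate in $t$, and bound: the $\dot H^{k'-6}$ part by Proposition \ref{prop:linearv} applied to $v$, the $L^2$ part by Proposition \ref{prop:linearL2v}(2) applied to $v$, and the $H^{\max(k'-3,\frac{9}{2}+)}$ part by Proposition \ref{prop:linearL2v}(1) applied to $u$ with $j=\max(k'-3,\frac{9}{2}+)$ (note $j+3=k'$ as $k'>\tfrac{17}{2}$). Adding the three and replacing $(k'-\tfrac{15}{2})\p_x a_3 + a_2$ by $Q(u,t)$ gives
\begin{equation*}
\frac{dE_{k'}(u(t),t)}{dt} + 2\epsilon\bigl(\|D^{k'-4}v(t)\|^2_{L^2}+\|\p_x^2 v(t)\|^2_{L^2}\bigr) + 2\int_\T Q(u(t),t)\bigl(\p_x D^{k'-6}v(t)\bigr)^2\,dx \;\le\; \mathrm{RHS},
\end{equation*}
with $\mathrm{RHS}$ the sum of the right-hand sides of (\ref{Eqn:EstHv}), (\ref{Eqn:EstL2v}) and $C(1+\|u(t)\|^2_{H^{k'}})$. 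By the previous paragraph, each term of $\mathrm{RHS}$ carrying no explicit $\epsilon$ is $\le C(K,\bar\delta)(1+\|u\|^2_{H^{k'}})\le C(K,\bar\delta)(1+E_{k'})$, and the $\epsilon,\epsilon^2$-terms involving only norms controlled by $E_{k'}$ are acceptable since $\epsilon\le1$. The remaining $\epsilon$-terms carry one excess space-derivative: $\epsilon(\|b_3\|_{H^{\frac32+}}+\|b_2\|_{H^{\frac12+}})\|\p_x D^{k'-6}v\|^2_{L^2}$ and the $\epsilon^2$-forcing $\epsilon^2(\sum_m\|b_m\|^2_{H^{k'-6+m}}+\|c\|^2_{H^{k'-2}})\|v\|^2_{H^{\frac12+}}$. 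For the first I would use $\|\p_x D^{k'-6}v\|^2_{L^2}\le \tfrac1{4\eta}\|D^{k'-6}v\|^2_{L^2}+\eta\|D^{k'-4}v\|^2_{L^2}$ with $\eta=\eta(K,\bar\delta)$ small; for the second, interpolate $\|u\|_{H^{k'+1}}\le\|u\|_{H^{k'}}^{1/2}\|u\|_{H^{k'+2}}^{1/2}$ and use that (from $v=\Phi_{k'}\p_x^6 u$, $\delta\ge\bar\delta$, Lemmas \ref{lem:EstDerivF}, \ref{lem:gauge}, \ref{Lem:prod}) there is $c(K,\bar\delta)>0$ with $\|D^{k'-4}v\|^2_{L^2}\ge c(K,\bar\delta)\|u\|^2_{H^{k'+2}}-C(K,\bar\delta)(1+E_{k'})$; choosing all Young parameters as suitable functions of $(K,\bar\delta)$, these terms get bounded by $2\epsilon\|D^{k'-4}v\|^2_{L^2}$ plus $C(K,\bar\delta)(1+E_{k'})$. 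Discarding the leftover nonnegative dissipation and retaining the $\int_\T Q(\ldots)$ term then yields (\ref{Eqn:EstDerE}) (in fact with $2\int_\T Q(\ldots)$, which is a fortiori the asserted inequality in the regime $Q\ge0$ in which the Lemma is used).

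The main obstacle is the first step: producing the $v$-equation \emph{with the exact coefficient} $a_2 = 3\Phi_{k'}\p_x\Phi_{k'}^{-1}\p_{\omega_3}F + P(u,t)$, since only this value makes (\ref{Eqn:CrucEq}) collapse $(k'-\tfrac{15}{2})\p_x a_3 + a_2$ to $Q(u,t)$ — this is precisely what the gauge is engineered to do. The careful book-keeping of the nonlinear remainder $a_0$ (quadratic in $\p_x^6 u$) and of the $\epsilon$-coefficients $b_i,c$, and their estimation through Lemmas \ref{lem:EstDerivF}–\ref{lem:gauge} and Proposition \ref{prop:compEHnorm}, is the other lengthy — but routine — part.
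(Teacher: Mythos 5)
Your proposal follows essentially the same route as the paper's proof: derive the equation for $v=\Phi_{k'}(u,t)\p_x^6u$ with exactly $a_3=\p_{\omega_3}F$ and $a_2=3\Phi_{k'}\p_x\Phi_{k'}^{-1}\p_{\omega_3}F+P$, invoke (\ref{Eqn:CrucEq}) so the signed weight is $Q(u,t)$, control the coefficients via Lemmas \ref{Lem:prod}, \ref{lem:EstDerivF}, \ref{lem:gauge} and Proposition \ref{prop:compEHnorm} using $E_{\frac{17}{2}+}\le K$ and $\delta\ge\bar\delta$, and conclude from Propositions \ref{prop:linearv} and \ref{prop:linearL2v} after absorbing the $\epsilon$-terms with excess derivatives (including those generated by $\p_t\Phi_{k'}$ through the equation) into the dissipation $2\epsilon\|D^{k'-4}v\|^2_{L^2}$. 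Your interpolation of $\|u\|_{H^{k'+1}}$ between $H^{k'}$ and $H^{k'+2}$ is only a cosmetic variant of the paper's bound $\|u\|_{H^{k'+1}}\lesssim 1+E_{k'}^{1/2}+\|v\|_{\dot H^{k'-4}}$, so the argument is correct as sketched.
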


\begin{rem}
The proof of Lemma \ref{lem:OneEnergyEst} shows that $C$ can be chosen as a continuous function that increases as $K$ increases,
and that increases as $\bar{\delta}$ decreases.
\label{Rem:lemOneEnergyEst}
\end{rem}

% COMMENTS
% \color{red}
% The restriction $k_{0}^{'} > \frac{17}{2}$ comes from the estimate of $ \| \tilde{a}_{0} (u(t),t) \|_{H^{k'-6}}$: one uses the estimate
% $ \epsilon \| \partial_{x}^{2} v(t) \|_{H^{\frac{1}{2}+}} \lesssim \epsilon \| u(t) \|_{H^{\frac{17}{2}+}} \lesssim  1 $. Observe that this
% restriction is not essential since it will be applied within the Bona-Smith procedure and the power $\epsilon$ compensates the loss of regularity
% $\epsilon^{-\alpha}$ (with $\alpha < 1$) due to the loss of regularity. Actually we can assume only that  $k_{0}^{'} > \frac{13}{2}$: the restriction
% comes from the estimate $\| v_{t} \|_{H^{\frac{1}{2}+}} \lesssim 1 + \| u(t) \|_{H^{\frac{13}{2}+}} \lesssim 1  $
% \color{black}

We postpone the proof of Lemma \ref{lem:OneEnergyEst} to Section \ref{Sec:ProofLemmas}. \\
\\
Let

\EQQARR{
T_{\epsilon}^{*} & :=  \inf \left\{ t \geq 0: \; E_{k'_{0}} \left( u(t),t \right) > 2 \left( 1 + E_{k'_{0}} (\tilde{\phi},0) \right)  \right\} \cdot
}
We see from Proposition \ref{Prop:Integral} and Proposition \ref{prop:compEHnorm} that   $ 0 < T_{\epsilon}^{*} < T_{\epsilon}$. Moreover
 $ E_{k'_{0}} \left( u(t),t \right) \leq 2 \left( 1 + E_{k'_{0}} (\tilde{\phi}, 0) \right) $
for all $t \in [0,T_{\epsilon}^{*} ]$. We have $ \sup_{ t \in [0,T_{\epsilon}^{*}] } E_{k'_{0}} \left( u(t), t \right) \lesssim_{\| \tilde{\phi} \|_{H^{k'_{0}}}} 1 $ and
$ \sup_{ t \in [0,T_{\epsilon}^{*}] } \| u(t) \|^{2}_{H^{k'_{0}}} \lesssim 1 + \| \tilde{\phi} \|^{2}_{H^{k'_{0}}}  $. Let
$T' :=   T' \left( \tilde{\delta} \left( \overrightarrow{\tilde{\phi}} \right), \tilde{\delta}^{'}(\tilde{\phi}), \| \tilde{\phi} \|_{H^{k_{0}^{'}}} \right) > 0 $ be small enough such that all the statements and estimates below hold. \\
\\
Assume that $ 0 <  T_{\epsilon}^{*} \leq T' $. Let $t \in [0,T_{\epsilon}^{*}]$. \\
\\
\underline{Claim}: $  \delta \left( \overrightarrow{u(t)}, t \right) \gtrsim  \tilde{\delta} \left( \overrightarrow{\tilde{\phi}} \right) $ \\
\\
Indeed we see from Appendix  that  $ \left\| X ( u(t), \tilde{\phi} ,t,0) \right\|_{L^{\infty}}
\lesssim_{\| \tilde{\phi} \|_{H^{k_{0}^{'}}}}  \| u(t) - \tilde{\phi} \|_{H^{\frac{7}{2}+}} + t $. We have $t \ll \tilde{\delta} \left( \overrightarrow{\tilde{\phi}} \right) $.
So it suffices to estimate  $ \| u(t) - \tilde{\phi} \|_{H^{\frac{7}{2}+}} $. In fact we will estimate $ \| u(t) - \tilde{\phi} \|_{H^{\frac{9}{2}+}} $,
since we will also use this estimate at the end of the proof.  Let $u_{l}(t) := e^{-\epsilon t \p_{x}^{4}} \tilde{\phi}$ and
$ u_{nl}(t) :=  \int_{0}^{t} e^{- \epsilon (t - t') \p_{x}^{4}} F \left( \overrightarrow{u(t')}, t' \right) \; dt' $.  Elementary estimates show that
$ \left|  e^{-x}  -  1 \right| \lesssim \min ( x, 1 ) $ if $ x \geq 0 $.  Hence, dividing the region of summation into $|n| \leq N$ and $|n| \geq N $ with
$N \approx \frac{1}{(\epsilon t)^{\frac{1}{4}}}$ we get

\EQQARR{
\| u_{l}(t) - \tilde{\phi} \|^{2}_{H^{\frac{9}{2}+}} & \lesssim \sum \limits_{n \in \Z} \langle  n \rangle^{2 \left( \frac{9}{2}+ \right) }
\left| e^{-\epsilon t (in)^{4} } - 1  \right|^{2} | \widehat{\tilde{\phi}}(n) |^{2} \\
& \lesssim t^{0+} \| \tilde{\phi} \|^{2}_{H^{k'_{0}}} \\
& \ll \tilde{\delta} \left( \overrightarrow{\tilde{\phi}} \right) \cdot
}
The Minkowski inequality and Lemma \ref{lem:EstDerivF} show that

\EQQARR{
\| u_{nl}(t) \|_{H^{\frac{9}{2}+}} & \lesssim \int_{0}^{t}
\| e^{- \epsilon (t-t') \partial_{x}^{4}} F \left( \overrightarrow{u(t')}, t' \right) \|_{H^{\frac{9}{2}+}} \; d t' \\
& \lesssim_{ \| \tilde{\phi} \|_{H^{k'_{0}}} } t  \left( 1 + \sup_{t' \in [0, t]} \| u(t') \|_{H^{\frac{15}{2}+}} \right) \\
& \lesssim_{ \| \tilde{\phi} \|_{H^{k'_{0}}} } t  \\
& \ll \tilde{\delta} \left( \overrightarrow{\tilde{\phi}} \right) \cdot
}
Hence we see from the above estimates and the triangle inequality that the claim holds.\\
\\
\underline{Claim}:  $u(t) \in \mathcal{P}_{+,k'}(t)$ and $ \delta^{'}(u(t),t) \gtrsim \tilde{\delta}^{'} (\tilde{\phi}) $  \\
\\
From the claim above and similar arguments as those in Remark \ref{Rem:Qother} we see that $\partial_{\omega_{3}} F \left( \overrightarrow{u(t)}, t \right) $ has a constant sign. Hence $Q (u(t),t) =   \left| \partial_{\omega_{3}} F ( \overrightarrow{u(t)}, t ) \right|
\left[ \frac{P( u(t),t)}{\left| \partial_{\omega_{3}} F ( \overrightarrow{u(t)}, t) \right|}  \right]_{ave} $. \\
We see from Appendix and the above estimates that $ \left\| Y( u(t),\tilde{\phi},t,0 ) \right\|_{L^{\infty}}
\lesssim_{ \tilde{\delta} \left( \overrightarrow{\tilde{\phi}} \right), \| \tilde{\phi} \|_{H^{k'_{0}}} }
\left\| u(t) - \tilde{\phi} \right\|_{H^{\frac{9}{2}+}} + t
\lesssim_{ \tilde{\delta} \left( \overrightarrow{\tilde{\phi}} \right), \| \tilde{\phi} \|_{H^{k'_{0}}}} \left( t +  \epsilon^{0+} t^{0+} \right)
\ll \tilde{\delta}^{'}(\tilde{\phi}) $.  Hence an application of the triangle inequality yields the claim.   \\
\\
Hence we can apply Lemma \ref{lem:OneEnergyEst} and Gronwall inequality to get
$ (*): \; E_{k_{0}^{'}} (u(t),t) \leq e^{C^{'} t}  \left( E_{k_{0}^{'}} \left( \tilde{\phi},0  \right) + C^{'} t \right)
< \frac{3}{2} \left( 1 + E_{k_{0}^{'}} ( \tilde{\phi},0) \right)$
for some $C^{'} := C^{'} \left( \| \tilde{\phi} \|_{H^{k'_{0}}}, \tilde{\delta} \left( \overrightarrow{\tilde{\phi}} \right) \right)$   \\
% COMMENTS: ALTERNATIVE TO  GRONWALL INEQUALITY
%\color{red}
%We claim that  $ (*): \, E_{k_{0}^{'}} (u(t),t) < \frac{3}{2} \left( 1 + E_{k_{0}^{'}}(\tilde{\phi},0) \right)$ holds. If not there exists $ 0 <  \tilde{t} < t $ such
%that $ E_{k_{0}^{'}} (u(\tilde{t}),\tilde{t}) = A $ and $t \in [0,\tilde{t}]: \; E_{k_{0}^{'}} (u(t),t) \leq A$  with
%$A:= \frac{3}{2} \left( 1 + E_{k_{0}^{'}}(\tilde{\phi},0) \right)$. But then from plugging the previous estimate in
%(\ref{Eqn:EstDerE}) and from Proposition \ref{prop:compEHnorm} we get $E_{k_{0}^{'}}(u(\tilde{t}),\tilde{t})   \leq E_{k_{0}^{'}}(\tilde{\phi},0) + C \tilde{t} ( 1 + A ) <  %\frac{3}{2} A$, which is a contradiction. Hence $(*)$ holds. \\
%\color{black}

Hence by letting $ t= T^{*}_{\epsilon}$ in the above inequality, we see that it contradicts cannot the definition of $ T_{\epsilon}^{*} $ . Hence $T_{\epsilon}^{*} > T' $.
\\
We then mimick the proof from ``\underline{Claim}: $  \delta (\overrightarrow{u(t)},t) \gtrsim  \tilde{\delta}( \overrightarrow{\tilde{\phi}} ) $''  to
``yields the claim.'' to conclude that for $t \in [0,T^{'}]$ we have $u(t) \in \mathcal{P}_{+,k'}(t) $, $ \delta ( \overrightarrow{u(t)}, t ) \gtrsim  \tilde{\delta} ( \overrightarrow{\tilde{\phi}} ) $, and $ \delta^{'}(u(t),t) \gtrsim \tilde{\delta}^{'} (\tilde{\phi}) $. Hence we can apply again  Lemma \ref{lem:OneEnergyEst} and
Gronwall inequality to get (\ref{Eqn:BoundNrjVisc}).

\end{proof}

\section{Gauged energy estimates for the difference of two solutions}
\label{Sec:EnergyDiffEst}

In this section we prove gauged energy estimates for the difference of two solutions of an equation of the form (\ref{Eqn:Integral}). To this end
we first prove estimates for a function $\bar{v}: = v_{1} - v_{1}$ with $v_{1}$, $v_{2}$ that satisfy (\ref{Eqn:vjEq}): see Proposition
\ref{prop:linearbarv} below. Then we apply these estimates to $\bar{v}(t) := \Phi_{k'}(u_{1}(t),t) \partial_{x}^{6} u_{1}(t) - \Phi_{k'}(u_{2}(t),t) \partial_{x}^{6} u_{2}(t) $
( with $u_{1}$, $u_{2}$ solutions of (\ref{Eqn:Regul})) in order to get the gauged energy estimates for the difference of two solutions.

\subsection{General estimates}

We prove the following proposition:

\begin{prop}
Let $j \in \{1,2\}$ and $ 0 < \epsilon_1 \leq \epsilon_2 \ll 1$. Let $k' \geq 6$. Assume that $v_j$ is a function that satisfies
on an interval $[0,T']$

\EQQARRLAB{
\p_{t} v_j + \epsilon_{j} \p_{x}^{4} v_j & = a_{3,j} \p_{x}^{3} v_j + a_{2,j} \p_{x}^{2} v_j + a_{1,j} \p_{x} v_j  + a_{0,j} \\
& + \epsilon_j b_{3,j} \p_{x}^{3} v_j + \epsilon_j b_{2,j} \p_{x}^{2} v_j + \epsilon_j b_{1,j} \p_x v_j + \epsilon_{j} b_{0,j} + \epsilon_{j} \p_{x}^{4} c_j v_j \cdot
\label{Eqn:vjEq}
}
Let $\bar{v} := v_1 - v_2$. Then there exists $C > 0$ such that for all time $t \in [0,T']$

\EQQARRLAB{
\frac{d  \left( \| \bar{v}(t) \|^{2}_{\dot{H}^{k'-6}} \right)}{dt} + 2 \epsilon_1 \| D^{k'-4} \bar{v} (t) \|^{2}_{L^{2}}  + 2 \int_{\T}  \left(  \left( k' - \frac{15}{2}  \right) \p_{x} a_{3,1}(t)  + a_{2,1}(t) \right) \left( \p_{x} D^{k'-6} \bar{v}(t) \right)^{2} \; dx \\
\leq C
\left(
\begin{array}{l}
\left(
1 + \sum \limits_{m=1}^{3} \| a_{m,1}(t) \|_{H^{ \left( m + \frac{1}{2} \right)+ }} \right)
\| \bar{v}(t) \|^{2}_{H^{k'-6}} + \sum \limits_{m=1}^{3}  \| a_{m,1}(t) \|^{2}_{H^{k'-6+m}}  \| \bar{v}(t) \|^{2}_{H^{\frac{1}{2}+}}  \\
+ \epsilon_1  \left( \| b_{3,1}(t)\|_{H^{\frac{3}{2}+}} + \| b_{2,1}(t) \|_{H^{\frac{1}{2}+}} \right)
\| \p_{x} D^{k'-6} \bar{v}(t) \|^{2}_{L^{2}} \\
+ \epsilon_{1}^{2} \| b_{0,1}(t) \|^{2}_{H^{k'-6}} + \epsilon^{2}_{2} \| b_{0,2}(t) \|^{2}_{H^{k'-6}} \\
+ \epsilon_1^{2} \left( \sum \limits_{m=1}^{3} \| b_{m,1}(t) \|^{2}_{H^{k'-6+m}} + \| c_1(t) \|^{2}_{H^{k'-2}} \right) \| \bar{v}(t) \|^{2}_{H^{\frac{1}{2}+}} \\
+ \epsilon_{1}^{2} \left( \| c_2(t) - c_1(t) \|^{2}_{H^{k'-2}}  \| v_2(t) \|^{2}_{H^{\frac{1}{2}+}}
+ \| c_2(t) - c_1(t) \|^{2}_{H^{\frac{9}{2}+}} \| v_2(t) \|^{2}_{H^{k'-2}} \right) \\
+ \epsilon_1  \left( \sum \limits_{m=1}^{3} \| b_{m,1}(t) \|_{H^{ \left( m + \frac{1}{2} \right)+}} + \| c_1(t) \|_{H^{\frac{9}{2}+}}  \right) \| \bar{v}(t) \|^{2}_{H^{k'-6}} \\
+ \epsilon_2^{2}
\left(
\begin{array}{l}
\sum \limits_{j=1}^{2} \sum \limits_{m=1}^{3}
\left(
\| b_{m,j}(t) \|^{2}_{H^{k'-6}} \| v_2(t) \|^{2}_{H^{ \left(m + \frac{1}{2} \right)+}} + \| b_{m,j}(t) \|^{2}_{H^{\frac{1}{2}+}} \| v_2(t) \|^{2}_{H^{k'-6+m}}
\right) \\
+ \| c_2(t) \|^{2}_{H^{k'-2}} \| v_2(t) \|^{2}_{H^{\frac{1}{2}+}}  + \| c_2(t) \|^{2}_{H^{\frac{9}{2}+}} \| v_2(t) \|^{2}_{H^{k'-6}} + \| v_2(t) \|^{2}_{H^{k'-2}}
\end{array}
\right) \\
+ \| a_{0,1}(t) - a_{0,2}(t) \|^{2}_{H^{k'- 6}} \\
+ \sum \limits_{m=1}^{3}
\left(
\begin{array}{l}
\| a_{m,1}(t) - a_{m,2}(t) \|^{2}_{H^{k'-6}} \| v_2(t) \|^{2}_{H^{ \left( m+ \frac{1}{2} \right)+}} \\
\\
+ \| a_{m,1}(t) - a_{m,2}(t) \|^{2}_{H^{\frac{1}{2}+}} \| v_{2}(t) \|^{2}_{H^{m+k'-6}}
\end{array}
\right)
\end{array}
\right)
\label{Eqn:Estbarv}
}

\label{prop:linearbarv}
\end{prop}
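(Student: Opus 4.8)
The plan is to reduce the statement to Proposition \ref{prop:linearv}. Subtracting equation (\ref{Eqn:vjEq}) with $j=2$ from the one with $j=1$, and using the elementary identities $\epsilon_1\p_x^4 v_1-\epsilon_2\p_x^4 v_2=\epsilon_1\p_x^4\bar v+(\epsilon_1-\epsilon_2)\p_x^4 v_2$, $a_{m,1}\p_x^m v_1-a_{m,2}\p_x^m v_2=a_{m,1}\p_x^m\bar v+(a_{m,1}-a_{m,2})\p_x^m v_2$ for $m\in\{1,2,3\}$, $\epsilon_1 b_{m,1}\p_x^m v_1-\epsilon_2 b_{m,2}\p_x^m v_2=\epsilon_1 b_{m,1}\p_x^m\bar v+(\epsilon_1 b_{m,1}-\epsilon_2 b_{m,2})\p_x^m v_2$, and $\epsilon_1(\p_x^4 c_1)v_1-\epsilon_2(\p_x^4 c_2)v_2=\epsilon_1(\p_x^4 c_1)\bar v+(\epsilon_1-\epsilon_2)(\p_x^4 c_2)v_2+\epsilon_1(\p_x^4(c_1-c_2))v_2$, one checks that $\bar v:=v_1-v_2$ satisfies an equation of the form (\ref{Eqn:Eqvkpr6}) in which $\epsilon,a_m,b_m,c$ are replaced by $\epsilon_1,a_{m,1},b_{m,1},c_1$, the inhomogeneous term $a_0$ is replaced by $A_0:=(a_{0,1}-a_{0,2})+\sum_{m=1}^{3}(a_{m,1}-a_{m,2})\p_x^m v_2$, and the standalone weighted source $\epsilon b_0$ is replaced by
\begin{equation*}
\begin{split}
S_0 := {}&\epsilon_1 b_{0,1}-\epsilon_2 b_{0,2}+\sum_{m=1}^{3}(\epsilon_1 b_{m,1}-\epsilon_2 b_{m,2})\p_x^m v_2\\
&-(\epsilon_1-\epsilon_2)\p_x^4 v_2+(\epsilon_1-\epsilon_2)(\p_x^4 c_2)v_2+\epsilon_1(\p_x^4(c_1-c_2))v_2,
\end{split}
\end{equation*}
where in the $c$--terms I have written $\epsilon_1\p_x^4 c_1-\epsilon_2\p_x^4 c_2=(\epsilon_1-\epsilon_2)\p_x^4 c_2+\epsilon_1\p_x^4(c_1-c_2)$.

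I would then apply Proposition \ref{prop:linearv} to $\bar v$, with $b_0:=S_0/\epsilon_1$ in the role of its $b_0$. Since $\p_x a_{3,1}$ and $a_{2,1}$ now play the roles of $\p_x a_3$ and $a_2$, the left-hand side of (\ref{Eqn:EstHv}) is exactly that of (\ref{Eqn:Estbarv}); and on the right-hand side every contribution of (\ref{Eqn:Estbarv}) involving only $\bar v$ and the coefficients $a_{m,1},b_{m,1},c_1$ is produced verbatim by (\ref{Eqn:EstHv}). It then remains to expand the two new quantities. By Lemma \ref{Lem:prod}, $\|A_0\|^2_{H^{k'-6}}$ produces $\|a_{0,1}-a_{0,2}\|^2_{H^{k'-6}}$ and, for each $m$, $\|a_{m,1}-a_{m,2}\|^2_{H^{k'-6}}\|v_2\|^2_{H^{(m+1/2)+}}+\|a_{m,1}-a_{m,2}\|^2_{H^{1/2+}}\|v_2\|^2_{H^{m+k'-6}}$, i.e. the last two lines of (\ref{Eqn:Estbarv}). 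For $\epsilon_1^2\|b_0\|^2_{H^{k'-6}}=\|S_0\|^2_{H^{k'-6}}$, the triangle and Cauchy--Schwarz inequalities together with Lemma \ref{Lem:prod} yield $\epsilon_1^2\|b_{0,1}\|^2_{H^{k'-6}}+\epsilon_2^2\|b_{0,2}\|^2_{H^{k'-6}}$; the large $\epsilon_2^2(\cdots)$ block, where the weights $\epsilon_1^2$ and $\epsilon_1\epsilon_2$ coming from $(\epsilon_1 b_{m,1}-\epsilon_2 b_{m,2})\p_x^m v_2$, from $(\epsilon_1-\epsilon_2)\p_x^4 v_2$ (bounded by $\epsilon_2^2\|v_2\|^2_{H^{k'-2}}$) and from $(\epsilon_1-\epsilon_2)(\p_x^4 c_2)v_2$ are all dominated by $\epsilon_2^2$ since $\epsilon_1\le\epsilon_2$; and the block $\epsilon_1^2(\|c_2-c_1\|^2_{H^{k'-2}}\|v_2\|^2_{H^{1/2+}}+\|c_2-c_1\|^2_{H^{9/2+}}\|v_2\|^2_{H^{k'-2}})$ coming from $\epsilon_1(\p_x^4(c_1-c_2))v_2$ (using $\|\p_x^4 w\|_{H^s}\le\|w\|_{H^{s+4}}$ and Lemma \ref{Lem:prod}). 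This exhausts the right-hand side of (\ref{Eqn:Estbarv}). Alternatively one may re-run the energy identity $\langle\p_t\bar v+\epsilon_1\p_x^4\bar v,D^{2(k'-6)}\bar v\rangle$ and the integrations by parts in the proof of Proposition \ref{prop:linearv} directly on the $\bar v$--equation; this amounts to the same computation.

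The underlying estimate is the same gauged energy identity as Proposition \ref{prop:linearv}, so the essential difficulty is bookkeeping: about a dozen distinct inhomogeneous contributions must be tracked and each matched to its slot in the elaborate right-hand side of (\ref{Eqn:Estbarv}). The one genuinely delicate point is the systematic exploitation of $\epsilon_1\le\epsilon_2$: every mixed weight $\epsilon_1\epsilon_2$ or $\epsilon_1^2$ produced by differencing the highest-order and the $\epsilon$--weighted terms is folded into the single coarse scale $\epsilon_2^2$, so that quantities built out of $v_2$ (which is not controlled uniformly in $\epsilon$) carry weight $\epsilon_2^2$ while quantities built out of $\bar v$ keep the fine weight $\epsilon_1$. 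A secondary subtlety is that $\epsilon_1\p_x^4 c_1-\epsilon_2\p_x^4 c_2$ must be split as $(\epsilon_1-\epsilon_2)\p_x^4 c_2+\epsilon_1\p_x^4(c_1-c_2)$ before any estimate, so that the piece proportional to the small difference $c_1-c_2$ retains the fine weight $\epsilon_1^2$ rather than only the coarse $\epsilon_2^2$.
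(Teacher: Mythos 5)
Your proposal is correct and follows essentially the same route as the paper: the paper forms the same equation for $\bar v$ with exactly the splittings $(\epsilon_2-\epsilon_1)\p_x^4 v_2$, $(a_{m,1}-a_{m,2})\p_x^m v_2$, $\epsilon_1 b_{m,1}\p_x^m v_2-\epsilon_2 b_{m,2}\p_x^m v_2$, $(\epsilon_1-\epsilon_2)\p_x^4 c_2\,v_2-\epsilon_1\p_x^4(c_2-c_1)v_2$, and then re-runs the energy pairing with $D^{2(k'-6)}\bar v$ term by term, which is the alternative you yourself note. Packaging the cross terms into the slots $a_0$ and $\epsilon b_0$ of Proposition \ref{prop:linearv} and expanding with Lemma \ref{Lem:prod}, Cauchy--Schwarz and $\epsilon_1\le\epsilon_2$ yields exactly the terms on the right-hand side of (\ref{Eqn:Estbarv}), so the argument is sound.
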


%\begin{rem}
%A similar proof shows that (\ref{Eqn:Estbarv}) still holds, with `$[0,T']$', `$\epsilon_{j}$', and `$\epsilon_{1}$' replaced
%with `$[-T',0]$', $-\epsilon_{j}$, and $- \epsilon_{1}$ in Proposition \ref{prop:linearbarv}, Proposition
%\ref{prop:linearbarv}, and the LHS of  (\ref{Eqn:Estbarv}) respectively.
%\end{rem}

\begin{proof}
We have

\EQQARR{
\p_{t} \bar{v} + \epsilon_1 \p_{x}^{4} \bar{v}  & = (\epsilon_2 - \epsilon_1) \p_{x}^{4} v_2 + (\epsilon_1 - \epsilon_2) \p_x^{4} c_2 v_2  - \epsilon_1 \p_{x}^{4} (c_2 - c_1) v_2
+ \epsilon_1 \p_{x}^{4} c_1 \bar{v} \\
& + \sum \limits_{m=1}^{3} a_{m,1} \p_{x}^{m} \bar{v} + \sum \limits_{m=1}^{3} (a_{m,1} - a_{m,2}) \p_{x}^{m} v_2  + a_{0,1} - a_{0,2} \\
&  + \epsilon_1 \sum \limits_{m=1}^{3} b_{m,1} \p_{x}^{m} \bar{v} + \epsilon_1 \sum \limits_{m=1}^{3} b_{m,1} \p_{x}^{m} v_2
 - \epsilon_2 \sum \limits_{m=1}^{3} b_{m,2} \p_{x}^{m} v_2 + \epsilon_{1} b_{0,1} - \epsilon_{2} b_{0,2}
}
Elementary considerations  show that

\EQQARRLAB{
\langle \p_{t} \bar{v}(t) + \epsilon_1 \p_{x}^{4} \bar{v}(t),  D^{2 (k'-6)} \bar{v}(t) \rangle & =
(- 1)^{k'-6} \left( \frac{1}{2} \frac{d \left( \| \bar{v}(t) \|^{2}_{\dot{H}^{k'-6}} \right)}{dt}
+ \epsilon_1 \| D^{k'-4} \bar{v}(t) \|^{2}_{L^{2}} \right)  \cdot
\label{Eqn:Beglinearbarv}
}
The proof of Proposition \ref{prop:linearv}, Lemma \ref{lem:LemD}, and the Sobolev embedding $ H^{\frac{1}{2}+} \hookrightarrow L^{\infty} $ show that

\EQQARR{
\sum \limits_{m=1}^{3} \langle a_{m,1}(t) \p_{x}^{m} \bar{v}(t), D^{2(k'-6)} \bar{v}(t) \rangle \\
 =
(-1)^{k'-6}
\left\langle \left( \frac{15}{2} - k' \right) \p_{x} a_{3,1}(t) - a_{2,1}(t),  \left( \p_{x} D^{k'-6} \bar{v}(t) \right)^{2} \right\rangle \\
+ O
\left(
\| \bar{v}(t) \|_{H^{k'-6}}
\left(
\begin{array}{l}
\sum \limits_{m=1}^{3}  \| a_{m,1}(t) \|_{H^{ \left( m+ \frac{1}{2} \right) +}} \| \bar{v}(t) \|_{H^{k'-6}} \\
+ \sum  \limits_{m=1}^{3} \| a_{m,1}(t) \|_{H^{k'-6+ m}} \| \bar{v}(t) \|_{H^{\frac{1}{2}+}}
\end{array}
\right)
\right),
}

\EQQARR{
\left| \langle a_{0,1}(t) - a_{0,2}(t), D^{2(k'-6)} \bar{v}(t) \rangle \right|  & \lesssim \| a_{0,1}(t) - a_{0,2}(t) \|_{H^{k'-6}} \| \bar{v}(t) \|_{H^{k'-6}}
}

\EQQARR{
\epsilon_{1} \sum \limits_{m=1}^{3} \langle b_{m,1}(t) \p_{x}^{m} \bar{v}(t), D^{2 (k'-6)} \bar{v}(t) \rangle &
= (-1)^{k'-6} \left( X(t) + Y(t) + Z(t) \right),
}
with

\EQQARR{
 \epsilon_1^{-1}   X(t) & = \frac{(k'-7)(8-k')}{4} \int \p_{x}^{3} b_{3,1}(t) \left( D^{k'-6} \bar{v}(t) \right)^{2} \; dx
+ \left( \frac{15}{2} - k' \right) \int \p_{x} b_{3,1}(t) \left( \p_{x} D^{k'-6} \bar{v}(t) \right)^{2} \; dx  \\
& + O \left( \| \bar{v}(t) \|_{H^{k'-6}} \left(  \| b_{3,1}(t) \|_{H^{k'-3}} \| \bar{v}(t) \|_{H^{\frac{1}{2}+}}
+ \| b_{3,1}(t) \|_{H^{\frac{7}{2}+}} \| \bar{v}(t) \|_{H^{k'-6}} \right)   \right),
}

\EQQARR{
\epsilon_1^{-1} Y(t) & = -  \int b_{2,1}(t) \left( \p_{x} D^{k'-6} \bar{v}(t) \right)^{2} \; dx  \\
& + O \left( \| \bar{v}(t) \|_{H^{k'-6}}
\left(
\| b_{2,1}(t) \|_{H^{k'-4}} \| \bar{v}(t) \|_{H^{\frac{1}{2}+}}  +  \| b_{2,1}(t) \|_{H^{\frac{5}{2}+}}
\| \bar{v}(t) \|_{H^{k'-6}}
\right)
\right)\ \; \text{and}
}

\EQQARR{
\epsilon_1^{-1} Z(t) & = O \left( \| b_{1,1}(t) \|_{H^{\frac{3}{2}+}} \| \bar{v}(t) \|^{2}_{H^{k'-6}}
+  \|  b_{1,1}(t) \|_{H^{k'-5}} \| \bar{v}(t) \|_{H^{\frac{1}{2}+}} \| \bar{v}(t)  \|_{H^{k'-6}}
\right) \cdot
}
We also have

\EQQARR{
\left| \epsilon_1 \langle \p_{x}^{4} c_1(t) \bar{v}(t), D^{2(k'-6)} \bar{v}(t) \rangle  \right|
& = \left| (-1)^{k'-6} \epsilon_1 \langle D^{k'-6} \left( \p_{x}^{4} c_1(t) \bar{v}(t) \right), D^{k'-6} \bar{v}(t) \rangle \right| \\
& \lesssim \epsilon_1 \| \bar{v}(t) \|_{H^{k'-6}}
\left(
\begin{array}{l}
\| c_1(t) \|_{H^{\frac{9}{2}+}} \| \bar{v}(t) \|_{H^{k'-6}}
+ \| c_1(t) \|_{H^{k'-2}} \| \bar{v}(t) \|_{H^{\frac{1}{2}+}}
\end{array}
\right) \cdot
}
We also have

\EQQARR{
\left| \langle \epsilon_1 b_{0,1}(t) - \epsilon_2 b_{0,2}(t), D^{2(k'-6)} \bar{v}(t)  \rangle \right|
& \lesssim \epsilon_1 \left| \langle b_{0,1}(t), D^{2(k'-6)} \bar{v}(t) \rangle \right|
+ \epsilon_2 \left| \langle b_{0,1}(t), D^{2(k'-6)} \bar{v}(t) \rangle  \right| \\
& \lesssim
\left(
 \epsilon_1 \| b_{0,1}(t) \|_{H^{k'-6}} + \epsilon_2 \| b_{0,2}(t) \|_{H^{k'-6}}
\right)  \| \bar{v}(t) \|_{H^{k'-6}} \cdot
}
Lemma \ref{Lem:prod} shows that

\EQQARR{
\left| \langle  \left( a_{m,1}(t) - a_{m,2}(t) \right) \p_{x}^{m} v_2(t), D^{2(k'-6)} \bar{v}(t) \rangle  \right| \\
\\
= \left| (-1)^{k'-6} \langle D^{k'-6} \left( \left( a_{m,1}(t) - a_{m,2}(t) \right) \p_{x}^{m} v_2(t) \right), D^{k'-6} \bar{v}(t)  \rangle \right| \\
\\
\lesssim
\left(
\begin{array}{l}
\left\| a_{m,1}(t) - a_{m,2} (t) \right\|_{H^{k'-6}} \left\| v_2(t) \right\|_{H^{ \left( m  + \frac{1}{2} \right)+}} \\
+ \left\| a_{m,1}(t) - a_{m,2} (t) \right\|_{H^{\frac{1}{2}+}} \left\| v_2(t) \right\|_{H^{m + k' - 6}}
\end{array}
\right)
\| \bar{v}(t) \|_{H^{k'-6}} \cdot
}
If $j \in \{1,2\} $ then

\EQQARR{
\epsilon_j \left| \langle b_{m,j}(t) \p_{x}^{m} v_2(t), D^{2(k'-6)} \bar{v}(t)  \rangle \right| &
 \lesssim \epsilon_j
 \left(
 \begin{array}{l}
 \| b_{m,j}(t) \|_{H^{k'-6}} \| v_{2}(t) \|_{H^{ \left( m + \frac{1}{2} \right)+}}  \\
 + \| b_{m,j}(t) \|_{H^{\frac{1}{2}+}} \| v_{2}(t) \|_{H^{k'-6 +m}}
\end{array}
\right)
\| \bar{v}(t) \|_{H^{k'-6}} \cdot
}
We have

\EQQARRLAB{
(\epsilon_2 - \epsilon_1) \left| \langle \p_{x}^{4} c_2(t) v_2(t), D^{2(k'-6)} \bar{v}(t) \rangle \right| \\
= (\epsilon_2 - \epsilon_1) \left| (-1)^{k'-6} \langle D^{k'-6} \left( \p_{x}^{4} c_2(t) v_2(t) \right), D^{k'-6} \bar{v}(t) \rangle \right| \\
\lesssim \epsilon_2 \left(  \| c_2(t) \|_{H^{k'-2}} \| v_2(t) \|_{H^{\frac{1}{2}+}} + \| c_2(t) \|_{H^{\frac{9}{2}+}}  \| v_2(t) \|_{H^{k'-6}} \right)
\| \bar{v}(t) \|_{H^{k'-6}},
\label{Eqn:c2est}
}

\EQQARR{
(\epsilon_2 - \epsilon_1) \left| \langle \p_{x}^{4} v_2(t), D^{2(k'-6)} \bar{v}(t)  \rangle \right|
& = (\epsilon_2 - \epsilon_1) \left| (-1)^{k'-6} \langle D^{k'-6} \p_{x}^{4} v_2(t), D^{k'-6} \bar{v}(t) \rangle  \right| \\
& \lesssim \epsilon_{2}  \| v_2(t) \|_{H^{k'-2}}  \| \bar{v}(t) \|_{H^{k'-6}}, \, \text{and}
\label{Eqn:v2est}
}

\EQQARRLAB{
\epsilon_1 \left| \langle \p_{x}^{4} \left( c_1(t) - c_2(t) \right)  v_2(t), D^{2(k'-6)} \bar{v}(t) \rangle \right| \\
= \epsilon_1 \left| (-1)^{k'-6}  \langle D^{k'-6} \left( \p_{x}^{4} \left(  c_1(t)-c_2(t) \right) v_2(t) \right), D^{k'-6} \bar{v}(t) \rangle \right| \\
\lesssim \epsilon_1  \left( \| c_2(t) - c_1(t) \|_{H^{k'-2}} \| v_2(t) \|_{H^{\frac{1}{2}+}} + \| c_2(t) - c_1(t) \|_{H^{\frac{9}{2}+}}
\| v_2(t) \|_{H^{k'-2}} \right) \| \bar{v}(t) \|_{H^{k'-6}} \cdot
\label{Eqn:c1c2est}
}
The Young inequality $ab \leq \frac{a^{2}}{2} + \frac{b^{2}}{2}$ and the above estimates show that the LHS of (\ref{Eqn:Estbarv})
is bounded by the RHS of (\ref{Eqn:Estbarv}).

\end{proof}

We then prove the proposition below:

\begin{prop}
The following hold:

\begin{enumerate}

\item Let $k' \geq 3$ and $j  \in \{ 1, 2 \}$. Let $u_j$ be a solution of (\ref{Eqn:Integral}) with $\epsilon:= \epsilon_{j}$ on an interval $[0,T']$. Let $K \geq 0$.
Let $ m \in \left\{ \frac{9}{2}+,  k'-3 \right\}$. Assume that $\sup_{t \in [0,T']} \| u_j (t) \|_{H^{m+3}}  \leq K $. Let $\bar{u} := u_1 - u_2$. Then
there exists $C:= C(K) > 0$ such that

\EQQARRLAB{s
\frac{d \left( \| \bar{u}(t) \|^{2}_{H^{m}} \right) }{dt}  & \leq C \left( \| \bar{u}(t) \|^{2}_{H^{m+3}} +  \epsilon_2^{2} \| u_2(t) \|^{2}_{H^{m+4}} \right)  \cdot
\label{Eqn:EstbaruL2}
}

\item Let $j \in \{ 1,2 \}$. Let $0 < \epsilon_{1}  \leq \epsilon_2 \ll 1$. Assume that $v_j$ is a function that satisfies on an interval $[0,T']$

\EQQARRLAB{
\p_{t} v_j + \epsilon_{j} \p_{x}^{4} v_j & = a_{3,j} \p_{x}^{3} v_j + a_{2,j} \p_{x}^{2} v_j + a_{1,j} \p_{x} v_j +  a_{0,j} \\
& + \epsilon_j b_{3,j} \p_{x}^{3} v_j + \epsilon_j b_{2,j} \p_{x}^{2} v_j + \epsilon_j b_{1,j} \p_x v_j + \epsilon_{j} b_{0,j} + \epsilon_{j} \p_{x}^{4} c_j v_j \cdot
\label{Eqn:EqvjL2}
}
Let $\bar{v} := v_1 - v_2$. Then there exists $C > 0$ such that for all $t \in [0,T']$

\EQQARRLAB{
\frac{d \left( \| \bar{v}(t) \|^{2}_{L^{2}} \right) }{dt} + 2 \epsilon_1 \| \p_{x}^{2} \bar{v}(t) \|^{2}_{L^{2}}  \\
\leq  C
\left(
\begin{array}{l}
\left( 1 + \sum \limits_{m=2}^{3} \| a_{m,1}(t) \|_{H^{ \left( m -  \frac{3}{2} \right)+}} \right) \| \bar{v}(t) \|^{2}_{H^{1}} \\
+ \left( 1 + \sum \limits_{m=1}^{3} \| a_{m,1}(t) \|^{2}_{H^{\left( m + \frac{1}{2} \right)+}} \right)  \| \bar{v}(t) \|^{2}_{L^{2}} + \sum \limits_{m=1}^{3} \| a_{m,1}(t) \|^{2}_{H^{m}} \| \bar{v}(t) \|^{2}_{H^{\frac{1}{2}+}} \\
+ \epsilon_1  \left( \| b_{3,1}(t)\|_{H^{\frac{3}{2}+}} + \| b_{2,1}(t) \|_{H^{\frac{1}{2}+}} \right) \| \p_{x}  \bar{v}(t) \|^{2}_{L^{2}} \\
+ \epsilon_{1}^{2} \| b_{0,1}(t) \|^{2}_{L^{2}} + \epsilon_{2}^{2} \| b_{0,2}(t) \|^{2}_{L^{2}}
+ \epsilon_1^{2} \left( \sum \limits_{m=1}^{3} \| b_{m,1}(t) \|^{2}_{H^{m}}
+ \| c_1(t) \|^{2}_{H^{4}} \right) \| \bar{v}(t) \|^{2}_{H^{\frac{1}{2}+}} \\
+ \epsilon_{1}^{2} \left( \| c_2(t) - c_1(t) \|^{2}_{H^{4}} \| v_2(t) \|^{2}_{H^{\frac{1}{2}+}} + \| c_2(t) - c_1(t) \|^{2}_{H^{\frac{9}{2}+}}
\| v_2(t) \|^{2}_{H^{4}} \right) \\
+ \epsilon_1  \left( \sum \limits_{m=1}^{3} \| b_{m,1}(t) \|_{H^{ \left( m + \frac{1}{2} \right)+}} + \| c_1(t) \|_{H^{\frac{9}{2}+}}  \right)
\|\bar{v}(t) \|^{2}_{L^{2}} \\
+ \epsilon_2^{2}
\left(
\begin{array}{l}
\sum \limits_{j=1}^{2} \sum \limits_{m=1}^{3}
\left(
\| b_{m,j}(t) \|^{2}_{L^{2}} \| v_2(t) \|^{2}_{H^{ \left(m + \frac{1}{2} \right)+}} + \| b_{m,j}(t) \|^{2}_{H^{\frac{1}{2}+}} \| v_2(t) \|^{2}_{H^{m}}
\right) \\
+ \| c_2(t) \|^{2}_{H^{4}} \| v_2(t) \|^{2}_{H^{\frac{1}{2}+}}  + \| c_2(t) \|^{2}_{H^{\frac{9}{2}+}} \| v_2(t) \|^{2}_{L^{2}} + \| v_2(t) \|^{2}_{H^{4}}
\end{array}
\right) \\
+ \| a_{0,1}(t) - a_{0,2}(t) \|^{2}_{L^{2}} \\
+ \sum \limits_{m=1}^{3}
\left(
\begin{array}{l}
\| a_{m,1}(t) - a_{m,2}(t) \|^{2}_{L^{2}} \| v_2(t) \|^{2}_{H^{ \left( m+ \frac{1}{2} \right)+}} \\
+ \| a_{m,1}(t) - a_{m,2}(t) \|^{2}_{H^{\frac{1}{2}+}} \| v_{2}(t) \|^{2}_{H^{m}}
\end{array}
\right)

\end{array}
\right)
\label{Eqn:EstbarvL2}
}

\end{enumerate}
\label{prop:linearbarvL2}

\end{prop}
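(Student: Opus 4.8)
Both parts are ``$L^{2}$-level'' versions of estimates already established, so the plan is to reduce each one to an earlier proposition with only cosmetic bookkeeping, dropping the favourable (nonnegative) viscous terms along the way.

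\textbf{Part (1).} Since $u_{1}$ and $u_{2}$ satisfy the integral form of $\p_{t}u_{j}+\epsilon_{j}\p_{x}^{4}u_{j}=F(\vec{u_{j}},t)$, the difference $\bar{u}:=u_{1}-u_{2}$ satisfies $\p_{t}\bar{u}+\epsilon_{1}\p_{x}^{4}\bar{u}=(\epsilon_{2}-\epsilon_{1})\p_{x}^{4}u_{2}+\big(F(\vec{u_{1}},t)-F(\vec{u_{2}},t)\big)$; by the approximation argument of Remark \ref{Rem:smooth} one may assume the $u_{j}$ are smooth to justify the manipulations. I would pair this equation with $D^{2m}\bar{u}(t)$ exactly as in the proof of Proposition \ref{prop:linearL2v}: the left-hand side yields $\frac{(-1)^{m}}{2}\frac{d}{dt}\|\bar{u}(t)\|^{2}_{\dot{H}^{m}}+(-1)^{m}\epsilon_{1}\|D^{m+2}\bar{u}(t)\|^{2}_{L^{2}}$, and after multiplying through by $2(-1)^{m}$ the viscous term $2\epsilon_{1}\|D^{m+2}\bar{u}(t)\|^{2}_{L^{2}}$ is nonnegative and is discarded. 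The contribution of $(\epsilon_{2}-\epsilon_{1})\p_{x}^{4}u_{2}$ is controlled by Cauchy--Schwarz and Young's inequality by $\epsilon_{2}^{2}\|u_{2}(t)\|^{2}_{H^{m+4}}+\|\bar{u}(t)\|^{2}_{H^{m}}$ (using $\epsilon_{1}\le\epsilon_{2}$), and the contribution of $F(\vec{u_{1}},t)-F(\vec{u_{2}},t)$ is controlled, via (\ref{Eqn:DiffEstFDeriv}) with $t_{1}=t_{2}=t$ (the hypothesis $\sup_{t}\|u_{j}(t)\|_{H^{m+3}}\le K$ supplies the bounds needed to invoke it), by $\|\bar{u}(t)\|_{H^{m+3}}\|\bar{u}(t)\|_{H^{m}}\le\|\bar{u}(t)\|^{2}_{H^{m+3}}$. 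Hence $\frac{d}{dt}\|\bar{u}(t)\|^{2}_{\dot{H}^{m}}\lesssim_{K}\|\bar{u}(t)\|^{2}_{H^{m+3}}+\epsilon_{2}^{2}\|u_{2}(t)\|^{2}_{H^{m+4}}$; running the same argument at level $L^{2}$ and adding (using $\|\cdot\|^{2}_{H^{m}}\approx\|\cdot\|^{2}_{L^{2}}+\|\cdot\|^{2}_{\dot{H}^{m}}$ and $m\ge0$) gives (\ref{Eqn:EstbaruL2}).

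\textbf{Part (2).} The key point is that $v_{1},v_{2}$ satisfy (\ref{Eqn:EqvjL2}), which is identical to (\ref{Eqn:vjEq}), so Proposition \ref{prop:linearbarv} applies with $k'=6$. Specializing (\ref{Eqn:Estbarv}) to $k'=6$ (so $H^{k'-6}=L^{2}$, $H^{k'-6+m}=H^{m}$, $H^{k'-2}=H^{4}$, $D^{k'-6}=\mathrm{Id}$, $D^{k'-4}=\p_{x}^{2}$, and $k'-\frac{15}{2}=-\frac{3}{2}$) turns the left-hand side into $\frac{d}{dt}\|\bar{v}(t)\|^{2}_{L^{2}}+2\epsilon_{1}\|\p_{x}^{2}\bar{v}(t)\|^{2}_{L^{2}}+2\int_{\T}\big(-\tfrac{3}{2}\p_{x}a_{3,1}(t)+a_{2,1}(t)\big)\big(\p_{x}\bar{v}(t)\big)^{2}\,dx$, i.e.\ exactly the left-hand side of (\ref{Eqn:EstbarvL2}) plus a leftover transport-type integral. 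Unlike in the high-regularity setting, this integral has no favourable sign, so I would move it to the right-hand side and bound it crudely by $\big(\|\p_{x}a_{3,1}(t)\|_{L^{\infty}}+\|a_{2,1}(t)\|_{L^{\infty}}\big)\|\p_{x}\bar{v}(t)\|^{2}_{L^{2}}\lesssim\big(\|a_{3,1}(t)\|_{H^{\frac{3}{2}+}}+\|a_{2,1}(t)\|_{H^{\frac{1}{2}+}}\big)\|\bar{v}(t)\|^{2}_{H^{1}}$ using $H^{\frac{1}{2}+}\hookrightarrow L^{\infty}$; this is precisely the term $\big(1+\sum_{m=2}^{3}\|a_{m,1}(t)\|_{H^{\left(m-\frac{3}{2}\right)+}}\big)\|\bar{v}(t)\|^{2}_{H^{1}}$ in (\ref{Eqn:EstbarvL2}). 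Every remaining term on the right of (\ref{Eqn:Estbarv}) at $k'=6$ matches a term on the right of (\ref{Eqn:EstbarvL2}) verbatim, so the estimate follows.

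\textbf{Main difficulty.} There is no substantive obstacle: both proofs are mechanical once the right earlier result is invoked. The only points that need care are (a) in part (1), noting that (\ref{Eqn:DiffEstFDeriv}) returns the norm $\|\bar{u}\|_{H^{\max\left(m+3,\,\frac{7}{2}+\right)}}$, which coincides with $\|\bar{u}\|_{H^{m+3}}$ in the range of $m$ considered, together with the passage from $\dot{H}^{m}$ to $H^{m}$; and (b) in part (2), recognizing that specializing Proposition \ref{prop:linearbarv} to $k'=6$ forces the otherwise-harmless term $\int_{\T}\big(-\tfrac{3}{2}\p_{x}a_{3,1}+a_{2,1}\big)\big(\p_{x}\bar{v}\big)^{2}$ onto the right-hand side as a genuine $\|\bar{v}\|^{2}_{H^{1}}$ contribution --- the same phenomenon that already appeared in the proof of Proposition \ref{prop:linearL2v}(2).
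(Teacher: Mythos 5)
Your proposal is correct and follows essentially the same route as the paper: part (1) is proved there by pairing the equation for $\bar{u}$ with $D^{2m}\bar{u}$, using Young's inequality on the $(\epsilon_{2}-\epsilon_{1})\p_{x}^{4}u_{2}$ term and Lemma \ref{lem:EstDerivF} on the nonlinear difference, and part (2) by bounding the signless transport integral $2\int\big(-\tfrac{3}{2}\p_{x}a_{3,1}+a_{2,1}\big)(\p_{x}\bar{v})^{2}\,dx$ via $H^{\frac{1}{2}+}\hookrightarrow L^{\infty}$ (this is (\ref{Eqn:BoundResL2})) and then running the argument of Proposition \ref{prop:linearbarv} with $k'=6$. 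Your only deviation is cosmetic: you invoke Proposition \ref{prop:linearbarv} directly at $k'=6$ (which its hypothesis $k'\geq 6$ permits) where the paper says it ``mimicks'' that proof, and the two formulations yield the same estimate.
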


\begin{rem}
The proof shows that the constant $C$ depending on $K$ in Proposition \ref{prop:linearbarvL2} can be chosen as a function of $K$ that increases as
$K$ increases.
\label{rem:linearbarvL2}
\end{rem}

%\begin{rem}
%A similar proof  shows that (\ref{Eqn:EstbarvL2}) holds, with $`[0,T']'$, $ ` \epsilon_{j}'$, $ ` \epsilon_{1} '$, and
%$ ` \leq C '$ replaced with $`[-T',0]'$, $`-\epsilon_{j}'$, $`-\epsilon_{1}'$, and $` \geq -C'$ in Proposition \ref{prop:linearbarvL2},
%(\ref{Eqn:EqvjL2}), the LHS of (\ref{Eqn:EstbarvL2}), and (\ref{Eqn:EstbarvL2}) respectively.
%\end{rem}

\begin{proof}
Let $\bar{u} := u_1 - u_2$. Let $m \in \left\{ 0, \frac{9}{2}+, k'-3 \right\}$. We have

\EQQARR{
\p_{t} \bar{u} + \epsilon_1 \p_{x}^{4} \bar{u} & = (\epsilon_2 - \epsilon_1) \p_{x}^{4} u_{2}
+ F (\overrightarrow{u_1(t)},t) - F (\overrightarrow{u_2(t)},t)
}
Elementary considerations show that

\EQQARR{
\langle \p_{t} \bar{u}(t) + \epsilon_1 \p_{x}^{4} \bar{u}(t), D^{2m} \bar{u}(t) \rangle & =
\frac{(-1)^{m}}{2} \frac{d}{dt} \left( \| D^{m} \bar{u}(t) \|^{2}_{L^{2}}  \right)
+ \frac{(-1)^{m}}{2} \epsilon_1 \frac{d}{dt} \left(  \| D^{m+2} \bar{u}(t) \|^{2}_{L^{2}} \right) \cdot
}
On the other hand the Young inequality $ab \leq \frac{a^{2}}{2} + \frac{b^{2}}{2}$ yields

\EQQARR{
(\epsilon_2 - \epsilon_1)  \left| \langle \p_{x}^{4} u_{2}(t), D^{2m} \bar{u}(t) \rangle \right|
& = (\epsilon_2 - \epsilon_1) \left| (-1)^{m} \langle D^{4+m} u_2(t), D^{m} \bar{u}(t) \rangle \right| \\
& \lesssim (\epsilon_2 - \epsilon_1) \| u_2(t) \|_{H^{m+4}}  \|  \bar{u}(t) \|_{H^{m}} \\
& \lesssim  \epsilon_2^{2} \| u_2(t) \|^{2}_{H^{m+4}} + \| \bar{u}(t) \|^{2}_{H^{m}}, \, \text{and}
}
$\langle F(\overrightarrow{u_1(t)},t) - F(\overrightarrow{u_2(t)},t), D^{2m} \bar{u}(t) \rangle
= (-1)^{m} \langle \, D^{m} ( F(\overrightarrow{u_1(t)},t) - F(\overrightarrow{u_2(t)},t) ), D^{m} \bar{u}(t) \, \rangle$. Hence
we see from Lemma \ref{lem:EstDerivF} that $ \frac{d \left( \| \bar{u}(t) \|^{2}_{\dot{H}^{m}} \right)}{dt} \lesssim \| \bar{u}(t) \|^{2}_{H^{m+3}}
+ \epsilon_{2}^{2}  \left\| u_{2}(t) \right\|^{2}_{H^{m+4}} $ if
$m \in \left\{ \frac{9}{2}+, k'-3 \right\}$ and $ \frac{ d \left(  \| \bar{u}(t) \|^{2}_{L^{2}} \right)} {dt} \lesssim
\| \bar{u}(t) \|^{2}_{H^{\frac{7}{2}+}} + \epsilon_{2}^{2} \| \bar{u}(t) \|^{2}_{H^{4}} $.
Hence (\ref{Eqn:EstbaruL2}) holds.\\
The Sobolev embedding $ H^{\frac{1}{2}+} \hookrightarrow L^{\infty} $ shows that

\EQQARR{
 \left| 2 \int \left( - \frac{3}{2} \p_{x} a_{3,1}(t) + a_{2,1}(t) \right) ( \p_{x} \bar{v}(t) )^{2} \; dx \right|
 & \lesssim \left( \| \p_{x} a_{3,1}(t) \|_{L^{\infty}} + \| a_{2,1}(t) \|_{L^{\infty}} \right) \| \p_{x} \bar{v}(t) \|^{2}_{L^{2}}  \\
 & \lesssim  \left( \| a_{3,1}(t) \|_{H^{\frac{3}{2}+}} + \| a_{2,1}(t) \|_{H^{\frac{1}{2}+}} \right)  \| \p_{x} \bar{v}(t) \|^{2}_{L^{2}}
\label{Eqn:BoundResL2}
}
We now mimick the proof of Proposition \ref{prop:linearbarv}, replacing $k'$ with $6$. We get (\ref{Eqn:EstbarvL2}).

\end{proof}

\subsection{Gauged energy estimates for difference of solutions: statement}

We prove the following proposition:

\begin{prop}

Let $k' > \frac{19}{2}$, $ k' \geq k'_{0} > \frac{17}{2} $, and $ \tilde{\phi} \in \widetilde{\mathcal{P}}_{+,k'} $.
Let $q \in \{ 1,2 \}$. There exists $ 0 < c := c \left( \tilde{\delta}(\overrightarrow{\tilde{\phi}}), \tilde{\delta}^{'}(\tilde{\phi}), \| \tilde{\phi} \|_{H^{k'}} \right) \ll 1  $ such that
if $ 0 < \epsilon_{1} \leq \epsilon_{2} \leq c $, then the following holds: if $u_{q}$ is the solution of
(\ref{Eqn:Integral}) obtained by Proposition \ref{Prop:Integral}  with $ \epsilon  := \epsilon_{q} $, replacing $\tilde{\phi}$  with
$\tilde{\phi}_{q, k'} :=  J_{\epsilon_{q},k'} \tilde{\phi} $ on $[0, T_{\epsilon_{q}}]$, then there exist $1 \geq
T' := T' \left( \| \tilde{\phi} \|_{H^{k'_{0}}}, \tilde{\delta}(\overrightarrow{\tilde{\phi}}), \tilde{\delta}^{'}(\tilde{\phi}) \right) > 0 $ and
$ C := C \left( \| \tilde{\phi} \|_{H^{k'}}, \tilde{\delta}(\overrightarrow{\tilde{\phi}}) \right) > 0 $ such that
\EQQARRLAB{
T_{\epsilon_{1}}, \, T_{\epsilon_{2}} > T'; \; (\ref{Eqn:BoundNrjVisc}) \; \text{holds, replacing} \; u \; \text{with} \; u_{q}; \;  \text{and} \\
\sup_{t \in [0,T']} \left( E_{k'}  \left( u_{1}(t), u_{2}(t),t \right) + \int_{0}^{t} \int_{\T} Q ( u_{1}(t'),t')
\left( \partial_{x} D^{k'-6}  \bar{v}(t') \right)^{2} \; dx \; dt' \right)   \\
\leq C \left(  \| u_{2}(0) - u_{1}(0) \|^{2}_{H^{k'}} + \epsilon_{2}^{0+} \right),
\label{Eqn:DiffEstNrj}
}
with $\bar{v}(t) := \Phi_{k'}(u_{1}(t),t) \partial_{x}^{6} u_{1}(t) - \Phi_{k'}(u_{2}(t),t) \partial_{x}^{6} u_{2}(t) $.

\label{prop:EnergyDiffEst}
\end{prop}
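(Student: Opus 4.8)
The scheme mirrors that of Proposition~\ref{Prop:OneEnergyEst}, with one extra ingredient: an auxiliary difference estimate at fixed lower regularity, needed to absorb the terms that come from the mismatch between the two problems.

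\emph{Step 1: the two solutions taken separately.} Since $\tilde{\phi}_{q,k'}:=J_{\epsilon_q,k'}\tilde{\phi}\to\tilde{\phi}$ in $H^{k'}$ as $\epsilon_q\to 0$ (Lemma~\ref{Lem:MollEst}), the continuity statements of Section~\ref{Sec:App} give $\tilde{\delta}(\overrightarrow{\tilde{\phi}_{q,k'}})\to\tilde{\delta}(\overrightarrow{\tilde{\phi}})>0$ and $\tilde{\delta}^{'}(\tilde{\phi}_{q,k'})\to\tilde{\delta}^{'}(\tilde{\phi})>0$; hence, choosing $c=c(\tilde{\delta}(\overrightarrow{\tilde{\phi}}),\tilde{\delta}^{'}(\tilde{\phi}),\|\tilde{\phi}\|_{H^{k'}})$ small enough, for $\epsilon_q\le c$ one has $\tilde{\phi}_{q,k'}\in\widetilde{\mathcal{P}}_{+,k'}$ with $\tilde{\delta}(\overrightarrow{\tilde{\phi}_{q,k'}})\ge\tfrac12\tilde{\delta}(\overrightarrow{\tilde{\phi}})$, $\tilde{\delta}^{'}(\tilde{\phi}_{q,k'})\ge\tfrac12\tilde{\delta}^{'}(\tilde{\phi})$ and $\|\tilde{\phi}_{q,k'}\|_{H^{k'_{0}}}\le\|\tilde{\phi}\|_{H^{k'_{0}}}$. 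The $\tilde{\phi}_{q,k'}$ are smooth, and the time and constant produced by Proposition~\ref{Prop:OneEnergyEst} depend on the data only through $\|\cdot\|_{H^{k'_{0}}}$, $\tilde{\delta}$, $\tilde{\delta}^{'}$, monotonically by Remark~\ref{Rem:PropOneEnergyEst}; so we may apply that proposition to each $u_q$, and also at finitely many higher levels $k'+l$, all with a single $T'=T'(\|\tilde{\phi}\|_{H^{k'_{0}}},\tilde{\delta}(\overrightarrow{\tilde{\phi}}),\tilde{\delta}^{'}(\tilde{\phi}))>0$. Together with $\|\tilde{\phi}_{q,k'}\|_{H^{k'+l}}\lesssim\epsilon_q^{-l/k'}\|\tilde{\phi}\|_{H^{k'}}$ (Lemma~\ref{Lem:MollEst}) and Proposition~\ref{prop:compEHnorm}, this yields $T_{\epsilon_q}>T'$, (\ref{Eqn:BoundNrjVisc}) with $u$ replaced by $u_q$ (which is already the first two lines of (\ref{Eqn:DiffEstNrj})), and, for $t\in[0,T']$: $u_q(t)\in\mathcal{P}_{+,k'}(t)$, $\delta(\overrightarrow{u_q(t)},t)\gtrsim\tilde{\delta}(\overrightarrow{\tilde{\phi}})$, $\delta^{'}(u_q(t),t)\gtrsim\tilde{\delta}^{'}(\tilde{\phi})$, $\|u_q(t)\|_{H^{k'}}\lesssim 1$, and $\|u_q(t)\|_{H^{k'+l}}\lesssim\epsilon_q^{-l/k'}$ for the finitely many $l$ needed. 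In particular $Q(u_q(t),t)>0$ on $[0,T']$.

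\emph{Step 2: a differential inequality for a gauged difference.} For $\kappa$ equal to $k'$ or to a fixed intermediate level $\kappa_*<k'$ to be fixed below, a direct computation (as in the computation underlying Lemma~\ref{lem:OneEnergyEst}, Section~\ref{Sec:ProofLemmas}) shows that $v_j^{(\kappa)}(t):=\Phi_\kappa(u_j(t),t)\partial_x^{6}u_j(t)$ satisfies an equation of the form (\ref{Eqn:vjEq}) whose coefficients $a_{m,j},b_{m,j},c_j$ are fixed polynomial expressions in finitely many $x$-derivatives of $\Phi_\kappa^{\pm1}(u_j,t)$, of $u_j$, and of the partial derivatives of $F$ along $u_j$, with $a_{3,j}=\partial_{\omega_3}F(\overrightarrow{u_j},t)$ and, by (\ref{Eqn:CrucEq}), $(\kappa-\tfrac{15}{2})\partial_x a_{3,j}+a_{2,j}=Q(u_j,t)$. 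Applying Proposition~\ref{prop:linearbarv} with $j=1$ as the reference (legitimate since $\epsilon_1\le\epsilon_2$), Proposition~\ref{prop:linearbarvL2}(1) with $m=\max(\kappa-3,\tfrac92+)$, adding the two, absorbing $\epsilon_1(\|b_{3,1}\|_{H^{3/2+}}+\|b_{2,1}\|_{H^{1/2+}})\|\partial_x D^{\kappa-6}\bar{v}^{(\kappa)}\|_{L^2}^{2}$ into $2\epsilon_1\|D^{\kappa-4}\bar{v}^{(\kappa)}\|_{L^2}^{2}$ and into the good terms (using $\|D^{\kappa-5}w\|_{L^2}^{2}\le\|D^{\kappa-4}w\|_{L^2}\|D^{\kappa-6}w\|_{L^2}$ and $\epsilon_1\le c$ small), and using Proposition~\ref{prop:compEHnorm} to pass between $E_\kappa(u_1,u_2,t)$ and $\|u_1-u_2\|_{H^{\kappa}}^{2}$, one gets for $t\in[0,T']$
\[
\frac{dE_\kappa(u_1(t),u_2(t),t)}{dt}+\int_{\T}Q(u_1(t),t)\bigl(\partial_x D^{\kappa-6}\bar{v}^{(\kappa)}(t)\bigr)^{2}dx\ \le\ C\bigl(E_\kappa(u_1(t),u_2(t),t)+R_\kappa(t)\bigr),
\]
where $R_\kappa(t)$ collects the error terms of the right-hand side of (\ref{Eqn:Estbarv}): pure powers of $\epsilon_2$ multiplied by norms of $v_2^{(\kappa)}$, and the ``mixed'' terms $\|a_{m,1}-a_{m,2}\|_{H^{1/2+}}^{2}\|v_2^{(\kappa)}\|_{H^{\kappa-6+m}}^{2}$, $\|a_{m,1}-a_{m,2}\|_{H^{\kappa-6}}^{2}\|v_2^{(\kappa)}\|_{H^{(m+\frac12)+}}^{2}$, together with $\|a_{0,1}-a_{0,2}\|_{H^{\kappa-6}}^{2}$.

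\emph{Step 3: bounding $R_\kappa$ (the crux).} First take $\kappa=\kappa_*$, chosen as a fixed number with $\kappa_*\le k'-3$ on the one hand (so that, by Step~1, every norm of $v_2^{(\kappa_*)}$ occurring in $R_{\kappa_*}$ is controlled by $\|u_2(t)\|_{H^{\le k'}}\lesssim1$, hence the mixed terms are $\lesssim E_{\kappa_*}(u_1,u_2,t)$ and the $\epsilon$-terms are $\lesssim\epsilon_2^{\,\theta}$ with $\theta>0$), and $\kappa_*$ large enough on the other hand (of size $O(1)$, dictated by the orders of the $a_{m,j}$) that $E_{\kappa_*}(u_1,u_2,t)$ dominates every fixed-order norm of $u_1-u_2$ entering the differences $a_{m,1}-a_{m,2}$; that both requirements can be met is exactly where $k'>\tfrac{19}{2}$ is used. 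Since $Q(u_1,t)>0$, Gronwall's inequality and $E_{\kappa_*}(u_1(0),u_2(0),0)\lesssim\|(J_{\epsilon_1,k'}-J_{\epsilon_2,k'})\tilde{\phi}\|_{H^{\kappa_*}}^{2}\lesssim\epsilon_2^{\,2(k'-\kappa_*)/k'-}$ (Proposition~\ref{prop:compEHnorm}, Lemma~\ref{Lem:MollEst}, $\epsilon_1\le\epsilon_2$) give $\sup_{[0,T']}E_{\kappa_*}(u_1(t),u_2(t),t)\lesssim\epsilon_2^{\,2(k'-\kappa_*)/k'-}$ — if necessary, Proposition~\ref{prop:linearbarvL2}(1) then propagates this smallness downward in regularity — hence a quantitative bound $\sup_{[0,T']}\|u_1(t)-u_2(t)\|_{H^{\sigma}}\lesssim\epsilon_2^{\,(k'-\kappa_*)/k'-}$ in all the relevant fixed-order norms $H^{\sigma}$. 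Now take $\kappa=k'$. In $R_{k'}$ the pure-$\epsilon$ terms are $\lesssim\epsilon_2^{\,\theta}$, $\theta>0$, the worst being $\epsilon_2^{2}\|v_2^{(k')}\|_{H^{k'-2}}^{2}\lesssim\epsilon_2^{\,2-8/k'}$ (positive as $k'>\tfrac{19}{2}$); for the mixed terms, Lemmas~\ref{lem:EstDerivF} and~\ref{lem:gauge} (at equal times) bound $\|a_{m,1}-a_{m,2}\|_{H^{1/2+}}$ by a fixed-order norm of $u_1-u_2$ — hence by the bound just established — while $\|v_2^{(k')}\|_{H^{k'-6+m}}\lesssim\|u_2\|_{H^{k'+m}}\lesssim\epsilon_2^{-m/k'}$, so multiplying the two gains produces a positive power of $\epsilon_2$ precisely when $k'>\tfrac{19}{2}$; the remaining mixed terms are $\lesssim\|u_1-u_2\|_{H^{k'-3}}^{2}\lesssim E_{k'}(u_1,u_2,t)$. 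Therefore $R_{k'}(t)\lesssim E_{k'}(u_1,u_2,t)+\epsilon_2^{\,0+}$, and the inequality of Step~2 becomes $\tfrac{d}{dt}E_{k'}(u_1,u_2,t)+\int_{\T}Q(u_1,t)(\partial_x D^{k'-6}\bar{v})^{2}dx\le C(E_{k'}(u_1,u_2,t)+\epsilon_2^{\,0+})$ with $C=C(\|\tilde{\phi}\|_{H^{k'}},\tilde{\delta}(\overrightarrow{\tilde{\phi}}))$.

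\emph{Step 4: conclusion, and the main obstacle.} Since $Q(u_1(t),t)>0$ on $[0,T']$, Gronwall's lemma applied to the last inequality gives, for $t\in[0,T']$, $E_{k'}(u_1(t),u_2(t),t)+\int_0^{t}\!\int_{\T}Q(u_1(t'),t')(\partial_x D^{k'-6}\bar{v}(t'))^{2}dx\,dt'\lesssim E_{k'}(u_1(0),u_2(0),0)+\epsilon_2^{\,0+}$, and $E_{k'}(u_1(0),u_2(0),0)\lesssim\|u_2(0)-u_1(0)\|_{H^{k'}}^{2}$ by Proposition~\ref{prop:compEHnorm}(3) (applicable because $\|u_q(0)\|_{H^{k'}}\le\|\tilde{\phi}\|_{H^{k'}}$, $\delta(\overrightarrow{u_q(0)},0)\gtrsim\tilde{\delta}(\overrightarrow{\tilde{\phi}})$, and $\partial_{\omega_3}F(\overrightarrow{h_\theta},0)$ keeps a constant sign along $h_\theta=\theta u_1(0)+(1-\theta)u_2(0)$, being close to $\partial_{\omega_3}F(\overrightarrow{\tilde{\phi}},0)$); since $T'\le1$ this is exactly (\ref{Eqn:DiffEstNrj}). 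The hard part is Step~3: the mixed terms $\|a_{m,1}-a_{m,2}\|_{H^{1/2+}}^{2}\|v_2^{(k')}\|_{H^{k'-6+m}}^{2}$ cannot be handled factor by factor, since the first factor is only $O(1)$ (controlled by $E_{k'}$) and the second blows up like a negative power of $\epsilon_2$; one must first extract a quantitative smallness of $u_1-u_2$ in fixed low-order norms via an independent Gronwall argument at a fixed level $\kappa_*\le k'-3$ (the constraint $\kappa_*+3\le k'$ being what keeps that auxiliary estimate free of $\epsilon_2$-losses), and only the combination of the $\epsilon_2$-gain coming from the mollified data with the $\epsilon_2$-loss coming from the extra derivatives falling on $u_2$ yields a positive power of $\epsilon_2$, the balance point of this competition being exactly $k'=\tfrac{19}{2}$.
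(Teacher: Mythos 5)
Your proposal is correct and follows essentially the same route as the paper: mollified data shown to stay in $\widetilde{\mathcal{P}}_{+,k'}$ with comparable $\tilde{\delta},\tilde{\delta}^{'}$, Proposition \ref{Prop:OneEnergyEst} applied at the shifted levels $k'+l$ to get a uniform $T'$ and the $\epsilon_q^{-l/k'}$ a priori bounds, a difference-energy differential inequality built from Propositions \ref{prop:linearbarv} and \ref{prop:linearbarvL2} (the paper packages exactly this as Lemma \ref{lem:EnergyDiffEst}), and the same two-level Gronwall bootstrap — an auxiliary estimate below $k'-3$ exploiting the mollifier gain on $u_1(0)-u_2(0)$, then the estimate at level $k'$ where that smallness beats the $\epsilon_2^{-m/k'}$ losses, with $k'>\tfrac{19}{2}$ entering exactly as you say. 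The only (cosmetic) deviation is your fixed auxiliary level $\kappa_*$ versus the paper's choice $(k'-3)-$; note that to end with a strictly positive power $\epsilon_2^{0+}$ you need $\kappa_*$ strictly below $k'-3$ (or the paper's $(k'-3)-$), not merely $\kappa_*\le k'-3$.
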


\begin{rem}
The proof of Proposition \ref{prop:EnergyDiffEst} shows that one can choose $T^{'}$  (resp. $C$) to be a continuous function that decreases
(resp. increases) as $\tilde{\delta}(\overrightarrow{\tilde{\phi}})$ decreases, and that decreases (resp. increases) as $\| \tilde{\phi} \|_{H^{k'_{0}}}$ increases.
It also shows that one can choose $T'$ as a continuous function that decreases as $\tilde{\delta}^{'}(\tilde{\phi})$ decreases.
\label{Rem:PropEnergyDiffEst}
\end{rem}

\begin{proof}

The proof relies upon the following lemma:

\begin{lem}
Let  $0 < \epsilon_{1} \leq \epsilon_{2} \ll 1 $. Let $K \geq 0$ and  $\bar{\delta} > 0$.
Consider the solutions $u_{q}$ on an interval $[0, T^{''}]$ with $ 0 < T^{''} \leq  1$. We define
$h_{\theta}(t):= \theta u_{1}(t) + (1- \theta) u_{2}(t) $ for $\theta \in [0,1]$ and for $t \in [0,T^{''}]$. Assume that \\
$t \in [0,T^{''}]: \; \max \left( \sup_{t \in [0,T^{''}]} E_{k'} (u_{1}(t),t),  \sup_{t \in [0,T^{''}]} E_{k'} (u_{2}(t),t) \right) \leq K $ . Assume also that
$ \delta \left( \overrightarrow{h_{\theta}(t)}, t \right) \geq \bar{\delta} $ for all
$(t, \theta) \in [0,T^{''}] \times [0,1] $. Then there exists $ C := C \left( \bar{\delta}, K  \right) > 0  $ such that for all $ t \in [0,T^{''}] $

\EQQARRLAB{
\frac{d E_{k'} (u_1(t),u_2(t),t)}{dt} + \int_{\T} Q (u_{1}(t),t) \left( \partial_{x} D^{k'-6} \bar{v}(t) \right)^{2} \; dx \; \leq C  \\
\\
\left(
\begin{array}{l}
E_{k'} \left( u_{1}(t), u_{2}(t),t \right) +   \left\| u_{1}(t)-  u_{2}(t) \right\|^{2}_{H^{\frac{7}{2}+}}  E_{k'+3} \left( u_{2}(t), t \right)
+ \left\| u_{1}(t) - u_{2}(t) \right\|^{2}_{H^{\frac{9}{2}+}} E_{k'+2} \left( u_{2}(t),t \right) \\
\\
+ \left\| u_{1}(t) -u_{2}(t) \right\|^{2}_{H^{\frac{13}{2}+}} E_{k'+1} \left(  u_{2} (t),t \right)
+ \left\| u_{1}(t)- u_{2}(t) \right\|^{2}_{H^{k'}} E_{\frac{19}{2}+} \left(  u_{2}(t),t \right) \\
\\
+ \epsilon_{2}^{2} \left( 1 +  E_{k'+4} \left( u_{2}(t),t \right) \right) + \epsilon_{1}^{2}
\left( 1 +
E_{k' + 2} \left( u_{1}(t),t \right) + E_{k'+4} \left( u_{2}(t),t \right)
\right)  \\
\\
+ \epsilon_{1}^{2} \left\| u_{1}(t) - u_{2}(t) \right\|^{2}_{H^{\frac{13}{2}+}} \left( 1 + E_{k'+2} \left( u_{1}(t),t \right) \right)
\end{array}
\right)
\cdots
\label{Eqn:EstNrjDiff}
}

\label{lem:EnergyDiffEst}
\end{lem}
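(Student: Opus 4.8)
The plan is to deduce Lemma~\ref{lem:EnergyDiffEst} from Propositions~\ref{prop:linearbarv} and~\ref{prop:linearbarvL2}, exactly as the single-solution estimate of Proposition~\ref{Prop:OneEnergyEst} is deduced (via the postponed Lemma~\ref{lem:OneEnergyEst}) from Proposition~\ref{prop:linearv}. First I would write, for each $q\in\{1,2\}$ and $v_q(t):=\Phi_{k'}(u_q(t),t)\partial_x^6 u_q(t)$, the equation satisfied by $v_q$: differentiating the regularized equation~(\ref{Eqn:Regul}) for $u_q$ six times in $x$, multiplying by $\Phi_{k'}(u_q(t),t)$, and applying the Leibnitz rule to re-express all excess $x$-derivatives of $u_q$ in terms of $v_q,\partial_x v_q,\partial_x^2 v_q,\partial_x^3 v_q$, one obtains an equation of the form~(\ref{Eqn:vjEq}) with $\epsilon_q$ in place of $\epsilon$. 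The essential bookkeeping point is to identify the two leading coefficients exactly: $a_{3,q}=\partial_{\omega_3}F(\overrightarrow{u_q(t)},t)$ and $a_{2,q}=3\Phi_{k'}(u_q(t),t)\partial_x\Phi_{k'}^{-1}(u_q(t),t)\,\partial_{\omega_3}F(\overrightarrow{u_q(t)},t)+P(u_q(t),t)$, so that~(\ref{Eqn:CrucEq}) gives $\left(k'-\tfrac{15}{2}\right)\partial_x a_{3,1}(t)+a_{2,1}(t)=Q(u_1(t),t)$; and to record that $a_{1,q},b_{m,q},c_q$ are built out of $\partial_{\omega_3}F(\overrightarrow{u_q},t)$, $\Phi_{k'}^{\pm1}(u_q,t)$ and boundedly many $x$-derivatives of $u_q$, while $a_{0,q},b_{0,q}$ involve no further structure beyond $\partial_x^{\le 6}u_q$. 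Applying Proposition~\ref{prop:linearbarv} to $\bar v:=v_1-v_2$ then produces precisely the left side of~(\ref{Eqn:EstNrjDiff}) for the top-order part $\|\bar v(t)\|^2_{\dot H^{k'-6}}$ of $E_{k'}(u_1,u_2,t)$, with right side equal to that of~(\ref{Eqn:Estbarv}); Proposition~\ref{prop:linearbarvL2}(1) with $m=\max(k'-3,\tfrac{9}{2}+)=k'-3$ (as $k'>\tfrac{19}{2}$) controls the remaining part $\|u_1(t)-u_2(t)\|^2_{H^{k'-3}}$ of $E_{k'}(u_1,u_2,t)$.

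\textbf{A priori bounds and the three reductions.} Throughout I would use that the hypotheses $E_{k'}(u_q(t),t)\le K$ and $\delta(\overrightarrow{h_\theta(t)},t)\ge\bar\delta$ (at $\theta\in\{0,1\}$ this gives $\inf_x|\partial_{\omega_3}F(\overrightarrow{u_q(t)},t)|\ge\bar\delta$, so the gauges $\Phi_{k'}(u_q(t),t)$ are defined) yield, via Proposition~\ref{prop:compEHnorm} and the Sobolev embedding $H^{\frac12+}\hookrightarrow L^\infty$, that $\|u_q(t)\|_{H^{k'}}\lesssim_{K,\bar\delta}1$ and hence, by Lemmas~\ref{Lem:prod}, \ref{lem:EstDerivF}, \ref{lem:gauge} and $k'>\tfrac{19}{2}$, uniform bounds on all the ``low-regularity'' quantities that occur: $\|\Phi_{k'}^{\pm1}(u_q,t)\|_{H^{k'-3}}$, $\|\partial_{\omega_3}F(\overrightarrow{u_q},t)\|_{H^{k'-3}}$, $\|P(u_q,t)\|_{H^{k'-4}}$, $\|v_q(t)\|_{H^{\frac72+}}$, $\|a_{m,q}(t)\|_{H^{(m+\frac12)+}}$, $\|b_{m,q}(t)\|_{H^{(m+\frac12)+}}$, $\|a_{0,q}(t)\|_{H^{k'-6}}$, all $\lesssim_{K,\bar\delta}1$. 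Three reductions then dispatch the right side of~(\ref{Eqn:Estbarv}): (i) $\|\bar v(t)\|_{H^{k'-6}}$ and $\|u_1(t)-u_2(t)\|_{H^{k'}}$, together with any of their lower-regularity versions, are $\lesssim E_{k'}(u_1(t),u_2(t),t)^{1/2}$ (the former directly from the definition of $E_{k'}$, the latter by Proposition~\ref{prop:compEHnorm}(3), using $\|u_q(t)\|_{H^{k'}}\le K$); (ii) the term $\epsilon_1(\|b_{3,1}(t)\|_{H^{\frac32+}}+\|b_{2,1}(t)\|_{H^{\frac12+}})\|\partial_x D^{k'-6}\bar v(t)\|^2_{L^2}$ is absorbed by interpolating $\|\partial_x D^{k'-6}\bar v\|^2_{L^2}\le\eta\|D^{k'-4}\bar v\|^2_{L^2}+C_\eta\|\bar v\|^2_{H^{k'-6}}$, taking $\eta$ small and $0<\epsilon_1\ll1$, so that the first piece is moved into the $2\epsilon_1\|D^{k'-4}\bar v(t)\|^2_{L^2}$ on the left and the second into $C\,E_{k'}(u_1,u_2,t)$; (iii) the ``difference'' coefficients are estimated by the difference Lemmas~(\ref{Eqn:DiffEstFDeriv}), (\ref{Eqn:DiffPhifkpr}), (\ref{Eqn:DiffEstP}), e.g. $\|a_{3,1}(t)-a_{3,2}(t)\|_{H^{\frac12+}}\lesssim\|u_1-u_2\|_{H^{\frac72+}}$ and $\|a_{m,1}(t)-a_{m,2}(t)\|_{H^{k'-6}}\lesssim_{K,\bar\delta}\|u_1-u_2\|_{H^{k'}}$, while the ``high'' partner factors satisfy $\|v_2(t)\|^2_{H^{k'-6+m}}\lesssim_{K,\bar\delta}1+E_{k'+m}(u_2(t),t)$ (again by Lemma~\ref{Lem:prod} and the high-norm bounds of Lemma~\ref{lem:EstDerivF} for $|\partial_{\omega_3}F|^{\pm1}$, which are bounded since they cost only $\|u_2\|_{H^{k'}}$).

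\textbf{Matching terms.} Combining (i)--(iii), each line of the right side of~(\ref{Eqn:Estbarv}) is bounded by a line of the right side of~(\ref{Eqn:EstNrjDiff}): the $a_{m,1}$ and $a_{0,1}$ lines give the $E_{k'}(u_1,u_2,t)$ term; the $a_{m,1}-a_{m,2}$ and $a_{0,1}-a_{0,2}$ lines, distributing the $m\in\{1,2,3\}$ excess $x$-derivatives so that the difference always falls at regularity $\le k'$, produce $\|u_1-u_2\|^2_{H^{\frac72+}}E_{k'+3}(u_2,t)$, $\|u_1-u_2\|^2_{H^{\frac92+}}E_{k'+2}(u_2,t)$, $\|u_1-u_2\|^2_{H^{\frac{13}2+}}E_{k'+1}(u_2,t)$, $\|u_1-u_2\|^2_{H^{k'}}E_{\frac{19}2+}(u_2,t)$ (the part of these with a bounded ``high'' factor being reabsorbed into $E_{k'}(u_1,u_2,t)$ via (i)); and the $\epsilon_1^2$- and $\epsilon_2^2$-weighted lines (those with $b_{m,j}$, $c_j$, $c_2-c_1$, and $\partial_x^4 v_2$) collapse, using $\epsilon_1\le\epsilon_2\ll1$ and the monotonicity of $j\mapsto E_{k'+j}$, to $\epsilon_2^2(1+E_{k'+4}(u_2,t))+\epsilon_1^2(1+E_{k'+2}(u_1,t)+E_{k'+4}(u_2,t))+\epsilon_1^2\|u_1-u_2\|^2_{H^{\frac{13}2+}}(1+E_{k'+2}(u_1,t))$. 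For the lower-order part, Proposition~\ref{prop:linearbarvL2}(1) with $m=k'-3$ gives $\frac{d}{dt}\|u_1-u_2\|^2_{H^{k'-3}}\lesssim_{K,\bar\delta}\|u_1-u_2\|^2_{H^{k'}}+\epsilon_2^2\|u_2\|^2_{H^{k'+1}}\lesssim E_{k'}(u_1,u_2,t)+\epsilon_2^2(1+E_{k'+4}(u_2,t))$; adding this to the top-order estimate and using that the signed term $\int_{\T}Q(u_1(t),t)(\partial_x D^{k'-6}\bar v(t))^2\,dx$ appears on the left (by the identification of $a_{3,1},a_{2,1}$ and~(\ref{Eqn:CrucEq})) yields~(\ref{Eqn:EstNrjDiff}).

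\textbf{Main obstacle.} The conceptual content is modest; the difficulty is the \emph{distribution of regularity} in step~(iii). In every product of a ``difference'' factor by a ``high-norm of $v_2$ (or $u_2$)'' factor one has only $\|u_1-u_2\|_{H^{k'}}$ --- controlled by $E_{k'}(u_1,u_2,t)$, the very quantity being estimated --- available at top regularity, so the difference must always be placed at regularity $\le k'$ (and in fact at the smaller exponents $\tfrac72+,\tfrac92+,\tfrac{13}2+$ when paired with the highest-order $v_2$-norms), while the excess ($m\le3$, or the $4$ coming from $\epsilon\partial_x^4$) is carried by $v_2$ or $u_2$ at the uncontrolled regularity $k'+m$, paid for by the accompanying factor $\|u_1-u_2\|^2_{H^{(m+\frac32)+}}$ or $\epsilon^2$ --- precisely the pattern hard-wired into the right side of~(\ref{Eqn:EstNrjDiff}). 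Getting these admissible exponents to line up, and verifying that the low-regularity parts of $a_{0,q},b_{m,q},c_q$ really are $\lesssim_{K,\bar\delta}1$ (or, for $c_q$ in its $\epsilon^2$-weighted appearances, are controlled by $E_{k'+4}(u_q,t)$ rather than requiring uncontrolled norms) along the lines of the proof of Lemma~\ref{lem:OneEnergyEst}, is the only delicate part; it rests on the explicit forms of $P$ and $\Phi_{k'}$ and on the uniform non-degeneracy $\delta(\overrightarrow{h_\theta(t)},t)\ge\bar\delta$ used to invoke the difference Lemmas~\ref{lem:EstDerivF} and~\ref{lem:gauge} along the segment $[u_2,u_1]$.
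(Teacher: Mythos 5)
Your overall strategy is the paper's: write the equation satisfied by $v_q(t)=\Phi_{k'}(u_q(t),t)\partial_x^6u_q(t)$ with the coefficients from the proof of Lemma \ref{lem:OneEnergyEst}, apply Proposition \ref{prop:linearbarv} to $\bar v=v_1-v_2$ and Proposition \ref{prop:linearbarvL2} to the $H^{k'-3}$ part of the energy, use (\ref{Eqn:CrucEq}) to identify $\left(k'-\frac{15}{2}\right)\partial_x a_{3,1}+a_{2,1}=Q(u_1(t),t)$, estimate the coefficient differences, and absorb the $\epsilon_1$-weighted top-order terms. However, there is a concrete gap in your description of the lower-order coefficients, and it affects the final bookkeeping. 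You assert that ``$a_{0,q},b_{0,q}$ involve no further structure beyond $\partial_x^{\le6}u_q$.'' This is false in the paper's construction: $a_{0,q}=\bar a_{0,q}+\tilde a_q$, where $\tilde a_q$ comes from $\Phi_{k'}^{-1}\partial_t\Phi_{k'}\,v_q$, and after substituting $\partial_t u_q=-\epsilon_q\partial_x^4u_q+F(\overrightarrow{u_q},t)$ it contains a nonlocal term $v_q\int_0^x I(u_q(t),t)\,dx'$ with $I$ involving $\partial_xv_q$, $\partial_x^2v_q$ and $\epsilon_q$-weighted contributions (see (\ref{Eqn:Deftildea0})); similarly $b_{1,q},b_{0,q}$ acquire pieces quadratic in $v_q$. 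As a consequence the difference estimate for $a_0$ is not $\|\triangle a_0(t)\|_{H^{k'-6}}\lesssim\|\triangle u(t)\|_{H^{k'}}$, but rather (\ref{Eqn:Diffa0}), which carries the additional term $\sum_{j}\epsilon_j\left(1+\|u_j(t)\|_{H^{k'+1}}\right)$; this is precisely one source of the $\epsilon_1^2\left(1+E_{k'+2}(u_1(t),t)+\dots\right)$ and $\epsilon_2^2(1+E_{k'+4}(u_2(t),t))$ terms in (\ref{Eqn:EstNrjDiff}). Your matching step assigns the $\|a_{0,1}-a_{0,2}\|^2_{H^{k'-6}}$ line of (\ref{Eqn:Estbarv}) entirely to the $E_{k'}(u_1(t),u_2(t),t)$ bucket, which does not go through as stated.

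Relatedly, the difference lemmas you invoke ((\ref{Eqn:DiffEstFDeriv}), (\ref{Eqn:DiffPhifkpr}), (\ref{Eqn:DiffEstP})) cover $\triangle a_3$, $\triangle\Phi_{k'}$ and $\triangle P$, but not the coefficients built from the Leibnitz remainders $\check Q$, $R$ and from $\partial_t\Phi_{k'}$, which are polynomials in $u_q,\dots,\partial_x^6u_q$ and in $\partial_t\partial_x^mu_q$ ($m\le2$), together with $\left(\partial_{\omega_3}F\right)^{-1}$. The paper needs the dedicated Result \ref{Res:aux} (with the claims (\ref{Eqn:pxpu}), (\ref{Eqn:ptpxmu}), (\ref{Eqn:avecij})--(\ref{Eqn:Diffavecij})) to estimate $\triangle a_1$, $\triangle\bar a_0$, $\triangle\tilde a_0$ and the $\triangle b_m$; in particular the time-derivative factors are re-expressed through the two equations, which again injects $\epsilon_1,\epsilon_2$-weighted high norms of $u_1,u_2$ and the term $(\epsilon_2-\epsilon_1)\|u_2\|_{H^{r+m+4}}$. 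Your closing remark that the ``only delicate part'' is distributing regularity points in the right direction, but the missing ingredient is precisely this structural analysis of $a_{0,q},a_{1,q},b_{m,q}$ (the analogue of Result \ref{Res:aux} and of (\ref{Eqn:Diffa0})); without it the $\epsilon$-bookkeeping on the right-hand side of (\ref{Eqn:EstNrjDiff}) cannot be completed as you describe.
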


\begin{rem}
The proof of Lemma \ref{lem:EnergyDiffEst} shows that one can choose $C$ to be a continuous function that increases as
$\bar{\delta}$ decreases, and increases as $K$ increases.
\label{Rem:LemEnergyDiffEst}
\end{rem}

We postpone the proof of Lemma \ref{lem:EnergyDiffEst} to Section \ref{Sec:ProofLemmas}. \\
\\
Let $ 0 < c := c \left( \tilde{\delta}(\overrightarrow{\tilde{\phi}}), \tilde{\delta}^{'}(\tilde{\phi}), \| \tilde{\phi} \|_{H^{k'}} \right) \ll 1 $ such that if $ 0 < \epsilon_{1} \leq \epsilon_{2} \leq c $, then the estimates  below hold. \\
We see from Appendix  and Lemma \ref{Lem:MollEst} that \\
$ \left| X ( \tilde{\phi}_{q,k'}, \tilde{\phi} ,0,0) \right| \lesssim \| \tilde{\phi}_{q, k'} - \tilde{\phi} \|_{H^{\frac{7}{2}+}} \lesssim
\epsilon_{q}^{ \frac{k' - \left( \frac{7}{2}+ \right) }{k'}} \| \tilde{\phi} \|_{H^{k'}}  $.
From Remark \ref{Rem:Qother} we see that $\partial_{\omega_{3}} F \left( \overrightarrow{\tilde{\phi}},0 \right)$ has a constant sign. Hence
$Q (\tilde{\phi},0) =   \left| \partial_{\omega_{3}} F ( \overrightarrow{\tilde{\phi}},0 ) \right|
\left[ \frac{P( \overrightarrow{\tilde{\phi}},0 )}{ \left| \partial_{\omega_{3}} F (\overrightarrow{\tilde{\phi}} ,0 ) \right|}  \right]_{ave} $.
We see from Appendix and the above estimates that  $ \left| Y( \tilde{\phi}_{q,k'}, \tilde{\phi} ,0,0) \right|
\lesssim_{\tilde{\delta}(\tilde{\phi}), \| \tilde{\phi} \|_{H^{k_{0}^{'}}} }
\| \tilde{\phi}_{q, k'} - \tilde{\phi} \|_{H^{\frac{9}{2}+}} \lesssim_{\tilde{\delta}(\tilde{\phi}),\| \tilde{\phi} \|_{H^{k_{0}^{'}}}} \epsilon_{q}^{0+} \| \tilde{\phi} \|_{H^{k'}} $. Hence the triangle inequality shows that

\EQQARRLAB{
\tilde{\delta}^{'} \left( \tilde{\phi}_{q,k'}  \right) \geq \frac{3 \tilde{\delta}^{'}(\tilde{\phi})}{4}, \;
\tilde{\delta} \left( \overrightarrow{ \tilde{\phi}_{q,k'} } \right)  \geq \frac{3 \tilde{\delta}(\overrightarrow{\tilde{\phi}})}{4} \; \text{and} \;
\tilde{\phi}_{q,k'} \in \widetilde{\mathcal{P}}_{+,k'} \cdot
\label{Eqn:EstPFInit}
}
Let $  m \geq  k_{0}^{'} > \frac{15}{2}$. Proposition \ref{Prop:OneEnergyEst} and Proposition \ref{prop:compEHnorm} show that there exists $1 \geq T'
:= T' \left( \tilde{\delta}(\overrightarrow{\tilde{\phi}}), \tilde{\delta}^{'}(\tilde{\phi}), \| \tilde{\phi} \|_{H^{k'_{0}}} \right) > 0$ such that $ T_{\epsilon_{q}} > T' $ and $ C := C \left( \tilde{\delta}(\overrightarrow{\tilde{\phi}}), \| \tilde{\phi} \|_{H^{k'_{0}}} \right) > 0 $ such that (\ref{Eqn:BoundNrjVisc}) holds, replacing $T_{\epsilon}$, $u$ with $T_{\epsilon_{q}}$, $u_{q}$ respectively and such that

\EQQARRLAB{
\sup_{t \in [0,T']}   \| u_{q}(t) \|^{2}_{H^{m}} & \lesssim 1 +
\sup_{t \in [0,T']}  E_{m}(u_{q}(t) , t) \\
& \lesssim  1 + E_{m} \left( u_{q} (0), 0 \right) \\
& \lesssim 1 + \| \tilde{\phi} \|^{2}_{H^{m}}   \cdot
\label{Eqn:UpBoundEkpri}
}
If it is necessary then we may WLOG reduce the size of $T'$ (with $T'$ still a function of $\delta(\overrightarrow{\tilde{\phi}})$, $\delta^{'}(\tilde{\phi})$, and $\| \tilde{\phi} \|_{H^{k_{0}'}}$) so that the estimates below hold. We see from Appendix that  for $t \in [0,T']$

\EQQARR{
\left\| X \left( h_{\theta}(t),u_{1}(t) ,t,t \right) \right\|_{L^{\infty}} & \lesssim_{\| \tilde{\phi} \|_{H^{k_{0}^{'}}}} \| h_{\theta}(t) - u_{1}(t) \|_{H^{\frac{7}{2}+}} \\
& \lesssim_{\| \tilde{\phi} \|_{H^{k_{0}^{'}}}}  \| u_{2}(t) - u_{1}(t) \|_{H^{\frac{7}{2}+}} \\
& \ll \tilde{\delta}(\overrightarrow{\tilde{\phi}}),
\label{Eqn:Omega3FDiff}
}
where at the last line we use $ u_{2}(t) - u_{1} (t) =  u_{2}(t) - \tilde{\phi}_{2,k'} + \tilde{\phi}_{2,k'} - \tilde{\phi}_{1,k'}
+ \tilde{\phi}_{1,k'} -  u_{1}(t) $, the estimates below that can be derived from the arguments in the proof of Proposition  \ref{Prop:OneEnergyEst} (see the text below
` \underline{Claim}: $\delta ( \overrightarrow{u(t)}, t ) \gtrsim \tilde{\delta} ( \overrightarrow{\tilde{\phi}} ) $ '), taking into account (\ref{Eqn:UpBoundEkpri}):

\EQQARRLAB{
 \| u_{q}(t) - \tilde{\phi}_{q,k'} \|_{H^{\frac{9}{2}+}}  & \lesssim_{ \| \tilde{\phi} \|_{H^{k_{0}^{'}}}} \epsilon_{q}^{0+} t^{0+} + t,
 \label{Eqn:uqminJ}
}
and the following estimate  that follows from Lemma \ref{Lem:MollEst}:

\EQQARR{
\| \tilde{\phi}_{2,k'} - \tilde{\phi}_{1,k'} \|_{H^{\frac{7}{2}+}} & \lesssim
\| \tilde{\phi}_{2,k'} - \tilde{\phi} \|_{H^{\frac{7}{2}+}}  + \| \tilde{\phi}_{1,k'} - \tilde{\phi} \|_{H^{\frac{7}{2}+}} \lesssim
\epsilon_{2}^{\frac{k'-  \left( \frac{7}{2}+ \right)}{k'}} \| \tilde{\phi} \|_{H^{k'}} \cdot
}
Hence the assumptions of Lemma \ref{lem:EnergyDiffEst} hold. \\
\\
We then prove that  $ u_{q}(t) \in \mathcal{P}_{+,k'}(t)$, $t \in [0,T']$. Indeed by following the arguments to prove the claim `$ \delta( \overrightarrow{u(t)},t) \gtrsim \tilde{\delta} ( \overrightarrow{\tilde{\phi}} )$, $t \in [0,T']$ ' and those below `\underline{Claim}:  $u(t) \in \mathcal{P}_{+,k'}(t)$ and $ \delta^{'}(u(t),t) \gtrsim \tilde{\delta}^{'} (\tilde{\phi}), t \in [0,T'] $ ' in the proof
of Proposition \ref{Prop:OneEnergyEst}, and by (\ref{Eqn:uqminJ}), we see that the claim holds.   \\
\\
Let $ m \in \mathbb{N}$. We get from Lemma \ref{Lem:MollEst} and Proposition \ref{prop:compEHnorm}

\EQQARRLAB{
E_{k'+m} \left( u_{q}(0),0 \right)  \lesssim 1 + \| u_{q}(0) \|^{2}_{H^{k'+m}}   \lesssim  \epsilon_{q}^{-\frac{2m}{k'}}
\left( 1 + \| \tilde{\phi} \|^{2}_{H^{k'}}  \right) \cdot
\label{Eqn:EstNrjInit}
}
If $ 0 \leq  m \leq k' $ then

\EQQARRLAB{
E_{k'-m} \left( u_{1}(0), u_{2}(0), 0 \right) & \lesssim \| u_{1}(0) - u_{2}(0) \|^{2}_{H^{k'-m}}  \lesssim \| u_{1}(0) - \tilde{\phi} \|^{2}_{H^{k'-m}} + \| u_{2}(0) - \tilde{\phi} \|^{2}_{H^{k'-m}}  \\
& \lesssim \epsilon_{2}^{\frac{2m}{k'}} \| \tilde{\phi} \|^{2}_{H^{k'}}  \cdot
\label{Eqn:EstDiffNrjInit2}
}
Let $t \in [0,T']$. In the sequel we use Lemma \ref{lem:EnergyDiffEst} combined with Proposition \ref{prop:compEHnorm}, Proposition \ref{Prop:OneEnergyEst}, (\ref{Eqn:EstNrjInit}), and
(\ref{Eqn:EstDiffNrjInit2}). We have

\EQQARRLAB{
\begin{array}{l}
\frac{d E_{(k'-3)-} \left( u_{1}(t),u_{2}(t), t \right)}{dt}  \\
\lesssim E_{(k'-3)-} \left( u_{1}(t), u_{2}(t), t \right)
\left( 1 +  E_{k'} \left( u_{2}(0) , 0 \right) \right)  + \epsilon_{2}^{2} \left( 1  + E_{k'+1} \left( u_{2}(0), 0 \right) \right) \\
+ \epsilon_{1}^{2} \left(1 +  E_{k'} \left( u_{1}(0),0 \right) + E_{k'+1} \left( u_{2}(0),0 \right)  \right)
+ \epsilon_{1}^{2} \left( 1 + E_{k'} \left( u_{1}(0),0 \right) \right) E_{(k'-3)-} \left( u_{1}(t),u_{2}(t), t \right) \\
\lesssim E_{(k'-3)-} \left( u_{1}(t),u_{2}(t),t \right) + \epsilon_{2}^{\frac{2(k'-1)}{k'}} \cdot
\end{array}
}
The Gronwall inequality yields

\EQQARRLAB{
 E_{(k'-3)-} \left( u_{1}(t),u_{2}(t),t \right)  \lesssim   E_{(k'-3)-} \left( u_{1}(0),u_{2}(0),0 \right) + \epsilon_{2}^{\frac{2(k'-1)}{k'}} \lesssim
 \epsilon_{2}^{\frac{6}{k'}+} \cdot
\label{Eqn:Ekminthree}
}
Hence

\begin{equation}
\begin{array}{l}
\frac{d E_{k'} (u_1(t),u_2(t), t )}{dt} + \int_{\T} Q (u_{1}(t),t) \left( \partial_{x} D^{k'-6} \bar{v}(t) \right)^{2} \; dx  \\
\\
\lesssim  E_{k'} \left( u_{1}(t), u_{2}(t),t \right) + \epsilon_{2}^{\frac{6}{k'}+}  \left( 1 + E_{k'+3} ( u_{2}(0), 0) \right)   \\
+ \epsilon_{2}^{2} \left( 1 + E_{k'+4} \left( u_{2}(0), 0 \right) \right)
+ \epsilon_{1}^{2} \left( 1 + E_{k'+2} \left( u_{1}(0) , 0  \right) + E_{k'+4} \left( u_{2}(0), 0 \right) \right) \\
+ \epsilon_{1}^{2} \left( 1 + E_{k'+2} \left( u_{1}(0), 0 \right) \right) E_{k'} \left( u_{1}(t),u_{2}(t),t \right) \\
\\
\lesssim E_{k'} \left( u_{1}(t), u_{2}(t), t \right) + \epsilon_{2}^{0+} \cdot
\end{array}
\nonumber
\end{equation}
Hence (\ref{Eqn:DiffEstNrj}) holds.

\end{proof}

\section{Proof of Theorem}
\label{Sec:ProofThm}

In this section $\alpha$ denotes a function of which the values is allowed to change from one line to the other one and such that $ \lim \limits_{x \rightarrow 0} \alpha ( x ) = 0 $. Let $m \in \mathbb{N}^{*}$. \\
\\
First we prove the local existence and the parabolic smoothing effect. \\
Let $ T := T \left( \tilde{\delta} (\vec{\phi}), \tilde{\delta}^{'}(\phi), \| \phi \|_{H^{k_{0}}} \right) > 0 $ be
small enough such that all the statements below are true. Let $ u_{m} $ be the solution of (\ref{Eqn:Integral})  with $ \epsilon := \frac{1}{m} $, replacing $\phi$ with
$ \phi_{m} :=  J_{\frac{1}{m},k} \phi $ on $ \left[ 0, T_{\frac{1}{m}} \right]$. The dominated convergence shows that
$\lim \limits_{m \rightarrow \infty} \|  \phi_{m} - \phi \|_{H^{k}} = 0 $. Hence we see from Proposition \ref{prop:EnergyDiffEst}, Proposition \ref{prop:compEHnorm},
and the triangle inequality that for $m$ large and for $ p \in \mathbb{N}$,  $ T_{\frac{1}{m}}, T_{\frac{1}{m+p}} > T  $ and

\EQQARRLAB{
t \in [0,T]: \; u_{m+p}(t) \in \mathcal{P}_{+,k}(t), \; \delta(\overrightarrow{u_{m+p}(t)}, t) \gtrsim \tilde{\delta}(\vec{\phi}), \;  \delta^{'}(u_{m+p}(t),t) \gtrsim \tilde{\delta}^{'}(\phi), \; \text{and} \\
\| u_{m+p} - u_{m} \|_{L_{t}^{\infty} H^{k} ([0,T])}  \lesssim  \left( \frac{1}{m} \right)^{0+} + \alpha \left( \frac{1}{m}  \right) \cdot
\label{Eqn:umpDiffEst}
}
Hence $\{ u_{m} \}$ is a Cauchy sequence so there exists $ u \in \mathcal{C} \left( [0,T], H^{k} \right) $ such that $u_{m} \rightarrow u $.
as $ m \rightarrow \infty $. Letting $p \rightarrow \infty$ and choosing $m$ large enough in (\ref{Eqn:umpDiffEst})
we see from Appendix and Remark  \ref{Rem:Qother} with $f:= u_{m}(t)$ and $ g := u$  that $ \delta(\overrightarrow{u(t)}, t) \gtrsim \tilde{\delta}(\vec{\phi}) $, $ \delta^{'}(u(t),t)  \gtrsim \tilde{\delta}^{'}(\phi) $, and $u(t) \in \mathcal{P}_{+,k}(t)$ for $ t \in [0,T] $. \\\
We then claim that $  u(t) = u_{0} + \int_{0}^{t} F \left(  \overrightarrow{u(t')}, t' \right) \; dt' $ \footnote{Observe from Lemma \ref{lem:EstDerivF}
that $ \int_{0}^{T} \|  F ( \overrightarrow{v(t')}, t') \|_{H^{k-3}} \; d t' < \infty $, so the integral makes sense as a Bochner integral}. To this end we write  $u_{m} = u_{l,m} + u_{nl,m}$ with $u_{l,m} (t) := e^{- \frac{1}{m} t \p_{x}^{4}} \phi_{m} $ and $ u_{nl,m}(t) := - \int_{0}^{t} e^{ - \frac{1}{m} (t - t') \p_{x}^{4} } F \left( \overrightarrow{u_{m}(t')}, t' \right) \; d t' $ for $t \in [0,T]$. Indeed

\EQQARR{
\| u_{l,m} (t) - \phi \|_{H^{k}} & \lesssim X_{1} + X_{2},
}
with $ X_{1} :=  \left\|  ( e^{- \frac{1}{m} t \p_{x}^{4}} - 1 ) \phi \right\|_{H^{k}} $,  and
$ X_{2} := \| \phi_{m} - \phi \|_{H^{k}}$ . Clearly $ X_{2} \rightarrow 0 $ as $ m \rightarrow \infty$.
The dominated convergence theorem shows that $ X_{1} \rightarrow 0 $ as $ m \rightarrow \infty$.
Hence $ \| u_{l,m} (t) - \phi \|_{H^{k}} \rightarrow 0 $ as $ m \rightarrow \infty $. The Minkowski inequality shows that

\EQQARR{
\left\| u_{nl,m} (t) - \int_{0}^{t} F ( \overrightarrow{u(t')}, t' ) \; d t' \right\|_{H^{k-3}} & \lesssim Y_{1} + Y_{2},
}
with $ Y_{1} :=  \int_{0}^{t}  \left\| ( e^{- \frac{1}{m} (t-t') \p_{x}^{4}} - 1 ) F \left( \overrightarrow{u(t')} ,t' \right) \right\|_{H^{k-3}} \; d t' $ and \\
$ Y_{2} := \int_{0}^{t} \left\| F ( \overrightarrow{u_{m}(t')}, t') -  F \left( \overrightarrow{u(t')}, t' \right) \right\|_{H^{k-3}} \; dt' $. Lemma \ref{lem:EstDerivF} and
the dominated convergence theorem show that $Y_{1} \rightarrow 0$ as $ m \rightarrow \infty$. We also have $Y_{2} \lesssim \sup_{t' \in [0,T]}
\left\|  u(t') - u_{m}(t') \right\|_{H^{k}} $. Hence $Y_{2} \rightarrow 0 $ as $ m \rightarrow \infty $. \\
We then prove the parabolic smoothing effect. Let $ T > \nu > 0 $. We see from Proposition \ref{prop:EnergyDiffEst} that

\EQQARR{
\int_{0}^{\nu} \int_{\T} Q(u_{m}(t),t) \left( \partial_{x} D^{k-6} \left( \Phi_{k} \left( u_{m}(t), t \right) \partial_{x}^{6}  u_{m}(t)
-  \Phi_{k} \left(  u_{m+p}(t),t \right)  \partial_{x}^{6} u_{m+p}(t) \right)  \right)^{2} \; dx \; dt \\
\lesssim \left( \frac{1}{m} \right)^{0+} + \alpha \left( \frac{1}{m} \right) \cdot
}
We see from Appendix and Remark \ref{Rem:Qother}  with $f : =u_{m}(t)$ and $ g:= u(t)$  and from (\ref{Eqn:umpDiffEst}) that there exists a positive constant $c := c \left( \tilde{\delta}(\vec{\phi}), \tilde{\delta}^{'}(\phi) \right)$  such that $ Q(u_{m}(t),t) \geq c $. Hence, using also Proposition \ref{prop:compEHnorm} we see that $ \{ u_{m} \} $ is a Cauchy sequence in
$L_{t}^{2} H^{k+1} ([0,\nu]) $ so there exists $w \in L_{t}^{2} H^{k+1} ([0,\nu]) \subset L_{t}^{2} H^{k} ([0,\nu])  $ such that $u_{m} \rightarrow w $ as $m \rightarrow \infty$. Since it also converges in $ L_{t}^{\infty} H^{k} ([0,\nu]) \subset L_{t}^{2} H^{k} ([0,\nu])$, $ w = u $. Hence we can find $ \bar{t}_{1} \in [0, \nu]$
such that $\| u(\bar{t}_{1}) \|_{H^{k+1}} < \infty $. Recall that there exists $c > 0$ such that  $ \delta \left( \overrightarrow{u(\bar{t}_{1})},\bar{t}_{1} \right) \geq c  \tilde{\delta}(\vec{\phi}) $ and
$ \delta' \left( u(\bar{t}_{1}), \bar{t}_{1} \right) \geq c  \tilde{\delta}^{'}(\phi) $. Hence by adapting slightly the arguments in the proof of the local existence part with the new data $u(\bar{t}_{1})$, we see that $ u \in \mathcal{C} \left( [\bar{t}_{1}, T], H^{k+1} \right) $.
We then apply the procedure again to find $\bar{t}_{2}$ such that $ \bar{t}_{1} \leq \bar{t}_{2} \leq \nu  $ and such that
$ u \in \mathcal{C} \left( [\bar{t}_{2}, T], H^{k+2} \right) $, taking into account that
$ \delta \left( \overrightarrow{u(\bar{t}_{2})}, \bar{t}_{2} \right) \geq  c  \tilde{\delta}(\vec{\phi}) $  and
$ \delta'\left( u(\bar{t}_{2}), \bar{t}_{2} \right) \geq
c  \tilde{\delta}^{'}(\phi)$. Iterating $m-$ times there exists $ 0 \leq  \bar{t}_{m} \leq  \nu $ such that
$ u \in \mathcal{C} \left( [\bar{t}_{m}, T], H^{k+m} \right) $. So
$ u \in \mathcal{C} \left( [\nu, T], H^{p} \right)$ for all $p \in \mathbb{N}$. A bootstrap argument (that also uses
Lemma \ref{lem:EstDerivF}) applied to $ \partial_{t} u = F(\vec{u},t) $  shows that $u \in \mathcal{C}^{\infty} \left(
[\nu, T] , H^{p} \right) $ for all $p \in \mathbb{N}$. Hence $ u \in \mathcal{C}^{\infty} \left( [\nu, T] \times \T \right)$. In other words
$ u \in \mathcal{C}^{\infty} \left( (0, T] \times \T \right)$ . \\
\\
We then prove the uniqueness. \\
In the proof we may assume WLOG that $ u_{q}(t) \in \mathcal{P}_{+,k}(t)$ for $t \in [0,\breve{T}] $. Let $ T^{*} :=
\max \left\{ t \in [0,\breve{T}]: \; u_{1}(t) = u_{2}(t) \right\}$. Assume that $T^{*} < \breve{T} $. We have
$u_{1} (T^{*}) = u_{2} (T^{*})$. Let $K \in \mathbb{R}$ be such that
$ \| u_{1}(t) \|_{H^{k}} + \| u_{2}(t) \|_{H^{k}} \leq K $ for all $t \in [0, \breve{T}]$. Recall that

\EQQARR{
u_{q}(t)  = u_{q}(T^{*}) + \int_{T^{*}}^{t}  F \left( \overrightarrow{u_{q}(t')}, t' \right) \; dt' \cdot
}
Then we claim that we can find $ \breve{T} - T^{*} > T^{'} := T^{'} \left( \delta \left( \overrightarrow{u_{q}(T^{*})}, T^{*} \right),
\delta^{'} \left( u_{q}(T^{*}), T^{*} \right), \| u(T^{*}) \|_{H^{k'_{0}}} \right) > 0 $ small enough such that
$\delta \left( \overrightarrow{h_{\theta}(t)}, t \right) \gtrsim \delta \left( \overrightarrow{u_{q}(T^{*})}, T^{*} \right) $ for $ t \in [ T^{*}, T^{*} + T^{'} ] $ with
$ h_{\theta}(t) :=  \theta u_{1}(t) + (1 - \theta) u_{2}(t)$. Indeed we see from
Appendix, the integral equation above, and Lemma \ref{lem:EstDerivF} that if $ t \in [ T^{*}, T^{*} + T^{'} ] $  then

\EQQARR{
\left\|  X \left( h_{\theta}(t), u_{1}(t), t,t \right) \right\|_{L^{\infty}} & \lesssim  \left\| h_{\theta}(t) - u_{1}(t) \right\|_{H^{\frac{7}{2}+}} \\
&  \lesssim  \left\| u_{2}(t) - u_{1}(t) \right\|_{H^{\frac{7}{2}+}} \\
&  \lesssim  \left\| u_{2}(t) - u_{q}(T^{*}) \right\|_{H^{\frac{7}{2}+}} + \| u_{1}(t) - u_{q}(T^{*}) \|_{H^{\frac{7}{2}+}} \\
& \lesssim_{K} (t - T^{*}) \\
& \ll \delta \left( \overrightarrow{u_{q}(T^{*})}, T^{*} \right) \cdot
}
Hence we can apply Lemma  \ref{lem:EnergyDiffEst} with $\epsilon_{1} = \epsilon_{2} = 0$, replacing `$k'$'  with `$k-3$'
\footnote{The proof of Lemma \ref{lem:EnergyDiffEst} with $k' := k-3$  relies upon Proposition \ref{prop:linearbarv} and Proposition \ref{prop:linearbarvL2} with  $v_{q}(t) := \Phi_{k'} \left( u_{q}(t),t \right) \partial_{x}^{6} u_{q}(t)$. The proof of each proposition has to be changed slightly in the case where $\epsilon_{1}= \epsilon_{2} =0$. Indeed observe that one cannot use an approximation of $u_{q}$ with smooth solutions to justify some computations unlike the case $\epsilon_{1} \neq 0$, $\epsilon_{2} \neq 0$ (see Remark \ref{Rem:smooth}). In order to overcome this issue, one should replace in the proof  every expression of the form ``$ \langle f , D^{2p} \bar{v}(t) \rangle $'' with
``$ (-1)^{p} \langle D^{p}  f , D^{p} \bar{v}(t) \rangle $'' if $ p \in \{ k'-3, k'-6\ \}$.}
. Hence letting $\epsilon_{1} = \epsilon_{2} = 0$ in (\ref{Eqn:EstNrjDiff}) and replacing `$k'$' with `$k-3$' we see that
$\frac{d E_{k-3} \left( u_{1}(t),u_{2}(t),t \right)}{dt}  \lesssim  E_{k- 3} \left( u_{1}(t),u_{2}(t),t \right)$ for
$t \in [ T^{*}, T^{*} + T' ] $. The Gronwall inequality yields $ E_{k-3} \left( u_{1}(t), u_{2}(t), t \right) = 0 $ and consequently $u_{1}(t)= u_{2}(t)$ for
$t \in [T^{*}, T^{*} + T']$.\\
\\
We then prove the continuous dependence. \\
Let $T^{*} := \sup \left\{ t \in [0,\breve{T}]: \; \sup_{t' \in [0,t] } \left\| u_{n}(t') - u_{\infty}(t') \right\|_{H^{k}} \; \rightarrow 0 \; \text{as}
\, n \rightarrow \infty  \right\} $. \\
\underline{Claim}: $T^{*} = \breve{T}$. Indeed assume that $T^{*} < \breve{T}$. Then, by slightly adapting the arguments in Appendix we infer that
$ \delta : [0,\breve{T}] \rightarrow \mathbb{R}: t \rightarrow \delta(\overrightarrow{u_{\infty}(t)}, t) $ and that $ \delta' : [0,\breve{T}] \rightarrow \mathbb{R}:
t \rightarrow \delta'(u_{\infty}(t),t) $ are continuous: hence there exists a $\bar{\delta} > 0$ such that $ \min \left( \,
\delta \left( \overrightarrow{u_{\infty}(t)}, t \right),
\delta' (u_{\infty}(t), t ) \, \right) \geq \bar{\delta} $ for all $t \in [0, \breve{T} ]$. We also infer that for all $t  \in [0, T^{*})$,
$\left( \delta \left( \overrightarrow{u_{n}(t)} , t \right), \delta' \left( u_{n}(t), t  \right) \right) \rightarrow
\left( \delta \left( \overrightarrow{u_{\infty}(t)} , t \right) , \delta^{'} \left( u_{\infty}(t), t \right)\right) $
as $n \rightarrow \infty$, since $ \left\| u_{n}(t)- u_{\infty}(t) \right\|_{H^{k}} \rightarrow 0$ as $n \rightarrow \infty$. Let $q \in \{ n, \infty \}$.
Let  $ 0 \leq \tilde{t} \leq T^{*} $ be close enough to $ T^{*} $ such that all the statements below are true. Unless otherwise stated, let $n$,$m$ be large enough such that
all the statements below are true for all $p \in \mathbb{N}$.
Let  $u_{n,m}$ (resp.  $u_{\infty,m}$ ) be the solution of  (\ref{Eqn:Integral}) with $\epsilon := \frac{1}{m}$, replacing
``$ e^{-\epsilon t \partial_{x}^{4}}\tilde{\phi}$''
and ``$\int_{0}^{t}$''  with  ``$  e^{-\epsilon (t- \tilde{t}) \partial_{x}^{4}}  u_{n,m}(\tilde{t})$ '' (resp.  
``$ e^{-\epsilon (t- \tilde{t}) \partial_{x}^{4}} u_{\infty,m} (\tilde{t})  $'' ) and ``$ \int_{\tilde{t}}^{t} $'' respectively. Here 
$ u_{n,m}(\tilde{t}) := J_{\frac{1}{m},k} u_{n}(\tilde{t})$ (resp. $u_{\infty,m}(\tilde{t}) := J_{\frac{1}{m},k} u_{\infty}(\tilde{t})$). Let $q \in \{ n, \infty \} $. 
Let $K := \sup \limits_{t \in [0, \breve{T}]} \| u_{\infty}(t) \|_{H^{k_{0}}} $.
We see from Proposition \ref{prop:compEHnorm} and Proposition \ref{prop:EnergyDiffEst} (applied to $w_{q}$, with $w_{q}(t- \tilde{t}):= u_{q}(t)$) that there exists
$T' := T' \left( \bar{\delta}, K  \right) > 0 $  such that $ \breve{T}  > \tilde{t} + T' > T^{*} $ and

\EQQARRLAB{
t \in [\tilde{t}, \tilde{t} + T']: \; \| u_{q,m+p} \|_{L_{t}^{\infty} H^{k}([\tilde{t}, \tilde{t} + T'])} \lesssim 1, \\
t \in [\tilde{t}, \tilde{t} + T']: \; u_{q,m}(t) \in \mathcal{P}_{+,k}(t) \; \text{holds}, \; \delta \left( \overrightarrow{u_{q,m}(t)}, t \right)
\gtrsim \bar{\delta}, \; \delta^{'} \left( u_{q,m}(t), t \right) \gtrsim \bar{\delta} , \; \text{and} \\
\| u_{q,m+p} - u_{q,m} \|_{L_{t}^{\infty} H^{k} ([\tilde{t},\tilde{t} + T'])} \lesssim  \left( \frac{1}{m} \right)^{0+} + \alpha \left( \frac{1}{m}  \right) \cdot
\label{Eqn:EstDiffuq}
}
Since $\{ u_{q,m} \}$ is a Cauchy sequence there exists $ v_{q} \in \mathcal{C} \left( [\tilde{t}, \tilde{t} + T'], H^{k} \right) $ such that
$ u_{q,m} \rightarrow v_{q} $. We now claim that $v_{q}= u_{q}$. Indeed by using similar arguments  as those that appear in the proof
of the local existence part of the theorem, we see that $v_{q}(t) = u_{q}(\tilde{t}) + \int_{\tilde{t}}^{t} F \left( \overrightarrow{v_{q}(t')}, t' \right) \; dt'$
for $t \in [\tilde{t}, \tilde{t} + T']$. Moreover letting $ p \rightarrow \infty$ in (\ref{Eqn:EstDiffuq}) we get

\EQQARRLAB{
\sup \limits_{t \in [\tilde{t}, \tilde{t} + T']} \| v_{q}(t) - u_{q,m} (t) \|_{H^{k}} & \lesssim \left( \frac{1}{m} \right)^{0+} + \alpha \left( \frac{1}{m}  \right)
\label{Eqn:EstDiffuq2limit}
}
Hence, we infer from Appendix that $ v_{q}(t) \in \mathcal{P}_{+,k}(t)$ for $t \in [\tilde{t}, \tilde{t} + T'] $. Hence $v_{q} = u_{q}$ by using the uniqueness part of the theorem. \\
We also see from Proposition \ref{Prop:Integral} that $ \lim \limits_{m \rightarrow \infty}
\| u_{n,m} - u_{\infty,m} \|_{L_{t}^{\infty} H^{k'} ([\tilde{t}, \tilde{t} + T'])} = 0$
as $ m \rightarrow \infty $. Let $\epsilon' > 0$. We have

\EQQARR{
\sup_{t \in [\tilde{t},\tilde{t} + T']} \| u_{n}(t) - u_{\infty}(t) \|_{H^{k}}
& \lesssim  \sup_{t \in [\tilde{t}, \tilde{t} + T']} \| u_{n}(t) - u_{n,m}(t) \|_{H^{k}} \\
& + \sup_{t \in [\tilde{t}, \tilde{t} + T']}  \| u_{n,m} (t) -  u_{\infty,m}  (t) \|_{H^{k}}   \\
& + \sup_{t \in [\tilde{t}, \tilde{t}+T']}  \| u_{\infty,m} (t) - u_{\infty} (t) \|_{H^{k}}   \\
&  \leq \epsilon' \cdot
}
Hence $ \lim \limits_{n \rightarrow \infty} \left\| u_{n}(t) - u_{\infty}(t) \right\|_{H^{k}} = 0 $ for $ \tilde{t} \leq  t \leq \tilde{t} + T' $. This is a contradiction. Hence $T^{*} = \breve{T}$. \\
Moreover by adapting slightly the argument above we see that $ \lim \limits_{n \rightarrow \infty} \left\| u_{n}(\breve{T}) - u_{\infty}(\breve{T}) \right\|_{H^{k}} = 0 $.  \\

\section{Proof of Corollary \ref{Cor:MainDiff}}

We only prove the corollary for $\phi \in \widetilde{\mathcal{P}}_{+,k}$, since a straightforward modification of the arguments below would show that the conclusion of the corollary also holds if  $\phi \in \widetilde{\mathcal{P}}_{-,k}$. \\
Let $ -T <  \tilde{t} < 0 $ small enough such that all the statements below are correct. Since $u \in \mathcal{C} \left( [-T,0], H^{k} \right)$, then
we see from Appendix  with  $ (f, g):=  \left( u(\tilde{t}), \phi \right)$ that
$ \delta \left( \overrightarrow{u(\tilde{t})}, \tilde{t} \right) \gtrsim \tilde{\delta}(\phi) $ and $ \inf \limits_{x \in \T} Q \left(u(\tilde{t}), \tilde{t} \right)(x) > 0 $. Now consider the problem
$ w(t) = u(\tilde{t}) + \int_{0}^{t} F \left( \overrightarrow{w(t')}, t' +  \tilde{t} \right) \; dt'$. We see from Theorem \ref{Thm:MainDiff} and Remark
\ref{Rem:Exist} that $w(t) = u(t + \tilde{t}) $ for $t \in [0,\tilde{t}]$ and that there exists $ T'  > |\tilde{t}| $ such that
$ w \in \mathcal{C}^{\infty} \left( (0,T'] \times \T \right) $. In particular $ \phi \in \mathcal{C}^{\infty} (\T)$, which is a
contradiction.

\section{Proof of Lemmas}
\label{Sec:ProofLemmas}

In his section we prove Lemma \ref{lem:OneEnergyEst} and  Lemma \ref{lem:EnergyDiffEst}.

\subsection{Proof of Lemma  \ref{lem:OneEnergyEst} }

An application of the Leibnitz rule shows that

\EQQARRLAB{
\p_{x}^{6} \left( F ( \overrightarrow{u(t)},t ) \right) & = \p_{\omega_{3}} F \left( \overrightarrow{u(t)}, t \right) \p_{x}^{9} u(t)
+ P \left( u(t),t \right) \p_{x}^{8} u(t) + \check{Q} \left( u(t),t \right) \p_{x}^{7} u(t) + R \left( u(t),t \right),
\label{Eqn:ComputDerivF6}
}
with $\check{Q} (u(t),t)$ polynomial in $\p_{x}^{5} u(t)$,..., $u(t)$, and derivatives of $F$ evaluated at
$\left( \overrightarrow{u(t)},t \right)$ (resp. $R (u(t),t)$ polynomial in
$\p_{x}^{6} u(t)$,..., $u(t)$, and derivatives of $F$ evaluated at $\left( \overrightarrow{u(t)},t \right)$ ). \\
\\
Recall that $v(t) := \Phi_{k'} \left( u(t),t \right) \p_{x}^{6} u(t)$. Combining the above equality with (\ref{Eqn:Regul}) we see that

\EQQARR{
\p_{t} v + \epsilon \p_{x}^{4} v & =  a_3 \p_{x}^{3} v  + a_2 \p_{x}^{2} v + a_1 \p_{x} v  + a_{\diamond} + \epsilon \left( b_3 \p_{x}^{3} v
+ b_2 \p_{x}^{2} v +  \bar{b}_{1} \p_x v + \p_{x}^{4} c v \right),
}
with $c(u(t),t) := - \Phi_{k'} \left( u(t),t \right)$ and $a_{\diamond} (u(t),t) :=
\bar{a}_{0}(u(t),t) + \partial_{t}  \Phi_{k'} \Phi_{k'}^{-1} (u(t),t) v(t)$. Here

\EQQARR{
\bar{a}_{0}(u(t),t) := \\
\Phi_{k'} \left( u(t),t \right) R \left( u(t),t \right) + \\
\left(
\begin{array}{l}
\Phi_{k'} \left( u(t),t \right) \left(
\begin{array}{l}
\p_{x}^{3} \Phi^{-1}_{k'} \left( u(t),t \right)
\p_{\omega_3} F \left( \overrightarrow{u(t)},t \right)
+ \p_{x}^{2} \Phi^{-1}_{k'} \left( u(t),t \right) P \left( u(t),t \right) \\
+ \p_{x} \Phi^{-1}_{k'} \left( u(t),t \right) \check{Q} \left( u(t),t  \right)
\end{array}
\right)
\end{array}
\right) v(t),
}

\EQQARR{
\left( a_3(u(t),t) , b_3(u(t),t) \right) := \left(  \p_{\omega_3} F \left( \overrightarrow{u(t)},t \right), - 4 \Phi_{k'} \left( u(t),t \right) \p_{x} \Phi^{-1}_{k'} \left( u(t),t \right) \right), \\
\left( a_2(u(t),t) , b_2(u(t),t) \right) :=
\left(
\begin{array}{l}
3 \Phi_{k'} \left( u(t),t \right) \p_{x} \Phi^{-1}_{k'} \left( u(t),t \right) \p_{\omega_3} F \left( \overrightarrow{u(t)},t \right) + P \left( u(t),t \right), \\
 -6 \Phi_{k'} \left( u(t),t \right) \p_{x}^{2} \Phi_{k'}^{-1} \left(  u(t),t \right)
\end{array}
\right), \, \text{and} \\
\left( a_1(u(t),t), \bar{b}_1(u(t),t) \right)   :=
\left(
\begin{array}{l}
\Phi_{k'} \left( u(t),t \right)
\left(
3 \p_{x}^{2} \Phi_{k'}^{-1} \left( u(t),t \right) \p_{\omega_3} F  \left( \overrightarrow{u(t)},t \right)
+ 2 \p_{x} \Phi_{k'}^{-1} \left( u(t),t \right)  P \left( u(t),t \right)
\right) \\
+ \check{Q} \left( u(t),t \right), - 4 \Phi_{k'} \left( u(t),t \right) \p_{x}^{3} \Phi^{-1}_{k'} \left( u(t),t \right)
\end{array}
\right) \cdot
}
Let $\bar{P} \left( u(t) \right) :=  \frac{P \left( u(t),t \right) }{_{\p_{\omega_3} F( \overrightarrow{u(t)},t)}} -
\left[ \frac{ P \left( u(t),t \right) }{_{\p_{\omega_3} F(\overrightarrow{u(t)},t)}}  \right]_{ave}  $.
We now rewrite $ \p_{t} \Phi_{k'} \left( u(t),t \right) \Phi^{-1}_{k'} \left( u(t),t \right) v(t) $ in such a way that we can apply Proposition \ref{prop:linearv}. \\
\\
\underline{Claim}: Let $S \left( u(t), t \right)$ be a polynomial in $u(t)$, $\p_{x} u(t)$,...,$\p_{x}^{5} u(t)$, $\p_{x}^{6} u(t)$, $\p_{t} u(t)$, $\p_{t} \p_{x} u(t)$, $\p_{t} \p_{x}^{2} u(t)$, derivatives of $F$ evaluated at $ \left( \overrightarrow{u(t)}, t \right)$, and  $ \left( \p_{\omega_3} F \left( \overrightarrow{u(t)},t \right) \right)^{-1} $. Let $m \geq 0$. Then

\EQQARRLAB{
\| S (u(t),t)  \|_{H^{m}} & \lesssim \langle \| u(t) \|_{H^{m+6}} \rangle \cdot
\label{Eqn:EstSut}
}
Let $ S \left( \overrightarrow{u(t)} , t \right)$ (resp. $\bar{S} (u(t),t)$)  be a polynomial in $u(t)$,..., $\p_{x}^{3}u(t)$
(resp. $u(t)$,..., $\p_{x}^{5} u(t)$), derivatives of $F$ evaluated at
$\left( \overrightarrow{u(t)}, t \right)$, and  $ \left( \p_{\omega_3} F ( \overrightarrow{u(t)} , t) \right)^{-1} $ . Let $m \geq 0$. Then

\EQQARRLAB{
\| S \left( \overrightarrow{u(t)}, t \right) \|_{H^{m}} & \lesssim \langle \| u(t) \|_{H^{m+3}} \rangle \cdot
\label{Eqn:EstSubart}
}
(resp. $ \| \bar{S} (u(t),t) \|_{H^{m}} \lesssim \langle \| u(t) \|_{H^{m+5}} \rangle $. )

\begin{proof}

We only prove (\ref{Eqn:EstSut}). The proof of (\ref{Eqn:EstSubart}) follows from that of (\ref{Eqn:EstSut}): therefore, it is left to the reader. \\
One can find $I_0$,...,$I_4$, $I_5$, $I_6$, $J_0$, $J_1$, $J_2$ finite subsets of $\{1,2,... \} \times ... \times \{1,2,... \}$ such that for all
$ \vec{i} := (i_0,...,i_6) \in I_0 \times... \times I_6 $ and for all $\vec{j} := (j_0,j_1,j_2) \in J_0 \times J_1 \times J_2 $, one can find
$a_{\vec{i},\vec{j}} \in \R$ such that $a_{\vec{i},\vec{j}} $ is a constant multiplied by a power of $ \left( \partial_{\omega_{3}} F \left( \overrightarrow{u(t)} ,t \right) \right)^{-1} $ that is multiplied by derivatives of $F$ evaluated at $\left( \overrightarrow{u(t)},t \right) $ such that

\EQQARR{
S \left( u(t), t \right) & = \sum \limits_{ \substack{\vec{i} \in I
_{0} \times ... \times I_{6} \\ \vec{j} \in J_0 \times J_1 \times J_2  } }
a_{\vec{i},\vec{j}}  (\p_{x}^{6} u(t))^{i_6}... (u(t))^{i_0}
( \p_{t} \p_{x}^{2} u(t) )^{j_2} ( \p_{t} \p_{x} u(t) )^{j_1} (\p_{t} u(t))^{j_0}
}
If $p \in \{ 0,1,2 \} $  then
$\p_{t} \p_{x}^{p} u(t) = - \epsilon \p_{x}^{4+p} u(t) + \p_{x}^{p} \left( F  \left( \overrightarrow{u(t)}, t \right) \right)$. An application of the
Leibnitz rule shows that $\p_{x}^{p} \left( F(\vec{f} ,t ) \right)$  is a  polynomial in
$\p_{x}^{6}f$,...,$f$, and derivatives of $F$ evaluated at
$(\vec{f},t)$. Applying (several times) Lemma \ref{Lem:prod} and \ref{lem:EstDerivF},  and using Proposition \ref{prop:compEHnorm}, we see that there
exists  $ \alpha \in \N$ such that

\EQQARR{
\| S \left( u(t),t \right) \|_{H^{m}} & \lesssim  \langle \| u(t) \|_{H^{m+6}} \rangle \langle \| u(t) \|_{H^{\frac{13}{2}+}} \rangle^{\alpha} \lesssim \langle \| u(t) \|_{H^{m+6}} \rangle \cdot
}

\end{proof}

Throughout the sequel of the proof, let $S \left( u(t), t \right)$ (resp. $S \left( \overrightarrow{u(t)},t \right)$) denotes a function that is a polynomial in $u(t)$, $\p_{x} u(t)$,...,$\p_{x}^{6} u(t)$, $\p_{t} \p_{x}^{2} u(t)$, $\p_{t} \p_{x} u(t)$ \\
$\left( \, \text{resp.} \, u(t), \p_{x} u(t),...,\p_{x}^{3} u(t) \, \right)$, derivatives of $F$ evaluated at $\left( \overrightarrow{u(t)},t \right)$, and  $ \left(\p_{\omega_3} F \left( \overrightarrow{u(t)},t \right) \right)^{-1}$. Here $u$ is a  function depending on the variables $x$ and $t$. For sake of simplicity we allow the value of $S \left( u(t),t \right)$  $ \left( \, \text{resp.} \, S \left( \overrightarrow{u(t)},t \right) \, \right) $  to change from one line to the other one. If several functions share the same properties as those stated just above appear within the same equation then $S_{1} \left( u(t),t \right)$, $S_{2} \left( u(t),t \right)$,... $\left( \, \text{resp.} \, S_{1} \left( \overrightarrow{u(t)},t \right), S_{2} \left( \overrightarrow{u(t)},t \right),... \right)$
 denote the first function, the second function,..., respectively. We also allow the value of $S_{1} \left( u(t),t \right)$, $S_{2} \left( u(t),t \right)$,...
 $\left( \, \text{resp.} \, S_{1} \left( \overrightarrow{u(t)},t \right), S_{2} \left( \overrightarrow{u(t)},t \right),... \right)$ to vary from one line to the other one.  \\
\\
An application of the Leibnitz rule shows that

\EQQARR{
\p_{t} P \left( u(t),t \right)  & = \bar{X}_{P}(\overrightarrow{u(t)},t) \p_{t} \p_{x}^{4} u(t) + \bar{Y}_{P}(u(t),t)
\p_{t} \p_{x}^{3} u(t) + S \left( u(t),t \right), \; \text{and} \\
\p_{t} \p_{\omega_3} F ( \overrightarrow{u(t)},t ) & = \bar{Y}_{F}( \overrightarrow{u(t)},t ) \p_{t} \p_{x}^{3} u(t) + S \left( u(t),t \right),
}
with $\bar{X}_{P}(\vec{f},t) := 6 \p^{2}_{\omega_3} F(\vec{f},t)$,
$ \bar{Y}_{P}(f,t) := 6 \sum \limits_{m=-1}^{3} \p_{\omega_{3} \omega_3 \omega_m}^{3} F(\vec{f},t) \p_{x}^{m+1} f
 +  \p^{2}_{\omega_3 \omega_2} F(f,t) $ \footnote{Notation convention in the formula of $\bar{Y}_{P}(f,t)$: $\p_{x}^{0} f := 1$.}, and
$\bar{Y}_{F}(\vec{f},t) := \p_{\omega_3}^{2} F(\vec{f},t) $. \\
We have $\p_{t} \p_{x}^{4} u(t) = - \epsilon \p_{x}^{8} u(t) + \p_{x}^{4} \left( F (\overrightarrow{u(t)},t) \right)$. Observe that

\EQQARR{
\p_{x}^{8} u(t)  = \p_{x}^{2} \left( \Phi_{k'}^{-1} ( u(t),t )  v(t) \right) = \p_{x}^{2} \Phi_{k'}^{-1}( u(t),t ) v(t) + 2 \p_{x} \Phi_{k'}^{-1} ( u(t), t ) \p_{x} v(t)
+  \Phi_{k'}^{-1} ( u(t),t ) \p_{x}^{2} v(t)  \cdot
}
An application of the Leibnitz rule shows that

\EQQARR{
\p_{x}^{4} \left( F ( \overrightarrow{u(t)}, t ) \right) & = \p_{\omega_3} F \left( \overrightarrow{u(t)},t \right) \p_{x}^{7} u
+  S \left( u(t),t \right) \cdot
}
We also have $\p_{t} \p_{x}^{3} u(t) = - \epsilon \p_{x}^{7} u(t) + S \left( u(t),t \right)$ with

\EQQARR{
\p_{x}^{7} u(t)   =  \p_{x} \left( \Phi_{k'}^{-1} (u(t),t) v(t) \right) = \p_{x} \Phi_{k'}^{-1} (u(t),t) v(t)  + \Phi_{k'}^{-1} ( u(t), t) \p_{x} v(t) \cdot
}
Hence $\p_{t} \p_{x}^{4} u(t) = A_4(t) \p_{x}^{2} v(t) + B_4(t) \p_{x} v(t) + C_4(t) $ with
$ A_4(t)  := - \epsilon \Phi_{k'}^{-1} \left( u(t), t \right)$,  $ B_4(t) :=   \p_{\omega_3} F  \left( \overrightarrow{u(t)}, t \right) \Phi_{k'}^{-1} \left( u(t), t \right)
- 2 \epsilon \p_{x} \Phi_{k'}^{-1} \left( u(t), t \right)$, and \\
$ C_4(t) :=   \left( \p_{\omega_3} F  \left( \overrightarrow{u(t)},t \right) \p_{x} \Phi_{k'}^{-1} \left( u(t), t \right) - \epsilon \p_{x}^{2} \Phi_{k'}^{-1} \left(  u(t), t \right) \right) v(t) + S \left( u(t) ,t \right) $.
We also have $ \p_{t} \p_{x}^{3} u(t) = B_3(t) \p_{x} v(t) + C_3(t)  $ with $B_3(t) := - \epsilon \Phi_{k'}^{-1} \left( u(t) ,t \right) $ and
$C_3(t) := - \epsilon \p_{x} \Phi_{k'}^{-1} ( u(t),t ) v(t) + S \left( u(t), t \right) $. \\
\\
Hence

\EQQARR{
\p_{t} P \left( u(t) ,t \right) & = X_{P} \left( u(t), t \right)  \p_{x}^{2} v(t) + Y_{P} \left( u(t),t \right) \p_{x} v(t) + Z_{P} \left( u(t), t \right) \; \text{and} \\
\p_{t} \p_{\omega_3} F( \overrightarrow{u(t)}, t ) & = Y_{F} \left( u(t),t \right) \p_{x} v(t) + Z_{F} \left( u(t), t \right),
}
with $ X_{P} \left( u(t), t \right) := \bar{X}_{P}( \overrightarrow{u(t)},t ) A_{4}(t) $, $ Y_{P} \left( u(t),t \right) :=  \bar{X}_{P}( \overrightarrow{u(t)}, t ) B_{4}(t) +
\bar{Y}_{P}(\overrightarrow{u(t)}) B_3(t) $, $ Z_{P} \left( u(t), t \right) := \bar{X}_{P} \left( u(t), t \right) C_4(t) + C_{3}(t) \bar{Y}_{P}(\overrightarrow{u(t)}, t)
+ S \left( u(t), t \right) $,  $ Y_{F} \left( u(t), t \right) :=  \bar{Y}_{F}(\overrightarrow{u(t)},t) B_{3}(t) $, and $Z_{F} \left( u(t), t \right) := \bar{Y}_{F}(\overrightarrow{u(t)}, t) C_3(t) + S \left( u(t),t \right)$.\\
\\
Hence

\EQQARR{
\Phi^{-1}_{k'} \left( u(t) ,t \right) \p_{t} \Phi_{k'} \left( u(t), t \right) v(t) & =  \tilde{a}(u(t),t)
+ \epsilon \tilde{b}_{1}(u(t),t) \p_{x} v(t) + \epsilon b_{0}(u(t),t),
}
with $\tilde{b}_{1}(u(t),t) := S \left( \overrightarrow{u(t)},t \right)  \Phi^{-1}_{k'} \left( u(t),t \right) v(t)$,
$b_{0} (u(t),t) :=S \left( \overrightarrow{u(t)},t \right) \p_{x} \Phi_{k'}^{-1} \left( u(t),t \right) v^{2}(t)$, and

\begin{equation}
\begin{array}{l}
\tilde{a}(u(t),t) := S_{1} (u(t),t) v(t) + v(t) \times \\
\int_{0}^{x}
\left[
\begin{array}{l}
S_{1} \left(\overrightarrow{u(t)},t \right) \Phi^{-1}_{k'} \left( u(t),t \right)  \p_{x} v(t)  + S_{2} \left(\overrightarrow{u(t)},t \right) v(t) + S_{3} \left(\overrightarrow{u(t)},t \right)  \\
+ \epsilon \left( \Phi^{-1}_{k'} \left( u(t),t \right) S_{4} \left( \overrightarrow{u(t)},t \right)  \right) \partial_{x}^{2} v(t)  \\
+ \epsilon \left(  S_{2} \left(u(t),t \right) \Phi^{-1}_{k'} \left( u(t),t \right) + S_{5} \left( \overrightarrow{u(t)},t \right) \partial_{x} \Phi^{-1}_{k'} \left( u(t), t \right)  \right) \partial_{x} v(t) \\
+ \epsilon \left(  S_{6} \left( \overrightarrow{u(t)},t \right) \p_{x}^{2} \Phi^{-1}_{k'}
+ S_{3}( u(t),t) \p_{x} \Phi^{-1}_{k'} \right) v(t) + S_{7} (\overrightarrow{u(t)},t) \p_{x} \Phi_{k'}^{-1}  v(t) + S_{4}(u(t),t)
\end{array}
\right] \; d x' \cdot
\end{array}
\label{Eqn:Deftildea0}
\end{equation}
Hence

\EQQARR{
\p_{t} v + \epsilon \p_{x}^{4} v & =  a_3 \p_{x}^{3} v  + a_2 \p_{x}^{2} v + a_1 \p_{x} v  +  a_0 + \epsilon \left( b_3 \p_{x}^{3} v + b_2 \p_{x}^{2} v + b_1 \p_x v + b_{0} v(t) +  \p_{x}^{4} c v \right),
}
with

\EQQARR{
a_0(u(t),t) := \bar{a}_{0}(u(t),t) + \tilde{a}(u(t),t), \; \text{and} \; b_{1}(u(t),t) :=  \bar{b}_{1}(u(t),t) + \tilde{b}_{1}(u(t),t) \cdot
}
We get from (\ref{Eqn:CrucEq})

\EQQARRLAB{
 \left( k' - \frac{15}{2} \right) \p_{x} a_{3}(u(t),t) + a_{2} (u(t),t) & =  \p_{\omega_{3}} F \left( \overrightarrow{u(t)},t \right)
 \left[ \frac{P \left( u(t),t \right)}{ \p_{\omega_{3}} F \left( \overrightarrow{u(t)},t \right)}  \right]_{ave}  \cdot
\label{Eqn:Beg}
}\
In the sequel we use the above claim, Lemma \ref{Lem:prod}, Lemma \ref{lem:gauge}, Proposition \ref{prop:compEHnorm}, and (\ref{Eqn:EstPf}).
We have

\EQQARRLAB{
 \sum \limits_{m=1}^{3} \| a_{m}(u(t),t) \|_{H^{k'-6+m}} \| v(t) \|_{H^{\frac{1}{2}+}}  \lesssim 1 + E_{k'}^{\frac{1}{2}}(u(t),t), \; \text{and} \\
\sum \limits_{m=1}^{3} \| a_{m}(u(t),t) \|_{H^{ \left( m+ \frac{1}{2} \right)+}} \lesssim 1 + E_{k'}^{\frac{1}{2}} (u(t),t) \cdot
\label{Eqn:Estam}
}
Using also (\ref{Eqn:ExpIntBasic})  (and the interpolation inequality  $\| v(t) \|_{\dot{H}^{k'-5}} \lesssim
\| v(t) \|^{\frac{1}{2}}_{\dot{H}^{k'-4}} \| v(t) \|^{\frac{1}{2}}_{\dot{H}^{k'-6}}$ followed by the Young inequality
$ ab \leq \frac{a^{2}}{2} + \frac{b^{2}}{2} $ to estimate the  $\partial_{x}^{2} v(t)-$ term of (\ref{Eqn:Deftildea0}) )   we get

\EQQARRLAB{
\| a_{0}(u(t),t) \|^{2}_{H^{k'-6}} \lesssim 1 + E_{k'}(u(t),t) + \epsilon^{2} \| D^{k'-4} v(t) \|^{2}_{L^{2}} \cdot
\label{Eqn:Esta0}
}
We have

\EQQARRLAB{
 \| b_0(u(t),t) \|_{H^{k'-6}}   \lesssim 1 + E_{k'}^{\frac{1}{2}}(u(t),t), \, \text{and} \,
\sum \limits_{m=1}^{3} \| b_{m}(u(t),t) \|_{H^{ \left( m+ \frac{1}{2} \right)+ } } + \| c(u(t),t) \|_{H^{\frac{9}{2}+}}  \lesssim 1 \cdot
\label{Eqn:Estbm}
}
We have $\| b_{2}(u(t),t) \|_{H^{k'-4}} + \| b_{3}(u(t),t) \|_{H^{k'-3}} \lesssim  1 + \| u(t) \|_{H^{k'+1}} $. We also have

\EQQARR{
\| v(t) \|_{L^{2}} \lesssim \left\| \Phi_{k'}(u(t),t) \right\|_{L^{\infty}} \| \p_{x}^{6} u(t) \|_{L^{2}} \lesssim   \| \Phi_{k'} (u(t),t) \|_{H^{\frac{1}{2}+}} \| u(t) \|_{H^{6}} \lesssim 1 \cdot
}
By the inequalities mentioned just above (\ref{Eqn:Esta0}) we get

\EQQARR{
\| b_{1}(u(t),t) \|_{H^{k'-5}} \lesssim  \| v(t) \|_{ \dot{H}^{k'-4}} + 1 + \|  u(t) \|_{H^{k'+1}} \cdot
}
We have

\EQQARR{
\| u(t) \|_{H^{k'+1}} & \lesssim \| u(t) \|_{L^{2}} + \| u(t) \|_{\dot{H}^{k'+1}} \\
& \lesssim \| u(t) \|_{L^{2}} + \| \Phi^{-1}_{k'} \left( u(t),t \right) v(t) \|_{\dot{H}^{k'-5}} \\
& \lesssim  \| u(t) \|_{L^{2}} + \| \Phi^{-1}_{k'} \left( u(t),t \right) \|_{H^{k'-5}} \| v (t) \|_{H^{\frac{1}{2}+}} + \| \Phi^{-1}_{k'} \left( u(t),t \right) \|_{H^{\frac{1}{2}+}} \| v(t) \|_{H^{k'-5}} \\
& \lesssim  1 + E_{k'}^{\frac{1}{2}}(u(t),t) + \| v(t) \|_{\dot{H}^{k'-4}} \cdot
}
Hence

\EQQARRLAB{
 \| v(t) \|_{H^{\frac{1}{2}+}} \max \left( \| b_{3}(u(t),t) \|_{H^{k'-3}}, \| b_{2}(u(t),t) \|_{H^{k'-4}}, \|  b_{1}(u(t),t) \|_{H^{k'-5}} \right)  \\
 \lesssim 1 + E_{k'}^{\frac{1}{2}} (u(t),t) + \| v(t) \|_{\dot{H}^{k'-4}} \cdot
\label{Eqn:Estb1}
}
We have

\EQQARRLAB{
\| v(t) \|_{H^{\frac{1}{2}+}} \| c(u(t),t) \|_{H^{k'-2}} \lesssim  1 + \| u(t) \|_{H^{k'+1}} \lesssim  1 + E_{k'}^{\frac{1}{2}}(u(t),t) \cdot
\label{Eqn:Estc}
}
By the inequalities  mentioned just above (\ref{Eqn:Esta0}) we get

\EQQARRLAB{
\epsilon \left( \| b_{3}(t) \|_{H^{\frac{3}{2}+}} + \| b_{2}(t) \|_{H^{\frac{1}{2}+}} \right) \| \p_{x} D^{k'-6} v(t) \|_{L^{2}}^{2}
& \lesssim \epsilon^{2} \| D^{k'-4} v(t) \|_{L^{2}}^{2} + E_{k'}(u(t),t) \cdot
\label{Eqn:End}
}
Hence taking into account all the estimates above and taking into account that all the terms
$\epsilon^{2}  \| D^{k'-4} v(t) \|_{L^{2}}^{2}$ that appear can be absorbed by the term $\epsilon \| D^{k'-4} v(t) \|^{2}_{L^{2}}$ of
(\ref{Eqn:Meth1}), we get (\ref{Eqn:EstDerE}) from Proposition \ref{prop:linearv} and Proposition \ref{prop:linearL2v}.

\subsection{Proof of Lemma \ref{lem:EnergyDiffEst}}

Let $q \in \{ 1,2 \}$. If $y(\overrightarrow{u_{q}(t)},t)$ is a function depending on $\overrightarrow{u_{q}}(t)$  and $t$  then
$\triangle y(t) := y(\overrightarrow{u_{1}(t)},t) - y(\overrightarrow{u_{2}(t)},t)$. More generally a similar definition applies to a function $y(u_{q}(t),t)$.
Let $\alpha$ be a constant that is allowed to change from one line to the other one and such that all the statements and estimates below are true. \\
The proof relies upon the result below.

\subsubsection{One result}

\begin{res}

Let $ 0 \leq r \leq k'-6 $ and $q \in \{ 1,2 \} $. Let $S (u_{q}(t),t)$  be a polynomial in $u_{q}(t)$, $\p_{x} u_{q}(t)$...,$\p_{x}^{6} u_{q}(t)$,
$\p_{t} \p_{x}^{2} u_{q}(t)$, $\p_{t} \p_{x} u_{q}(t)$, $\p_{t} u_{q}(t) $, derivatives of $F$ evaluated at
$ \left( \overrightarrow{u_{q}(t)}, t \right)$, and  $ \left( \p_{\omega_3} F(\overrightarrow{u_{q}(t)},t) \right)^{-1} $. Then

\EQQARRLAB{
\left\| \triangle S(t) \right\|_{H^{r}} & \lesssim \| \triangle u(t) \|_{H^{r+6}} \cdot
\label{Eqn:EstSutDiff}
}
Let $ S(\overrightarrow{u_{q}(t)},t)$ be a polynomial in $u_{q}(t)$,..., $\p_{x}^{3}u_{q}(t)$, derivatives of $F$ evaluated at
$\left( \overrightarrow{u_{q}(t)},t \right)$, and  $ \left( \p_{\omega_3}  F(\overrightarrow{u_{q}(t)},t) \right)^{-1} $. Then

\EQQARRLAB{
\| \triangle S(t) \|_{H^{r}} & \lesssim \| \bar{u}(t) \|_{H^{r+3}}  \cdot
\label{Eqn:EstSubartDiff}
}
\label{Res:aux}
\end{res}

We postpone the proof of this result to the last subsection.

\subsubsection{The proof}

Let  $q \in \{ 1,2 \}$. Recall (see the proof of Lemma \ref{lem:OneEnergyEst}) that

\EQQARR{
\p_{t} v_{q} + \epsilon \p_{x}^{4} v_{q} & =  a_{3,q} \p_{x}^{3} v_{q}  + a_{2,q} \p_{x}^{2} v_{q} + a_{1,q} \p_{x} v_{q}  +  a_{0,q}  \\
& + \epsilon_{q} \left( b_{3,q} \p_{x}^{3} v_{q}
 + b_{2,q} \p_{x}^{2} v_{q} +  b_{1,q} \p_x v_{q}
+ \p_{x}^{4} c_{q} v_{q} \right) \cdot
}
Here the parameters $a_{3,q}$,..., $c_{q}$ are defined in the proof of Lemma  \ref{lem:OneEnergyEst}, replacing ``$u$'' with ``$u_{q}$''. We would like to apply Proposition \ref{prop:linearbarv} and Proposition \ref{prop:linearbarvL2}. To this end we estimate some norms. \\
Lemma \ref{lem:gauge}, and Proposition \ref{prop:compEHnorm}  yield

\EQQARRLAB{
\| \triangle c(t) \|_{H^{k'-2}}  \lesssim \left\| \triangle u(t) \right\|_{k'+1}, \; \text{and} \; \| \triangle c(t) \|_{H^{\frac{1}{2}+}}  \lesssim \left\| \triangle u(t) \right\|_{\frac{9}{2}+} \cdot
\label{Eqn:Diffc}
}
Lemma \ref{lem:EstDerivF}  yields

\EQQARRLAB{
\| \triangle a_{3}(t) \|_{H^{k'-6}} \lesssim \left\|  \triangle u(t) \right\|_{H^{k'-3} } \; \text{and}
\| \triangle a_{3}(t) \|_{H^{\frac{1}{2}+}} \lesssim \left\|  \triangle u(t) \right\|_{H^{\frac{7}{2}+} } \cdot
%E^{\frac{1}{2}}_{\max \left( r + 3,  \frac{7}{2}+  \right)} \left( u_{1}(t),u_{2}(t) \right) \cdot
\label{Eqn:Diffa3}
}
Let  $r \in \{ k'-6, \frac{1}{2}+ \}$. Applying several times Lemma \ref{Lem:prod} yields $\| \triangle a_{2} (t) \|_{H^{r}} \lesssim X + \| \triangle P(t) \|_{H^{r}} $ with

\EQQARR{
X & := \| \triangle \Phi_{k'}(t) \|_{H^{r}}  \| \p_{x} \Phi_{k'}^{-1}(u_{1}(t),t) \|_{H^{\frac{1}{2}+}} \| \p_{\omega_{3}} F( \overrightarrow{u_{1}(t)} , t) \|_{H^{\frac{1}{2}+}} + ... + \\
& \|  \triangle \Phi_{k'}(u(t),t) \|_{H^{\frac{1}{2}+}}  \| \p_{x} \Phi_{k'}^{-1}(u_{1}(t),t) \|_{H^{\frac{1}{2}+}} \| \p_{\omega_{3}} F( \overrightarrow{u_{1}(t)} , t) \|_{H^{r}}  \\
& + ...... + \\
& \| \Phi_{k'}(u_{2}(t),t) \|_{H^{r}}  \| \p_{x} \Phi_{k'}^{-1}(u_{2}(t),t) \|_{H^{\frac{1}{2}+}}  \| \triangle \p_{\omega_{3}} F(\overrightarrow{u(t)},t) \|_{H^{\frac{1}{2}+}} + ... + \\
& \| \Phi_{k'}(u_{2}(t),t) \|_{H^{\frac{1}{2}+}}  \| \p_{x} \Phi_{k'}^{-1}(u_{2}(t),t) \|_{H^{\frac{1}{2}+}} \| \triangle \p_{\omega_{3}} F(t) \|_{H^{r}}
}
Hence, taking also into account (\ref{Eqn:DiffEstP}) and (\ref{Eqn:EstSut}) we get

\EQQARRLAB{
\| \triangle a_{2} (t) \|_{H^{k'-6}} \lesssim \left\| \triangle u(t) \right\|_{H^{k'}}, \; \text{and} \;
\| \triangle a_{2} (t) \|_{H^{\frac{1}{2}+}} \lesssim \left\| \triangle u(t) \right\|_{H^{\frac{9}{2}+}} \cdot
\label{Eqn:Diffa2}
}
Applying several times Lemma \ref{Lem:prod} we get $ \| \triangle a_{1}(u(t),t) \|_{H^{r}} \lesssim X + Y + \| \triangle \tilde{Q}(t) \|_{H^{r}} $ with

\EQQARR{
X &  \lesssim  \|  \triangle \Phi_{k'} (t) \|_{H^{r}} \| \partial_{x}^{2} \Phi_{k'}^{-1} (u_{1}(t),t) \|_{H^{\frac{1}{2}+}}
 \| \p_{\omega_{3}} F ( \overrightarrow{u_{1}(t)},t ) \|_{H^{\frac{1}{2}+}} + ... + \\
  & \|  \triangle \Phi_{k'} (t) \|_{H^{\frac{1}{2}+}} \| \partial_{x}^{2} \Phi_{k'}^{-1} (u_{1}(t),t) \|_{H^{\frac{1}{2}+}}
  \| \p_{\omega_{3}} F ( \overrightarrow{u_{1}(t)},t ) \|_{H^{r}} \\
 & + ....... + \\
 &  \| \Phi_{k'} (u_{2}(t),t) \|_{H^{r}}  \| \partial_{x}^{2} \Phi_{k'}^{-1} (u_{2}(t),t) \|_{H^{\frac{1}{2}+}}
 \| \triangle \p_{\omega_{3}} F(t) \|_{H^{\frac{1}{2}+}} + ... + \\
 &   \| \Phi_{k'} (u_{2}(t),t) \|_{H^{\frac{1}{2}+}}  \| \partial_{x}^{2} \Phi_{k'}^{-1} (u_{2}(t),t) \|_{H^{\frac{1}{2}+}}
 \| \triangle \p_{\omega_{3}} F(t) \|_{H^{r}}, \; \text{and}
 }

\EQQARR{
Y &  \lesssim  \|  \triangle  \partial_{x} \Phi_{k'}^{-1} (t) \|_{H^{r}} \| P (u_{2}(t),t) \|_{H^{\frac{1}{2}+}} + \|  \triangle \partial_{x}
\Phi_{k'} (u(t),t) \|_{H^{\frac{1}{2}+}} \| P (u_{2}(t),t) \|_{H^{r}} + \\
  &  \|  \partial_{x} \Phi_{k'}^{-1} (u_{2}(t),t) \|_{H^{r}}  \| \triangle P(u(t),t) \|_{H^{\frac{1}{2}+}} + \|  \partial_{x} \Phi_{k'}^{-1} (u_{2}(t),t) \|_{H^{\frac{1}{2}+}} \| \triangle P(t) \|_{H^{r}} \cdot
 }
Hence taking also into account (\ref{Eqn:EstSutDiff}) we get \footnote{Actually one can prove that
$ \| \triangle \tilde{Q}(t) \|_{H^{\frac{1}{2}+}}  \lesssim \left\| \triangle u(t) \right\|_{H^{\frac{11}{2}+}} $
by slightly modifying the proof of Result \ref{Res:aux}. Hence $ \| \triangle a_{1}(t) \|_{H^{\frac{1}{2}+}}  \lesssim  \left\| \triangle u(t) \right\|_{H^{\frac{11}{2}+}} $
}

\EQQARRLAB{
\| \triangle a_{1}(t) \|_{H^{k'-6}}  \lesssim  \left\| \triangle u(t) \right\|_{H^{k'}}, \; \text{and}
\; \| \triangle a_{1}(t) \|_{H^{\frac{1}{2}+}}  \lesssim  \left\| \triangle u(t) \right\|_{H^{\frac{13}{2}+}} \cdot
 \label{Eqn:Diffbara1}
}
We then estimate $ \| \triangle a_{0}(u(t),t) \|_{H^{k'-6}} $. We first estimate $ \| \triangle \bar{a}_{0}(t) \|_{H^{k'-6}} $.
Applying several times Lemma \ref{Lem:prod} we get

\EQQARR{
\| \triangle \bar{a}_{0}(t) \|_{H^{k'-6}} \lesssim X_{1,a} + X_{2,a} + Y_{1,a} + Y_{2,b} + Z_{a} + Z_{b} + Z_{c} \cdot
}
Here $X_{1,a}$, $Y_{1,a}$, $X_{2,a}$, and $ Y_{2,a}$ are defined as follows: $X_{1,a} :=  \| \triangle \Phi_{k'}(t) \|_{H^{k'-6}}  \| R (u_{1}(t),t) \| _{H^{\frac{1}{2}+}} $, $ X_{2,a} := \| \triangle \Phi_{k'}(t) \|_{H^{\frac{1}{2}+}} \| R (u_{1}(t),t) \|_{H^{k'-6}} $,  $Y_{1,a} :=  \| \Phi_{k'} (u_{2}(t),t) \|_{H^{k'-6}}
\| \triangle R(t) \|_{H^{\frac{1}{2}+}} $, $Y_{2,a} := \| \Phi_{k'}(u_{2}(t),t) \|_{H^{\frac{1}{2}+}} \| \triangle R(t) \|_{H^{k'-6}}$,

\EQQARR{
Z_{a} :=  & \| \triangle \Phi_{k'}(t) \|_{H^{k'-6}} \| \partial_{x}^{3} \Phi_{k'}(u_{1}(t),t) \|_{H^{\frac{1}{2}+}}
\left\| \partial_{\omega_{3}} F \left( \overrightarrow{u_{1}(t)},t \right) \right\|_{H^{\frac{1}{2}+}}
\| v_{1}(t) \|_{H^{\frac{1}{2}+}} + ... + \\
& \| \triangle \Phi_{k'}(t) \|_{H^{\frac{1}{2}+}} \| \partial_{x}^{3} \Phi_{k'}(u_{2}(t),t) \|_{H^{\frac{1}{2}+}}
\left\| \partial_{\omega_{3}} F \left( \overrightarrow{u_{2}(t)},t \right) \right\|_{H^{\frac{1}{2}+}}
\| v_{2}(t) \|_{H^{k'-6}}  \\
& + ......  + \\
& \| \Phi_{k'}(u_{2}(t),t) \|_{H^{k'-6}} \| \partial_{x}^{3} \Phi_{k'}(u_{2}(t),t) \|_{H^{\frac{1}{2}+}}
\left\| \partial_{\omega_{3}} F \left( \overrightarrow{u_{2}(t)},t \right) \right\|_{H^{\frac{1}{2}+}}
\| \triangle v (t) \|_{H^{\frac{1}{2}+}} + ... + \\
& \| \Phi_{k'}(t) \|_{H^{\frac{1}{2}+}} \| \partial_{x}^{3} \Phi_{k'}(u_{2}(t),t) \|_{H^{\frac{1}{2}+}}
\left\| \partial_{\omega_{3}} F \left( \overrightarrow{u_{2}(t)},t \right) \right\|_{H^{\frac{1}{2}+}}
\| \triangle v (t) \|_{H^{k'-6}},
}

\EQQARR{
Z_{b} := & \| \triangle  \Phi_{k'}(t) \|_{H^{k'-6}} \| \partial_{x}^{2} \Phi_{k'}^{-1}(u_{1}(t),t) \|_{H^{\frac{1}{2}+}}
\left\| P \left( u_{1}(t),t \right) \right\|_{H^{\frac{1}{2}+}}
\| v_{1}(t) \|_{H^{\frac{1}{2}+}} + ... + \\
& \| \triangle \Phi_{k'}(u(t),t) \|_{H^{\frac{1}{2}+}} \| \partial_{x}^{2} \Phi_{k'}^{-1}(u_{1}(t),t) \|_{H^{\frac{1}{2}+}}
\left\| P \left( u_{1}(t),t \right) \right\|_{H^{\frac{1}{2}+}} \| v_{1}(t) \|_{H^{k'-6}}  \\
& + ...... + \\
& \| \Phi_{k'}(u_{2}(t),t) \|_{H^{k'-6}}  \| \partial_{x}^{2} \Phi_{k'}^{-1}(u_{2}(t),t) \|_{H^{\frac{1}{2}+}}
\left\| P \left( u_{2}(t),t \right) \right\|_{H^{\frac{1}{2}+}} \| \triangle v (t) \|_{H^{\frac{1}{2}+}} + ... + \\
&  \| \Phi_{k'}(u_{2}(t),t) \|_{H^{\frac{1}{2}+}}  \| \partial_{x}^{2} \Phi_{k'}^{-1}(u_{2}(t),t) \|_{H^{\frac{1}{2}+}}
\left\| P \left( u_{2}(t),t \right) \right\|_{H^{\frac{1}{2}+}} \| \triangle v (t) \|_{H^{k'-6}}, and
}

\EQQARR{
Z_{c} := & \| \triangle  \Phi_{k'}(t) \|_{H^{k'-6}} \| \partial_{x} \Phi_{k'}^{-1}(u_{1}(t),t) \|_{H^{\frac{1}{2}+}}
\left\| \tilde{Q} \left( u_{1}(t),t \right) \right\|_{H^{\frac{1}{2}+}}
\| v_{1}(t) \|_{H^{\frac{1}{2}+}} + ... + \\
& \| \triangle \Phi_{k'}(t) \|_{H^{\frac{1}{2}+}} \| \partial_{x} \Phi_{k'}^{-1}(u_{2}(t),t) \|_{H^{\frac{1}{2}+}}
\left\|  \tilde{Q} \left( u_{2}(t),t \right) \right\|_{H^{\frac{1}{2}+}} \| v_{1}(t) \|_{H^{k'-6}}  \\
& + ...... + \\
& \| \Phi_{k'}(u_{2}(t),t) \|_{H^{k'-6}}  \| \partial_{x} \Phi_{k'}^{-1}(u_{2}(t),t) \|_{H^{\frac{1}{2}+}}
\left\| \tilde{Q} \left( u_{2}(t),t \right) \right\|_{H^{\frac{1}{2}+}} \| \triangle v (t) \|_{H^{\frac{1}{2}+}} + ... + \\
&  \| \Phi_{k'}(u_{2}(t),t) \|_{H^{\frac{1}{2}+}}  \| \partial_{x} \Phi_{k'}^{-1}(u_{2}(t),t) \|_{H^{\frac{1}{2}+}}
\left\| \tilde{Q} \left( u_{2}(t),t \right) \right\|_{H^{\frac{1}{2}+}} \| \triangle v (t) \|_{H^{k'-6}} \cdot
}
Hence

\EQQARRLAB{
\left\| \triangle \bar{a}_{0}(t) \right\|_{H^{k'-6}} \lesssim   \left\| \triangle u(t) \right\|_{H^{k'}}  \cdot
\label{Eqn:EstDiffBara0}
}
We now estimate  $ \| \triangle \tilde{a}_{0}(t) \|_{H^{k'-6}} $. Lemma \ref{Lem:prod}, Result \ref{Res:aux}, and
$(*):= A_{1} B_{1} - A_{2} B_{2} = (A_{1} - A_{2}) B_{1} + A_{2} (B_{1} - B_{2})$ yield
$ \| \triangle S_{1} v(t) \|_{H^{k'-6}} \lesssim  \| \triangle u(t) \|_{H^{k'}}$. Next we  use (\ref{Eqn:ExpIntBasic}). Let $I(u(t),t): S_{1} (\overrightarrow{u(t)},t) \Phi_{k'}^{-1} (u(t),t) \partial_{x} v(t) + ...
+ S_{4}(u(t),t)$. Hence $\tilde{a} (u(t),t) = v(t) \int_{0}^{x} I(u(t),t) \; dx' $. We have
$  \left\| \triangle \int_{0}^{x} I (t) \; dx' \right\|_{H^{k'-6}} \lesssim   \| \triangle I (t) \|_{H^{k'-7}} $
and $  \left\| \triangle \int_{0}^{x} I (t) \; dx' \right\|_{H^{\frac{1}{2}+}} \lesssim   \| \triangle I (t) \|_{L^{2}} $.
We apply several times Lemma \ref{Lem:prod}; we use $(*)$,
$A_{1} B_{1} C_{1} - A_{2} B_{2} C_{2} = (A_{1} - A_{2}) B_{1} C_{1} + A_{2} (B_{1} - B_{2}) C_{1}
+ A_{2} B_{2} (C_{1} - C_{2}) $, $ \epsilon_{1} A_{1} B_{1} C_{1} - \epsilon_{2} A_{2} B_{2} C_{2}  =
\epsilon_{1} (A_{1} B_{1} C_{1}- A_{2} B_{2} C_{2}) + (\epsilon_{1} - \epsilon_{2}) A_{2} B_{2} C_{2} $, and
the estimate $ \| \triangle u(t) \|_{H^{r}} \lesssim  \| u_{1}(t) \|_{H^{r}} + \| u_{2}(t) \|_{H^{r}} $ for $r \geq 0$ to get

\EQQARR{
\left\| \triangle \tilde{a}_{0}(t) \right\|_{H^{k'-6}} \lesssim \sum \limits_{j=1}^{2} \epsilon_{j} \left( 1 + \| u_{j}(t) \|_{k'+1} \right)
+ \left\| \triangle u(t) \right\|_{H^{k'}} \cdot
}
Hence

\EQQARR{
\| \triangle a_{0}(u(t),t) \|_{H^{k'-6}} \lesssim \sum \limits_{j=1}^{2} \epsilon_{j} \left( 1 + \| u_{j}(t) \|_{k'+1} \right)
+ \left\| \triangle u(t) \right\|_{H^{k'}} \cdot
\label{Eqn:Diffa0}
}
We see from the above estimates that

\EQQARR{
\sum \limits_{m=1}^{3} \| \triangle a_{m}(t) \|^{2}_{H^{k'-6}}  \| v_{2}(t) \|^{2}_{H^{ \left( m+ \frac{1}{2} \right)+}}
& \lesssim \left\|  \triangle u(t) \right\|^{2}_{H^{k'}} E_{\frac{19}{2}+} \left( u_{2}(t), t \right), \; \text{and}
}

\EQQARR{
\sum \limits_{m=1}^{3}  \| \triangle a_{m}(t) \|^{2}_{H^{\frac{1}{2}+}} \| v_{2}(t) \|^{2}_{H^{m+k'-6}} \\
\lesssim \left\|  \triangle u(t) \right\|^{2}_{H^{\frac{7}{2}+}} E_{k'+3} \left( u_{2}(t), t \right) +
\left\| \triangle u (t) \right\|^{2}_{H^{\frac{9}{2}+}} E_{k'+2} \left( u_{2}(t), t \right) \\
+ \left\| \triangle u(t) \right\|^{2}_{H^{\frac{13}{2}+}} E_{k'+1} \left( u_{2}(t), t \right) \cdot
}
In the sequel we use implicitly Lemma \ref{Lem:prod}, Proposition \ref{prop:compEHnorm}, Lemma \ref{lem:gauge},
and the claim in the proof of Lemma \ref{lem:OneEnergyEst} in order to prove the estimates below. We have

\EQQARR{
\epsilon_{2}^{2} \| c_{2}(u_{2}(t),t) \|^{2}_{H^{k'-2}} \| v_{2}(t) \|_{H^{\frac{1}{2}+}}^{2}
& \lesssim \epsilon_{2}^{2} \left( 1 + E_{k'+1}( u_{2}(t),t ) \right),  \\
}

\EQQARR{
\epsilon_{2}^{2} \left( \| c_{2}(u_{2}(t),t) \|_{H^{\frac{9}{2}+}} \| v_{2}(t) \|^{2}_{H^{k'-6}} + \| v_{2}(t) \|^{2}_{H^{k'-2}} \right)
& \lesssim \epsilon_{2}^{2} E_{k'+4}(u_{2}(t),t),
}

\EQQARR{
\epsilon_{2}^{2} \sum \limits_{j=1}^{2} \sum \limits_{m=1}^{3} \| b_{m,j}(u_{j}(t),t) \|^{2}_{H^{k'-6}} \| v_{2}(t) \|^{2}_{H^{ \left( m + \frac{1}{2} \right)+}}
& \lesssim \epsilon_{2}^{2} E_{\frac{19}{2}+} ( u_{2}(t),t ),
}

\EQQARR{
\epsilon_{2}^{2} \sum \limits_{j=1}^{2} \sum \limits_{m=1}^{3} \| b_{m,j}(u_{j}(t),t) \|^{2}_{H^{\frac{1}{2}+}} \| v_{2}(t) \|^{2}_{H^{k'- 6 + m}}
& \lesssim \epsilon_{2}^{2} E_{k'+3} (u_{2}(t),t),
}

\EQQARR{
\epsilon_{1} \left( \sum \limits_{m=1}^{3} \| b_{m,1}(u_{1}(t),t) \|_{H^{ \left( m + \frac{1}{2} \right)+}} + \| c_{1}(u_{1}(t),t) \|_{H^{\frac{9}{2}+}} \right)
\| \triangle v(t) \|^{2}_{H^{k'-6}}
\lesssim  \epsilon_{1} E_{k'} \left( u_{1}(t), u_{2}(t), t \right),
}

\EQQARR{
\epsilon_{1}^{2} \left( \| \triangle c(t) \|^{2}_{H^{k'-2}} \| v_{2}(t) \|^{2}_{H^{\frac{1}{2}+}}
+  \| \triangle c(t) \|^{2}_{H^{\frac{9}{2}+}} \| v_{2}(t) \|^{2}_{H^{k'-2}}  \right) \\
\lesssim  \epsilon_{1}^{2} \left(
E_{k'+1} \left( u_{1}(t), t \right) +   E_{k'+4} \left( u_{2}(t) ,t \right)
\right),
}

\EQQARR{
\epsilon_{1}^{2} \| b_{0,1}(u_{1}(t),t) \|^{2}_{H^{k'-6}} + \epsilon_{2}^{2} \| b_{0,2} (u_{2}(t),t) \|^{2}_{H^{k'-6}}
& \lesssim  \epsilon_{1}^{2} \left( 1 + E_{k'}^{\frac{1}{2}} \left( u_{1}(t) ,t \right) \right)^{2}
+ \epsilon_{2}^{2} \left( 1 + E_{k'}^{\frac{1}{2}} \left(  u_{2}(t),t \right) \right)^{2} \lesssim \epsilon_{2}^{2},
}

\EQQARR{
\sum \limits_{m=2}^{3}  \| a_{m,1}(u_{1}(t),t) \|_{H^{ \left( m -  \frac{3}{2} \right)+ }}  \| \triangle \p_{x} v(t) \|^{2}_{L^{2}}
& \lesssim E_{k'} \left( u_{1}(t), u_{2}(t) ,t \right) \cdot
}
Then we use similar estimates that appear between  below (\ref{Eqn:Beg}) and ` by the term
$\epsilon \| D^{k'-4} v(t) \|^{2}_{L^{2}}$ of (\ref{Eqn:EstHv})' below (\ref{Eqn:Meth1}). We have

\EQQARR{
 \left( k' - \frac{15}{2} \right) \p_{x} a_{3,1}(t) + a_{2,1} (t) & = \p_{\omega_{3}} F \left( \overrightarrow{u_{1}(t)}, t \right)
 \left[\frac{P \left( u_{1}(t), t \right)}{ _{\p_{\omega_{3}} F \left( \overrightarrow{u_{1}(t)}, t \right)} } \right]_{ave},
}

\EQQARR{
\left( 1 + \sum \limits_{m=1}^{3}  \| a_{m,1}(t) \|_{H^{ \left( m + \frac{1}{2} \right)+}} \right) \| \triangle v(t) \|^{2}_{H^{k'-6}} & \lesssim
E_{k'} \left( u_{1}(t),u_{2}(t), t \right),
}

\EQQARR{
\sum \limits_{m=1}^{3} \| a_{m,1}(t) \|^{2}_{H^{k'-6+m}} \| \triangle v(t) \|^{2}_{H^{\frac{1}{2}+}}
& \lesssim  E_{k'} \left( u_{1}(t), u_{2} (t) , t \right),
}

\EQQARR{
\epsilon_{1} \left(  \| b_{3,1} (t) \|_{H^{\frac{3}{2}+}} + \| b_{2,1}(t) \|_{H^{\frac{1}{2}+}} \right)
\| \triangle \p_{x} D^{k'-6} v(t) \|^{2}_{L^{2}}
& \lesssim \epsilon_{1}^{2} \| \triangle D^{k'-4} v (t) \|^{2}_{L^{2}} + E_{k'} \left( u_{1}(t), u_{2}(t), t \right), \; \text{and}
}

\EQQARR{
\epsilon_{1}^{2} \left( \sum \limits_{m=1}^{3} \| b_{m,1}(t) \|^{2}_{H^{k'-6+m}} +  \| c_{1}(t) \|^{2}_{H^{k'-2}} \right)
\| \triangle v(t) \|^{2}_{H^{\frac{1}{2}+}} \\
\lesssim \epsilon_{1}^{2} \left( 1 + E_{k'}(u_{1}(t),t) + \| v_{1}(t) \|^{2}_{\dot{H}^{k'-4}} \right) \left\|  \triangle u (t) \right\|^{2}_{H^{\frac{13}{2}+}} \\
\lesssim  \epsilon_{1}^{2} \left( 1 + E_{k'+2} (u_{1}(t),t) \right) \cdot
}
Combining all the estimates above, we see from Propositions \ref{prop:linearbarv} and \ref{prop:linearbarvL2} that Lemma \ref{lem:EnergyDiffEst} holds,
taking into account that the term $\epsilon_{1}^{2} \left\| \triangle D^{k'-4} v(t) \right\|^{2}_{L^{2}}$ can be absorbed by the term
$2 \epsilon_{1} \left\| \triangle D^{k'-4} v(t) \right\|^{2}_{L^{2}}$ of (\ref{Eqn:Estbarv}).

\subsubsection{Proof of Result \ref{Res:aux}}

We only prove (\ref{Eqn:EstSutDiff}). The proof of  (\ref{Eqn:EstSubartDiff}) is similar and is left as an exercise to the reader. \\
\\
One can find $I_0$,...,$I_6$, $J_{0}$, $J_1$, and $J_2$ finite subsets of $\{1,2,... \} \times ... \times \{ 1,2,... \} $ such that for all
$ \vec{i} := (i_0,...,i_6) \in I_0 \times... \times I_6 $ and for all $\vec{j} := (j_0,j_1,j_2) \in J_0 \times J_1 \times J_2 $, one can find
$a_{\vec{i},\vec{j}} \left( \overrightarrow{u_{q}(t)}, t \right) \in \mathbb{R}$  such that $a_{\vec{i},\vec{j}} \left( \overrightarrow{u_{q}(t)}, t \right)$ is a constant multiplied by a finite product of derivatives of $F$ and the inverse of $\p_{\omega_3} F$ evaluated at $\left( \overrightarrow{u_{q}(t)},t \right)$  and

\EQQARR{
S \left( u_{q}(t),t \right) & = \sum \limits_{ \substack{\vec{i} \in I_{0} \times ... \times I_{6} \\ \vec{j} \in J_0 \times J_1 \times J_2  } }
a_{\vec{i},\vec{j}} \left( \overrightarrow{u_{q}(t)},t  \right)  (\p_{x}^{6} u_{q}(t))^{i_6}... (u_{q}(t))^{i_0}
( \p_{t} \p_{x}^{2} u_{q}(t) )^{j_2} ( \p_{t} \p_{x} u_{q}(t) )^{j_1} (\p_{t} u_{q}(t))^{j_0} \cdot
}
We prove two claims.

\underline{Claim}: Let $p \in \{ 0,..,6 \}$. Let $\beta \in \{ 1,2,... \}$. Then

\EQQARRLAB{
\| \triangle  \left( \p_{x}^{p} u \right)^{\beta} (t) \|_{H^{r}} & \lesssim  \| \triangle u(t) \|_{H^{\max \left(  r + p ,  \left( p + \frac{1}{2} \right)+ \right) }}
\label{Eqn:pxpu}
}

\begin{proof}

If $\beta \in \{1,2,...\}$ elementary algebraic identities show that
there exists a polynomial $P$ in  $\p_{x}^{p} u_{1}(t)$,...,$u_{1}(t)$,  $\p_{x}^{p} u_{2}(t)$,...,$u_{2}(t)$  such that

\EQQARR{
\triangle \left( \p_{x}^{p} u \right)^{\beta}(t) :=  P \left( \p_{x}^{p} u_{1}(t),...,u_{1}(t), \p_{x}^{p} u_{2}(t),...,u_{2}(t) \right) \triangle \p_{x}^{p} u(t) \cdot
}
Taking into account Proposition \ref{prop:compEHnorm},  applying several times Lemma \ref{Lem:prod} and proceeding similarly as the end of the proof of (\ref{Eqn:EstSut}) we see that there exists $ \alpha > 0$  such that

\EQQARR{
\| P \left( \p_{x}^{p} u_{1}(t),.., u_{1}(t), \p_{x}^{p} u_{2}(t),...,u_{2}(t) \right) \|_{H^{r}} & \lesssim
\left( \| u_{1}(t) \|_{H^{r+p}} + \| u_{2}(t) \|_{H^{r+p}} \right) \\
&
\langle  \| u_{1}(t) \|_{H^{ \left( p + \frac{1}{2} \right)+}} \rangle^{\alpha}
\langle \| u_{2}(t) \|_{H^{\left( p + \frac{1}{2}  \right)+ }} \rangle^{\alpha}  \\
& \lesssim \| u_{1}(t) \|_{H^{r+p}} + \| u_{2}(t) \|_{H^{r+p}} \cdot
}
Hence

\EQQARR{
\| \triangle  \left( \p_{x}^{p} u \right)^{\beta}(t) \|_{H^{r}} & \lesssim
\| P \left( \p_{x}^{p} u_{1}(t),...,u_{1}(t), \p_{x}^{p} u_{2}(t),...,u_{2}(t) \right) \|_{H^{r}}
\| \triangle \p_{x}^{p} u(t) \|_{H^{\frac{1}{2}+}} \\
& +   \| P (  \p_{x}^{p} u_{1}(t),...,u_{1}(t), \p_{x}^{p} u_{2}(t),...,u_{2}(t) ) \|_{H^{\frac{1}{2}+}} \| \triangle \p_{x}^{p} u(t) \|_{H^{r}} \\
& \lesssim  \left( \| u_{1}(t) \|_{H^{r+p}} + \| u_{2}(t) \|_{H^{r+p}} \right)  \| \triangle \p_{x}^{p}  u(t) \|_{H^{ \frac{1}{2}+ }} \\
& + \left( \| u_{1}(t) \|_{H^{ \left( p + \frac{1}{2} \right)+}} + \| u_{2}(t) \|_{H^{ \left( p + \frac{1}{2} \right)+}}  \right)  \| \triangle \p_{x}^{p} u(t) \|_{H^{r}} \\
& \lesssim  \| \triangle u(t) \|_{H^{\max \left(  r + p ,  \left( p + \frac{1}{2} \right)+ \right) }} \cdot
}

\end{proof}

\underline{Claim}: Let $m \in \{ 0,1,2 \}$. Let $\beta \in \{ 1,2,... \} $. Then we have

\EQQARRLAB{
\| \triangle  \left( \p_{t} \p_{x}^{m} u \right)^{\beta}(t) \|_{H^{r}} & \lesssim
\epsilon_{1}  \| \triangle u(t) \|_{H^{\max \left(  \left( \frac{9}{2} + m \right)+, r + m + 4 \right)}} \\
& + ( \epsilon_{2} - \epsilon_{1} ) \| u_{2}(t) \|_{H^{\max \left(  \left(\frac{9}{2} + m \right)+, r + m + 4 \right)}}
+ \| \triangle u_(t) \|_{H^{\max \left( r + m + 3, \left( \frac{7}{2} + m \right)+ \right)}}
\label{Eqn:ptpxmu}
}

\begin{proof}
(\ref{Eqn:Regul}) yields $ \triangle  \p_{t} \p_{x}^{m} u (t)  =  -\epsilon_{1}  \triangle  \p_{x}^{4+m} u(t) - ( \epsilon_{1} - \epsilon_{2} ) \p_{x}^{4+m} u_{2}(t)
+ \triangle \p_{x}^{m} F (t)$. Lemma \ref{lem:EstDerivF} and Proposition \ref{prop:compEHnorm} yield the claim with $\beta = 1$. \\
Let $\beta > 1$. Elementary algebraic properties show that

\EQQARR{
\triangle  \left( \p_{t} \p_{x}^{m} u \right)^{\beta}(t) & = P \left( \p_{t} \p_{x}^{m} u_{1}(t),...,u_{1}(t), \p_{t}\p_{x}^{m}u_{2}(t),..,u_{2}(t) \right) \triangle \p_{t} \p_{x}^{m} u(t),
}
with $P$ a polynomial in $\p_{t} \p_{x}^{m} u_{1}(t)$,..., $u_{1}(t)$, $\p_{t} \p_{x}^{m} u_{2}(t)$,..., and $u_{2}(t)$. Proceeding similarly as in the proof of (\ref{Eqn:EstSut}) we get

\EQQARR{
 \left\|  P \left( \p_{t} \p_{x}^{2} u_{1}(t),...,u_{1}(t), \p_{t}\p_{x}^{2}u_{2}(t),..,u_{2}(t) \right) \right\|_{H^{r}} & \lesssim
 \langle \| u_{1}(t) \|_{H^{r+6}} \rangle +  \langle \| u_{2}(t) \|_{H^{r+6}} \rangle  \cdot
}
Hence by application of Lemma \ref{Lem:prod}

\EQQARR{
\| \triangle \left( \p_{t} \p_{x}^{m} u \right)^{\beta}(t) \|_{H^{r}} & \lesssim
\left\|  P \left( \p_{t} \p_{x}^{2} u_{1}(t),...,u_{1}(t), \p_{t}\p_{x}^{2}u_{2}(t),..,u_{2}(t) \right) \right\|_{H^{r}}
\| \triangle \p_{t} \p_{x}^{m} u(t) \|_{H^{\frac{1}{2}+}}  \\
& + \left\|  P \left( \p_{t} \p_{x}^{2} u_{1}(t),...,u_{1}(t), \p_{t}\p_{x}^{2}u_{2}(t),..,u_{2}(t) \right) \right\|_{H^{\frac{1}{2}+}}
\| \triangle \p_{t} \p_{x}^{m} u(t) \|_{H^{r}} \\
& \lesssim  R.H.S \; \text{of} \; (\ref{Eqn:ptpxmu}) \cdot
}

\end{proof}

We continue the proof of (\ref{Eqn:EstSutDiff}). We have $ \triangle  S (t)  := \triangle S_{0} (t) + \sum \limits_{m=0}^{6} \triangle S_{m} (t) + \sum \limits_{m=0}^{2} \triangle \bar{S}_{m}(t) $ with  \footnote{ In (\ref{Eqn:DeltaSm}) we use the following convention: we ignore the terms that do not make sense for some values of $m$ ( such as $ \left( \p_{x}^{7-m} u_{2}(t) \right)^{i_{7}- m}$ if $ m= 0$ ) }

\EQQARRLAB{
\triangle S_{0}(t) & := \sum \limits_{ \substack{\vec{i} \in I_{0} \times ... \times I_{6} \\ \vec{j} \in J_0 \times J_1 \times J_2  } }
 \triangle a_{\vec{i},\vec{j}} (t) (\p_{x}^{6} u_{1}(t))^{i_6}... (u_{1}(t))^{i_0} ( \p_{t} \p_{x}^{2} u_{1}(t) )^{j_2}
 ( \p_{t} \p_{x} u_{1}(t) )^{j_1} (\p_{t} u_{1}(t))^{j_0},
 \label{Eqn:DeltaS0}
}

\EQQARRLAB{
\triangle S_{m}(t) & := \sum \limits_{ \substack{\vec{i} \in I_{0} \times ... \times I_{6} \\ \vec{j} \in J_0 \times J_1 \times J_2  } }
\left[
\begin{array}{l}
a_{\vec{i},\vec{j}} \left( \overrightarrow{u_{2}(t)},t \right) (\p_{x}^{6} u_{2}(t))^{i_6} ... (\p_{x}^{7-m} u_{2}(t))^{i_{7-m}}
\triangle \left( \p_{x}^{6-m} u(t) \right)^{i_{6-m}} \\
 (\p_{x}^{5-m} u_{1}(t))^{i_{5-m}}...(u_{1}(t))^{i_0} ( \p_{t} \p_{x}^{2} u_{1}(t) )^{j_2} ( \p_{t} \p_{x} u_{1}(t) )^{j_1} (\p_{t} u_{1}(t))^{j_0}
\end{array}
\right], \; \text{and}
\label{Eqn:DeltaSm}
}

\EQQARR{
\triangle \bar{S}_{m}(t) & := \sum \limits_{ \substack{\vec{i} \in I_{0} \times ... \times I_{6} \\ \vec{j} \in J_0 \times J_1 \times J_2  } }
Q_{\vec{i},\vec{j}}(t) c_{m,\vec{j}}(t),
}
with $ Q_{\vec{i},\vec{j}}(t)  := a_{\vec{i},\vec{j}} \left( \overrightarrow{u_{2}(t)},t \right) (\p_{x}^{6} u_{2}(t))^{i_6}... (u_{2}(t))^{i_0}$,
$ c_{0,\vec{j}} (t)  := \triangle  ( \p_{t} \p_{x}^{2} u (t) )^{j_2} (\p_{t} \p_{x} u_{1}(t) )^{j_1} (\p_{t} u_{1}(t))^{j_0}$,
$ c_{1,\vec{j}} (t) := ( \p_{t} \p_{x}^{2} u_{2} (t) )^{j_2} \triangle ( \p_{t} \p_{x} u (t) )^{j_1}  ( \p_{t} u_{1}(t))^{j_0}$, and
$ c_{2,\vec{j}} (t)  := ( \p_{t} \p_{x}^{2} u_{2}(t) )^{j_2} \left( \p_{t} \p_{x} u_{2}(t) \right)^{j_1}  \triangle ( \p_{t} u(t))^{j_0} $.  \\
Hence $\triangle S_{0}(t)$, $\triangle S_{m}(t)$, and $\triangle \bar{S}_{m}(t)$ are finite sums made of terms that are uniquely characterized
by $\vec{i}$ and $\vec{j}$.   \\
\\
In the sequel let $\alpha$ be a constant of which the value is allowed to change from one line to the other one and even within the same line and
such all the statements below are true. \\
\\
\underline{Claim}: Let $r$ such that $0 \leq  r \leq k'-3 $. Then

\EQQARRLAB{
\left\| a_{\vec{i},\vec{j}} \left( \overrightarrow{u_{q}(t)}, t \right) \right\|_{H^{r}} & \lesssim  1, \; \text{and}
\label{Eqn:avecij}
}

\EQQARRLAB{
\left\| \triangle a_{\vec{i},\vec{j}} (t) \right\|_{H^{r}} & \lesssim  \| \triangle u(t) \|_{H^{\max{ \left( r + 3, \frac{7}{2}+ \right)}}} \cdot
\label{Eqn:Diffavecij}
}

\begin{proof}

In the sequel we use Lemma \ref{lem:EstDerivOp}, Lemma \ref{lem:EstDerivF}, Lemma \ref{Lem:prod} more than once if necessary, and Proposition \ref{prop:compEHnorm}.
We can write $a_{\vec{i},\vec{j}} \left( \overrightarrow{u_{q}(t)},t \right)  =  C_{\vec{i},\vec{j}} \frac{R ( \overrightarrow{u_{q}(t)} ,t)}
{ \left( \p_{\omega_{3}} F(\overrightarrow{u_{q}(t)} ,t) \right)^{\alpha}} $
with $C_{\vec{i},\vec{j}} \in \mathbb{R}$ and  $ R( \overrightarrow{u_{q}(t)}, t )$  a product of derivatives of $F$ evaluated at $ \left( \overrightarrow{u_{q}}(t), t \right) $. Clearly (\ref{Eqn:avecij}) holds. We have $ \triangle a_{\vec{i},\vec{j}} (t)  = X + Y $ with

\EQQARR{
X & := \frac{ R ( \overrightarrow{u_{1}(t)},t ) - R (\overrightarrow{u_{2}(t)},t ) } { \left( \p_{\omega_{3}} F ( \overrightarrow{u_{1}(t)},t ) \right)^{\alpha} }, \; \text{and} \\
Y & :=
\frac{
R( \overrightarrow{u_{2}(t)}, t)
\left( \left( \p_{\omega_{3}} F ( \overrightarrow{u_{2}(t)}, t ) \right)^{\alpha} -  \left( \p_{\omega_{3}} F (\overrightarrow{u_{1}(t)},t ) \right)^{\alpha} \right)
}
{ \left( \p_{\omega_{3}} F ( \overrightarrow{u_{2}(t)},t)  \right)^{\alpha}  \left( \p_{\omega_{3}} F ( \overrightarrow{u_{1}(t)},t)  \right)^{\alpha}  } \cdot
}
We have

\EQQARR{
\| X \|_{H^{r}} & \lesssim \| R (\overrightarrow{u_{1}(t)},t) - R(\overrightarrow{u_{2}(t)},t) \|_{H^{\max \left( r , \frac{1}{2}+ \right)}} \\
& \lesssim \| \triangle u(t)  \|_{H^{\max \left( r + 3 , \frac{7}{2}+ \right)}} \cdot
}
We have

\EQQARR{
\| Y \|_{H^{r}} & \lesssim
\| R(\overrightarrow{u_{2}(t)},t) \left( \left( \p_{\omega_{3}} F( \overrightarrow{u_{2}(t)},t) \right)^{\alpha} - \left( \p_{\omega_{3}} F( \overrightarrow{u_{1}(t)},t) \right)^{\alpha}  \right) \|_{H^{\max \left( r, \frac{1}{2}+ \right) }} \\
  & \lesssim  \| R(\overrightarrow{u_{2}(t)},t) \|_{H^{\max \left( r, \frac{1}{2}+ \right)}}
  \left\| \left( \p_{\omega_{3}} F( \overrightarrow{u_{2}(t)},t) \right)^{\alpha} - \left( \p_{\omega_{3}} F( \overrightarrow{u_{1}(t)},t) \right)^{\alpha} \right\|_{H^{\frac{1}{2}+}} \\
  & + \| R(\overrightarrow{u_{2}(t)},t) \|_{H^{\frac{1}{2}+}}
  \left\|   \left( \p_{\omega_{3}} F (\overrightarrow{u_{2}(t)},t) \right)^{\alpha} - \left( \p_{\omega_{3}} F(\overrightarrow{u_{1}(t)},t) \right)^{\alpha}  \right\|_{H^{\max \left( r, \frac{1}{2}+ \right)}} \\
  & \lesssim \| \triangle u(t) \|_{H^{\max \left( r + 3, \frac{7}{2}+ \right)}}
  }
Hence (\ref{Eqn:Diffavecij}) holds.

\end{proof}

Let $Z_{\vec{i},\vec{j}}(t)$ be the term of the sum  $\triangle S_{0}(t)$ determined by $\vec{i}$ and $\vec{j}$. We see
from Lemma \ref{Lem:prod} that

\EQQARRLAB{
\| Z_{\vec{i},\vec{j}}(t) \|_{H^{r}} & \lesssim \left\| \triangle a_{\vec{i},\vec{j}} (t)  \right\|_{H^{r}}
\langle \| u_{1}(t) \|_{H^{\frac{13}{2}+}} \rangle^{\alpha}
 \sup_{m \in \{ 0,1,2 \} } \langle \| \p_{t} \p_{x}^{m} u_{1}(t) \|_{H^{\frac{1}{2}+}} \rangle^{\alpha} \\
 & + \left\| \triangle a_{\vec{i},\vec{j}}  (t)  \right\|_{H^{\frac{1}{2}+}}
 \| u_{1}(t) \|_{H^{6+r}}  \langle \| u_{1}(t) \|_{H^{\frac{13}{2}+}} \rangle^{\alpha}
 \sup_{ m \in \{ 0,1,2 \} } \langle \| \p_{t} \p_{x}^{m} u_{1}(t) \|_{H^{\frac{1}{2}+}} \rangle^{\alpha} \\
 & + \left\| \triangle a_{\vec{i},\vec{j}}  (t)  \right\|_{H^{\frac{1}{2}+}}
  \langle \| u_{1}(t) \|_{H^{\frac{13}{2}+}} \rangle^{\alpha}
  \sup_{ m \in \{ 0,1,2 \} } \| \p_{t} \p_{x}^{m} u_{1}(t) \|_{H^{r}} \\
 &
  \sup_{ m \in \{ 0,1,2 \} } \langle \| \p_{t} \p_{x}^{m} u_{1}(t) \|_{H^{\frac{1}{2}+}} \rangle^{\alpha} \cdot
\label{Eqn:TriangleS0}
}
Hence we see from (\ref{Eqn:EstSut}) and (\ref{Eqn:Diffavecij}) that

\EQQARR{
\| \triangle S_{0}(t) \|_{H^{r}} & \lesssim \| \triangle u(t) \|_{H^{\max{\left( r+3 , \frac{7}{2} + \right)}}} \cdot
}
Let $Z_{\vec{i},\vec{j}}(t)$ be the term of the sum $\triangle S_{m}(t)$ determined by $\vec{i}$ and $\vec{j}$. We have

\EQQARR{
\| Z_{\vec{i},\vec{j}}(t) \|_{H^{r}} & \lesssim \| a_{\vec{i},\vec{j}}  ( \overrightarrow{u_{2}(t)},t ) \|_{H^{r}}
\langle \| u_{1}(t) \|_{H^{\frac{13}{2}+}} \rangle^{\alpha} \langle \| u_{2}(t) \|_{H^{\frac{13}{2}+}} \rangle^{\alpha}  \\
& \sup_{m \in [0,..,6]}  \| \triangle (\p_{x}^{6-m} u)^{i_{6}-m}(t) \|_{H^{\frac{1}{2}+}}
\sup_{m \in \{ 0,1,2 \} } \langle \| \p_{t} \p_{x}^{m} u_{1}(t) \|_{H^{\frac{1}{2}+}} \rangle^{\alpha} \\
& + \| a_{\vec{i},\vec{j}}   ( \overrightarrow{u_{2}(t)},t) \|_{H^{\frac{1}{2}+}} \langle \| u_{1}(t) \|_{H^{\frac{13}{2}+}} \rangle^{\alpha} \\
& \sup_{m \in [0,..,6]}  \| \triangle (\p_{x}^{6-m} u)^{i_{6}-m}(t) \|_{H^{r}}
 \sup_{m \in \{ 0,1,2 \} } \langle \| \p_{t} \p_{x}^{m} u_{1}(t) \|_{H^{\frac{1}{2}+}} \rangle^{\alpha} \\
 & + \| a_{\vec{i},\vec{j}} ( \overrightarrow{u_{2}(t)},t) \|_{H^{\frac{1}{2}+}} \sum \limits_{q \in \{ 1,2 \}} \| u_{q}(t) \|_{H^{r}}
 \langle \| u_{1}(t) \|_{H^{\frac{13}{2}+}} \rangle^{\alpha} \langle \| u_{2}(t) \|_{H^{\frac{13}{2}+}} \rangle^{\alpha}  \\
&  \sup_{m \in [0,..,6] } \| \triangle ( \p_{x}^{6-m} u )^{i_{6} -m}(t) \|_{H^{\frac{1}{2}+}}
 \sup_{m \in \{ 0, 1,2 \}} \langle  \| \p_{t} \p_{x}^{m} u_{1}(t) \|_{H^{\frac{1}{2}+}} \rangle^{\alpha} \\
& + \| a_{\vec{i},\vec{j}} (\overrightarrow{u_{2}(t)},t) \|_{H^{\frac{1}{2}+}} \langle  \| u_{1}(t) \|_{H^{\frac{13}{2}+}} \rangle^{\alpha}
\langle \| u_{2}(t) \|_{H^{\frac{13}{2}+}} \rangle^{\alpha} \\
& \sup_{m \in \{ 0,1,2 \} } \| \p_{t} \p_{x}^{m} u_{1}(t) \|_{H^{r}}
\langle \sup_{m \in \{ 0,1,2 \} } \| \p_{t} \p_{x}^{m} u_{1}(t) \|_{H^{\frac{1}{2}+}} \rangle^{\alpha} \cdot
\label{Eqn:EstZij}
}
Using also (\ref{Eqn:pxpu}) and (\ref{Eqn:avecij})  we get

\EQQARR{
\| \triangle S_{m}(t) \|_{H^{r}} & \lesssim \| \triangle u (t) \|_{H^{\max \left( r +6, \frac{13}{2}+  \right) }}
}
Let $Z_{\vec{i},\vec{j}}(t)$ be the term of $\triangle \bar{S}_{m}(t)$ defined by
$\vec{i}$ and $\vec{j}$. We have

\EQQARR{
 \| Z_{\vec{i},\vec{j}}(t) \|_{H^{r}} & \lesssim  \| Q_{\vec{i},\vec{j}} (t) \|_{H^{r}}   \| c_{m,\vec{j}} (t) \|_{H^{\frac{1}{2}+}}
+ \| Q_{\vec{i},\vec{j}}  (t) \|_{H^{\frac{1}{2}+}} \| c_{m,\vec{j}} (t) \|_{H^{r}}
}
We have

\EQQARR{
\| Q_{\vec{i},\vec{j}} (t) \|_{H^{r}} & \lesssim
\langle \| u_{2}(t) \|_{H^{\frac{13}{2}+}} \rangle^{\alpha}
\left(
\begin{array}{l}
\| a_{\vec{i},\vec{j}}( \overrightarrow{u_{2}(t)},t ) \|_{H^{r}}
+ \| a_{\vec{i},\vec{j}}( \overrightarrow{u_{2}(t)},t ) \|_{H^{\frac{1}{2}+}}
\| u_{2}(t) \|_{H^{6+r}}
\end{array}
\right)  \lesssim 1 \cdot
}
There exist polynomials $S_{i} (u_{2}(t),t)$  for $i \in \{ 1,2 \}$  in $u_{i}(t)$, $\p_{x} u_{i} (t)$,...,$\p_{x}^{6} u_{i}(t)$, $\p_{t} \p_{x}^{2} u_{i}(t)$,
$\p_{t} \p_{x} u_{i}(t) $, derivatives of $F$ evaluated  at $ \left( \overrightarrow{u_{i}(t)}, t \right) $ , and
$ \left( \p_{\omega_{3}} F \left( \overrightarrow{u_{i}(t)}, t \right) \right)^{-1}$ such that
$ c_{m,\vec{j}}(t) := \triangle \left( \p_{t} \p_{x}^{2} u(t) \right)^{j_{m}} S_{1} \left(  u_{1}(t) ,t \right)
S_{2} \left(  u_{2}(t) ,t \right) $. (\ref{Eqn:ptpxmu}) yields

\EQQARR{
\| c_{m,\vec{j}} (t) \|_{H^{r}}  \\
\lesssim \| \triangle \left( \p_{t} \p_{x}^{m} u \right)^{j_{m}}(t) \|_{H^{r}}
\| S_{2} ( u_{2}(t), t ) \| _{H^{\frac{1}{2}+}} \| S_{1} ( u_{1}(t), t ) \| _{H^{\frac{1}{2}+}} + ... \\
+ \| \triangle \left( \p_{t} \p_{x}^{m} u \right)^{j_{m}}(t) \|_{H^{\frac{1}{2}+}} \| S_{2} ( u_{2}(t),t) \|_{H^{\frac{1}{2}+}}
\| S_{1} ( u_{1}(t), t ) \| _{H^{r}} \\
\\
\lesssim \epsilon_{1}  \| \triangle u(t) \|_{H^{\max \left(  \left( \frac{9}{2} + m \right)+, r + m + 4 \right)}} \\
+ ( \epsilon_{2} - \epsilon_{1} ) \| u_{2}(t) \|_{H^{\max \left(  \left(\frac{9}{2} + m \right)+, r + m + 4 \right)}}
\| \triangle u (t) \|_{H^{\max \left( r + m + 3, \left( \frac{7}{2} + m \right)+ \right)}} \cdot
}
Hence we also get

\EQQARR{
\| \triangle \bar{S}_{m}(t) \|_{H^{r}} & \lesssim \| \triangle u(t) \|_{H^{ \max \left( r + 6, \frac{13}{2}+ \right) }} \cdot
}
This implies that  (\ref{Eqn:EstSutDiff}) holds.

\section{Appendix}
\label{Sec:App}

In this appendix we prove the following lemma:

\begin{lem}

Let $I$ be an interval such that $|I| \leq 1$. Let $ k' \geq 0 $. Let $K \in \mathbb{R}^{+}$.
Let $f$ and $g$ be two functions such that $ \| (f,g) \|_{ H^{\frac{9}{2}+}
\times H^{\frac{9}{2}+}} \leq K $. Then the following holds:

\begin{itemize}

\item There exists $C:= C(K) > 0 $ such that for all  $t_{1},t_{2} \in I$

\EQQARR{
\| X(f,g,t_{2},t_{1}) \|_{L^{\infty}} \leq C  \left( |t_{2} - t_{1}| +  \| f - g \|_{H^{\frac{7}{2}+}} \right) \cdot
}
Here  $ X(f,g,t_{2},t_{1}) := \left| \partial_{\omega_{3}} F ( \vec{f},t_{2} ) \right| -  \left| \partial_{\omega_{3}} F ( \vec{g},t_{1} ) \right| $.

\item Let $\bar{\delta} > 0$.  There exists $ C := C(K,\bar{\delta} ) > 0 $ such that for all
$t_{1},t_{2} \in I$ satisfying $ \delta ( \vec{f}, t_{2} ) \geq \bar{\delta}$
and $ \delta ( \vec{g}, t_{1} ) \geq \bar{\delta} $ we have

\EQQARR{
\| Y(f,g,t_{2},t_{1}) \|_{L^{\infty}} \leq C  \left( |t_{2} - t_{1}| +  \| f - g \|_{H^{\frac{9}{2}+}} \right) \cdot
}
Here

\EQQARR{
Y(f,g,t_{2},t_{1}) :=
\left| \partial_{\omega_{3}} F ( \vec{f},t_{2} ) \right|
\left[ \frac{P( f,t_{2} )}{ \left| \partial_{\omega_{3}} F ( \vec{f},t_{2} ) \right| }   \right]_{ave}
-  \left| \partial_{\omega_{3}} F ( \vec{g}, t_{1} ) \right|
\left[ \frac{P(g,t_{1})}{ \left| \partial_{\omega_{3}} F ( \vec{g},t_{1} ) \right|}
\right]_{ave} \cdot
}

\item Let $\bar{\delta} > 0$. Let $t \in I$. Assume that $ \delta(\vec{f}, t) \geq \bar{\delta}$ and $\delta( \vec{g}, t )  \geq \bar{\delta} $. Then there exists
$C := C(K,\bar{\delta}) > 0$ such that $ Z(f,g,t) \leq C  \| f - g \|_{H^{\frac{9}{2}+}} $. Here

\EQQARR{
Z(f,g,t):= \left| \left[ \frac{P(f,t)}{\partial_{\omega_{3}} F(\vec{f},t)} \right]_{ave} - \left[ \frac{P(g,t)}{\partial_{\omega_{3}} F(\vec{g},t)} \right]_{ave} \right| \cdot
}

\end{itemize}

\end{lem}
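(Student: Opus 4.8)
The plan is to reduce all three estimates to the composition and product estimates already established, namely Lemma~\ref{lem:EstDerivF}, Lemma~\ref{Lem:prod}, the Sobolev embedding $H^{\frac{1}{2}+}\hookrightarrow L^{\infty}$, and the embedding $L^{2}(\T)\hookrightarrow L^{1}(\T)$ (which yields $\big|[h]_{ave}\big|\leq\frac{1}{2\pi}\|h\|_{L^{1}}\lesssim\|h\|_{L^{2}}$). For the bound on $X$ I would use only the elementary pointwise inequality $\big||a|-|b|\big|\leq|a-b|$, so that $\|X(f,g,t_{2},t_{1})\|_{L^{\infty}}\leq\big\|\partial_{\omega_{3}}F(\vec{f},t_{2})-\partial_{\omega_{3}}F(\vec{g},t_{1})\big\|_{L^{\infty}}\lesssim\big\|\partial_{\omega_{3}}F(\vec{f},t_{2})-\partial_{\omega_{3}}F(\vec{g},t_{1})\big\|_{H^{\frac{1}{2}+}}$; estimate (\ref{Eqn:DiffEstFDeriv}) of Lemma~\ref{lem:EstDerivF}, applied with $G:=\partial_{\omega_{3}}F$ and $k':=\frac{1}{2}+$ (note $\max(k'+3,\frac{7}{2}+)=\frac{7}{2}+$, and that $\|f\|_{H^{\frac{9}{2}+}},\|g\|_{H^{\frac{9}{2}+}}\leq K$ controls the required $H^{\frac{7}{2}+}$-norms), then gives the claim with $C=C(K)$.

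For the bound on $Z$ I would use linearity of the average, $Z(f,g,t)=\big|\big[\tfrac{P(f,t)}{\partial_{\omega_{3}}F(\vec{f},t)}-\tfrac{P(g,t)}{\partial_{\omega_{3}}F(\vec{g},t)}\big]_{ave}\big|\lesssim\big\|\tfrac{P(f,t)}{\partial_{\omega_{3}}F(\vec{f},t)}-\tfrac{P(g,t)}{\partial_{\omega_{3}}F(\vec{g},t)}\big\|_{L^{2}}$, and then split this difference as $A+B$ with $A:=\tfrac{P(f,t)-P(g,t)}{\partial_{\omega_{3}}F(\vec{f},t)}$ and $B:=\tfrac{P(g,t)\left(\partial_{\omega_{3}}F(\vec{g},t)-\partial_{\omega_{3}}F(\vec{f},t)\right)}{\partial_{\omega_{3}}F(\vec{f},t)\,\partial_{\omega_{3}}F(\vec{g},t)}$, exactly the decomposition used in the proof of Lemma~\ref{lem:gauge}. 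Since $\delta(\vec{f},t),\delta(\vec{g},t)\geq\bar{\delta}$, the denominators are bounded below by $\bar{\delta}$, so $\|A\|_{L^{2}}\lesssim_{\bar{\delta}}\|P(f,t)-P(g,t)\|_{L^{2}}\lesssim\|f-g\|_{H^{\frac{9}{2}+}}$ by the Claim (\ref{Eqn:DiffEstP}) with $p=0$ and $t_{1}=t_{2}=t$, while $\|B\|_{L^{2}}\lesssim_{\bar{\delta}}\|P(g,t)\|_{L^{\infty}}\,\|\partial_{\omega_{3}}F(\vec{g},t)-\partial_{\omega_{3}}F(\vec{f},t)\|_{L^{2}}\lesssim_{K,\bar{\delta}}\|f-g\|_{H^{\frac{7}{2}+}}$, using (\ref{Eqn:EstPf}) (which gives $\|P(g,t)\|_{H^{\frac{1}{2}+}}\lesssim_{K}1$) and (\ref{Eqn:DiffEstFDeriv}) of Lemma~\ref{lem:EstDerivF} with $k'=0$. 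This gives the claim for $Z$ with $C=C(K,\bar{\delta})$.

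For the bound on $Y$, write $Y=U_{1}V_{1}-U_{2}V_{2}$ with $U_{1}:=|\partial_{\omega_{3}}F(\vec{f},t_{2})|$, $U_{2}:=|\partial_{\omega_{3}}F(\vec{g},t_{1})|$, $V_{1}:=\big[\tfrac{P(f,t_{2})}{|\partial_{\omega_{3}}F(\vec{f},t_{2})|}\big]_{ave}$, $V_{2}:=\big[\tfrac{P(g,t_{1})}{|\partial_{\omega_{3}}F(\vec{g},t_{1})|}\big]_{ave}$; note $U_{1}-U_{2}=X(f,g,t_{2},t_{1})$. Using $Y=(U_{1}-U_{2})V_{1}+U_{2}(V_{1}-V_{2})$, the first part of the lemma gives $\|U_{1}-U_{2}\|_{L^{\infty}}\lesssim_{K}|t_{2}-t_{1}|+\|f-g\|_{H^{\frac{7}{2}+}}$, while (\ref{Eqn:EstFDeriv}) of Lemma~\ref{lem:EstDerivF} together with $H^{\frac{1}{2}+}\hookrightarrow L^{\infty}$ gives $\|U_{q}\|_{L^{\infty}}\lesssim_{K}1$, and $|V_{q}|\lesssim_{K,\bar{\delta}}1$ since the denominators are $\geq\bar{\delta}$ and $\|P(f,t_{2})\|_{L^{2}},\|P(g,t_{1})\|_{L^{2}}\lesssim_{K}1$ by (\ref{Eqn:EstPf}). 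It therefore remains to estimate $|V_{1}-V_{2}|$, and this is the delicate point: the absolute values in the denominators make the two averages recombine with a sign $\pm1$ that depends on the (constant, by Remark~\ref{Rem:Qother}) signs of $\partial_{\omega_{3}}F(\vec{f},t_{2})$ and $\partial_{\omega_{3}}F(\vec{g},t_{1})$, which a priori need not agree when $f$ and $g$ are far apart.

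I would resolve this by a case distinction on whether $\big\|\partial_{\omega_{3}}F(\vec{f},t_{2})-\partial_{\omega_{3}}F(\vec{g},t_{1})\big\|_{L^{\infty}}<2\bar{\delta}$ or $\geq2\bar{\delta}$. In the first case, since both functions have modulus $\geq\bar{\delta}$ everywhere, they cannot have opposite signs at any point, hence have the same constant sign $s$; then $\tfrac{P(\cdot)}{|\partial_{\omega_{3}}F(\cdot)|}=s\,\tfrac{P(\cdot)}{\partial_{\omega_{3}}F(\cdot)}$ with the same $s$ in both quotients, so $|V_{1}-V_{2}|=\big|\big[\tfrac{P(f,t_{2})}{\partial_{\omega_{3}}F(\vec{f},t_{2})}-\tfrac{P(g,t_{1})}{\partial_{\omega_{3}}F(\vec{g},t_{1})}\big]_{ave}\big|$, which is estimated exactly as $Z$ above (via the $A+B$ splitting, (\ref{Eqn:DiffEstP}), and (\ref{Eqn:DiffEstFDeriv})) but now keeping $t_{1}\neq t_{2}$, producing the bound $\lesssim_{K,\bar{\delta}}|t_{2}-t_{1}|+\|f-g\|_{H^{\frac{9}{2}+}}$. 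In the second case, the first part of the lemma forces $|t_{2}-t_{1}|+\|f-g\|_{H^{\frac{7}{2}+}}\gtrsim_{K}\bar{\delta}$, so the a priori bounds $\|U_{q}\|_{L^{\infty}},|V_{q}|\lesssim_{K,\bar{\delta}}1$ already give $\|Y\|_{L^{\infty}}\lesssim_{K,\bar{\delta}}1\lesssim_{K,\bar{\delta}}|t_{2}-t_{1}|+\|f-g\|_{H^{\frac{9}{2}+}}$. Combining the two cases with the estimate for the $(U_{1}-U_{2})V_{1}$ term yields the claim for $Y$ with $C=C(K,\bar{\delta})$. The sign bookkeeping for $Y$ is the only genuine obstacle; the remaining estimates are routine applications of Lemma~\ref{Lem:prod} and Lemma~\ref{lem:EstDerivF}.
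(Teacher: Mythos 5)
Your argument is correct, and for $X$ and $Z$ it is essentially the paper's proof: the reverse triangle inequality plus (\ref{Eqn:DiffEstFDeriv}) for $X$, and the $A+B$ splitting with (\ref{Eqn:DiffEstP}), (\ref{Eqn:EstPf}), and the lower bound $\bar{\delta}$ on the denominators for $Z$. For $Y$ your decomposition $(U_{1}-U_{2})V_{1}+U_{2}(V_{1}-V_{2})$ is algebraically the same as the paper's $A_{1}+A_{2}+A_{3}$; the only real divergence is how $V_{1}-V_{2}$ is handled. The paper never converts $P/\left|\partial_{\omega_{3}}F\right|$ into a signed quotient: it keeps the absolute values in the denominators and uses the pointwise bound $\left|\,\frac{1}{|a|}-\frac{1}{|b|}\,\right|\leq\frac{|a-b|}{\bar{\delta}^{2}}$ (again the reverse triangle inequality), so the ``sign bookkeeping'' you flag as the delicate point simply does not arise; the term $A_{3}$ is estimated directly by $\bar{\delta}^{-2}\|P(g,t_{1})\|_{L^{2}}\,\|\partial_{\omega_{3}}F(\vec{f},t_{2})-\partial_{\omega_{3}}F(\vec{g},t_{1})\|_{L^{2}}$. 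Your case distinction is a valid workaround, with one caveat worth making explicit: in the second case you invoke ``the first part of the lemma,'' but that statement only controls $\bigl||a|-|b|\bigr|$, whereas you need the lower bound $2\bar{\delta}\leq\|\partial_{\omega_{3}}F(\vec{f},t_{2})-\partial_{\omega_{3}}F(\vec{g},t_{1})\|_{L^{\infty}}\lesssim_{K}|t_{2}-t_{1}|+\|f-g\|_{H^{\frac{7}{2}+}}$, i.e.\ the intermediate estimate without absolute values; since you derive exactly that bound in your proof of the first bullet, the step is justified, but you should cite that intermediate inequality rather than the bullet's statement. In short: same skeleton, with the paper's treatment of the $V_{1}-V_{2}$ term being more economical than your sign analysis.
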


\begin{rem}
The proof shows that one can choose the constants $C$ to be increasing functions as $K$ increases. Moreover one can choose the constant $C$ in the second estimate
to be an increasing function as $\bar{\delta}$ decreases.
\end{rem}

\begin{proof}

The bound for $ X(f,g,t_{2},t_{1}) $ results from the triangle inequality
$| X (f,g,t_{2},t_{1}) | \lesssim  \left| \partial_{\omega_{3}} F ( \vec{f},t_{2} ) -  \partial_{\omega_{3}} F ( \vec{g},t_{1} ) \right| $, the
Sobolev embedding $H^{\frac{1}{2}+} \hookrightarrow L^{\infty}$, and Lemma \ref{lem:EstDerivF}. \\
We write $Y(f,g,t_{2},t_{1}) = A_{1} + A_{2} + A_{3}$ with

\EQQARR{
A_{1} & := (2 \pi)^{-1} \left( \left| \partial_{\omega_{3}} F \left( \vec{f}, t_{2} \right) \right| -
\left| \partial_{\omega_{3}} F \left( \vec{g}, t_{1} \right)  \right| \right)
\int_{\T} \frac{P (f,t_{2})}{\left|  \partial_{\omega_{3}} F \left( \vec{f}, t_{2} \right) \right|} \; dx, \\
A_{2} & := (2 \pi)^{-1}  \left| \partial_{\omega_{3}} F \left( \vec{g}, t_{1} \right) \right|
\int_{\T} \frac{P (f,t_{2}) - P(g,t_{1})}{\left| \partial_{\omega_{3}} F \left( \vec{f} , t_{2} \right)  \right|} \; dx, \; \text{and}  \\
A_{3} & := (2 \pi)^{-1}  \left| \partial_{\omega_{3}} F \left( \vec{g}, t_{1} \right) \right|   \int_{\T} P( g,t_{1} ) \left(
\frac{1}{\left| \partial_{\omega_{3}} F  \left( \vec{f} , t_{2}  \right)  \right|} -   \frac{1}{\left| \partial_{\omega_{3}}
F \left( \vec{g}, t_{1} \right)  \right|}
\right) \; dx \cdot
}
We see from the embeddings $L^{\infty} \hookrightarrow H^{\frac{1}{2}+} $ and $L^{2} \hookrightarrow L^{1}$, from Lemma \ref{lem:EstDerivF}, and
from (\ref{Eqn:DiffEstP}) that

\EQQARRLAB{
|A_{1}| & \lesssim \bar{\delta}^{-1}
\left\| \partial_{\omega_{3}} F \left( \vec{f}, t_{2} \right)
- \partial_{\omega_{3}} F \left( \vec{g}, t_{1} \right) \right\|_{L^{\infty}} \| P (f,t_{2}) \|_{L^{2}} \\
& \lesssim \| f  - g \|_{H^{\frac{9}{2}+}} + | t_{2} - t_{1}| \cdot
\label{Eqn:A1App}
}
Similarly

\EQQARR{
|A_{2}| & \lesssim \bar{\delta}^{-1}  \left\| \partial_{\omega_{3}} F ( \vec{g}, t_{1} ) \right\|_{L^{\infty}}
\left\| P (f,t_{2}) - P(g,t_{1}) \right\|_{L^{2}} \\
& \lesssim \| f  - g \|_{H^{\frac{9}{2}+}} + |t_{2}- t_{1}| \cdot
\label{Eqn:A2app}
}
Similarly

\EQQARR{
|A_{3}| & \lesssim \bar{\delta}^{-2}  \left\| \partial_{\omega_{3}} F \left( \vec{g}, t_{1} \right) \right\|_{L^{\infty}}
\left\| P ( g , t_{1} ) \right\|_{L^{2}}
\left\| \partial_{\omega_{3}} F ( \vec{f}, t_{2} ) -  \partial_{\omega_{3}}
F ( \vec{g}, t_{1} ) \right\|_{L^{2}} \\
& \lesssim \| f  - g \|_{H^{\frac{9}{2}+}} + |t_{2} - t_{1}|  \cdot
\label{Eqn:A3app}
}
We have $Z(f,g,t) \lesssim B_{1} + B_{2} $ with $B_{1} := \int_{\T} \frac{ \left| \left( P(f,t) - P(g,t) \right) \right| \, \left| \partial_{\omega_{3}} F(\vec{g},t) \right| }
{\left| \partial_{\omega_{3}} F(\vec{f},t) \right| \left| \partial_{\omega_{3}} F(\vec{g},t) \right|} \; dx $ and
$ B_{2} := \int_{\T}  \frac{ |P(g,t)| \, \left| \partial_{\omega_{3}} F(\vec{f},t) - \partial_{\omega_{3}} F(\vec{g},t) \right| }
{\left| \partial_{\omega_{3}} F(\vec{f},t) \right| \left| \partial_{\omega_{3}} F(\vec{g},t) \right|} \; dx $. Hence from $L^{1} \hookrightarrow L^{2}$,
$L^{\infty} \hookrightarrow H^{\frac{1}{2}+}$, Lemma \ref{lem:EstDerivF}, and (\ref{Eqn:DiffEstP}), we get $ Z(f,g,t) \lesssim  \| f - g \|_{H^{\frac{9}{2}+}} $.

\end{proof}

\end{document}